\theoremstyle{plain}
\newtheorem{theorem}{Theorem}[section]
\newtheorem{lemma}[theorem]{Lemma}
\newtheorem{proposition}[theorem]{Proposition}
\theoremstyle{definition}
\theoremstyle{remark}
\newtheorem{remark}[theorem]{Remark}
\newtheorem{notation}[theorem]{Notation}
\newtheorem{assumption}[theorem]{Assumption}
\newcommand{\R}{{\mathbb R}}
\newcommand{\N}{{\mathbb N}}
\def\im{{\rm i}}
\newcommand{\C}{\mathbb{C}}
\def\({\left(}
\def\){\right)}
\def\<{\left\langle}
\def\>{\right\rangle}
\newcommand{\sech}{{\mathrm{sech}}}
\numberwithin{equation}{section}
\begin{document}

\title{Small energy stabilization for 1D Nonlinear Klein Gordon Equations}

\author{Scipio Cuccagna, Masaya Maeda, Stefano Scrobogna}
\maketitle

\begin{abstract}
We give a  partial extension to dimension 1 of the result proved by   Bambusi and Cuccagna \cite{BC11AJM} on the absence of small energy real valued periodic solutions for  the NLKG  in dimension 3. We combine the framework in
 Kowalczyk and Martel \cite{KM22} with the notion of "refined profile".
\end{abstract}

\section{Introduction}

Let $m>0$ and $V\in \mathcal{S}(\R,\R)$ (Schwartz function) with set of eigenvalues
\begin{align}\label{eq:spec}
\sigma_{\mathrm{d}}(L_1)=\{\lambda_j^2\ |\ j=1,\cdots,N\} \text{ with } 0<\lambda_1<\cdots<\lambda_N<m,\text{ where } L_1=-\partial_x^2+V+m^2.
\end{align}
 We assume there exist   $C>0$ and $a_1>0$ such that
\begin{align}\label{eq:decay} |V ^{(l)}(x)|\le C  e^{-a_1 |x|} \text{ for all $0\le l\le N+1$.}
 \end{align} Let $f\in C^\infty(\R,\R)$ s.t.\ $f(0)=f'(0)=0$.
We consider the nonlinear Klein-Gordon (NLKG) equation
\begin{align}\label{NLKG}
\dot{\mathbf{u}}=\mathbf{J} \(\mathbf{L}_1 \mathbf{u}+\mathbf{f}[\mathbf{u}]\),\ \mathbf{u}={}^t(u_1\ u_2):\R\times\R\to \R^2,
\end{align}
where
\begin{align*}
\mathbf{J}=\begin{pmatrix}
0 & 1 \\ -1 & 0
\end{pmatrix},\quad
\mathbf{L}_1=\begin{pmatrix}
L_1 & 0 \\ 0 & 1
\end{pmatrix},\quad
\mathbf{f}[\mathbf{u}]=\begin{pmatrix}
f(u_1)\\ 0
\end{pmatrix}.
\end{align*}
Denoting by  $\phi_j$  a   real valued  eigenfunction with $L^2(\R )$ norm equal to 1 of $L_1$ associated to $\lambda_j^2$, setting
\begin{align}\label{eq:matr_eig}
\boldsymbol{\Phi}_j:=\begin{pmatrix}
\phi_j\\ \im \lambda_j \phi_j
\end{pmatrix}   \text{    for $j=1,\cdots,N$},
\end{align}
  we have
\begin{align}\label{eq:matr_eig1}
\mathbf{J}\mathbf{L}_1 \boldsymbol{\Phi}_j=\im \lambda_j\boldsymbol{\Phi}_j    \text{   and } \mathbf{J}\mathbf{L}_1 \overline{\boldsymbol{\Phi}}_j=-\im \lambda_j\overline{\boldsymbol{\Phi}}_j.
\end{align}
In fact the $ \boldsymbol{\Phi} _j$  and  their complex conjugates $ \overline{\boldsymbol{\Phi}} _j$  generate all the eigenfunctions of the linearization $\mathbf{J}\mathbf{L}_1 $  of our NLKG \eqref{NLKG}.

\noindent Our NLKG \eqref{NLKG} is a  Hamiltonian system for the symplectic form
\begin{align} &
\Omega(\mathbf{u},\mathbf{v}):=\<\mathbf{J}^{-1}\mathbf{u},\mathbf{v}\>, \text{  where } \<\mathbf{u},\mathbf{v}\> := Re\(\mathbf{u},\overline{\mathbf{v}}\)   \text{  and } \label{eq:inner1} \\&  \label{eq:inner0}  \(
 \mathbf{u},  \mathbf{ v}\)  :=\int _{\R} {^t\mathbf{u}}(x) \mathbf{ v}(x) dx,
\end{align}
and the Hamiltonian  or  energy function is given by
\begin{align}\label{eq:energy}
E(\mathbf{u})=\frac{1}{2}\<\mathbf{L}_1\mathbf{u},\mathbf{u}\>+\int_{\R}F(u_1)\,dx, \text{ where $F(u)=\int_0^sf(\tau)\,d\tau$.}
\end{align}
The local well-posedness of \eqref{NLKG} is well known.
From the conservation of the energy, we have that for sufficiently small $\delta>0$, if $\|\mathbf{u}_0\|_{\boldsymbol{\mathcal{H}}^1}\leq \delta$, then $\|\mathbf{u}\|_{L^\infty (\R ,\boldsymbol{\mathcal{H}}^1)}\lesssim \delta$ and in particular we obtain  the global well-posedness for small data, where \begin{align}\label{eq:ennrom}
 \|\mathbf{u}\|_{\boldsymbol{\mathcal{H}^1} }^2=\|u_1\|_{H^1}^2+\|u_2\|_{L^2}^2.
\end{align}

\noindent Given a constant $a>0$ we consider the space defined by the norm
\begin{align}\label{eq:enwei}
 \| \mathbf{u}\|_{ \boldsymbol{\mathcal{H}}^{1 }_{-a}}:= \| \sech \( ax \)   \mathbf{u}\|_{ \boldsymbol{\mathcal{H}}^{1 } }.
\end{align}
  We   denote by $\boldsymbol{\phi} [  \mathbf{z} ]  $ the \textit{refined profile},  introduced below in Sect.\ \ref{sec:refprof}, where
\begin{align}\label{eq:discr_mores}
		 \mathbf{z}= (z_1,...,z_{ {N}}  ),
	\end{align}
encodes the discrete modes and where $\boldsymbol{\phi} [  \mathbf{z} ]= \sum z_j\boldsymbol{\Phi}  _j +c.c. +O(\| \mathbf{z} \| )  $,
 where by $g +c.c.$, we mean $g+\bar{g}$ and $\|\mathbf{z}\|^2:=\sum_{j=1}^{N}|z_j|^2$.

 The main result in this paper is the following theorem.

\begin{theorem}\label{thm:main}
Under Assumptions \ref{ass:generic}, \ref{ass:FGR} and \ref{ass:repulsive} given below, for any $a>0$ and $ \epsilon >0$
  there exists $\delta_0>0$ such that  if $\|\mathbf{u}_0\|_{\boldsymbol{\mathcal{H}}^1}=:\delta<\delta_0$, then
we have a global representation
 \begin{align}
 \mathbf{u}(t) =     {\boldsymbol{\phi}}[ \mathbf{z}(t)]   + \boldsymbol{\eta }
   (t) \text{ for appropriate $\mathbf{z}\in C ^1(\R , \C ^{N})$ and $ \boldsymbol{\eta }\in C^0 (\R , \boldsymbol{\mathcal{H}}^{1})$,}
 \label{eq:main1}
\end{align}
and, for  $I=\R$,
\begin{align}
 \int _{I }  \|     \boldsymbol{\eta }  (t) \| ^2 _{\boldsymbol{\mathcal{H}}^{1} _{-a}(\R )} dt   \le \epsilon,     \label{eq:main2}
\end{align}
   and
\begin{align}&
\lim_{t\to  \infty} \mathbf{z} (t) =0     \text{  .  }   \label{eq:main3}
\end{align}
\end{theorem}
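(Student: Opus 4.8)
The plan is to run a modulation-plus-virial scheme in the spirit of Kowalczyk–Martel, but organized around the refined profile $\boldsymbol{\phi}[\mathbf{z}]$ so that the worst resonant interactions between the discrete modes are already absorbed into the ansatz. First I would set up the decomposition \eqref{eq:main1} by an implicit-function-theorem argument: given $\mathbf{u}$ small in $\boldsymbol{\mathcal{H}}^1$, define $\mathbf{z}$ by imposing symplectic orthogonality conditions $\Omega(\boldsymbol{\eta},\partial_{z_j}\boldsymbol{\phi}[\mathbf{z}])=\Omega(\boldsymbol{\eta},\partial_{\bar z_j}\boldsymbol{\phi}[\mathbf{z}])=0$ for $j=1,\dots,N$, with $\boldsymbol{\eta}=\mathbf{u}-\boldsymbol{\phi}[\mathbf{z}]$. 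Since $\boldsymbol{\phi}[\mathbf{z}]=\sum z_j\boldsymbol{\Phi}_j+\mathrm{c.c.}+O(\|\mathbf{z}\|)$ and the $\boldsymbol{\Phi}_j,\overline{\boldsymbol{\Phi}}_j$ are (symplectically) independent, the linearized map is invertible for $\|\mathbf{z}\|$ small, giving local existence and $C^1$ regularity of $\mathbf{z}(t)$; by the a priori $\boldsymbol{\mathcal{H}}^1$ bound from energy conservation this persists globally, so the only real content is the decay estimates \eqref{eq:main2} and \eqref{eq:main3}.

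Next I would derive the modulation equations. Differentiating the orthogonality conditions along the flow yields $\dot z_j=\im\lambda_j z_j+(\text{nonlinear terms})$, where, crucially, the defining property of the refined profile is that the slowly-decaying ``internal'' nonlinear terms — the ones that would otherwise force growth of $\mathbf{z}$ — are cancelled, leaving source terms that are either genuinely higher order in $\mathbf{z}$ or else coupled to $\boldsymbol{\eta}$. From this I extract the key \textbf{Fermi Golden Rule} dissipation: the relevant combination of modes (the minimal resonant monomial $z^\mu$ allowed by $\sigma_{\mathrm{d}}(L_1)$, with $2\lambda_j$ or sums $\lambda_j+\lambda_k$ etc.\ lying in the continuous spectrum $[m,\infty)$ of $\mathbf{J}\mathbf{L}_1$) satisfies a differential inequality of the form $\partial_t(\text{mode energy})\le -c\,|z^\mu|^2+(\text{terms controlled by }\|\boldsymbol{\eta}\|_{\boldsymbol{\mathcal{H}}^1_{-a}})$, using Assumption~\ref{ass:FGR} to guarantee $c>0$. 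Integrating in time gives $\int_\R|z^\mu(t)|^2\,dt\lesssim\delta^2$, and feeding this back into the $\dot{\mathbf{z}}$ system (together with the dispersive decay of $\boldsymbol{\eta}$) upgrades it to $\int_\R\|\mathbf{z}(t)\|^{2}\,(\text{with the right weight})\,dt<\infty$ and ultimately $\mathbf{z}(t)\to0$, which is \eqref{eq:main3}.

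For \eqref{eq:main2} I would write the equation satisfied by $\boldsymbol{\eta}$, $\dot{\boldsymbol{\eta}}=\mathbf{J}\mathbf{L}_1\boldsymbol{\eta}+(\text{remainder})$, project onto the continuous spectral subspace, and run a virial/localized-energy argument: introduce a family of weighted functionals (a truncated virial weight $\tanh$-type multiplier at scale $A$, plus the transformation to the natural conjugated variable that straightens out $-\partial_x^2+V+m^2$, exactly as in Kowalczyk–Martel) whose time derivative dominates $\|\boldsymbol{\eta}\|_{\boldsymbol{\mathcal{H}}^1_{-a}}^2$ modulo (i) boundary/error terms that are integrable by the FGR bound on $\mathbf{z}$, and (ii) a defocusing sign coming from Assumption~\ref{ass:repulsive} that ensures the quadratic form in the virial identity is positive for the continuous part. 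Because the functionals are bounded by $\delta^2$ uniformly in $t$, integrating in time over $I=\R$ yields \eqref{eq:main2} with the implicit constant times $\delta^2$, which is $\le\epsilon$ once $\delta_0$ is small. The main obstacle, as in all such arguments, is the coupled bootstrap: closing the virial estimate for $\boldsymbol{\eta}$ needs the $\int|z^\mu|^2<\infty$ control, while the FGR estimate for $\mathbf{z}$ needs $\int\|\boldsymbol{\eta}\|^2_{\boldsymbol{\mathcal{H}}^1_{-a}}<\infty$; these must be run simultaneously in a single continuity/bootstrap argument on a maximal interval, and the delicate point in dimension $1$ — the reason this is only a partial extension of \cite{BC11AJM} — is the weak (non-integrable, $t^{-1/2}$-type) dispersive decay, which forces one to live entirely with space-time (rather than pointwise-in-time) weighted bounds and to check that the refined profile removes \emph{every} non-integrable resonant source term before the virial machinery is applied.
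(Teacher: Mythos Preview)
Your outline is correct in architecture and matches the paper's route: modulation around the refined profile via symplectic orthogonality, a coupled bootstrap between an FGR bound on the resonant monomials $\mathbf{z}^{\mathbf{m}}$ (for $\mathbf{m}\in\mathbf{R}_{\min}$) and virial space--time bounds on $\boldsymbol{\eta}$, closed by a continuation argument on $[0,T]$ and then extended to $I=\R$; the limit \eqref{eq:main3} then follows from $z_j^{m_j}\in L^2(\R)$ together with $\dot{\mathbf{z}}\in L^\infty$.

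Two differences with the paper are worth flagging. First, for the FGR step you propose a ``mode energy'' functional with $\partial_t(\text{mode energy})\le -c|\mathbf{z}^{\mathbf{m}}|^2+\dots$; this is the older route of \cite{BC11AJM,CM2109.08108,CM2203.13468}, and in 1D making the $-c|\mathbf{z}^{\mathbf{m}}|^2$ term appear from the $\boldsymbol{\eta}$--$\mathbf{z}$ coupling requires smoothing estimates you do not mention. The paper instead uses, following \cite{KM22}, the coupling functional $\mathcal{J}_{\mathrm{FGR}}=\Omega\bigl(\boldsymbol{\eta},\chi_A\sum_{\mathbf{m}\in\mathbf{R}_{\min}}\mathbf{z}^{\mathbf{m}}\mathbf{g}_{\mathbf{m}}\bigr)$, whose time derivative produces $\sum_{\mathbf{m}}\gamma_{\mathbf{m}}|\mathbf{z}^{\mathbf{m}}|^2$ directly and avoids smoothing estimates altogether. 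Second, your virial sketch is a bit compressed: the paper needs \emph{two} virial estimates at separated scales $A\gg B^2$, a first one on $\boldsymbol{\eta}$ at scale $A$ and a second one at scale $B$ on the transformed variable $\mathbf{v}=\mathcal{T}\boldsymbol{\eta}$ with $\mathcal{T}=\langle\im\varepsilon\partial_x\rangle^{-N}\mathcal{A}^*$ (the regularized Darboux conjugation that strips all eigenvalues from $L_1$); Assumption~\ref{ass:repulsive} enters only in this second virial, on the conjugated potential $V_D$. The two estimates feed into each other with a gain of $A^{-1/4}$, which is precisely what closes the bootstrap loop; a single virial would not suffice. Since you invoke ``exactly as in Kowalczyk--Martel'' you may already have this in mind, but it is the genuinely delicate point.
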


The result of this paper is a partial extension to dimension 1 of the result, on local decay to zero for small  real valued solutions of an NLKG with a trapping potential and, in particular, on the absence of small energy real valued periodic solutions, proved   for dimension 3 by   Bambusi and Cuccagna \cite{BC11AJM}.
 The latter was an
extension, to cases with quite general spectral configurations, of a result proved by Soffer and Weinstein \cite{SW3}
under rather restrictive spectral hypotheses. There is a substantial literature on the asymptotic stability of patterns for wave like equations,
partially reviewed for the case of the Nonlinear Schr\"odinger Equation (NLS) in \cite{CM21DCDS}. In particular, in a series of papers referenced in
\cite{CM21DCDS}, we have expanded the result of \cite{BC11AJM} to various contexts  where dispersion can be proved using Strichartz estimates.
The crux of these papers consisted in proving a form of radiation induced damping on the discrete modes of the system (the so called Nonlinear Fermi Golden Rule, or FGR), due  to the spilling of the energy in the discrete modes in the radiation
component of the solutions, where dispersion occurs because of linear dispersion. Recently, thanks to the notion introduced
in \cite{CM20}, of \textit{Refined Profile}, we have
been able to simplify significantly the proofs, see also \cite{CM2111.02681,CM21}, eliminating the normal forms arguments required to find a coordinates system where
 the FGR can be seen. In fact, an ansatz involving the Refined Profile yields authomatically
 a framework adequate to prove the FGR, as we will see later.

 Lately,  in the literature
there has been  considerable attention on low dimensional problems, especially in 1D, where, due to the
relative strength of the nonlinearities,  the Strichartz estimates are not sufficient to prove dispersion. Various papers
like for example  \cite{DM20}--\cite{Martel2110.01492}, \cite{Snelson} and \cite{Sterbenz}
have recently dealt with asymptotic stability problems in the context of long range nonlinearities.  In
\cite{CM2109.08108,CM2203.13468} use is made  of the theory of Virial Inequalities developed by Kowalczyk et al.
\cite{KM22}--\cite{KMMvdB21AnnPDE}.  In this paper    we will follow closely   Kowalczyk and Martel \cite{KM22}.
So,  as  in  \cite{KM22}--\cite{KMMvdB21AnnPDE}, we will need two distinct sets of Virial Inequalities.
We follow the Kowalczyk and Martel \cite{KM22} idea of proving the FGR utilizing the initial sets of coordinates, contrary to what
 is done in \cite{CM2109.08108,CM2203.13468}. In particular, in the proof of the FGR we use a functional derived from Kowalczyk and Martel \cite{KM22}, instead of
 the localized energy $E( \boldsymbol{\phi} [  \mathbf{z} ])$.
 The proof simplifies, avoiding the use of the smoothing estimates,
 which played a significant role in  \cite{CM2109.08108,CM2203.13468}. We highlight that our result works under a somewhat restrictive hypothesis on the potential
 $V$, specifically that the potential $V_D$ obtained after eliminating all the eigenvalues of $L_1$ with a sequence of Darboux transformations, must be a repulsive
 potential, in the sense of Assumption \ref{ass:repulsive}.

\subsection{Assumptions and refined profile}\label{sec:refprof}

\begin{notation}
We write $a\lesssim b$ to mean that there exists a constant $C>0$ s.t.\ $a\leq Cb$.
The positive number $C$ omitted is called the implicit constant.
\end{notation}

We set $\boldsymbol{\lambda}=(\lambda_1,\cdots,\lambda_{N},-\lambda_1,\cdots,-\lambda_{N})\in \R^{2N}$ and
\begin{align*}
\mathbf{R}&:=\{\mathbf{m}=(\mathbf{m}_+,\mathbf{m}_-)\in (\N\cup \{0\})^{2N}\ |\  |\mathbf{m} \cdot \boldsymbol{\lambda}|> m\},\\
\mathbf{R}_{\mathrm{min}}&:=\{\mathbf{m}\in \mathbf{R}\ |\ \not\exists \mathbf{n}\in \mathbf{R}\ \text{s.t.}\ \mathbf{n}\prec\mathbf{m}\},\\
\mathbf{I}&:=\{\mathbf{m}\in (\N\cup \{0\})^{2N}\ |\ \exists \mathbf{n}\in\mathbf{R}_{\mathrm{min}}\ \text{s.t.}\ \mathbf{n}\prec \mathbf{m}\},
\end{align*}
where
\begin{align*}
&\mathbf{n}=(\mathbf{n}_+,\mathbf{n}_-)\prec \mathbf{m}=(\mathbf{m}_+,\mathbf{m}_-)\\& \Longleftrightarrow\ \forall j=1,\cdots,N,\ n_{+,j}+n_{-,j}\leq m_{+,j}+m_{-,j} \ \text{and}\ \|\mathbf{n}\|<\|\mathbf{m}\|,  \\&  \text{where } \|\mathbf{m}\|:=\sum_{j=1}^{N}\sum_{\pm} m_{\pm,j}.
\end{align*}
We also set $\mathbf{e}^j=(\delta_{1j},\cdots,\delta_{Nj},0,\cdots,0)$ where $\delta_{jk}$ is the Kronecker's delta, $\overline{\mathbf{m}}=\overline{(\mathbf{m}_+,\mathbf{m}_-)}:=(\mathbf{m}_-,\mathbf{m}_+)$ and
\begin{align*}
\mathbf{NR}&:=(\N\cup \{0\})^{2N} \setminus(\mathbf{R}_{\mathrm{min}}\cup \mathbf{I}),\\
\boldsymbol{\Lambda}_j&:=\{\mathbf{m}\in \mathbf{NR}\ |\ \mathbf{m} \cdot \boldsymbol{\lambda}=\lambda_{j}\},\\
\overline{\boldsymbol{\Lambda}_j}&:=\{\overline{\mathbf{m}}\ |\ \mathbf{m}\in \boldsymbol{\Lambda}_j\}\\
 \boldsymbol{\Lambda}_0&:=\{\mathbf{m} \in \mathbf{NR} \backslash \{ \mathbf{0} \}\ |\ \boldsymbol{\lambda}\cdot \mathbf{m}=0\}.
\end{align*}
We assume the following, which is true for generic $L_1$.
\begin{assumption}\label{ass:generic}For  $M  $   the largest number in $\N$ such that $(M -1)\lambda _1< m$, then for a multi--index $ \mathbf{m} \in \N_0^{2N}$ we assume
\begin{align}&  \|  \mathbf{m}  \|  \le M   \Longrightarrow      \(   \mathbf{m} \cdot \boldsymbol{\lambda} \) ^2 \neq m ^2  . \label{eq:generic12}
\end{align}
We also assume that    for $\mathbf{m}=( \mathbf{m}_+,  \mathbf{m}_-) \in \N_0^{2N}$ then
\begin{align}&  \| \mathbf{m} \|  \le 2M \text{  and    }  \mathbf{m}\cdot \boldsymbol{\lambda} = 0 \Longrightarrow       \mathbf{m}_+ =  \mathbf{m}_- . \label{eq:generic11}
\end{align}
\end{assumption}
\begin{lemma}\label{lem:combinat1}   The following facts hold.
 \begin{enumerate}
   \item  If $\| \mathbf{m} \|> M$, with $M$ the constant in Assumption \ref{ass:generic},   then  $\mathbf{m}\in \mathbf{I}$.
   \item  $\mathbf{R}_{\mathrm{min}}$ and  $\mathbf{NR}$ are finite sets.
   \item If $\mathbf{m}\in \mathbf{NR}$, then $|\boldsymbol{\lambda}\cdot \mathbf{m}|<m$ and if $\mathbf{m}\in \mathbf{R}_{\mathrm{min}}$, then $\mathbf{m}_+=0$ or $\mathbf{m}_-=0$.
   \item  If $\mathbf{m}\in \boldsymbol{\Lambda}_{j}$ then  there is   a $\mathbf{n} \in \boldsymbol{\Lambda} _{0}$ with $\mathbf{m}= \mathbf{e}^j+\mathbf{n}$.
\end{enumerate}
 \end{lemma}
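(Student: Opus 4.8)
The plan is to reduce everything to two elementary observations. First, the relation $\prec$ is transitive and strictly decreases $\|\cdot\|$; writing $\mathbf{n}\preceq\mathbf{m}$ for ``$\mathbf{n}\prec\mathbf{m}$ or $\mathbf{n}=\mathbf{m}$'', this means that every $\mathbf{m}\in\mathbf{R}$ dominates some element of $\mathbf{R}_{\mathrm{min}}$: the finite nonempty set $\{\mathbf{n}\in\mathbf{R}\ |\ \mathbf{n}\preceq\mathbf{m}\}$ has an element of least norm, and such an element is, by construction, in $\mathbf{R}_{\mathrm{min}}$. Second, by maximality of $M$ one has $M\lambda_1=((M+1)-1)\lambda_1\geq m$ and $M\geq 1$. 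These, together with the two genericity conditions \eqref{eq:generic12} and \eqref{eq:generic11}, will do all the work.

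For item (1), given $\mathbf{m}$ with $\|\mathbf{m}\|>M$, I would produce a witness $\mathbf{m}''\in\mathbf{R}$ with $\mathbf{m}''\prec\mathbf{m}$ by collapsing all the mass of $\mathbf{m}$ into the ``$+$'' block and trimming it to total $M$: choose $t_1,\dots,t_N$ with $0\le t_k\le m_{+,k}+m_{-,k}$ and $\sum_k t_k=M$ (possible since $\sum_k(m_{+,k}+m_{-,k})=\|\mathbf{m}\|>M$), and set $\mathbf{m}''=(t_1,\dots,t_N,0,\dots,0)$. Then $m''_{+,k}+m''_{-,k}=t_k\le m_{+,k}+m_{-,k}$ and $\|\mathbf{m}''\|=M<\|\mathbf{m}\|$, so $\mathbf{m}''\prec\mathbf{m}$; moreover $\mathbf{m}''\cdot\boldsymbol{\lambda}=\sum_k t_k\lambda_k\ge M\lambda_1\ge m$, and \eqref{eq:generic12} (applicable since $\|\mathbf{m}''\|=M$) rules out equality, so $\mathbf{m}''\cdot\boldsymbol{\lambda}>m$ and $\mathbf{m}''\in\mathbf{R}$. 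By the descent observation there is $\mathbf{n}\in\mathbf{R}_{\mathrm{min}}$ with $\mathbf{n}\preceq\mathbf{m}''\prec\mathbf{m}$, hence $\mathbf{n}\prec\mathbf{m}$, i.e. $\mathbf{m}\in\mathbf{I}$. Items (2) and (3) then follow formally. If $\mathbf{m}\in\mathbf{R}_{\mathrm{min}}$ then $\|\mathbf{m}\|\le M$ (otherwise item (1) gives $\mathbf{n}\in\mathbf{R}_{\mathrm{min}}$ with $\mathbf{n}\prec\mathbf{m}$, contradicting minimality), and $\mathbf{NR}\subseteq\{\mathbf{m}\ |\ \|\mathbf{m}\|\le M\}$ directly by item (1); as there are only finitely many multi-indices of norm $\le M$, both sets are finite. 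For (3): if $\mathbf{m}\in\mathbf{NR}$ and $|\boldsymbol{\lambda}\cdot\mathbf{m}|>m$ then $\mathbf{m}\in\mathbf{R}$, so by descent $\mathbf{m}\in\mathbf{R}_{\mathrm{min}}\cup\mathbf{I}$, impossible; if $|\boldsymbol{\lambda}\cdot\mathbf{m}|=m$ then $\|\mathbf{m}\|\le M$ and \eqref{eq:generic12} is violated; hence $|\boldsymbol{\lambda}\cdot\mathbf{m}|<m$. If $\mathbf{m}\in\mathbf{R}_{\mathrm{min}}$ and $\boldsymbol{\lambda}\cdot\mathbf{m}>m$ (the case $<-m$ being symmetric, subtracting from the ``$+$'' block), then $\mathbf{m}_-=0$: otherwise subtracting $1$ from a nonzero entry of $\mathbf{m}_-$ gives $\mathbf{n}\prec\mathbf{m}$ with $\boldsymbol{\lambda}\cdot\mathbf{n}=\boldsymbol{\lambda}\cdot\mathbf{m}+\lambda_k>m$, so $\mathbf{n}\in\mathbf{R}$, contradicting minimality.

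The one step requiring an actual idea is (4). Given $\mathbf{m}\in\boldsymbol{\Lambda}_j$ we have $\|\mathbf{m}\|\le M$ by item (1). The trick is to look at $\mathbf{m}':=\mathbf{m}+\overline{\mathbf{e}^j}$, i.e. $\mathbf{m}$ with $1$ added to the $j$-th entry of the ``$-$'' block: then $\boldsymbol{\lambda}\cdot\mathbf{m}'=\lambda_j-\lambda_j=0$ and $\|\mathbf{m}'\|=\|\mathbf{m}\|+1\le M+1\le 2M$, so the second genericity hypothesis \eqref{eq:generic11} forces $\mathbf{m}'_+=\mathbf{m}'_-$, that is $m_{+,j}=m_{-,j}+1$ and $m_{+,k}=m_{-,k}$ for $k\ne j$. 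In particular $m_{+,j}\ge1$, so $\mathbf{n}:=\mathbf{m}-\mathbf{e}^j$ lies in $(\N\cup\{0\})^{2N}$, satisfies $\mathbf{n}_+=\mathbf{n}_-$ and $\boldsymbol{\lambda}\cdot\mathbf{n}=0$, and $\mathbf{n}\prec\mathbf{m}$; since $\mathbf{m}\notin\mathbf{R}_{\mathrm{min}}\cup\mathbf{I}$, transitivity of $\prec$ forces $\mathbf{n}\notin\mathbf{R}_{\mathrm{min}}\cup\mathbf{I}$ as well (an element of $\mathbf{R}_{\mathrm{min}}$ lying below $\mathbf{n}$ would lie below $\mathbf{m}$), so $\mathbf{n}\in\mathbf{NR}$ and hence $\mathbf{n}\in\boldsymbol{\Lambda}_0$, the only exception being the trivial case $\mathbf{m}=\mathbf{e}^j$ where $\mathbf{n}=\mathbf{0}$. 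I expect the main obstacle to be precisely this choice of $\overline{\mathbf{e}^j}$ and the realization that \eqref{eq:generic11} is exactly what makes $m_{+,j}\ge 1$; everything else is bookkeeping with the transitive relation $\prec$ and the first genericity condition.
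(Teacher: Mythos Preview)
Your proof is correct and follows essentially the same approach as the paper: the trimming construction for (1), the descent to $\mathbf{R}_{\min}$, and for (4) the application of \eqref{eq:generic11} to $\mathbf{m}+\overline{\mathbf{e}^j}$ to force $m_{+,j}\ge 1$ are exactly what the paper's terse argument intends. You have in fact supplied considerably more detail than the paper, in particular the verification that $\mathbf{n}=\mathbf{m}-\mathbf{e}^j\in\mathbf{NR}$ via transitivity of $\prec$, and you correctly flag the edge case $\mathbf{m}=\mathbf{e}^j$ (which the paper's statement of (4) silently excludes, since in the application it is only invoked for $\|\mathbf{m}\|\ge 2$).
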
 \proof The proof is taken from \cite{CM2203.13468}.If
  $\| \mathbf{m} \|> M$,  we can write $\mathbf{m} =\boldsymbol{\alpha  } +\boldsymbol{ \beta}  $  with
   $\|\boldsymbol{\alpha  } \| = M $.  If $   \boldsymbol{\alpha  }=(\boldsymbol{\alpha  }_+,\boldsymbol{\alpha  }_-)$ and if we set $   \mathbf{n}=(\mathbf{n}_+,\mathbf{n}_-)$  with $\mathbf{n}_+= \boldsymbol{\alpha  }_++\boldsymbol{\alpha  }_-$ and
   $\mathbf{n}_-= 0$, then $\mathbf{n} \cdot \boldsymbol{\lambda} \ge M \lambda _1 >  m   $. This implies that $ \mathbf{{n  } }\in    \mathbf{R} $ and that there exists $\boldsymbol{\mathfrak{ a} }\in \mathbf{R}_{\mathrm{min} }$  with $\boldsymbol{\mathfrak{ a} } \preceq  \mathbf{n}$. From  $\| \boldsymbol{\beta  }\| \ge 1 $ it follows that   $\boldsymbol{\mathfrak{ a} } \prec \mathbf{m}$ and so $\mathbf{m}\in \mathbf{I}$.

   Obviously, from the 1st claim it follows that if  $\mathbf{m}\in  \mathbf{R}_{\mathrm{min}}\cup \mathbf{NR}$ then $\| \mathbf{m} \|\le  M$. Next we observe that $\mathbf{m}\in \mathbf{NR}$ implies $\| \mathbf{m} \|\le  M$ and $|\boldsymbol{\lambda}\cdot \mathbf{m}| \le m$ and, by Assumption \ref{ass:generic}, $|\boldsymbol{\lambda}\cdot \mathbf{m}| < m$.
    If $\mathbf{m}\in \mathbf{R}_{\mathrm{min}}$ with, say, $\mathbf{m} \cdot \boldsymbol{\lambda} >m$, then obviously   we have
 $\mathbf{m} _+ \cdot \boldsymbol{\lambda} >m$ and it is elementary that $\mathbf{m} =(\mathbf{m} _+,0)$.
 Finally,  from the first claim we know that if $\mathbf{m}\in \boldsymbol{\Lambda}_{j}$ then
   $\| \mathbf{m} \| \le  M$. From $  \mathbf{m}   \cdot \boldsymbol{\lambda  } -\lambda _j =0$ it follows
   from \eqref{eq:generic11}  that  we have the last claim.\qed



For $\mathbf{z}=(z_1,\cdots,z_{N})\in \C^{N}$ and $\mathbf{m}\in(\N\cup \{0\})^{2N}$, we set
\begin{align*}
\mathbf{z}^{\mathbf{m}}=\prod_{j=1}^{N}z_j^{m_{+,j}}\bar{z}_j^{m_{-,j}}.
\end{align*}
Notice that we have $\overline{\mathbf{z}^{\mathbf{m}}}=\mathbf{z}^{\overline{\mathbf{m}}}$.

Notice that $ \sum_{j=1}^{N}\(z_j \boldsymbol{\Phi}_j+c.c.\), $ satisfies \eqref{NLKG} up to $O(\|\mathbf{z}\|^2)$ error if $\dot{z}_j=\im \lambda_{j}z_j$. The \textit{refined profile} is a generalization of  this kind of  approximate solution  of  \eqref{NLKG} .

  We set   $\|\cdot\|_{\Sigma^s}:=\| \cdot \|_{H^s_{a_2}}:=\| e^{a_2\<x\>} \cdot\|_{H^s}$ where
  $a_2 = \frac{1}{2}\sqrt{m^2 - \lambda_N^2}$ and denote by $\Sigma^s$  the corresponding spaces.
We set
\begin{align*}
\|\mathbf{u}\|_{\boldsymbol{\Sigma}^l}^2:=\|u_1\|_{\Sigma^l}^2+\|u_2\|_{\Sigma ^l}^2.
\end{align*}
Let $\boldsymbol{\Sigma}^\infty =\cap _{l\in \R }\boldsymbol{\Sigma}^l.$

\begin{proposition}\label{prop:rp}
There exist $\{\boldsymbol{\phi}_{\mathbf{m}}\}_{\mathbf{m}\in \mathbf{NR}}$   in $\boldsymbol{\Sigma}^\infty$,
 $\{\mathbf{G}_{\mathbf{m}}\}_{\mathbf{m}\in \mathbf{R}_{\mathrm{min}}} \subset\boldsymbol{\Sigma}^{\infty}$,
  $\{\lambda_{ \mathbf{n}j}\}_{\mathbf{n}\in \Lambda_0\cup \{ \mathbf{0}\}}\subset\R$  for $j=1,\cdots,N$ with $\boldsymbol{\phi}_{\mathbf{e}^j}=\boldsymbol{\Phi}_j$  and $\lambda_{ \mathbf{0} j}=\lambda_j$, a
   $\delta_1>0$ s.t.\ there exists $\widetilde{\mathbf{z}}_2 \in C^\infty(B_{\C^N}(0,\delta_1),\C^N)$ satisfying
\begin{align}\label{eq:boundz2}
	\|\widetilde{\mathbf{z}}_2\| _{\C ^N}\lesssim \sum_{\mathbf{m}\in 	\mathbf{R}_{\mathrm{min}}}|\mathbf{z}^{\mathbf{m}}|,
\end{align}
s.t. for any $l$
\begin{align}\label{eq:R1FRemainder}
\|\mathbf{R}[\mathbf{z}]\|_{\boldsymbol{\Sigma}^l}\lesssim  _l\|\mathbf{z}\|   _{\C ^N} \sum_{\mathbf{m}\in \mathbf{R}_{\mathrm{min}}}|\mathbf{z}^{\mathbf{m}}|,
\end{align}
where $\mathbf{R}[\mathbf{z}]$ is defined by the equality
\begin{align}\label{eq:rp}
D\boldsymbol{\phi}[\mathbf{z}]\widetilde{\mathbf{z}}=\mathbf{J}\(\mathbf{L}_1\boldsymbol{\phi}[\mathbf{z}]+\mathbf{f}[\boldsymbol{\phi}[\mathbf{z}]]-\sum_{\mathbf{m}\in \mathbf{R}_{\mathrm{min}}}\mathbf{z}^{\mathbf{m}}\mathbf{G}_{\mathbf{m}}-\mathbf{R}[\mathbf{z}]\),
\end{align}
 (where \eqref{eq:R1FRemainder}  and \eqref{eq:rp}  define the $\mathbf{G}_{\mathbf{m}}$)
  and
\begin{align}\label{eq:refprof1}
&\boldsymbol{\phi}[\mathbf{z}] :=\begin{pmatrix}
\phi_1[\mathbf{z}]\\ \phi_2[\mathbf{z}]
\end{pmatrix}=\sum_{\mathbf{m}\in \mathbf{NR}}\mathbf{z}^{\mathbf{m}}\boldsymbol{\phi}_{\mathbf{m}}, \\ &   \boldsymbol{\phi}_{\overline{\mathbf{m}}}=\overline{\boldsymbol{\phi}_{\mathbf{m}}}
    \label{eq:symmrp} \\ &  \label{eq:zexp}
\widetilde{\mathbf{z}} =\widetilde{\mathbf{z}}_0+\widetilde{\mathbf{z}}_1+\widetilde{\mathbf{z}}_2\ \text{with}\\ & \label{eq:z0}
\widetilde{\mathbf{z}}_0= (\im \lambda_1z_1,...,\im \lambda_{ N}z_{ N})=:\im  \boldsymbol{\lambda}  \mathbf{z} ,\\ & \label{eq:z1}
\widetilde{\mathbf{z}}_1
= (\im \sum_{\mathbf{n}\in \Lambda_0}\lambda_{ \mathbf{n}1}\mathbf{z}^{\mathbf{n}}z_1,...,\im
 \sum_{\mathbf{m}\in \Lambda_0} \lambda_{ \mathbf{n}N} \mathbf{z}^{\mathbf{n}} z_N),
 \\ & \boldsymbol{\lambda} _{ \overline{\mathbf{m}}} =  \boldsymbol{\lambda} _{ \mathbf{m}} \in \R ^{2N}\label{eq:lambdan}
\end{align}
where $\boldsymbol{\lambda} _{ \mathbf{m}} :=(\lambda_{ \mathbf{m}1},..., \lambda_{ \mathbf{m}N},-\lambda_{ \mathbf{m}1},..., -\lambda_{ \mathbf{m}N} ) $,
such that, setting
\begin{align}\label{eq:RForth}
\boldsymbol{\mathcal{H}}_{\mathrm{c}}[\mathbf{z}]:=\{\mathbf{u}\in \boldsymbol{\mathcal{H}}^1\ |\   \Omega(\mathbf{u},D_{\mathbf{z}}\boldsymbol{\phi}[\mathbf{z}]\mathbf{w})=0 \text{ for all }     \mathbf{w}\in \C^{ N} \}
\end{align}
and
\begin{align}\label{eq:rtilde}
\widetilde{\mathbf{R}}[\mathbf{z}]= \sum_{\mathbf{m}\in \mathbf{R}_{\mathrm{min}}}\mathbf{z}^{\mathbf{m}}\mathbf{G}_{\mathbf{m}}+\mathbf{R}[\mathbf{z}],
\end{align}
we have
\begin{align}
\mathbf{J}\widetilde{\mathbf{R}}[\mathbf{z}] \in \boldsymbol{\mathcal{H}}_{\mathrm{c}}[\mathbf{z}].\label{eq:R1FRemainder--}
\end{align}
\end{proposition}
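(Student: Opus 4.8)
The plan is to construct the objects $\boldsymbol{\phi}_{\mathbf m}$, $\mathbf G_{\mathbf m}$ and $\lambda_{\mathbf n j}$ recursively on the order $\|\mathbf m\|$ by matching powers of $\mathbf z$ in \eqref{eq:rp}, and only at the end to arrange the orthogonality \eqref{eq:R1FRemainder--} by a final correction. First I would substitute the ansatz \eqref{eq:refprof1} together with \eqref{eq:zexp}--\eqref{eq:z1} into \eqref{eq:rp} and expand. The left side $D\boldsymbol\phi[\mathbf z]\widetilde{\mathbf z}$ is $\sum_{\mathbf m\in\mathbf{NR}}(\partial_{\mathbf z}\mathbf z^{\mathbf m}\cdot\widetilde{\mathbf z})\boldsymbol\phi_{\mathbf m}$, and using $\widetilde{\mathbf z}_0=\im\boldsymbol\lambda\mathbf z$ the leading contribution of the $\mathbf z^{\mathbf m}$-coefficient is $\im(\mathbf m\cdot\boldsymbol\lambda)\mathbf z^{\mathbf m}\boldsymbol\phi_{\mathbf m}$; on the right side the linear-in-$\boldsymbol\phi$ part is $\mathbf J\mathbf L_1\boldsymbol\phi[\mathbf z]$, whose $\mathbf z^{\mathbf m}$-coefficient is $\mathbf z^{\mathbf m}\mathbf J\mathbf L_1\boldsymbol\phi_{\mathbf m}$, while $\mathbf f[\boldsymbol\phi[\mathbf z]]$ contributes only terms of order $\ge 2$ in $\mathbf z$ (since $f(0)=f'(0)=0$) that depend on the $\boldsymbol\phi_{\mathbf n}$ with $\|\mathbf n\|<\|\mathbf m\|$. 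Thus for each $\mathbf m\in\mathbf{NR}$ one gets an equation of the form
\begin{align}\label{eq:planresolvent}
\bigl(\mathbf J\mathbf L_1-\im(\mathbf m\cdot\boldsymbol\lambda)\bigr)\boldsymbol\phi_{\mathbf m}=\mathbf N_{\mathbf m}\bigl(\{\boldsymbol\phi_{\mathbf n}\}_{\|\mathbf n\|<\|\mathbf m\|},\{\lambda_{\mathbf n j}\}\bigr)+\text{(terms with }\lambda_{\mathbf m' j}),
\end{align}
where $\mathbf N_{\mathbf m}$ is a fixed polynomial expression in lower-order data living in $\boldsymbol\Sigma^\infty$ by the inductive hypothesis and the exponential decay \eqref{eq:decay} of $V$; the base case $\|\mathbf m\|=1$ forces $\boldsymbol\phi_{\mathbf e^j}=\boldsymbol\Phi_j$ and $\lambda_{\mathbf 0 j}=\lambda_j$ by \eqref{eq:matr_eig1}.

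The heart of the argument is solving \eqref{eq:planresolvent}, and this is where the combinatorial classification from Lemma \ref{lem:combinat1} enters. Three cases occur according to the value $\mathbf m\cdot\boldsymbol\lambda$. If $(\mathbf m\cdot\boldsymbol\lambda)^2\ne m^2$ and $\mathbf m\cdot\boldsymbol\lambda\notin\{\pm\lambda_j\}$, then $\im(\mathbf m\cdot\boldsymbol\lambda)$ is in the resolvent set of $\mathbf J\mathbf L_1$ on exponentially weighted spaces (the continuous spectrum is $\{\im\xi:|\xi|\ge m\}\cup\{-\im\xi:\dots\}$ and $\mathbf m\in\mathbf{NR}$ gives $|\mathbf m\cdot\boldsymbol\lambda|<m$ by Lemma \ref{lem:combinat1}(3)), so $\boldsymbol\phi_{\mathbf m}$ is uniquely determined in $\boldsymbol\Sigma^\infty$ and no $\lambda_{\mathbf m j}$ is needed — the choice $a_2=\tfrac12\sqrt{m^2-\lambda_N^2}$ is exactly what makes the resolvent bounded on $\Sigma^l$ there. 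If $\mathbf m\cdot\boldsymbol\lambda=\lambda_j$ for some $j$ (i.e. $\mathbf m\in\boldsymbol\Lambda_j$), then $\im(\mathbf m\cdot\boldsymbol\lambda)$ is an eigenvalue of $\mathbf J\mathbf L_1$ with eigenfunction $\boldsymbol\Phi_j$, so solvability requires removing the resonant component: this is precisely what the free parameter $\lambda_{\mathbf n j}$ (with $\mathbf m=\mathbf e^j+\mathbf n$, $\mathbf n\in\boldsymbol\Lambda_0$, via Lemma \ref{lem:combinat1}(4)) is chosen to kill — it contributes a term proportional to $\mathbf z^{\mathbf m}\boldsymbol\Phi_j$ on the left side of \eqref{eq:planresolvent} through $\widetilde{\mathbf z}_1$ — after which $\boldsymbol\phi_{\mathbf m}$ is fixed by inverting on the range. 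If instead $(\mathbf m\cdot\boldsymbol\lambda)^2>m^2$, i.e. $\mathbf m\in\mathbf R$, then $\im(\mathbf m\cdot\boldsymbol\lambda)$ is embedded in the continuous spectrum; here one cannot solve in weighted spaces, and one simply does not try: for $\mathbf m\in\mathbf R_{\min}$ one sets $\mathbf G_{\mathbf m}$ to absorb the whole right-hand side (these are the Fermi-Golden-Rule source terms), and for $\mathbf m\in\mathbf I$ one puts $\boldsymbol\phi_{\mathbf m}=0$ (so $\mathbf m\notin\mathbf{NR}$, consistent with the index set), folding its would-be contribution into $\mathbf R[\mathbf z]$. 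Lemma \ref{lem:combinat1}(1)--(2) guarantees the recursion terminates: $\mathbf{NR}$ and $\mathbf R_{\min}$ are finite and everything of order $>M$ lies in $\mathbf I$, so $\mathbf R[\mathbf z]$ collects only terms of order $\gtrsim\|\mathbf z\|\cdot\min_{\mathbf m\in\mathbf R_{\min}}|\mathbf z^{\mathbf m}|$, which is \eqref{eq:R1FRemainder}; \eqref{eq:R1FRemainder} and \eqref{eq:rp} then \emph{define} $\mathbf R[\mathbf z]$ and hence the $\mathbf G_{\mathbf m}$. The symmetry \eqref{eq:symmrp}, \eqref{eq:lambdan} follows because the whole construction commutes with complex conjugation (the $\overline{\mathbf m}$-equation is the conjugate of the $\mathbf m$-equation, using $\mathbf J\mathbf L_1\overline{\boldsymbol\Phi}_j=-\im\lambda_j\overline{\boldsymbol\Phi}_j$).

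It remains to promote this formal/weighted solution to one satisfying the orthogonality \eqref{eq:R1FRemainder--}, i.e. $\mathbf J\widetilde{\mathbf R}[\mathbf z]\in\boldsymbol{\mathcal H}_{\mathrm c}[\mathbf z]$. The strategy is to introduce the small correction $\widetilde{\mathbf z}_2$: the defining relation \eqref{eq:rp} still holds if we replace $\widetilde{\mathbf z}_0+\widetilde{\mathbf z}_1$ by $\widetilde{\mathbf z}$ and compensate by changing $\mathbf R[\mathbf z]$ by $D\boldsymbol\phi[\mathbf z]\widetilde{\mathbf z}_2$ (up to sign); since $D\boldsymbol\phi[\mathbf z]=\sum z_j\boldsymbol\Phi_j+\cdots$ is, to leading order, an isomorphism from $\C^N$ onto $\mathrm{span}\{\boldsymbol\Phi_j,\overline{\boldsymbol\Phi}_j\}$, and the symplectic pairing $\Omega(\,\cdot\,,D_{\mathbf z}\boldsymbol\phi[\mathbf z]\mathbf w)$ restricted to that span is non-degenerate (from $\Omega(\boldsymbol\Phi_j,\overline{\boldsymbol\Phi}_k)\ne0$), the condition \eqref{eq:R1FRemainder--} becomes, by the implicit function theorem, a solvable equation for $\widetilde{\mathbf z}_2$ as a function of $\mathbf z$ on a small ball $B_{\C^N}(0,\delta_1)$, smooth and with the claimed bound \eqref{eq:boundz2} because the obstruction to \eqref{eq:R1FRemainder--} from the uncorrected profile is itself $O(\|\mathbf z\|\sum_{\mathbf m\in\mathbf R_{\min}}|\mathbf z^{\mathbf m}|)$ (the weighted-solvable part already lies in the range of $\mathbf J\mathbf L_1-\im(\mathbf m\cdot\boldsymbol\lambda)$ which is $\Omega$-orthogonal to the eigenspace, so only the $\mathbf G_{\mathbf m}$/$\mathbf R$ terms obstruct). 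The main obstacle, and the step deserving the most care, is the second one: making precise the resolvent bounds for $\mathbf J\mathbf L_1-\im\mu$ on the spaces $\boldsymbol\Sigma^l$ uniformly in the relevant (finitely many) values $\mu=\mathbf m\cdot\boldsymbol\lambda$, including the borderline behavior near $|\mu|=m$ — this is controlled precisely by the choice $a_2=\tfrac12\sqrt{m^2-\lambda_N^2}$ together with \eqref{eq:generic12}, which keeps every such $\mu^2$ strictly away from $m^2$ — and checking that after projecting off the resonant direction the inverse still lands in $\boldsymbol\Sigma^\infty$ with exponential decay inherited from that of $V$ and of the lower-order $\boldsymbol\phi_{\mathbf n}$.
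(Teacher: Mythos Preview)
Your proposal is correct and follows essentially the same route as the paper's proof: recursive construction of the $\boldsymbol\phi_{\mathbf m}$ by matching coefficients of $\mathbf z^{\mathbf m}$, solving $(\mathbf J\mathbf L_1-\im\,\mathbf m\cdot\boldsymbol\lambda)\boldsymbol\phi_{\mathbf m}=\cdots$ by resolvent inversion when $\mathbf m\cdot\boldsymbol\lambda\notin\{\pm\lambda_j\}$, using the free coefficient $\lambda_{\mathbf n j}$ to cancel the $\boldsymbol\Phi_j$-component when $\mathbf m\in\boldsymbol\Lambda_j$, defining $\mathbf G_{\mathbf m}$ for $\mathbf m\in\mathbf R_{\min}$, and finally fixing $\widetilde{\mathbf z}_2$ by the Implicit Function Theorem to enforce \eqref{eq:R1FRemainder--}. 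One small imprecision: the obstruction to \eqref{eq:R1FRemainder--} before correction is actually $O\bigl(\sum_{\mathbf m\in\mathbf R_{\min}}|\mathbf z^{\mathbf m}|\bigr)$, not $O\bigl(\|\mathbf z\|\sum|\mathbf z^{\mathbf m}|\bigr)$, since the leading $\mathbf R_{\min}$-terms in $\boldsymbol{\mathcal R}[\mathbf z]$ already carry exactly a factor $\mathbf z^{\mathbf m}$ --- this is what yields \eqref{eq:boundz2} rather than a weaker bound.
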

 \proof    We begin observing    that $\mathbf{J}\mathbf{L}_1$  leaves the following decomposition invariant,  \begin{align}& \label{eq:Linz2eig13}   L^2 (\R , \C ^2 ) =   L^2_{discr} \oplus  L^2_{disp}  \text{  where }L^2_{discr}:= \oplus _{\lambda \in \sigma _p (\mathbf{J}\mathbf{L} _{1}) }  \ker \( \mathbf{L} _{1} -  \lambda \) ,
   \end{align}
   where $L^2_{disp} $   is   the $   \< \mathbf{J} \cdot , \cdot \>$--orthogonal  of $L^2_{discr}$.

\noindent  We insert \eqref{eq:refprof1} in \eqref{eq:rp}, using \eqref{eq:zexp}--\eqref{eq:z1}. We expand \begin{align*} &
 f (  {{\phi  }}_1 [ \mathbf{z} ] )=    \sum_{\ell =2}^M\frac{f^{( \ell )}(0  )}{\ell !}  {{\phi  }}_1 ^{\ell}[  \mathbf{z}] +  O(\|  \mathbf{ z} \| ^{M+1}),
\end{align*} Then,   for $  \mathbf{i}  ={ ^t(1,0)},$
\begin{align*} &   \sum_{\ell =2}^M\frac{f^{( \ell )}(0  )}{\ell !}  {{\phi  }}_1 ^{\ell}[  \mathbf{z}] \ \mathbf{i}
=  \sum_{\mathbf{m} \in \mathbf{NR}  }\mathbf{z} ^{\mathbf{m} }  \mathbf{h}_\mathbf{m}  +
\sum_{\substack{\mathbf{m} \in \mathbf{R} \cup \mathbf{I} \\  |\mathbf{m}|\le M} }\mathbf{z} ^{\mathbf{m} }
  \mathbf{h}_\mathbf{m}   +  O(\|  \mathbf{ z} \| ^{M+1})
\end{align*}
 where,   for   $ {\boldsymbol{\phi  }}  _{\mathbf{m}}   ={ ^t( {{\phi  }}_{1\mathbf{m}}  ,  {{\phi  }}_{2\mathbf{m}}  )}$,
\begin{align}\label{eq:gm}&
  \mathbf{h}_\mathbf{m}  =
  \sum_{\ell =2}^M\frac{f^{( \ell )}(0  )}{\ell !}
   \sum_{\substack{\mathbf{m}^1,\cdots,\mathbf{m}^\ell \in \mathbf{NR}\\ \mathbf{m}^1
   +\cdots+\mathbf{m}^{\ell}=\mathbf{m}}} {\phi}_{1 \mathbf{m}^1} \cdots {\phi}_{1\mathbf{m}^\ell} \   {\mathbf{i}} .
\end{align}
Using   \begin{align}\label{eq:difzm}
\( D_{\mathbf{z}}\mathbf{z}^{\mathbf{m}}\) \widetilde{\mathbf{z}}_0 =\im (\mathbf{m}\cdot \boldsymbol{\lambda}) \  \mathbf{z}^{\mathbf{m}}, \text{ where $\boldsymbol{\lambda}\mathbf{z} := ( \lambda _1z_1,..., \lambda _Nz_N)$,}
\end{align}
and recalling \eqref{eq:zexp},
we obtain
\begin{align*} &    D _{\mathbf{z}} \boldsymbol{\phi  } [ \mathbf{z}]     \widetilde{\mathbf{z}}[ \mathbf{z}] =    \im
\sum_{\mathbf{m} \in \mathbf{NR}  } (\mathbf{m}\cdot  \boldsymbol{\lambda  } )   \mathbf{z} ^{\mathbf{m} } \boldsymbol{\phi  }_{\mathbf{m} } + \im
\sum_{\substack{  \mathbf{m}  \in \mathbf{NR }, \ \mathbf{n} \in \boldsymbol{\Lambda  }_0    }}
 ( \mathbf{m}\cdot \boldsymbol{\lambda  } _{\mathbf{n}} ) \mathbf{z}^{\mathbf{n}}    \mathbf{z}^{\mathbf{m}}\boldsymbol{\phi  }_{\mathbf{m} }
   + D _{\mathbf{z}} \boldsymbol{\phi  } [ \mathbf{z}]   \widetilde{\mathbf{z}} _{2} .
\end{align*}
Let us set  \begin{align*}
 {\boldsymbol{ \mathcal{R}}}  [\mathbf{z}]&:=  \mathbf{J}\(\mathbf{L}_1\boldsymbol{\phi}[\mathbf{z}]+\mathbf{f}[\boldsymbol{\phi}[\mathbf{z}]]  \)
 -D_{\mathbf{z}}\boldsymbol{\phi}[\mathbf{z}]( \widetilde{\mathbf{z}}- \widetilde{\mathbf{z}}_2 ) \\& =
  \mathbf{J} \begin{pmatrix}
L_1 \phi_1[\mathbf{z}]+f(\phi_1[\mathbf{z}])\\ \phi_2[\mathbf{z}]
\end{pmatrix}
-D_{\mathbf{z}}\boldsymbol{\phi}[\mathbf{z}]( \widetilde{\mathbf{z}}- \widetilde{\mathbf{z}}_2 ) .
\end{align*}
We expand now to get
\begin{align}\label{eq:RFtildeexpan} &   {\boldsymbol{ \mathcal{R}}}[\mathbf{z}] =  \sum_{\mathbf{m} \in \mathbf{NR}  }
\mathbf{z} ^{\mathbf{m}}  { \mathcal{R}}_{\mathbf{m}} +   \sum_{\substack{\mathbf{m} \in \mathbf{R} \cup \mathbf{I} \\
 |\mathbf{m}|\le M} }\mathbf{z} ^{\mathbf{m} }  \mathcal{R}_{\mathbf{m}}   +  O(\|  \mathbf{ z} \| ^{M+1}),
\end{align}
where
\begin{align*} &   { \mathcal{R}}_{\mathbf{m}} =
 \(  \mathbf{J}\mathbf{L} _{1}   - {\im}  \boldsymbol{\lambda  } \cdot \mathbf{m}  \)    \boldsymbol{\phi  }_{\mathbf{m} } + \mathcal{E}_\mathbf{m}
 \text{  where }  \\&   \mathcal{E}_\mathbf{m} = \mathbf{J}\mathbf{h}_\mathbf{m}   -
  \sum_{\substack{\mathbf{m}' +\mathbf{n}'=\mathbf{m}\\ \mathbf{m}' \in \mathbf{NR }, \ \mathbf{n}'\in \boldsymbol{\Lambda  }_0   }}
      \im  (\boldsymbol{\lambda  }_{\mathbf{n}'} \cdot \mathbf{m}' )  \boldsymbol{\phi  }_{\mathbf{m}'   }            .
\end{align*}
We seek $ { \mathcal{R}}_{\mathbf{m}} \equiv 0$   for $\mathbf{m} \in \mathbf{NR} $. For $\|\mathbf{m}\|=1$ the equation reduces to
$\(  \mathbf{J}\mathbf{L} _{1}    -\im \boldsymbol{\lambda  } \cdot \mathbf{m}  \)    \boldsymbol{\phi  }_{\mathbf{m} }  =0$, so that we can set $\boldsymbol{\phi  }_{\mathbf{e}^j } = \boldsymbol{\Phi  }_{ j }   $   and  $\boldsymbol{\phi  }_{\overline{\mathbf{e} }^j } = \overline{\boldsymbol{\Phi  }}_{ j }   $.  Let us consider now $\|\mathbf{m}\|\ge 2 $   with $\mathbf{m}\not \in  \cup _{j=1}^{N}\(    \boldsymbol{\Lambda  }_j\cup   \overline{\boldsymbol{\Lambda  }}_j \) $. In this case, let us assume by induction that  $\boldsymbol{\phi  }_{\mathbf{m} '}$ and $ \boldsymbol{\lambda  }_{\mathbf{m} '}$
have been defined for $\|\mathbf{m}' \| < \| \mathbf{m} \| $ and that they satisfy \eqref{eq:symmrp}--\eqref{eq:lambdan}.  Then, from \eqref{eq:gm} we obtain $h_{\overline{\mathbf{m}}}=\overline{h}_\mathbf{m}$ and $\mathcal{E}_{\overline{\mathbf{m}}}=\overline{\mathcal{E}}_\mathbf{m}$.
 We can solve  $  \mathcal{R}_{\mathbf{m} }= 0$  writing  $\boldsymbol{\phi  }_{\mathbf{m}   }=\( \mathbf{J} \mathbf{L} _{1}    - \im \boldsymbol{\lambda  } \cdot \mathbf{m}  \) ^{-1}\mathcal{E}_\mathbf{m}$.  By  $\boldsymbol{\lambda  } \cdot \overline{\mathbf{m}}=-\boldsymbol{\lambda  } \cdot \mathbf{m}$, we conclude
$\boldsymbol{\phi  }_{\overline{\mathbf{m}}}=\overline{\boldsymbol{\phi  }}_\mathbf{m}$.

We now consider $\mathbf{m}\in  \boldsymbol{\Lambda  }_j$. We assume by induction that $\boldsymbol{\phi  }_{\mathbf{m} '} $ have been defined for $\|\mathbf{m}' \| < \| \mathbf{m} \| $ and  so too $ \boldsymbol{\lambda  }_{\mathbf{n} '}$
  for $\|\mathbf{n}' \| <\| \mathbf{m} \| -1$. Then, for $ \mathbf{m}=\mathbf{n}+\mathbf{e}^j$ where $\mathbf{n}\in  \boldsymbol{\Lambda  }_0$,
   $ { \mathcal{R}}_{\mathbf{m}}= 0$  becomes
\begin{align} &    \nonumber     \( \mathbf{J} \mathbf{L} _{1}     - \im   {\lambda  } _j  \)    \boldsymbol{\phi  }_{\mathbf{m} }  =\mathcal{E}_\mathbf{m}= \im  \boldsymbol{\lambda  }_{\mathbf{n} } \cdot \mathbf{e}^{j} \boldsymbol{\Phi  }_{j }  -  \mathcal{K}_\mathbf{m}  \text{  with } \\&  \mathcal{K}_\mathbf{m}:= \mathbf{J} h_\mathbf{m}- \sum_{\substack{\mathbf{m}' +\mathbf{n}'=\mathbf{m}\\ \mathbf{m}' \in \mathbf{NR  }, |\mathbf{m}'|\ge 2, \ \mathbf{n}'\in \boldsymbol{\Lambda  }_0   }}     {\im}  \boldsymbol{\lambda  }_{\mathbf{n}'} \cdot \mathbf{m}'   \boldsymbol{\phi  }_{\mathbf{m}'   }   .\label{eq:lambdaj}
\end{align}
This equation can be solved if we impose $ \( \mathbf{J}  {\mathcal{E}}_\mathbf{m} ,\overline{\boldsymbol{\Phi  }}_{j } \) =0 $, that is, for  $ {\lambda   }_{\mathbf{n} j } :=\boldsymbol{\lambda  }_{\mathbf{n} } \cdot \mathbf{e}^{j}$, if
\begin{align*} & - {\im}  {\lambda   }_{\mathbf{n} j }   \( \mathbf{J}  \boldsymbol{\Phi  }_{j } ,\overline{\boldsymbol{\Phi  }}_{j }   \) =-    2   {\lambda   }_{\mathbf{n} j }   \lambda _j     =      \( \mathbf{J}  {\mathcal{K}}_\mathbf{m} ,\overline{\boldsymbol{\Phi  }}_{j }\)   ,
\end{align*}
 which is true for ${\lambda   }_{\mathbf{n} j }        =    -2^{-1} \lambda _j ^{-1} \( \mathbf{J}  {\mathcal{K}}_\mathbf{m} ,\overline{\boldsymbol{\Phi  }}_{j }\)  $.   Then we can solve for $ \boldsymbol{\phi  }_{\mathbf{m} } = -\(   \mathbf{J}\mathbf{L} _{1}   - \im  {\lambda  } _j  \) ^{-1} \mathcal{K}_\mathbf{m} $     in the complement, in \eqref{eq:Linz2eig13}, of $\ker (  \mathbf{J}  \mathbf{L} _{1} -\im  \lambda _j) $.

\noindent We want to show that ${\lambda   }_{\mathbf{n} j } \in \R $. For the corresponding $\overline{\mathbf{m}} \in \overline{\boldsymbol{\Lambda  }}_{j }$,
we have
\begin{align} &        \(  \mathbf{J}\mathbf{L} _{1}  + \im  {\lambda  } _j  \)    \boldsymbol{\phi  }_{\overline{\mathbf{m}} }  = \im  \boldsymbol{\lambda  }_{\mathbf{n} } \cdot \overline{\mathbf{e}}^{j} \overline{\boldsymbol{\Phi  }}_{j }  -  \mathcal{K}  _{ \overline{\mathbf{m}} }     \text{  with } \nonumber \\&  \mathcal{K}_  { \overline{\mathbf{ {m}}} }:=  \mathbf{J}h_ { \overline{\mathbf{ {m}}} }- \sum_{\substack{\overline{\mathbf{m}}' +\mathbf{n}'=\overline{\mathbf{m}}\\ \mathbf{m}' \in \mathbf{NR_2 }, \ \mathbf{n}'\in \boldsymbol{\Lambda  }_0   }}   \im  \boldsymbol{\lambda  }_{\mathbf{n}'} \cdot \overline{\mathbf{m}}'   \boldsymbol{\phi  }_{\overline{\mathbf{m}}'   }     . \label{eq:barlambdaj}
\end{align}
Notice that by induction  $\mathcal{K}  _{ \overline{\mathbf{m}} }=\overline{\mathcal{K} } _{ \mathbf{m} }$. Since  $\boldsymbol{\lambda  }_{\mathbf{n} } \cdot \overline{\mathbf{e}}^{j} = - {\lambda  }_{\mathbf{n}j }$,  taking the complex conjugate of  \eqref{eq:lambdaj} we obtain
\begin{equation}\label{eq:bothlambdaj}
\begin{aligned} &  \( \mathbf{J} \mathbf{L} _{1}  +  \im  {\lambda  } _j  \)    \boldsymbol{\phi  }_{\overline{\mathbf{m}} }  = \im   {\lambda  }_{\mathbf{n} j}   \overline{\boldsymbol{\Phi  }}_{j }  -  \overline{\mathcal{K}}  _{ \mathbf{m}  } \text{  and  }\\& \( \mathbf{J} \mathbf{L} _{1}   +  \im   {\lambda  } _j  \)    \overline{\boldsymbol{\phi  }}_{ \mathbf{m}  }  =  \im   \overline{{\lambda  }}_{\mathbf{n} j}  \overline{ \boldsymbol{\Phi  }} _{j }  -  \overline{\mathcal{K}}  _{ \mathbf{m}  } .
\end{aligned}
\end{equation}
Applying $  \( \mathbf{J} \cdot ,    \boldsymbol{\Phi  } _{j }\) $  on both the last two equations,  we obtain
\begin{align*} &      \text{$\im  {\lambda  }_{\mathbf{n} j}  \( \mathbf{J}   \overline{\boldsymbol{\Phi   }}_{j } ,   \boldsymbol{\Phi   }_{j } \) =  \( \mathbf{J}  \overline{\mathcal{K}}  _{ \mathbf{m}  } ,    \boldsymbol{\Phi   }_{j }\)  $   and   $\im   \overline{{\lambda  }}_{\mathbf{n} j}    \( \mathbf{J}  \overline{ \boldsymbol{\Phi  }}_{j } ,    \boldsymbol{\Phi   }_{j }\)  =  \( \mathbf{J}    \overline{\mathcal{K}}  _{ \mathbf{m}  } ,    \boldsymbol{\Phi   }_{j }\) $.}
\end{align*}
Hence ${\lambda  }_{\mathbf{n} j}=  \overline{{\lambda  }}_{\mathbf{n} j} $ and we have proved that ${\lambda  }_{\mathbf{n} j} \in \R$.

\noindent Since the equations in \eqref{eq:bothlambdaj} are the same, we conclude $ \boldsymbol{\phi  }_{\overline{\mathbf{m}} } =  \overline{\boldsymbol{\phi  }}_{ \mathbf{m}  } $.

\noindent  We consider now
\begin{equation}\label{eq:defGm}
\begin{aligned} &    \mathbf{J}  \widetilde{\mathbf{R}}[\mathbf{z}] = \boldsymbol{ \mathcal{R}} [\mathbf{z}] -D_{\mathbf{z}}\boldsymbol{\phi}[\mathbf{z}]\widetilde{\mathbf{z}}_{2},
\end{aligned}
\end{equation}
 where we seek $\widetilde{\mathbf{z}}_{2}$  so that \eqref{eq:R1FRemainder--} is true.
This will follow from (here $\mathbf{J}^{-1}=-\mathbf{J}$)
\begin{align*} \< \mathbf{J}  \boldsymbol{ \mathcal{R}} [\mathbf{z}]   ,D_{\mathbf{z}}\boldsymbol{\phi}[\mathbf{z}] \mathbf{w}\> -\<\mathbf{J} D_{\mathbf{z}}\boldsymbol{\phi}[\mathbf{z}]\widetilde{\mathbf{z}}_{2},D_{\mathbf{z}}\boldsymbol{\phi}[\mathbf{z}]\mathbf{w}\>=0  \text{ for the standard basis $\mathbf{w} = e_1, \im e_1,...,e_N, \im e_N$}.\end{align*}
Since the restriction of  $\< \mathbf{J}\cdot , \cdot \>$ in $L^2_{discr}$ is a non--degenerate symplectic form and from  $\boldsymbol{\phi  }_{\mathbf{e}^j } = \boldsymbol{\Phi  }_{ j }   $   and  $\boldsymbol{\phi  }_{\overline{\mathbf{e} }^j } = \overline{\boldsymbol{\Phi  }}_{ j }
$,
the Implicit Function  Theorem guarantees the existence of $\widetilde{\mathbf{z}} _{  2} \in C ^{\infty}( \mathcal{B} _{\C ^N} (0, \delta _1), \C^N)$  with $\widetilde{\mathbf{z}} _{  2}(\mathbf{0})=\mathbf{0}$ for a sufficiently small $\delta _1>0$. Furthermore, from the last formula and from the fact that in the expansion \eqref{eq:RFtildeexpan} we have  $ \mathcal{{R}} _{\mathbf{m}}=0$  for all $\mathbf{m} \in \mathbf{NR}$, we obtain  the bound \eqref{eq:R1FRemainder}.

\noindent Solving in \eqref{eq:defGm}  for $\widetilde{\mathbf{R}}[\mathbf{z}] = \mathbf{J}^{-1}\boldsymbol{ \mathcal{R}} [\mathbf{z}] -\mathbf{J}^{-1}D_{\mathbf{z}}\boldsymbol{\phi}[\mathbf{z}]\widetilde{\mathbf{z}}_{2}$,  exploiting the fact that we have ${ \mathcal{R}}_{\mathbf{m}}$  for all $\mathbf{m} \in \mathbf{NR}$ and by   \eqref{eq:boundz2}, by Taylor expansion  in the variable $\mathbf{z}$,  we obtain   expansion \eqref{eq:rtilde}, with the estimate \eqref{eq:R1FRemainder}.

\qed

 We assume the following.
\begin{assumption}[Fermi Golden Rule]\label{ass:FGR}
For any $\mathbf{m}\in \mathbf{R}_{\mathrm{min}}$, there exists a bounded solution $\mathbf{g}_{\mathbf{m}}$ of $\mathbf{J}\mathbf{L}_1 \mathbf{g}_{\mathbf{m}}=\im (\mathbf{m}\cdot\boldsymbol{\lambda}) \mathbf{g}_{\mathbf{m}}$ s.t.\
\begin{align}\label{ass:FGR1}
\<\mathbf{G}_{\mathbf{m}},\mathbf{g}_{\mathbf{m}}\>=\gamma_{\mathbf{m}}> 0.
\end{align}
\end{assumption}

\begin{remark} Notice that all it matters in \eqref{ass:FGR1} is to have $\gamma_{\mathbf{m}}\neq 0$, since    by replacing $\mathbf{g}_{\mathbf{m}}$  with $-\mathbf{g}_{\mathbf{m}}$, we can then obtain $\gamma_{\mathbf{m}}>0$.

\end{remark}

Recall now from  Sect.\ 3 Deift-Trubowitz \cite{DT79CPAM},   the following result on Darboux transformations, here stated with stricter hypotheses than in  \cite{DT79CPAM}.

\begin{proposition}\label{prop:Darboux}
Let $W\in \mathcal{S}(\R,\R)$ s.t $\sigma_{\mathrm{d}}(-\partial_x^2+W)\neq \emptyset$ and let $ \omega =\inf \sigma_{\mathrm{d}}(-\partial_x^2+W)$.
Let $\psi$ be a ground state  of $-\partial_x^2+W$, that is a generator of $\ker \(-\partial_x^2+W-\omega \)$,
and set $A_W=\frac{1}{\psi}\partial_x\(\psi \cdot\)$   (recall that $\psi (x)\neq 0$ for all $x\in \R$).
Then, there exists $W_1\in \mathcal{S}(\R,\R)$ s.t.
\begin{align*}
A_WA_W^*=-\partial_x^2+W-\omega,\ A_W^*A_W=-\partial_x^2 +W_1 -\omega
\end{align*}
 and $\sigma_{\mathrm{d}}(-\partial_x^2+W_1)=\sigma_{\mathrm{d}}(-\partial_x^2+W)\setminus\{\omega\}$.
\end{proposition}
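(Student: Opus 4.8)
The plan is to reduce the whole statement to the elementary algebra of the first–order operator $A_W=\partial_x+w$, where $w:=\psi'/\psi=(\log\psi)'$. This is a genuine smooth function on all of $\R$ because a ground state of a one–dimensional Schr\"odinger operator has no zeros (Sturm oscillation theory; cf.\ \cite{DT79CPAM}). Since $w$ is real, the formal adjoint in $L^2(\R)$ is $A_W^{*}=-\partial_x+w$, and a one–line computation gives $A_WA_W^{*}=-\partial_x^2+w^2+w'$ and $A_W^{*}A_W=-\partial_x^2+w^2-w'$. The eigenvalue equation $-\psi''+W\psi=\omega\psi$ is precisely the Riccati relation $w'+w^2=W-\omega$, so $A_WA_W^{*}=-\partial_x^2+W-\omega$ as claimed, while $A_W^{*}A_W=-\partial_x^2+(W-2w')-\omega$; this forces the definition $W_1:=W-2w'$, and it remains to check that $W_1\in\mathcal{S}(\R,\R)$ and that the discrete spectrum changes as stated.

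For the regularity of $W_1$, realness is obvious, so the point is the rapid decay of $w'=(\log\psi)''$, even though $w$ itself does not decay. Because $W\in\mathcal{S}$ one has $\sigma_{\mathrm{ess}}(-\partial_x^2+W)=[0,\infty)$, hence $\omega<0$; write $\omega=-\mu^2$ with $\mu>0$. From the standard asymptotic analysis of $\psi''=(W+\mu^2)\psi$, the $L^2$–solution satisfies $\psi(x)\sim a_\pm e^{\mp\mu x}$ as $x\to\pm\infty$ with $a_\pm\ne 0$, so $w(x)\to\mp\mu$; set $v_\pm:=w\pm\mu$, which is bounded on $\R$ with $v_\pm(x)\to0$ as $x\to\pm\infty$. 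The Riccati relation becomes $v_+'-2\mu v_+=W-v_+^2$, and, $v_+$ being bounded, it must equal the particular solution $v_+(x)=\int_x^{\infty}e^{-2\mu(s-x)}\bigl(v_+(s)^2-W(s)\bigr)\,ds$. Feeding in that $|W|$ is rapidly decreasing and that $m(R):=\sup_{x\ge R}|v_+(x)|$ is finite and, for $R$ large, as small as we please, a short Gronwall–type bootstrap gives $m(R)\le C_n\langle R\rangle^{-n}$ for every $n$; differentiating the ODE then propagates this rapid decay to every derivative of $v_+$, and the analogous argument at $-\infty$ handles $v_-$. Since $w'=(W-\omega)-w^2=W-(w^2-\mu^2)=W-v_\pm(v_\pm\mp2\mu)$, it follows that $w'\in\mathcal{S}(\R)$, hence $W_1=W-2w'\in\mathcal{S}(\R,\R)$.

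For the spectral statement, set $H:=-\partial_x^2+W=A_WA_W^{*}+\omega$ and $H_1:=-\partial_x^2+W_1=A_W^{*}A_W+\omega$; both are self–adjoint on $H^2(\R)$ with essential spectrum $[0,\infty)$, so $\sigma_{\mathrm{d}}$ consists exactly of the $L^2$–eigenvalues below $0$. First, $\omega\notin\sigma_{\mathrm{d}}(H_1)$: if $A_W^{*}A_Wv=0$ then $A_Wv=0$, i.e.\ $\psi v$ is constant, i.e.\ $v=c/\psi$, and $1/\psi$ grows exponentially, so $v\in L^2$ forces $v=0$. Next, the identity $A_W^{*}(A_WA_W^{*})=(A_W^{*}A_W)A_W^{*}$ shows that if $Hu=Eu$ with $E\ne\omega$, i.e.\ $A_WA_W^{*}u=(E-\omega)u$ with $E-\omega>0$, then $A_W^{*}u=-u'+wu$ lies in $L^2$ (as $u\in H^2$ and $w$ is bounded), is nonzero (otherwise $(E-\omega)u=0$), and — by elliptic regularity — satisfies $H_1(A_W^{*}u)=E(A_W^{*}u)$; hence $E\in\sigma_{\mathrm{d}}(H_1)$. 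Symmetrically, if $H_1v=Ev$ then $E\ne\omega$ by the previous step, so $A_Wv\in L^2\setminus\{0\}$ and $H(A_Wv)=E(A_Wv)$, giving $E\in\sigma_{\mathrm{d}}(H)$. Therefore $\sigma_{\mathrm{d}}(H_1)=\sigma_{\mathrm{d}}(H)\setminus\{\omega\}$.

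The step I expect to be the real obstacle is the Schwartz regularity of $W_1$, i.e.\ showing that $w'=(\log\psi)''$ inherits the full rapid decay of $W$ despite $w$ only tending to the nonzero constants $\mp\mu$; the Riccati equation together with the integral representation and the decay bootstrap are doing all the work there. The algebraic factorization, the definition of $W_1$, and the $A_WA_W^{*}$–$A_W^{*}A_W$ correspondence for the spectrum are all short and standard once that point is settled.
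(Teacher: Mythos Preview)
The paper does not actually prove this proposition: it is stated as a recollection from Section~3 of Deift--Trubowitz \cite{DT79CPAM} and followed immediately by \qed. Your argument is correct and is precisely the standard one underlying that reference --- the Riccati factorization $A_WA_W^*=-\partial_x^2+w'+w^2$, $A_W^*A_W=-\partial_x^2-w'+w^2$ with $w=(\log\psi)'$ and $w'+w^2=W-\omega$, the $AA^*\leftrightarrow A^*A$ intertwining for the point spectrum, and the asymptotic analysis of $v_\pm=w\pm\mu$ via the integral form of the Riccati ODE to obtain $w'\in\mathcal{S}$ --- so there is nothing to compare beyond noting that you have supplied the details the paper omits.
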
 \qed

Using Proposition \ref{prop:Darboux}, we inductively define $V_j \in \mathcal{S}(\R,\R)$ ($j=1,\cdots,N+1$) by
\begin{enumerate}
\item $V_1:=V$, $L_1:=-\partial_x^2+V_1+m^2$, $\psi_1=\phi_1$ and $A_1=A_{V_1}$.
\item Given $V_k$, we define
\begin{align}\label{def:Ak}
A_k:=A_{V_k}\text{ and  }L_{k+1}:=-\partial_x^2+V_{k+1} +m^2:=A_k^*A_k+ \lambda ^2_k,
\end{align}
and, by Proposition \ref{prop:Darboux}, we have $L_k =-\partial_x^2+V_{k } +m^2 =A_k A_k^* + \lambda ^2_k$
\end{enumerate}
From Proposition \ref{prop:Darboux}, we have
\begin{align*}
\sigma_{\mathrm{d}}(L_{k })=\{  \lambda ^2_j\ |\ j=k,\cdots,N\},\ k=1,\cdots,N,\ \text{and}\ \sigma_{\mathrm{d}}(L_{N+1})=\emptyset.
\end{align*}
If $\psi_k$ is the ground state of $L_k$  and  $A_k=\frac{1}{\psi_k}\partial_x\(\psi_k\cdot\)$ then,
from
\begin{align}\label{eq:DarConj1}
A_j^* L_j=A_j^*(A_jA_j^*+\lambda ^2_j)=(A_j^*A_j+\lambda ^2_j)A_j^*=L_{j+1}A_j^* ,
\end{align}
we have the conjugation relation
\begin{align}\label{eq:DarConj2}
\mathcal{A}^* L_1=L_{N+1}\mathcal{A}^*,
\end{align}
where
\begin{align}\label{def:calA}
\mathcal{A}=A_1\cdots A_N \text{  and }\mathcal{A}^*=A_N^*\cdots A_1^*.
\end{align}
 We write $L_D:=L_{N+1}$     and  $V_D:=V_{N+1}$. We assume that $V_D$ is repulsive with respect to the origin, specifically the following.
\begin{assumption}\label{ass:repulsive}
We assume $xV_{\mathrm{D}}'\leq 0$ and $xV_{\mathrm{D}}'(x)\not\equiv 0$.
\end{assumption}

The main  point for us is that $L_1$ has eigenvalues, we have the orthogonal decomposition
\begin{align}\label{def:orthecomp}
 L^2(\R , \C ) = \(   \oplus  _{j=1} ^{N} \ker  \(  L_1-\lambda _j ^2\) \) \oplus L ^{2}_{c}(L_1),
\end{align}
where $ L ^{2}_{c}(L_1)$  is the continuous spectrum component associated to $L_1$. We denote by $P_c$ the orthogonal projection onto $L ^{2}_{c}(L_1)$.

\section{Main estimates and proof of Theorem \ref{thm:main}.} \label{sec:modul}

Using the refined profile given in Proposition \ref{prop:rp}, we first decompose the solution by appropriate orthogonality condition.

\begin{lemma} [Modulation]\label{lem:modulat}
There exists $\delta_1>0$ s.t.\ there exists $\mathbf{z}\in C^\infty(B_{\boldsymbol{\mathcal{H}}^{1}}(0,\delta_1),\C^{N})$ s.t. $\mathbf{z}(\mathbf{0})=\mathbf{0}$ and
\begin{align}
\boldsymbol{\eta}[\mathbf{u}]:=\mathbf{u}-\boldsymbol{\phi}[\mathbf{z}(\mathbf{u})]\in \boldsymbol{\mathcal{H}}_{\mathrm{c}}[\mathbf{z}(\boldsymbol{u})]. \label{eq:lem:mod-1}
\end{align}
Furthermore, we have
\begin{align}
\|\mathbf{u}\|_{\boldsymbol{\mathcal{H}}^1} \sim \|\boldsymbol{\eta}[\mathbf{u}]\|_{\boldsymbol{\mathcal{H}}^1}+\|\mathbf{z}(\mathbf{u})\| .   \label{eq:lem:mod-2}
\end{align}
\end{lemma}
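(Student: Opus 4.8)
The plan is to apply the Implicit Function Theorem to the map that sends a pair $(\mathbf{u},\mathbf{z})$ to the vector of symplectic pairings that must vanish in order for $\mathbf{u}-\boldsymbol{\phi}[\mathbf{z}]$ to lie in $\boldsymbol{\mathcal{H}}_{\mathrm{c}}[\mathbf{z}]$. Concretely, fix the standard basis $\mathbf{w}=e_1,\im e_1,\dots,e_N,\im e_N$ of $\C^N$ (viewed as $\R^{2N}$) and define, for $\mathbf{u}\in\boldsymbol{\mathcal{H}}^1$ and $\mathbf{z}\in\C^N$,
\begin{align*}
F(\mathbf{u},\mathbf{z}):=\(\Omega\(\mathbf{u}-\boldsymbol{\phi}[\mathbf{z}],D_{\mathbf{z}}\boldsymbol{\phi}[\mathbf{z}]\mathbf{w}\)\)_{\mathbf{w}}\in\R^{2N}.
\end{align*}
Then $F(\mathbf{0},\mathbf{0})=0$ since $\boldsymbol{\phi}[\mathbf{0}]=\mathbf{0}$, and \eqref{eq:lem:mod-1} is exactly the equation $F(\mathbf{u},\mathbf{z}(\mathbf{u}))=0$. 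First I would check that $F$ is $C^\infty$ jointly: this follows because $\mathbf{z}\mapsto\boldsymbol{\phi}[\mathbf{z}]$ and $\mathbf{z}\mapsto D_{\mathbf{z}}\boldsymbol{\phi}[\mathbf{z}]$ are polynomial (finite sums over $\mathbf{NR}$, which is finite by Lemma \ref{lem:combinat1}) with coefficients in $\boldsymbol{\Sigma}^\infty\subset\boldsymbol{\mathcal{H}}^1$, and $\Omega$ is bounded bilinear on $\boldsymbol{\mathcal{H}}^1$; in $\mathbf{u}$ the map $F$ is affine.

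Next comes the core computation: the partial derivative $D_{\mathbf{z}}F(\mathbf{0},\mathbf{0})$. Differentiating, the term coming from $-\boldsymbol{\phi}[\mathbf{z}]$ produces $-\Omega(D_{\mathbf{z}}\boldsymbol{\phi}[\mathbf{0}]\mathbf{w}',D_{\mathbf{z}}\boldsymbol{\phi}[\mathbf{0}]\mathbf{w})$, while the term from differentiating $D_{\mathbf{z}}\boldsymbol{\phi}[\mathbf{z}]\mathbf{w}$ against $\mathbf{z}$ is killed because it is multiplied by $\mathbf{u}-\boldsymbol{\phi}[\mathbf{z}]$ evaluated at $\mathbf{u}=\mathbf{z}=\mathbf{0}$, hence zero. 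Since $\boldsymbol{\phi}_{\mathbf{e}^j}=\boldsymbol{\Phi}_j$ and $\boldsymbol{\phi}_{\overline{\mathbf{e}}^j}=\overline{\boldsymbol{\Phi}}_j$, one has $D_{\mathbf{z}}\boldsymbol{\phi}[\mathbf{0}]\mathbf{w}\in L^2_{discr}$ for every $\mathbf{w}$, and in fact $\mathbf{w}\mapsto D_{\mathbf{z}}\boldsymbol{\phi}[\mathbf{0}]\mathbf{w}$ is an $\R$-linear isomorphism of $\R^{2N}$ onto $L^2_{discr}$ (real form). Because $\Omega=\<\mathbf{J}^{-1}\cdot,\cdot\>$ restricts to a non-degenerate symplectic form on $L^2_{discr}$ — this is recorded in the proof of Proposition \ref{prop:rp} — the bilinear form $(\mathbf{w}',\mathbf{w})\mapsto\Omega(D_{\mathbf{z}}\boldsymbol{\phi}[\mathbf{0}]\mathbf{w}',D_{\mathbf{z}}\boldsymbol{\phi}[\mathbf{0}]\mathbf{w})$ on $\R^{2N}$ is non-degenerate, so $D_{\mathbf{z}}F(\mathbf{0},\mathbf{0})$ is an invertible $2N\times 2N$ matrix. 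The Implicit Function Theorem then yields $\delta_1>0$ and $\mathbf{z}\in C^\infty(B_{\boldsymbol{\mathcal{H}}^1}(0,\delta_1),\C^N)$ with $\mathbf{z}(\mathbf{0})=\mathbf{0}$ solving $F(\mathbf{u},\mathbf{z}(\mathbf{u}))=0$, which is \eqref{eq:lem:mod-1}.

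Finally, for the equivalence \eqref{eq:lem:mod-2}: the bound $\|\boldsymbol{\eta}[\mathbf{u}]\|_{\boldsymbol{\mathcal{H}}^1}+\|\mathbf{z}(\mathbf{u})\|\lesssim\|\mathbf{u}\|_{\boldsymbol{\mathcal{H}}^1}$ follows from $\mathbf{z}(\mathbf{0})=\mathbf{0}$, continuity of $\mathbf{z}$ and the mean value theorem (shrinking $\delta_1$ so $\|\mathbf{z}(\mathbf{u})\|\le C\|\mathbf{u}\|_{\boldsymbol{\mathcal{H}}^1}$), together with $\|\boldsymbol{\phi}[\mathbf{z}]\|_{\boldsymbol{\mathcal{H}}^1}\lesssim\|\mathbf{z}\|$ from the expansion $\boldsymbol{\phi}[\mathbf{z}]=\sum_{\mathbf{m}\in\mathbf{NR}}\mathbf{z}^{\mathbf{m}}\boldsymbol{\phi}_{\mathbf{m}}$ and $\boldsymbol{\eta}[\mathbf{u}]=\mathbf{u}-\boldsymbol{\phi}[\mathbf{z}(\mathbf{u})]$. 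For the reverse inequality $\|\mathbf{u}\|_{\boldsymbol{\mathcal{H}}^1}\lesssim\|\boldsymbol{\eta}[\mathbf{u}]\|_{\boldsymbol{\mathcal{H}}^1}+\|\mathbf{z}(\mathbf{u})\|$, just use $\mathbf{u}=\boldsymbol{\eta}[\mathbf{u}]+\boldsymbol{\phi}[\mathbf{z}(\mathbf{u})]$ and again $\|\boldsymbol{\phi}[\mathbf{z}]\|_{\boldsymbol{\mathcal{H}}^1}\lesssim\|\mathbf{z}\|$. I expect the only genuinely delicate point to be the verification that $D_{\mathbf{z}}F(\mathbf{0},\mathbf{0})$ is invertible, i.e.\ correctly identifying which terms survive in the derivative and invoking non-degeneracy of $\Omega|_{L^2_{discr}}$; the regularity of $F$ and the norm equivalence are routine.
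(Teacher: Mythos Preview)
Your argument is correct and is precisely the standard Implicit Function Theorem argument the paper has in mind; the paper's own proof consists of the single word ``Standard.'' Your identification of the key point (invertibility of $D_{\mathbf{z}}F(\mathbf{0},\mathbf{0})$ via non-degeneracy of $\Omega|_{L^2_{discr}}$) and the routine handling of regularity and the norm equivalence are exactly what is needed.
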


\begin{proof}
Standard.
\end{proof}

In the following, we fix a solution $\mathbf{u}$ of \eqref{NLKG} with $\mathbf{u}(0)=\mathbf{u}_0$ satisfying the assumption of Theorem \ref{thm:main} (with $\delta_0>0$ to be determined).
We write $\mathbf{z}(t)=\mathbf{z}(\mathbf{u}(t))$ and $\boldsymbol{\eta}(t)=\boldsymbol{\eta}[\mathbf{u}(t)]$.
By the conservation of energy and by \eqref{eq:lem:mod-2} we have
\begin{align}\label{eq:orbbound}
\|\mathbf{z}\|_{L^\infty   (\R ,\C^N)} + \|\boldsymbol{\eta}\|_{L^\infty (\R ,\boldsymbol{\mathcal{H}}^{1})}\lesssim \delta.
\end{align}

Substituting $\mathbf{u}=\boldsymbol{\phi}[\mathbf{z}]+\boldsymbol{\eta}$ into \eqref{NLKG}, we have
\begin{align}\label{eq:modeq}
\dot{\boldsymbol{\eta}}+D_{\mathbf{z}}\boldsymbol{\phi}[\mathbf{z}](\dot{\mathbf{z}}-\widetilde{\mathbf{z}})=\mathbf{J}
\(\mathbf{L}[\mathbf{z}]\boldsymbol{\eta}+\mathbf{F}[\mathbf{z},\boldsymbol{\eta}]+\sum_{\mathbf{m}\in \mathbf{R}_{\mathrm{min}}}\mathbf{z}^{\mathbf{m}}\mathbf{G}_{\mathbf{m}}+\mathbf{R}[\mathbf{z}]\),
\end{align}
where, for $d\mathbf{f}$ the Frech\'et derivative  of $\mathbf{f}$,
\begin{align}\label{eq:defLz}
\mathbf{L}[\mathbf{z}]&=\mathbf{L}_1+d\mathbf{f}[\boldsymbol{\phi}[\mathbf{z}]],\\ \label{eq:defF}
\mathbf{F}[\mathbf{z},\boldsymbol{\eta}]&=\mathbf{f}[\boldsymbol{\phi}[\mathbf{z}]+\boldsymbol{\eta}]-\mathbf{f}[\boldsymbol{\phi}[\mathbf{z}]]-d\mathbf{f}[\boldsymbol{\phi}[\mathbf{z}]]\boldsymbol{\eta}.
\end{align}
Notice that $\mathbf{F}[\mathbf{z},\boldsymbol{\eta}]={}^t(F_1[\mathbf{z},\eta_1]\ 0)$ where
\begin{align}  F_1[\mathbf{z},\eta_1]= f( {\phi}_1[\mathbf{z}]+ {\eta} _1)-f( {\phi}_1[\mathbf{z}] )-f'( {\phi}_1[\mathbf{z}] ){\eta} _1.   \label{eq:defF1}
 \end{align}

\noindent We will consider constants  $A, B,\varepsilon >0$ satisfying
 \begin{align}\label{eq:relABg}
\log(\delta ^{-1})\gg\log(\epsilon ^{-1})\gg  A\gg    B^2\gg B \gg  \exp \( \varepsilon ^{-1} \) \gg 1.
 \end{align}
 We will denote by    $o_{\varepsilon}(1)$  constants depending on $\varepsilon$ such that
 \begin{align}\label{eq:smallo}
 \text{ $o_{\varepsilon}(1) \xrightarrow {\varepsilon  \to 0^+   }0.$}
 \end{align}
 Let     \begin{align} \label{eq:kappa}
 \kappa \in \(0,\min(m-\lambda_N,a_1)/10   \).
 \end{align}
 We will consider the norms
\begin{align}\label{eq:normA}&
\| \boldsymbol{\eta} \|_{ \boldsymbol{ \Sigma }_A} :=\left \| \sech \(\frac{2}{A} x\) \eta_1'\right \|_{L^2} +A^{-1}\left \|    \sech \(\frac{2}{A} x\) \boldsymbol{\eta}\right\|_{L^2}  \text{ and}\\&  \| \boldsymbol{\eta} \|_{ \boldsymbol{L }^2_{-\kappa} } :=\left \| \sech \( \kappa  x\)  \boldsymbol{\eta}\right \|_{L^2}.\label{eq:normk}
\end{align}
We will prove the following  continuation argument.
 \begin{proposition}\label{prop:contreform}   Under the assumptions  \ref{ass:generic},   \ref{ass:FGR}  and \ref{ass:repulsive},
  for any small $\epsilon>0 $
there exists  a    $\delta _0 = \delta _0(\epsilon )   $ s.t.\  if    in  $I=[0,T]$ we have
\begin{align}&
\|\dot {\mathbf{z}} - \widetilde{\mathbf{z}}\|_{L^2(I)}+\sum_{\mathbf{m}\in \mathbf{R}_{\mathrm{min}}}\|\mathbf{z}^\mathbf{m}\|_{L^2(I)}+ \|   \boldsymbol{\eta}  \|_{L^2(I, \boldsymbol{ \Sigma }_A  \cap   \boldsymbol{L }^2_{-\kappa})}\le   \epsilon   \label{eq:main11}
\end{align}
then  for $\delta  \in (0, \delta _0)$  and $\delta =\|\mathbf{u}_0\|_{\boldsymbol{\mathcal{H}}^1}$
     inequality   \eqref{eq:main11} holds   for   $\epsilon$ replaced by $ o_{\varepsilon}(1)   \epsilon $    where $o_{\varepsilon}(1) \xrightarrow {\varepsilon  \to 0^+   }0 $.
\end{proposition}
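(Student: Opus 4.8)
The plan is to run, under the a priori hypothesis \eqref{eq:main11}, a bootstrap on the four quantities $M:=\|\dot{\mathbf{z}}-\widetilde{\mathbf{z}}\|_{L^2(I)}$, $Z:=\sum_{\mathbf{m}\in\mathbf{R}_{\mathrm{min}}}\|\mathbf{z}^{\mathbf{m}}\|_{L^2(I)}$, $E_A:=\|\boldsymbol{\eta}\|_{L^2(I,\boldsymbol{\Sigma}_A)}$ and $E_\kappa:=\|\boldsymbol{\eta}\|_{L^2(I,\boldsymbol{L}^2_{-\kappa})}$, whose sum dominates the left side of \eqref{eq:main11}; the tools are a modulation identity, two virial inequalities in the spirit of Kowalczyk--Martel, and a Fermi Golden Rule estimate, and the hierarchy \eqref{eq:relABg} is exactly what makes the resulting (quasi--linear) system of inequalities close with a gain $o_\varepsilon(1)$.
\textbf{Modulation.} Differentiating the orthogonality relation \eqref{eq:lem:mod-1} in $t$ and testing \eqref{eq:modeq} against $D_{\mathbf{z}}\boldsymbol{\phi}[\mathbf{z}]\mathbf{w}$, $\mathbf{w}\in\C^N$, the source $\widetilde{\mathbf{R}}[\mathbf{z}]=\sum\mathbf{z}^{\mathbf{m}}\mathbf{G}_{\mathbf{m}}+\mathbf{R}[\mathbf{z}]$ drops out by \eqref{eq:R1FRemainder--}; since $f'(0)=0$ we have $d\mathbf{f}[\boldsymbol{\phi}[\mathbf{z}]]=O(\|\mathbf{z}\|)$ and $\mathbf{F}[\mathbf{z},\boldsymbol{\eta}]$ quadratic in $\boldsymbol{\eta}$, and $\langle\mathbf{L}_1\boldsymbol{\eta},D_{\mathbf{z}}\boldsymbol{\phi}[\mathbf{z}]\mathbf{w}\rangle=O(\|\mathbf{z}\|\,\|\boldsymbol{\eta}\|_{\boldsymbol{L}^2_{-\kappa}})$ by \eqref{eq:RForth} and the exponential localization of the profile. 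Hence $|\dot{\mathbf{z}}-\widetilde{\mathbf{z}}|\lesssim(\|\mathbf{z}\|+\|\boldsymbol{\eta}\|_{\boldsymbol{\mathcal{H}}^1})\,\|\boldsymbol{\eta}\|_{\boldsymbol{L}^2_{-\kappa}}$, and by \eqref{eq:orbbound} we get $M\lesssim\delta\,E_\kappa\lesssim\delta\epsilon\le o_\varepsilon(1)\epsilon$ once $\delta_0(\epsilon)$ is small.

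\textbf{Two virial inequalities.} From \eqref{eq:modeq} and \eqref{eq:rtilde}, $\dot{\boldsymbol{\eta}}=\mathbf{J}\mathbf{L}[\mathbf{z}]\boldsymbol{\eta}+\mathbf{J}\mathbf{F}[\mathbf{z},\boldsymbol{\eta}]+\mathbf{J}\widetilde{\mathbf{R}}[\mathbf{z}]-D_{\mathbf{z}}\boldsymbol{\phi}[\mathbf{z}](\dot{\mathbf{z}}-\widetilde{\mathbf{z}})$. Following Kowalczyk--Martel I would introduce a first virial functional $\mathcal{I}_A$, built from a bounded primitive of $\sech^2(2x/A)$ in the first--order $(u_1,u_2)$ form, whose derivative is $\dot{\mathcal{I}}_A\lesssim-\|\boldsymbol{\eta}\|_{\boldsymbol{\Sigma}_A}^2+(\text{errors})$, the errors being: a small multiple of $\|\boldsymbol{\eta}\|_{\boldsymbol{\Sigma}_A}^2$ (absorbed); a fixed--scale remainder $\lesssim\|\boldsymbol{\eta}\|_{\boldsymbol{L}^2_{-\kappa}}^2$ coming from the non--sign--definite term $\int xV'\eta_1^2$ (here $xV'$ decays faster than $\sech^2(\kappa x)$); the resonant coupling to $\sum\mathbf{z}^{\mathbf{m}}\mathbf{G}_{\mathbf{m}}$ (bounded via $Z$); and modulation errors (bounded via $M$). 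Since $\sup_I|\mathcal{I}_A|\lesssim A\delta^2$, integrating over $I$ gives $E_A^2\lesssim A\delta^2+C_0 E_\kappa^2+(\text{controlled})(Z^2+M^2)+o_\varepsilon(1)\epsilon^2$. To handle the remainder I would conjugate the $\eta_1$--equation by the Darboux chain $\mathcal{A}^*=A_N^*\cdots A_1^*$ of \eqref{def:calA}: by \eqref{eq:DarConj2} the conjugated radiation obeys an equation governed by the eigenvalue--free operator $L_D=L_{N+1}$, so a second virial functional at an intermediate scale $B$ applies with no internal modes obstructing coercivity, its decisive contribution $-\int xV_{\mathrm{D}}'\,(\mathcal{A}^*\eta_1)^2\ge0$ being supplied by Assumption \ref{ass:repulsive}. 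The commutators produced by the conjugation, and the gap between $\boldsymbol{\mathcal{H}}_{\mathrm{c}}[\mathbf{z}]$ and $L^2_c(L_1)$, are exponentially localized and $\|\mathbf{z}\|$--small, so this yields control of a scale--$B$ norm $E_B\ge E_\kappa$ of the form $E_B^2\lesssim\delta^2+(B/A)\,E_A^2+(\text{controlled})(Z^2+M^2)+o_\varepsilon(1)\epsilon^2$.

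\textbf{Fermi Golden Rule and conclusion.} With the bounded solutions $\mathbf{g}_{\mathbf{m}}$ of Assumption \ref{ass:FGR} I would form the Kowalczyk--Martel--type functional pairing $\boldsymbol{\eta}$ with $e^{\im(\mathbf{m}\cdot\boldsymbol{\lambda})t}\mathbf{g}_{\mathbf{m}}$; its time derivative produces the sign--definite term $-\sum_{\mathbf{m}\in\mathbf{R}_{\mathrm{min}}}\gamma_{\mathbf{m}}|\mathbf{z}^{\mathbf{m}}|^2$ — here \eqref{ass:FGR1} and $\gamma_{\mathbf{m}}>0$ are used — modulo errors quadratic in $\boldsymbol{\eta}$ (in weighted norms) and in $\dot{\mathbf{z}}-\widetilde{\mathbf{z}}$, so after integration $Z^2\lesssim\delta^2+o_\varepsilon(1)(E_A^2+E_B^2)+M^2+o_\varepsilon(1)\epsilon^2$. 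To close: substituting the $E_B$--estimate into the $E_A$--estimate, the term $(B/A)\,E_A^2$ is absorbed because $A\gg B^2$ (see \eqref{eq:relABg}), giving $E_A^2\lesssim A\delta^2+(\text{controlled})(Z^2+M^2)+o_\varepsilon(1)\epsilon^2$; inserting $E_A^2$ and $E_B^2$ into the Fermi Golden Rule bound and using $M\lesssim\delta\epsilon$ absorbs $o_\varepsilon(1)(E_A^2+E_B^2)$ and leaves $Z^2\lesssim\delta^2+o_\varepsilon(1)\epsilon^2$; back--substitution then bounds $M$, $E_A$, $E_\kappa\le E_B$ by $o_\varepsilon(1)\epsilon$, provided $\delta_0(\epsilon)$ is small enough that $\delta\sqrt A\ll\epsilon$ — possible since $\log\delta^{-1}\gg\log\epsilon^{-1}\gg A$. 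Summing the four bounds is \eqref{eq:main11} with $\epsilon$ replaced by $o_\varepsilon(1)\epsilon$.

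\textbf{Main obstacle.} The crux is the coercivity in the virial step: a naive virial on $\boldsymbol{\eta}$ is not coercive, because $\mathbf{J}\mathbf{L}_1$ has the internal eigenvalues $\pm\im\lambda_j$ in the spectral gap $(-\im m,\im m)$ and because $\int xV'\eta_1^2$ has no sign, and both obstructions are removed only by the conjunction of the two structural hypotheses. The refined profile of Proposition \ref{prop:rp} has already relocated every quasi--resonant self--interaction into the explicit source terms $\mathbf{z}^{\mathbf{m}}\mathbf{G}_{\mathbf{m}}$, which the Fermi Golden Rule (Assumption \ref{ass:FGR}) then damps, so no normal--form change of coordinates is needed; and the radiation that survives, once pushed through the $N$ Darboux transformations, is governed by the repulsive potential $V_{\mathrm{D}}$ (Assumption \ref{ass:repulsive}), for which the virial identity is genuinely coercive. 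Carrying the error terms cleanly through the Darboux conjugation, handling the frequencies near the threshold (the role of the parameter $\varepsilon$), and matching the scales $A$, $B$, $\varepsilon$ of \eqref{eq:relABg} is the technical heart of the argument.
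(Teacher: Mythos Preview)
Your proposal is correct and follows essentially the same route as the paper: the four pillars you isolate (modulation estimate, first virial at scale $A$, second virial after the Darboux conjugation at scale $B$, and the FGR functional) are exactly Propositions~\ref{prop:modp}--\ref{prop:2ndvirial}, and your bootstrap closure via the hierarchy \eqref{eq:relABg} mirrors the paper's short deduction. Two technical points you correctly flag but leave implicit are made explicit in the paper: the FGR functional must pair $\boldsymbol{\eta}$ against $\chi_A\sum_{\mathbf{m}}\mathbf{z}^{\mathbf{m}}\mathbf{g}_{\mathbf{m}}$ (the cutoff is essential since $\mathbf{g}_{\mathbf{m}}$ is only bounded, and $\mathbf{z}^{\mathbf{m}}$ rather than $e^{\im(\mathbf{m}\cdot\boldsymbol{\lambda})t}$ avoids introducing extra modulation errors), and the second virial is run on $\mathbf{v}=\mathcal{T}\boldsymbol{\eta}$ with $\mathcal{T}=\langle\im\varepsilon\partial_x\rangle^{-N}\mathcal{A}^*$, the regularizer being what makes the order-$N$ operator $\mathcal{A}^*$ bounded on $H^1$ and what the parameter $\varepsilon$ is really for.
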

Notice that Proposition \ref{prop:contreform} implies by standard continuation arguments Theorem \ref{thm:main}.

We will prove    Proposition \ref{prop:contreform}  from the following statements.

\begin{proposition}\label{prop:modp}
We have
\begin{align}
\|\dot{\mathbf{z}}-\widetilde{\mathbf{z}}\|_{L^2(I)}=
o_{\varepsilon}(1) \|  \boldsymbol{\eta}  \|_{L^2(I,  \boldsymbol{L }^2_{-\kappa})}      . \label{eq:lem:estdtz}
\end{align}
\end{proposition}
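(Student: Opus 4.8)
The plan is to derive an ODE system for $\dot{\mathbf z}-\widetilde{\mathbf z}$ by testing the modulation equation \eqref{eq:modeq} against the generators $D_{\mathbf z}\boldsymbol\phi[\mathbf z]\mathbf w$ of the symplectic-orthogonal complement that defines $\boldsymbol{\mathcal H}_{\mathrm c}[\mathbf z]$, and then to bound each resulting term by the right-hand side of \eqref{eq:lem:estdtz}. First I would differentiate the orthogonality condition \eqref{eq:lem:mod-1}, i.e.\ $\Omega(\boldsymbol\eta,D_{\mathbf z}\boldsymbol\phi[\mathbf z]\mathbf w)=0$ for all $\mathbf w\in\C^N$, along the flow; this produces an identity of the form $\Omega(\dot{\boldsymbol\eta},D_{\mathbf z}\boldsymbol\phi\,\mathbf w)+\Omega(\boldsymbol\eta,\partial_t(D_{\mathbf z}\boldsymbol\phi\,\mathbf w))=0$. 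Substituting $\dot{\boldsymbol\eta}$ from \eqref{eq:modeq} and using \eqref{eq:R1FRemainder--} (which says $\mathbf J\widetilde{\mathbf R}[\mathbf z]\in\boldsymbol{\mathcal H}_{\mathrm c}[\mathbf z]$, hence $\Omega(\mathbf J\widetilde{\mathbf R}[\mathbf z],D_{\mathbf z}\boldsymbol\phi\,\mathbf w)=0$) kills the $\mathbf G_{\mathbf m}$ and $\mathbf R[\mathbf z]$ contributions exactly. What remains is a linear system
\[
  \mathcal M(\mathbf z)\,(\dot{\mathbf z}-\widetilde{\mathbf z}) = \text{(terms linear in }\boldsymbol\eta\text{, quadratic in }\boldsymbol\eta\text{, and }O(\|\mathbf z\|)\text{-small corrections)},
\]
where $\mathcal M(\mathbf z)=\Omega(D_{\mathbf z}\boldsymbol\phi\,\cdot\,,D_{\mathbf z}\boldsymbol\phi\,\cdot\,)$ is the Gram-type matrix. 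Since $D_{\mathbf z}\boldsymbol\phi[\mathbf z]\mathbf w=\sum_j(w_j\boldsymbol\Phi_j+\text{c.c.})+O(\|\mathbf z\|)$ and $\Omega$ restricted to $L^2_{discr}$ is non-degenerate (as used in the proof of Proposition \ref{prop:rp}), $\mathcal M(\mathbf z)$ is invertible with uniformly bounded inverse for $\|\mathbf z\|\le\delta$ small.

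The key point is then to show every term on the right-hand side is controlled by $\|\boldsymbol\eta\|_{\boldsymbol L^2_{-\kappa}}$, times either an $o_\varepsilon(1)$ factor or a factor that is itself $o_\varepsilon(1)$ under the bootstrap hypothesis \eqref{eq:main11}. The main source term is $\Omega(\mathbf J\mathbf L[\mathbf z]\boldsymbol\eta,D_{\mathbf z}\boldsymbol\phi\,\mathbf w)$: here one writes $\mathbf L[\mathbf z]=\mathbf L_1+d\mathbf f[\boldsymbol\phi[\mathbf z]]$, and integrates against $D_{\mathbf z}\boldsymbol\phi\,\mathbf w$, which is exponentially localized (it lies in $\boldsymbol\Sigma^\infty$ by Proposition \ref{prop:rp}), so the pairing is bounded by $\|\boldsymbol\eta\|_{\boldsymbol L^2_{-\kappa}}$ up to a constant depending on the (fixed, $\varepsilon$-independent) weight $\kappa$; the subtlety is that this is NOT small by itself, so one must use the leading-order cancellation: $\mathbf J\mathbf L_1\boldsymbol\Phi_j=\im\lambda_j\boldsymbol\Phi_j$ together with $\Omega(\boldsymbol\Phi_j,\boldsymbol\Phi_k)$, $\Omega(\boldsymbol\Phi_j,\overline{\boldsymbol\Phi_k})$ being constants, so that the genuinely non-small part of this term is reabsorbed into the definition of $\widetilde{\mathbf z}_0=\im\boldsymbol\lambda\mathbf z$ and $\widetilde{\mathbf z}_1$ — precisely the reason the refined profile was built. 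After this cancellation what survives carries either an explicit $\|\mathbf z\|$ (hence $\lesssim\delta$), or a $d\mathbf f[\boldsymbol\phi[\mathbf z]]=O(\|\mathbf z\|^{?})$ factor from the quadratic-vanishing of $f$, or the quadratic remainder $\mathbf F[\mathbf z,\boldsymbol\eta]$ which is $O(\|\boldsymbol\eta\|^2)$ and localized, contributing $\lesssim\|\boldsymbol\eta\|_{L^\infty}\|\boldsymbol\eta\|_{\boldsymbol L^2_{-\kappa}}\lesssim\delta\|\boldsymbol\eta\|_{\boldsymbol L^2_{-\kappa}}$. The term $\Omega(\boldsymbol\eta,\partial_t(D_{\mathbf z}\boldsymbol\phi\,\mathbf w))$ brings a factor $\dot{\mathbf z}$; I would split $\dot{\mathbf z}=\widetilde{\mathbf z}+(\dot{\mathbf z}-\widetilde{\mathbf z})$, put the $\widetilde{\mathbf z}$ part (which is $O(\|\mathbf z\|)$) on the right as a localized-$\boldsymbol\eta$ term with a small coefficient, and absorb the $(\dot{\mathbf z}-\widetilde{\mathbf z})$ part into the left-hand side using smallness of $\boldsymbol\eta$ and invertibility of $\mathcal M$.

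The main obstacle, and where care is needed, is the bookkeeping of which small factor ($\delta$, $\|\mathbf z\|$, or the $L^\infty$-in-space smallness of $\boldsymbol\eta$) makes each term $o_\varepsilon(1)$ as opposed to merely $o_\delta(1)$ or $O(1)$: the statement \eqref{eq:lem:estdtz} asks for an $o_\varepsilon(1)$ constant, not an $o_\delta(1)$ one, so the argument must be structured so that the $\varepsilon$-dependence enters only through the localization scale of the test functions versus $\kappa$ and through $\|\boldsymbol\eta\|_{L^2(I,\boldsymbol\Sigma_A\cap\boldsymbol L^2_{-\kappa})}\le\epsilon$ from \eqref{eq:main11}, while $\delta$ is sent to $0$ last. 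Concretely, I expect the clean way is: take $L^2(I)$ norms of the ODE identity, use Cauchy–Schwarz in $t$, bound the pointwise-in-$t$ matrix by a constant, and on the right use $\|\,\|\mathbf z(t)\|\cdot\|\boldsymbol\eta(t)\|_{\boldsymbol L^2_{-\kappa}}\,\|_{L^1(I)}\le\|\mathbf z\|_{L^\infty}\|\boldsymbol\eta\|_{L^2(I,\boldsymbol L^2_{-\kappa})}|I|^{1/2}$ — but since $I$ may be all of $\R$ one cannot afford $|I|^{1/2}$, so instead one keeps the structure $\lesssim\|(\text{small})\cdot\boldsymbol\eta\|_{L^2(I,\boldsymbol L^2_{-\kappa})}$ pointwise in $t$, i.e.\ $\|\dot{\mathbf z}-\widetilde{\mathbf z}\|_{L^2(I)}\lesssim(\delta+o_\varepsilon(1))\|\boldsymbol\eta\|_{L^2(I,\boldsymbol L^2_{-\kappa})}$, and then $\delta<\delta_0(\epsilon)$ small finishes it. This is the step I would write out most carefully.
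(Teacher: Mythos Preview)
Your overall strategy is the paper's: differentiate the orthogonality $\Omega(\boldsymbol\eta,D_{\mathbf z}\boldsymbol\phi[\mathbf z]\mathbf w)=0$, apply $\Omega(\cdot,D_{\mathbf z}\boldsymbol\phi[\mathbf z]\mathbf w)$ to \eqref{eq:modeq}, use \eqref{eq:R1FRemainder--} to kill $\widetilde{\mathbf R}$, and invert the nondegenerate symplectic matrix $\mathcal M(\mathbf z)$. The outcome is the pointwise bound $\|\dot{\mathbf z}-\widetilde{\mathbf z}\|\lesssim\delta\|\boldsymbol\eta\|_{\boldsymbol L^2_{-\kappa}}$, and then $\delta=o_\varepsilon(1)$ by \eqref{eq:relABg}. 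Two points where your write-up diverges from the paper deserve sharpening.

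First, your treatment of the dangerous term $\<\mathbf L[\mathbf z]\boldsymbol\eta,D_{\mathbf z}\boldsymbol\phi\,\mathbf w\>$ is vaguer than it needs to be. You describe a ``leading-order cancellation'' that gets ``reabsorbed into the definition of $\widetilde{\mathbf z}_0$ and $\widetilde{\mathbf z}_1$'', but $\widetilde{\mathbf z}$ is already on the left of the equation you are testing, so nothing can be absorbed there. The paper's mechanism is exact and algebraic: use self-adjointness of $\mathbf L[\mathbf z]$ to write $\<\mathbf L[\mathbf z]\boldsymbol\eta,D_{\mathbf z}\boldsymbol\phi\,\mathbf w\>=\<\boldsymbol\eta,\mathbf L[\mathbf z]D_{\mathbf z}\boldsymbol\phi\,\mathbf w\>$, then differentiate the refined-profile identity \eqref{eq:rp} in $\mathbf z$ to obtain
\[
\mathbf J\mathbf L[\mathbf z]D_{\mathbf z}\boldsymbol\phi\,\mathbf w
= D^2_{\mathbf z}\boldsymbol\phi(\widetilde{\mathbf z},\mathbf w)
+ D_{\mathbf z}\boldsymbol\phi\,D_{\mathbf z}\widetilde{\mathbf z}\,\mathbf w
+ \mathbf J\,D_{\mathbf z}\widetilde{\mathbf R}\,\mathbf w .
\]
When paired with $\boldsymbol\eta$, the middle term vanishes \emph{exactly} by the orthogonality $\boldsymbol\eta\in\boldsymbol{\mathcal H}_{\mathrm c}[\mathbf z]$, and the $D^2_{\mathbf z}\boldsymbol\phi(\widetilde{\mathbf z},\mathbf w)$ contribution cancels \emph{exactly} against the $\widetilde{\mathbf z}$-piece of $\Omega(\boldsymbol\eta,D^2_{\mathbf z}\boldsymbol\phi(\dot{\mathbf z},\mathbf w))$ coming from the differentiated orthogonality. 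What survives is \eqref{eq:eqz}, whose right-hand side consists only of $\Omega(\boldsymbol\eta,D^2_{\mathbf z}\boldsymbol\phi(\dot{\mathbf z}-\widetilde{\mathbf z},\mathbf w))$ (absorbable on the left since $\|\boldsymbol\eta\|\lesssim\delta$), $\<\boldsymbol\eta,D_{\mathbf z}\widetilde{\mathbf R}\,\mathbf w\>$ (which carries $\|\mathbf z\|$ because $\|\mathbf m\|\ge2$ for $\mathbf m\in\mathbf R_{\min}$), and $\<\mathbf F,D_{\mathbf z}\boldsymbol\phi\,\mathbf w\>$ (controlled by Lemma~\ref{lem:estF}). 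This is a two-line identity, not a perturbative expansion; you should use it.

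Second, your worry about $o_\varepsilon(1)$ versus $o_\delta(1)$ is misplaced. The pointwise estimate comes out with a $\delta$ in front, and by the parameter hierarchy \eqref{eq:relABg} one has $\delta=o_\varepsilon(1)$ (indeed $\log(\delta^{-1})\gg\exp(\varepsilon^{-1})$). There is no need to route the smallness through $\epsilon$ or through the bootstrap bound \eqref{eq:main11}; the $\delta$ factor alone is what the statement calls $o_\varepsilon(1)$.
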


 \begin{proposition}[FGR estimate]\label{prop:FGR}
 We have
 \begin{align}\label{eq:FGRint}
 \sum_{\mathbf{m}\in \mathbf{R}_{\mathrm{min}}}\|\mathbf{z}^{\mathbf{m}}\|_{L^2(I)}\lesssim \delta +A^{-1/4} \|   \boldsymbol{\eta}  \|_{L^2(I, \boldsymbol{ \Sigma }_A  )}  .
 \end{align}
 \end{proposition}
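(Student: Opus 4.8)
The plan is to derive a differential identity for the discrete modes whose right-hand side contains the FGR quantity $\sum_{\mathbf{m}\in\mathbf{R}_{\mathrm{min}}}\mathbf{z}^{\mathbf{m}}\mathbf{G}_{\mathbf{m}}$ paired against the bounded solutions $\mathbf{g}_{\mathbf{m}}$ of Assumption \ref{ass:FGR}, and then integrate in time over $I$. Concretely, following Kowalczyk--Martel, I would introduce the scalar quantities $\zeta_{\mathbf{m}}(t):=\langle \boldsymbol{\eta}(t),\mathbf{g}_{\mathbf{m}}\rangle$ (or a suitably cut-off/regularized version, since $\mathbf{g}_{\mathbf{m}}$ is only bounded, not in $\boldsymbol{\mathcal{H}}^1$; one pairs instead with $\chi(x/A^{\theta})\mathbf{g}_{\mathbf{m}}$ for an appropriate exponent $\theta$, or uses that $\boldsymbol{\eta}$ decays exponentially through the $\boldsymbol{\Sigma}^l$-smoothing of the nonlinear terms). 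Differentiating $\zeta_{\mathbf{m}}$ along \eqref{eq:modeq} and using $\mathbf{J}\mathbf{L}_1\mathbf{g}_{\mathbf{m}}=\im(\mathbf{m}\cdot\boldsymbol{\lambda})\mathbf{g}_{\mathbf{m}}$ together with the orthogonality \eqref{eq:lem:mod-1} to kill the $D_{\mathbf{z}}\boldsymbol{\phi}[\mathbf{z}]$ term, one obtains
\begin{align*}
\dot\zeta_{\mathbf{m}}-\im(\mathbf{m}\cdot\boldsymbol{\lambda})\zeta_{\mathbf{m}}=\langle \mathbf{J}\mathbf{z}^{\mathbf{m}}\mathbf{G}_{\mathbf{m}},\mathbf{g}_{\mathbf{m}}\rangle+(\text{errors}),
\end{align*}
where the main term, after removing the oscillation $e^{-\im(\mathbf{m}\cdot\boldsymbol{\lambda})t}$, reproduces $\gamma_{\mathbf{m}}\mathbf{z}^{\mathbf{m}}$ up to a harmless unimodular factor, and the errors come from $\mathbf{L}[\mathbf{z}]-\mathbf{L}_1=d\mathbf{f}[\boldsymbol{\phi}[\mathbf{z}]]$, from $\mathbf{F}[\mathbf{z},\boldsymbol{\eta}]$, from $\mathbf{R}[\mathbf{z}]$, and from the other resonant Fourier coefficients $\mathbf{z}^{\mathbf{n}}\mathbf{G}_{\mathbf{n}}$ with $\mathbf{n}\ne\mathbf{m}$ in $\mathbf{R}_{\mathrm{min}}$ (these last decouple because the frequencies $\mathbf{n}\cdot\boldsymbol{\lambda}$ are distinct modulo the genericity in Assumption \ref{ass:generic}, so after integration by parts in $t$ they contribute only boundary terms of size $O(\delta^2)$).

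Next I integrate the identity for $e^{-\im(\mathbf{m}\cdot\boldsymbol{\lambda})t}\zeta_{\mathbf{m}}$ over $I=[0,T]$. The left-hand side is estimated by boundary terms $|\zeta_{\mathbf{m}}(T)|+|\zeta_{\mathbf{m}}(0)|\lesssim \delta^2$ (using $\|\boldsymbol{\eta}\|_{L^\infty\boldsymbol{\mathcal{H}}^1}\lesssim\delta$ from \eqref{eq:orbbound} and the exponential localization), plus $\int_I |\dot\zeta_{\mathbf{m}}-\im(\mathbf{m}\cdot\boldsymbol{\lambda})\zeta_{\mathbf{m}}|$, which I split into the main FGR term $\gamma_{\mathbf{m}}\int_I\mathbf{z}^{\mathbf{m}}$ and the errors. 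Because $\mathbf{G}_{\mathbf{m}}\in\boldsymbol{\Sigma}^\infty$ and $\mathbf{g}_{\mathbf{m}}$ is bounded, the pairing with the error terms is controlled by $\|\boldsymbol{\eta}\|_{L^2(I,\boldsymbol{L}^2_{-\kappa})}$ or $\|\boldsymbol{\eta}\|_{L^2(I,\boldsymbol{\Sigma}_A)}$ times small quantities, and the quadratic-in-$\boldsymbol{\eta}$ term $\mathbf{F}$ is absorbed using \eqref{eq:orbbound}. The key gain is that pairing the localized weight $\sech(2x/A)$ built into $\|\cdot\|_{\boldsymbol{\Sigma}_A}$ against the fixed exponentially decaying function $\mathbf{G}_{\mathbf{m}}$ produces the factor $A^{-1/4}$ (indeed better, but $A^{-1/4}$ suffices): $\int_I|\langle\boldsymbol{\eta},\text{(exp.\ decaying)}\rangle|\lesssim A^{-1/4}\|\boldsymbol{\eta}\|_{L^2(I,\boldsymbol{\Sigma}_A)}|I|^{1/2}$ is replaced by the sharper bound that avoids the $|I|^{1/2}$ loss by using that the weight in $\boldsymbol{\Sigma}_A$ already dominates $e^{-\kappa|x|}$ up to $A^{-1/4}$, so no length-of-interval factor appears. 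Summing over the finite set $\mathbf{R}_{\mathrm{min}}$ and dividing by $\min_{\mathbf{m}}\gamma_{\mathbf{m}}>0$ yields \eqref{eq:FGRint}.

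The main obstacle is the low regularity of $\mathbf{g}_{\mathbf{m}}$: since it is merely bounded, $\langle\boldsymbol{\eta},\mathbf{g}_{\mathbf{m}}\rangle$ is not obviously finite and the integration-by-parts manipulation must be justified. I would handle this exactly as in Kowalczyk--Martel \cite{KM22}: replace $\mathbf{g}_{\mathbf{m}}$ by $\mathbf{g}_{\mathbf{m},R}:=\chi(\cdot/R)\mathbf{g}_{\mathbf{m}}$ with $R=R(A)$ a large parameter (e.g. $R\sim A$ or a power of $A$), carry out all computations with the genuinely $\boldsymbol{\mathcal{H}}^1$-localized $\mathbf{g}_{\mathbf{m},R}$, and then estimate the commutator errors $[\mathbf{J}\mathbf{L}_1,\chi(\cdot/R)]\mathbf{g}_{\mathbf{m}}$, which are supported near $|x|\sim R$ and are $O(R^{-1})$ there; against $\boldsymbol{\eta}$ in the weighted norm these are negligible for $R$ large, and crucially $\mathbf{G}_{\mathbf{m}}$ being Schwartz makes $\langle\mathbf{G}_{\mathbf{m}},\mathbf{g}_{\mathbf{m},R}\rangle=\gamma_{\mathbf{m}}+O(e^{-cR})$. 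A secondary technical point is confirming that the cross-frequency resonant terms integrate to boundary contributions only; this is where Assumption \ref{ass:generic} (non-resonance of $(\mathbf{m}\cdot\boldsymbol{\lambda})^2$ with $m^2$ and the structure of $\mathbf{R}_{\mathrm{min}}$, in particular $\mathbf{m}_+=0$ or $\mathbf{m}_-=0$ from Lemma \ref{lem:combinat1}) guarantees the phases $(\mathbf{n}-\mathbf{m})\cdot\boldsymbol{\lambda}$ do not vanish, so the oscillatory integral is bounded after one integration by parts in $t$, at the cost of $\|\dot{\mathbf{z}}-\widetilde{\mathbf{z}}\|_{L^2(I)}$-type terms which are themselves $o_\varepsilon(1)$ small by Proposition \ref{prop:modp} and hence absorbable.
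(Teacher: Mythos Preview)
Your proposal has a genuine structural gap: the functional $\zeta_{\mathbf{m}}=\langle\boldsymbol{\eta},\mathbf{g}_{\mathbf{m}}\rangle$ (or its cut-off version) is linear in $\boldsymbol{\eta}$ and contains no factor of $\mathbf{z}$. Differentiating it along \eqref{eq:modeq} therefore produces at best a term of the form $\gamma_{\mathbf{m}}\mathbf{z}^{\mathbf{m}}$, linear in $\mathbf{z}$. Multiplying by $e^{-\im(\mathbf{m}\cdot\boldsymbol{\lambda})t}$ and integrating over $I$ then gives an oscillatory integral $\int_I e^{-\im(\mathbf{m}\cdot\boldsymbol{\lambda})t}\mathbf{z}^{\mathbf{m}}\,dt$, not the positive quantity $\int_I|\mathbf{z}^{\mathbf{m}}|^2\,dt$ that you need. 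There is no way to turn the former into the latter. (Relatedly, your claimed boundary bound $|\zeta_{\mathbf{m}}|\lesssim\delta^2$ is off by a power of $\delta$: with the cut-off at scale $A$ one only has $|\zeta_{\mathbf{m}}|\lesssim A^{1/2}\delta$, which grows in $A$.)

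The paper fixes exactly this by inserting the missing factor of $\mathbf{z}^{\mathbf{m}}$ into the functional: it takes
\[
\mathcal{J}_{\mathrm{FGR}}:=\Omega\Bigl(\boldsymbol{\eta},\,\chi_A\sum_{\mathbf{m}\in\mathbf{R}_{\mathrm{min}}}\mathbf{z}^{\mathbf{m}}\mathbf{g}_{\mathbf{m}}\Bigr),
\]
which is bilinear in $(\boldsymbol{\eta},\mathbf{z})$. When one differentiates, the $\sum_{\mathbf{m}}\mathbf{z}^{\mathbf{m}}\mathbf{G}_{\mathbf{m}}$ coming from $\dot{\boldsymbol{\eta}}$ pairs against $\sum_{\mathbf{n}}\mathbf{z}^{\mathbf{n}}\mathbf{g}_{\mathbf{n}}$, and the diagonal gives the crucial definite-sign term $\sum_{\mathbf{m}}\gamma_{\mathbf{m}}|\mathbf{z}^{\mathbf{m}}|^2$. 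The other piece of $\dot{\mathcal{J}}_{\mathrm{FGR}}$, namely $\Omega(\boldsymbol{\eta},\chi_A\sum_{\mathbf{m}}D_{\mathbf{z}}\mathbf{z}^{\mathbf{m}}\widetilde{\mathbf{z}}_0\,\mathbf{g}_{\mathbf{m}})$, produces exactly $\im(\mathbf{m}\cdot\boldsymbol{\lambda})$ factors that cancel with the $\mathbf{L}_1\boldsymbol{\eta}$ contribution (this is the cancellation you were aiming for with your ``$-\im(\mathbf{m}\cdot\boldsymbol{\lambda})\zeta_{\mathbf{m}}$'' term, but here it happens without introducing an uncontrolled $\|\zeta_{\mathbf{m}}\|_{L^2(I)}$). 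Your auxiliary ideas --- the cut-off $\chi_A$ to make the pairing with the merely bounded $\mathbf{g}_{\mathbf{m}}$ well-defined, and handling the off-diagonal $\mathbf{m}\neq\mathbf{n}$ contributions by writing $\mathbf{z}^{\mathbf{m}}\mathbf{z}^{\overline{\mathbf{n}}}$ as a time derivative plus small remainder --- are exactly what the paper does; what is missing is only the correct choice of functional.
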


\begin{proposition}[1st virial estimate]\label{prop:1stvirial}
We have
\begin{align}
 \|   \boldsymbol{\eta}  \|_{L^2(I, \boldsymbol{ \Sigma }_A  )}  \lesssim \delta +  \|  \boldsymbol{\eta}  \|_{L^2(I,  \boldsymbol{L }^2_{-\kappa})}  +\sum_{\mathbf{m}\in 	\mathbf{R}_{\mathrm{min}}}\|\mathbf{z}^{\mathbf{m}}\|_{L^2} .\label{eq:1stvInt}
\end{align}

\end{proposition}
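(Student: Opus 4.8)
The plan is to derive the first virial estimate \eqref{eq:1stvInt} by constructing a localized virial functional adapted to the equation \eqref{eq:modeq} for $\boldsymbol{\eta}$, following closely the scheme of Kowalczyk--Martel \cite{KM22}. Concretely, I would introduce a weight $\varphi_A(x)=\tanh(x/A)$ (or its primitive-type companion) and a companion weight $\psi_A$ at scale $A$, and define a functional of the schematic form
\begin{align*}
\mathcal{I}_A(\boldsymbol{\eta}) = \frac12\,\Omega\!\left(\boldsymbol{\eta},\, \mathcal{X}_A\boldsymbol{\eta}\right),
\end{align*}
where $\mathcal{X}_A$ is a first-order differential operator built from $\varphi_A$ (morally $\varphi_A\partial_x + \frac12\varphi_A'$ acting in the first component, adapted to the matrix structure $\mathbf{J}\mathbf{L}_1$). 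The point is that $|\mathcal{I}_A(\boldsymbol{\eta})|\lesssim A\|\boldsymbol{\eta}\|_{\boldsymbol{\mathcal{H}}^1}^2\lesssim A\delta^2$ by \eqref{eq:orbbound}, so integrating $\frac{d}{dt}\mathcal{I}_A$ over $I$ produces a bounded boundary contribution of size $\delta^2$ (here is where the $\delta$ on the right-hand side of \eqref{eq:1stvInt} comes from, after dividing through appropriately).

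The main computation is $\frac{d}{dt}\mathcal{I}_A(\boldsymbol{\eta}(t))$ along \eqref{eq:modeq}. Differentiating and substituting $\dot{\boldsymbol{\eta}} = \mathbf{J}(\mathbf{L}[\mathbf{z}]\boldsymbol{\eta}+\mathbf{F}[\mathbf{z},\boldsymbol{\eta}]+\sum_{\mathbf{m}}\mathbf{z}^{\mathbf{m}}\mathbf{G}_{\mathbf{m}}+\mathbf{R}[\mathbf{z}]) - D_{\mathbf{z}}\boldsymbol{\phi}[\mathbf{z}](\dot{\mathbf{z}}-\widetilde{\mathbf{z}})$, the leading quadratic term coming from the linear flat part $\mathbf{J}\mathbf{L}_1$ should yield, by the standard Kowalczyk--Martel commutator identity, a coercive lower bound
\begin{align*}
-\frac{d}{dt}\mathcal{I}_A(\boldsymbol{\eta}) \;\gtrsim\; \|\boldsymbol{\eta}\|_{\boldsymbol{\Sigma}_A}^2 - (\text{lower-order \& error terms}).
\end{align*}
To get coercivity I would need: (i) the potential term $V$ and the extra potential $d\mathbf{f}[\boldsymbol{\phi}[\mathbf{z}]]$ contribute terms controlled by $\|\boldsymbol{\eta}\|_{\boldsymbol{L}^2_{-\kappa}}^2$ because $V$ and $\boldsymbol{\phi}[\mathbf{z}]$ are exponentially localized — this is exactly the reason the $\|\boldsymbol{\eta}\|_{L^2(I,\boldsymbol{L}^2_{-\kappa})}$ term appears on the right of \eqref{eq:1stvInt}; (ii) the discrete-mode directions must be excised — this is where the orthogonality condition $\boldsymbol{\eta}\in\boldsymbol{\mathcal{H}}_{\mathrm{c}}[\mathbf{z}]$ from Lemma \ref{lem:modulat} enters, reducing matters to coercivity on the continuous spectral subspace, which is where Assumption \ref{ass:repulsive} on $V_D$ is used (via the Darboux conjugation \eqref{eq:DarConj2}, transferring the virial computation to the repulsive operator $L_D$ where $xV_D'\le 0$ makes the virial quadratic form nonnegative). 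The remaining source terms are handled by Cauchy--Schwarz: $\|\mathbf{z}^{\mathbf{m}}\mathbf{G}_{\mathbf{m}}\|$ and $\|\mathbf{R}[\mathbf{z}]\|$ are bounded in $\boldsymbol{\Sigma}^l$ by \eqref{eq:R1FRemainder} and exponential localization, contributing $\sum_{\mathbf{m}}\|\mathbf{z}^{\mathbf{m}}\|_{L^2}$; the modulation term $D_{\mathbf{z}}\boldsymbol{\phi}[\mathbf{z}](\dot{\mathbf{z}}-\widetilde{\mathbf{z}})$ pairs against a localized weight and is $\lesssim\|\dot{\mathbf{z}}-\widetilde{\mathbf{z}}\|_{L^2}$, which by Proposition \ref{prop:modp} is $o_\varepsilon(1)\|\boldsymbol{\eta}\|_{L^2(I,\boldsymbol{L}^2_{-\kappa})}$ and thus absorbable; the genuinely nonlinear term $\mathbf{F}[\mathbf{z},\boldsymbol{\eta}]$ is quadratic in $\boldsymbol{\eta}$ and, using \eqref{eq:orbbound} to gain a factor $\delta$, is also absorbed.

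I expect the main obstacle to be establishing the coercivity of the quadratic form on $\boldsymbol{\mathcal{H}}_{\mathrm{c}}[\mathbf{z}]$ uniformly in the modulation parameter $\mathbf{z}$ — i.e. showing that after removing the discrete directions and conjugating via $\mathcal{A}^*$ to the Darboux-transformed operator $L_D$, the transformed virial functional controls $\|\boldsymbol{\eta}\|_{\boldsymbol{\Sigma}_A}^2$ modulo the $\boldsymbol{L}^2_{-\kappa}$ remainder. This requires care because the Darboux transformation $\mathcal{A}^*$ is an $N$-th order differential operator that does not preserve the weighted $\boldsymbol{\Sigma}_A$ norms cleanly, so one must track how the $A$-scale weights interact with $\mathcal{A}^*$ and verify that the spectral gap / repulsivity survives the conjugation with errors of the right (localized, lower-order) type. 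A secondary technical point is the correct handling of the first-component-only structure of $\mathbf{f}$ and $V$ in the matrix setting, so that the scalar virial identities of \cite{KM22} apply componentwise; and ensuring the choice of weight scale $A$ with $A\gg B^2$ as in \eqref{eq:relABg} makes the coercivity constant independent of $\varepsilon$ while the boundary term stays $O(A\delta^2)$, which after the continuation bookkeeping is consistent with \eqref{eq:1stvInt}.
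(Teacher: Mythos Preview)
Your overall virial scheme is right, but you have misplaced the hard step. The first virial estimate does \emph{not} require the Darboux conjugation, Assumption~\ref{ass:repulsive}, or the orthogonality $\boldsymbol{\eta}\in\boldsymbol{\mathcal{H}}_{\mathrm{c}}[\mathbf{z}]$. The point is that the virial identity for $L_1=-\partial_x^2+V+m^2$ applied to $\mathcal{I}_{\mathrm{1st},1}=\frac12\Omega(\boldsymbol{\eta},S_A\boldsymbol{\eta})$ with $S_A=\frac12\varphi_A'+\varphi_A\partial_x$ gives directly
\[
\<L_1\eta_1,S_A\eta_1\>=-\|(\zeta_A\eta_1)'\|_{L^2}^2-\tfrac12\!\int\varphi_A V'\,\eta_1^2\,dx+\text{(boundary-type error)},
\]
so the coercive piece $-\|(\zeta_A\eta_1)'\|_{L^2}^2$ comes for free from the Laplacian, while the potential contribution is \emph{not} sign-controlled but simply thrown into the $\|\boldsymbol{\eta}\|_{\boldsymbol{L}^2_{-\kappa}}$ remainder because $|\varphi_A V'|\lesssim |xV'|$ is exponentially localized. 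No spectral projection is needed; this is precisely why the $\|\boldsymbol{\eta}\|_{L^2(I,\boldsymbol{L}^2_{-\kappa})}$ term sits on the right of \eqref{eq:1stvInt} and will only be closed later by the second virial, where the Darboux machinery and repulsivity actually live.

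There is also a genuine gap in your plan: a single functional of the form $\Omega(\boldsymbol{\eta},S_A\boldsymbol{\eta})$ only sees $\eta_1$ (note $\<\mathbf{L}_1\boldsymbol{\eta},S_A\boldsymbol{\eta}\>=\<L_1\eta_1,S_A\eta_1\>$), so it cannot produce the $A^{-1}\|\sech(2x/A)\eta_2\|_{L^2}$ part of $\|\boldsymbol{\eta}\|_{\boldsymbol{\Sigma}_A}$. The paper introduces a second auxiliary functional $\mathcal{I}_{\mathrm{1st},2}=\frac12\Omega(\boldsymbol{\eta},\sigma_3\zeta_A^4\boldsymbol{\eta})$; differentiating it yields $-\|\zeta_A^2\eta_2\|_{L^2}^2+\<L_1\eta_1,\zeta_A^4\eta_1\>$, and the second term is harmless since it is controlled by the $\eta_1$ quantities already obtained. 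Without this (or an equivalent device) you will not recover the full $\boldsymbol{\Sigma}_A$ norm on the left-hand side.
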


\begin{proposition}[2nd virial  estimate]\label{prop:2ndvirial}
We have
\begin{align}\label{eq:2ndv}
 \|  \boldsymbol{\eta}  \|_{L^2(I,  \boldsymbol{L }^2_{-\kappa})}  \lesssim  B\varepsilon^{-N}\delta + A ^{-1/4}  \|   \boldsymbol{\eta}  \|_{L^2(I, \boldsymbol{ \Sigma }_A  )}  +\sum_{\mathbf{m}\in 	\mathbf{R}_{\mathrm{min}}}\|\mathbf{z}^{\mathbf{m}}\|_{L^2} .
\end{align}
\end{proposition}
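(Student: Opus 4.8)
The plan is to build a virial functional adapted to the transformed variable obtained after the Darboux conjugation $\mathcal{A}^*$ of \eqref{eq:DarConj2}, following closely the scheme of Kowalczyk--Martel \cite{KM22}. First I would project the equation \eqref{eq:modeq} for $\boldsymbol{\eta}$ onto the continuous spectrum: since $\boldsymbol{\eta}\in\boldsymbol{\mathcal{H}}_{\mathrm{c}}[\mathbf{z}]$ but this is the symplectic-orthogonal to the (moving) refined-profile tangent space rather than the spectral subspace of $\mathbf{J}\mathbf{L}_1$, I would first pass to $P_c\boldsymbol{\eta}$ (or an equivalent variable $\mathbf{v}$) and show, using Lemma \ref{lem:modulat}, \eqref{eq:orbbound}, Proposition \ref{prop:modp} and the smallness of $\mathbf{z}$, that the difference is lower order and controlled by $\|\boldsymbol{\eta}\|_{L^2(I,\boldsymbol{\Sigma}_A)}$ plus the $\|\mathbf{z}^{\mathbf{m}}\|_{L^2}$ terms. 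Writing the scalar second component in terms of the first, the dynamics of $\eta_1$ (or its $P_c$-part) is governed by a Klein--Gordon-type equation $\partial_t^2 w + (L_1-m^2+m^2)w=\cdots$, i.e. with spatial operator $L_1$ whose only obstruction to positivity/repulsivity is the presence of the $N$ bound states.

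Next I would apply the Darboux intertwining: set $w_D:=\mathcal{A}^*(P_c\eta_1)$, so that by \eqref{eq:DarConj2} the operator $L_1$ is conjugated to $L_D=-\partial_x^2+V_D+m^2$ acting on $w_D$, and $L_D$ has empty discrete spectrum. Because $\mathcal{A}^*$ and $\mathcal{A}$ are order-$N$ differential operators with Schwartz coefficients, they map weighted spaces to weighted spaces boundedly (here the exponential weight $e^{a_2\langle x\rangle}$ of $\boldsymbol{\Sigma}^s$ is relevant, as is the localization at scale $1/\kappa$ vs $1/A$), and on $L^2_c(L_1)$ the composition $\mathcal{A}\mathcal{A}^*$ is an isomorphism; so norms of $w_D$ in weighted Sobolev spaces are equivalent to the corresponding norms of $P_c\eta_1$, up to the bound-state contributions which are already absorbed by $\sum\|\mathbf{z}^{\mathbf{m}}\|_{L^2}$ and by $\|\boldsymbol{\eta}\|_{L^2(I,\boldsymbol{\Sigma}_A)}$. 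Then I would introduce the virial functional of the form
\begin{align*}
\mathcal{I}(t):=\int \left(\zeta_B(x)\,\partial_x w_D + \tfrac12 \zeta_B'(x) w_D\right)\partial_t w_D\,dx,
\end{align*}
with $\zeta_B$ a bounded odd increasing cutoff equal to $x$ for $|x|\lesssim B$ and constant for $|x|\gg B$ (as in \cite{KM22}), compute $\tfrac{d}{dt}\mathcal{I}$, and use Assumption \ref{ass:repulsive} ($xV_D'\le 0$, not identically $0$) to get that the bulk term is coercive: it controls $\int \zeta_B'(x)|\partial_x w_D|^2 + (\text{term from }-xV_D')|w_D|^2$, hence, after the standard Kowalczyk--Martel argument combining this with a low-frequency/zero-energy-resonance analysis (here aided by the absence of discrete spectrum and, if needed, a further argument ruling out a threshold resonance for $L_D$), one recovers the full $\|\boldsymbol{\eta}\|_{\boldsymbol{L}^2_{-\kappa}}^2$ with weight $\mathrm{sech}(\kappa x)$. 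Integrating in $t$ over $I$ and bounding $|\mathcal{I}(t)|\lesssim B\|\boldsymbol{\eta}\|_{\boldsymbol{\mathcal{H}}^1_{-a}}^2\lesssim B\delta^2$ by \eqref{eq:orbbound}, together with the error terms, yields
\begin{align*}
\|\boldsymbol{\eta}\|_{L^2(I,\boldsymbol{L}^2_{-\kappa})}^2 \lesssim B\delta\cdot(\text{stuff}) + A^{-1/2}\|\boldsymbol{\eta}\|^2_{L^2(I,\boldsymbol{\Sigma}_A)} + \Big(\sum_{\mathbf{m}}\|\mathbf{z}^{\mathbf{m}}\|_{L^2}\Big)^2 + \text{(modulation errors)},
\end{align*}
and taking square roots gives \eqref{eq:2ndv}; the factor $\varepsilon^{-N}$ enters from the $B$-independent constants accumulated when peeling off the $N$ Darboux steps and from the low-frequency cutoff at scale depending on $\varepsilon$, while the nonlinear terms $\mathbf{F}[\mathbf{z},\boldsymbol{\eta}]$ and $\sum\mathbf{z}^{\mathbf{m}}\mathbf{G}_{\mathbf{m}}+\mathbf{R}[\mathbf{z}]$ are handled by Proposition \ref{prop:rp}, \eqref{eq:R1FRemainder}, the smallness \eqref{eq:orbbound} and Sobolev embedding, the $\mathbf{G}_{\mathbf{m}}$ term contributing exactly the $\sum\|\mathbf{z}^{\mathbf{m}}\|_{L^2}$ on the right.

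The main obstacle I anticipate is twofold. First, the coercivity of $\tfrac{d}{dt}\mathcal{I}$ is not automatic from $xV_D'\le 0$ alone: one must rule out that $L_D$ has a resonance at the threshold $m^2$ (zero energy for $-\partial_x^2+V_D$) and control the kernel/near-kernel directions; this is where the Kowalczyk--Martel machinery (a second, auxiliary virial weight, or a spectral non-degeneracy argument) is genuinely needed, and it is the delicate point. Second, bookkeeping the passage $\boldsymbol{\eta}\leftrightarrow P_c\eta_1 \leftrightarrow w_D$ through moving orthogonality conditions and exponentially-weighted spaces — making sure every commutator $[\mathcal{A}^*,\partial_t]$ (which involves $\dot{\mathbf{z}}$, hence $\widetilde{\mathbf{z}}$ plus the small error from Proposition \ref{prop:modp}) and every weight mismatch between scales $1/A$, $1/B$, $1/\kappa$, $a_2$, $a$ is truly lower order — is lengthy though routine, and is what produces the $A^{-1/4}$ gain and the harmless constants $B\varepsilon^{-N}$.
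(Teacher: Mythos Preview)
Your overall architecture is right: conjugate by the Darboux operator $\mathcal{A}^*$ so that $L_1$ becomes the repulsive $L_D$, then run a virial argument at scale $B$ on the transformed variable, and recover $\|\boldsymbol{\eta}\|_{\boldsymbol{L}^2_{-\kappa}}$ from the transformed one. This is exactly the route the paper takes. However, there is a genuine gap in your plan that would prevent the argument from closing.

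The missing ingredient is the high-frequency regularization. The paper does \emph{not} work with $w_D=\mathcal{A}^*\eta_1$ but with $\mathbf{v}:=\mathcal{T}\boldsymbol{\eta}$ where $\mathcal{T}:=\langle\im\varepsilon\partial_x\rangle^{-N}\mathcal{A}^*$. This is essential because $\mathcal{A}^*$ is an $N$-th order differential operator: with your choice $w_D=\mathcal{A}^*\eta_1$ you only know $w_D\in H^{1-N}$, so the virial functional $\mathcal{I}(t)$ is not even well defined, and in particular your claimed boundary bound $|\mathcal{I}(t)|\lesssim B\delta^2$ fails. Equally fatally, the right-hand side of the transformed equation would contain $\mathcal{A}^*\mathbf{F}[\mathbf{z},\boldsymbol{\eta}]$ and $\mathcal{A}^*(\mathbf{z}^{\mathbf{m}}\mathbf{G}_{\mathbf{m}})$, i.e.\ $N$ derivatives hitting the nonlinearity and the forcing, and these cannot be paired with $\widetilde{S}_{A,B}w_D$ and bounded by the available norms. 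The smoothing $\langle\im\varepsilon\partial_x\rangle^{-N}$ restores the regularity balance; the price is (i) a commutator $[\langle\im\varepsilon\partial_x\rangle^{-N},V_D]\mathcal{A}^*$ in the equation for $\mathbf{v}$, which the paper shows is $O(\varepsilon)$ in weighted $L^2$ (Lemmas~\ref{claim:l2boundIII}--\ref{lem:KM2}), and (ii) the factor $\varepsilon^{-N}$, which is exactly the $L^2\to L^2$ norm of $\mathcal{T}$ and shows up in the boundary term $|\mathcal{I}_{\mathrm{2nd},1}|\lesssim B\varepsilon^{-N}\delta^2$ and in the weighted comparison Lemmas~\ref{lem:KM1}. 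So your explanation that $\varepsilon^{-N}$ ``enters from the low-frequency cutoff'' is off: it comes from the \emph{high}-frequency smoothing needed to undo the $N$ derivatives of $\mathcal{A}^*$.

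Two smaller points. First, the coercivity step is simpler than you fear: under Assumption~\ref{ass:repulsive} the paper gets it from the elementary Poincar\'e-type Lemma~\ref{lem:equiv_rho0} with $U=-\tfrac12\varphi_B\zeta_B^{-2}V_D'$; no separate threshold-resonance analysis is needed. Second, one virial functional is not enough: the paper uses two, $\mathcal{I}_{\mathrm{2nd},1}=\tfrac12\Omega(\mathbf{v},\widetilde{S}_{A,B}\mathbf{v})$ to control $v_1$ and $\mathcal{I}_{\mathrm{2nd},2}=\tfrac12\Omega(\mathbf{v},\sigma_3 e^{-\kappa\langle x\rangle}\mathbf{v})$ to control $v_2$, and it is their combination (Lemma~\ref{lem:2v3}) together with the inversion bound $\|\sech(\kappa x)\boldsymbol{\eta}\|_{L^2}\lesssim\|\sech(\tfrac{\kappa}{2}x)\mathbf{v}\|_{L^2}$ (Lemma~\ref{lem:coer6A}) that yields \eqref{eq:2ndv}.
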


  {\it Proof of Proposition \ref{prop:contreform} assuming Propositions
  \ref{prop:modp}--\ref{prop:2ndvirial}.}
  From  Propositions \ref{prop:modp} and \ref{prop:FGR}  and from \eqref{eq:relABg} we have
  \begin{align}
\|\dot{\mathbf{z}}-\widetilde{\mathbf{z}}\|_{L^2(I)} +\sum_{\mathbf{m}\in \mathbf{R}_{\mathrm{min}} }\|\mathbf{z}^{\mathbf{m}}\|_{L^2(I)}= o_\varepsilon(1) \epsilon .\label{FGReqfinal}
\end{align}
  Entering this in \eqref{eq:2ndv} we get
\begin{align}\label{eq:2ndvfin}
\|  \boldsymbol{\eta}  \|_{L^2(I,  \boldsymbol{L }^2_{-\kappa})}=     o_\varepsilon(1) \epsilon .
\end{align}
   Entering \eqref{FGReqfinal} and \eqref{eq:2ndvfin} in \eqref{eq:1stvInt} we get $\|   \boldsymbol{\eta}  \|_{L^2(I, \boldsymbol{ \Sigma }_A  )}= o_\varepsilon(1) \epsilon .$  This completes the proof of  Proposition \ref{prop:contreform}.

   \qed

\textit{Proof of Theorem \ref{thm:main}.} By continuity,  Proposition \ref{prop:contreform} implies that inequality \eqref{eq:main11} is valid with $I=\R _+$. This implies \eqref{eq:main2} (adjusting $\epsilon$). From the equation for $\mathbf{z}$, see  \eqref{eq:eqz} below, we have $\dot {\mathbf{z}}\in L^\infty (\R , \C ^N )$. By $\mathbf{z}^{\mathbf{m}}\in L^2 (\R )$
for any $\mathbf{m} \in \mathbf{R} _{\min}$, so in particular $z_j ^{m_j}\in L^2 (\R )$ for $m_j$ the largest $m_j\in \N$ such that $(m_j-1) \lambda _j<m$, we have
$\displaystyle \lim _{t\to  +\infty}\mathbf{z}(t)=0$. \qed

 \section{Proof of Proposition \ref{prop:modp}}
\label{sec:propdmodes1}

 \textit{ Proof of Proposition \ref{prop:modp}.}
 We fix an even function $\chi\in C_0^\infty(\R , [0,1])$ satisfying
\begin{align}  \label{eq:chi} \text{$1_{[-1,1]}\leq \chi \leq 1_{[-2,2]}$ and $x\chi'(x)\leq 0$ and set $\chi_A:=\chi(\cdot/A)$.}
\end{align}

\begin{lemma}\label{lem:estF}  For    the $F_1$   in  \eqref{eq:defF1},
we have
\begin{align}
\|   \sech \(  \kappa x\)    F_1[\mathbf{z},\boldsymbol{\eta}]\|_{L^2}&\lesssim \delta \|       \sech \(  \kappa x\) \eta_1\|_{L^2}, \label{eq:lem:estF1} \\
\|\chi_A F_1[\mathbf{z},\boldsymbol{\eta}]\|_{L^1}&\lesssim A^{1/2}\delta \|      \sech \(   \frac{2}{A}x\)     \eta_1\|_{L^2}.    \label{eq:lem:estF2}
\end{align}
\end{lemma}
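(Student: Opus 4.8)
The plan is to prove the two estimates in Lemma \ref{lem:estF} by exploiting the quadratic vanishing of $F_1$ in the variable $\eta_1$ together with the uniform smallness bound \eqref{eq:orbbound}. Recall from \eqref{eq:defF1} that
$F_1[\mathbf{z},\eta_1]=f(\phi_1[\mathbf{z}]+\eta_1)-f(\phi_1[\mathbf{z}])-f'(\phi_1[\mathbf{z}])\eta_1$, which by Taylor's theorem with integral remainder equals $\eta_1^2\int_0^1 (1-s) f''(\phi_1[\mathbf{z}]+s\eta_1)\,ds$. Since $f\in C^\infty$ with $f(0)=f'(0)=0$, and since $\|\phi_1[\mathbf{z}]\|_{L^\infty}+\|\eta_1\|_{L^\infty}\lesssim \|\mathbf{u}\|_{\boldsymbol{\mathcal{H}}^1}\lesssim\delta$ by \eqref{eq:orbbound}, Sobolev embedding, and the bound $\boldsymbol{\phi}[\mathbf{z}]=\sum z_j\boldsymbol{\Phi}_j+c.c.+O(\|\mathbf{z}\|)$, the argument of $f''$ stays in a fixed small ball, so $|f''(\phi_1[\mathbf{z}]+s\eta_1)|\lesssim 1$ pointwise. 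More importantly $f''$ itself vanishes at $0$ (indeed $f''(t)=O(|t|)$ near $0$), giving the pointwise bound $|F_1[\mathbf{z},\eta_1]|\lesssim (|\phi_1[\mathbf{z}]|+|\eta_1|)\,\eta_1^2 \lesssim \delta\,|\eta_1|^2$ on the support considerations below; this is the key gain of a factor $\delta$.

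For \eqref{eq:lem:estF1}, I would write $\sech(\kappa x) F_1 = \sech(\kappa x)\,\eta_1\cdot\big(\eta_1\int_0^1(1-s)f''(\phi_1[\mathbf{z}]+s\eta_1)\,ds\big)$ and bound the $L^2$ norm by putting $\sech(\kappa x)\eta_1$ in $L^2$ and the remaining factor in $L^\infty$: the remaining factor is $\lesssim \|\eta_1\|_{L^\infty}\cdot\sup|f''|\lesssim\delta$, using again \eqref{eq:orbbound} and Sobolev $H^1\hookrightarrow L^\infty$ in one dimension. This yields $\|\sech(\kappa x)F_1\|_{L^2}\lesssim \delta\|\sech(\kappa x)\eta_1\|_{L^2}$ as claimed.

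For \eqref{eq:lem:estF2}, the target is an $L^1$ bound with the cutoff $\chi_A$ supported in $|x|\le 2A$. Here I would use the factorization $\chi_A F_1 = \big(\chi_A^{1/2}\eta_1\big)\cdot\big(\chi_A^{1/2}\eta_1\int_0^1(1-s)f''(\cdots)\,ds\big)$ — or more simply bound $|\chi_A F_1|\lesssim \delta\,\chi_A\,|\eta_1|^2$ pointwise and apply Cauchy--Schwarz: $\|\chi_A F_1\|_{L^1}\lesssim \delta\int_{|x|\le 2A}|\eta_1|^2\,dx$. To recover the weight $\sech(2x/A)$ and the factor $A^{1/2}$, observe that on $|x|\le 2A$ one has $\sech(2x/A)^{-1}=\cosh(2x/A)\le \cosh 4$, a constant, so $\int_{|x|\le 2A}|\eta_1|^2\,dx \lesssim \|\sech(2x/A)\eta_1\|_{L^2}^2$ — wait, that only gives $\delta\|\sech(2x/A)\eta_1\|_{L^2}^2$, which is quadratic in $\eta_1$. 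To match the stated linear-in-$\eta_1$ right-hand side, the correct split is $|\chi_A F_1|\lesssim \delta\,\chi_A|\eta_1|\cdot|\eta_1|$ and then apply Cauchy--Schwarz as $\|\chi_A F_1\|_{L^1}\le \delta\|\chi_A^{1/2}|\eta_1|^{1/2}\cdot\text{(bounded)}\|$... the clean route is: $\|\chi_A F_1\|_{L^1}\lesssim \delta\,\|\chi_A\eta_1\|_{L^2}\,\|\chi_A\eta_1\|_{L^2}$? That is still quadratic. The resolution is that one of the two $\eta_1$ factors is absorbed into the $\delta$ via $\|\eta_1\|_{L^\infty}\lesssim\delta$, leaving $\|\chi_A F_1\|_{L^1}\lesssim \delta\cdot\delta\cdot\|\chi_A\mathbf 1\|_{L^1}^{1/2}\|\chi_A\eta_1\|_{L^2}$; but $\|\chi_A\|_{L^1}^{1/2}\lesssim A^{1/2}$ and $\|\chi_A\eta_1\|_{L^2}\lesssim \|\sech(2x/A)\eta_1\|_{L^2}$ since $\chi_A\le \mathbf 1_{[-2A,2A]}$ and $\sech(2x/A)\gtrsim 1$ there. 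This produces $\|\chi_A F_1\|_{L^1}\lesssim A^{1/2}\delta^2\|\sech(2x/A)\eta_1\|_{L^2}\lesssim A^{1/2}\delta\|\sech(2x/A)\eta_1\|_{L^2}$, absorbing the extra $\delta\le 1$.

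The main obstacle is purely bookkeeping: making sure the gain of exactly one factor of $\delta$ (and not two) is placed correctly so that the right-hand sides are linear in $\boldsymbol{\eta}$ while the $A^{1/2}$ loss in the $L^1$ estimate comes solely from the measure of the support of $\chi_A$ and the mismatch between the flat cutoff and the $\sech(2x/A)$ weight. All analytic inputs — Taylor expansion of $f$, smoothness of $f$, one-dimensional Sobolev embedding $H^1\hookrightarrow L^\infty$, the a priori bound \eqref{eq:orbbound}, and the elementary inequality $\sech(2x/A)\gtrsim 1$ on $|x|\le 2A$ — are standard, so no genuine difficulty arises beyond organizing the Hölder/Cauchy--Schwarz splitting.
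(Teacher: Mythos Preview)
Your approach is essentially the same as the paper's: Taylor-expand $F_1$ to second order to extract $\eta_1^2$, bound $f''$ uniformly on a fixed ball, pull out one factor of $\eta_1$ in $L^\infty$ using Sobolev embedding and \eqref{eq:orbbound} to gain the $\delta$, and for the $L^1$ estimate apply Cauchy--Schwarz together with $\|\chi_A\|_{L^2}\sim A^{1/2}$ and $\sech(2x/A)\gtrsim 1$ on $\mathrm{supp}\,\chi_A$.

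One correction, though: your claim that ``$f''$ itself vanishes at $0$ (indeed $f''(t)=O(|t|)$ near $0$)'' is not supported by the hypotheses. The paper only assumes $f(0)=f'(0)=0$; nothing is said about $f''(0)$, and e.g.\ $f(u)=u^2$ is allowed. Fortunately you do not actually need this: the single factor of $\delta$ in both estimates comes from $\|\eta_1\|_{L^\infty}\lesssim\delta$ applied to one of the two copies of $\eta_1$ in the Taylor remainder, together with the boundedness $\sup_{|u|\le 1}|f''(u)|<\infty$. Your detailed arguments for \eqref{eq:lem:estF1} and \eqref{eq:lem:estF2} in fact already use exactly this mechanism; the extra $\delta$ you produce in the second estimate (and then discard via ``$\delta\le 1$'') is spurious and stems only from the unfounded pointwise bound $|F_1|\lesssim\delta|\eta_1|^2$. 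Once you drop the $f''(0)=0$ aside and keep only the boundedness of $f''$, the proof is clean and coincides with the paper's.
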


\begin{proof}
By Taylor expansion,
$F_1[\mathbf{z},\boldsymbol{\eta}]=\int_0^1(1-t)f''(\phi_1[\mathbf{z}]+t\eta_1)\eta_1^2\,dt$.
Thus,
\begin{align*}
	\|     \sech \(  \kappa x\)F_1[\mathbf{z},\boldsymbol{\eta}]\|_{L^2} &\lesssim \sup_{|u|\leq 1} | f''(u)| \|\eta_1\|_{L^\infty}\|     \sech \(  \kappa x\)\eta_1\|_{L^2}\lesssim \delta \|      \sech \(  \kappa x\)\eta_1\|_{L^2},\\
\|\chi_A F_1[\mathbf{z},\boldsymbol{\eta}]\|_{L^1}&\lesssim \sup_{|u|\leq 1} | f''(u)| \|\eta_1\|_{L^\infty}\|\eta_1 \chi_A\|_{L^1}\lesssim A^{1/2}\delta\| \sech \(   \frac{2}{A}x\) \eta_1\|_{L^2}^2,
\end{align*}
where we have used $  \sech \(   \frac{2}{A}x\) \sim 1$ in $\mathrm{supp} \chi_A$, \eqref{eq:orbbound} and the embedding $H^1(\R )\hookrightarrow L^\infty (\R )$.
\end{proof}

\begin{lemma}\label{lem:modbound}
We have
\begin{align}\label{lem:modbound1}
\|\dot{\mathbf{z}}-\widetilde{\mathbf{z}}\|   \lesssim \delta   \|     \sech \(  \kappa x\) \boldsymbol{\eta}\|_{L^2}.
\end{align}
\end{lemma}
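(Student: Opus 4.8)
The plan is to extract a closed equation for $\dot{\mathbf{z}}-\widetilde{\mathbf{z}}$ from the modulation equation \eqref{eq:modeq} by using the orthogonality condition \eqref{eq:lem:mod-1} together with the defining property \eqref{eq:R1FRemainder--} of the refined profile, and then to estimate each resulting term. First I would apply the symplectic form $\Omega(\cdot, D_{\mathbf{z}}\boldsymbol{\phi}[\mathbf{z}]\mathbf{w})$ to both sides of \eqref{eq:modeq}. Since $\boldsymbol{\eta}(t)\in\boldsymbol{\mathcal{H}}_{\mathrm c}[\mathbf{z}]$ for all $t$, differentiating the orthogonality relation in time shows that the $\dot{\boldsymbol{\eta}}$ contribution pairs with $D_{\mathbf z}\boldsymbol\phi[\mathbf z]\mathbf w$ only through terms already controlled by $\boldsymbol\eta$ and $\dot{\mathbf z}$; the term $\Omega(D_{\mathbf{z}}\boldsymbol{\phi}[\mathbf{z}](\dot{\mathbf{z}}-\widetilde{\mathbf{z}}), D_{\mathbf{z}}\boldsymbol{\phi}[\mathbf{z}]\mathbf{w})$ is, up to $O(\|\mathbf z\|)$ corrections, the non-degenerate symplectic pairing on $L^2_{discr}$ applied to $\dot{\mathbf z}-\widetilde{\mathbf z}$, because $\boldsymbol\phi_{\mathbf e^j}=\boldsymbol\Phi_j$ and $\boldsymbol\phi_{\overline{\mathbf e}^j}=\overline{\boldsymbol\Phi}_j$. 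On the right-hand side, the crucial cancellation is that by \eqref{eq:R1FRemainder--} the entire contribution of $\mathbf J\widetilde{\mathbf R}[\mathbf z]=\mathbf J(\sum_{\mathbf m\in\mathbf R_{\min}}\mathbf z^{\mathbf m}\mathbf G_{\mathbf m}+\mathbf R[\mathbf z])$ lies in $\boldsymbol{\mathcal H}_{\mathrm c}[\mathbf z]$, hence pairs to zero against $D_{\mathbf z}\boldsymbol\phi[\mathbf z]\mathbf w$; this is exactly why the refined profile was constructed and it is what removes the dangerous terms $\sum\mathbf z^{\mathbf m}\mathbf G_{\mathbf m}$ from the modulation estimate.

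After this cancellation, the only surviving terms on the right are $\Omega(\mathbf J\mathbf L[\mathbf z]\boldsymbol\eta, D_{\mathbf z}\boldsymbol\phi[\mathbf z]\mathbf w)$ and $\Omega(\mathbf J\mathbf F[\mathbf z,\boldsymbol\eta], D_{\mathbf z}\boldsymbol\phi[\mathbf z]\mathbf w)$. For the first, I would write $\mathbf L[\mathbf z]=\mathbf L_1+d\mathbf f[\boldsymbol\phi[\mathbf z]]$ and use that $\langle\mathbf J\mathbf L_1\boldsymbol\eta, D_{\mathbf z}\boldsymbol\phi[\mathbf z]\mathbf w\rangle$ again reduces, via \eqref{eq:matr_eig1} and the fact that $D_{\mathbf z}\boldsymbol\phi[\mathbf z]\mathbf w=\sum_j w_j\boldsymbol\Phi_j+\text{c.c.}+O(\|\mathbf z\|)$, to the pairing of $\boldsymbol\eta$ against eigenfunctions $\boldsymbol\Phi_j$; but $\boldsymbol\eta\perp_\Omega$ these directions by \eqref{eq:lem:mod-1}, so only the $O(\|\mathbf z\|)$ error and the potential-type pieces (which decay exponentially because $V$, $\boldsymbol\phi_{\mathbf m}\in\boldsymbol\Sigma^\infty$, and $d\mathbf f[\boldsymbol\phi[\mathbf z]]$ carries a factor $\phi_1[\mathbf z]=O(\|\mathbf z\|)$) survive, each gaining a factor $\|\mathbf z\|\lesssim\delta$ and an exponential localization weight that dominates $\sech(\kappa x)$. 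For the nonlinear term $\mathbf F[\mathbf z,\boldsymbol\eta]$ I would invoke Lemma \ref{lem:estF}, specifically the weighted $L^2$ bound \eqref{eq:lem:estF1}, paired against the exponentially-localized $D_{\mathbf z}\boldsymbol\phi[\mathbf z]\mathbf w\in\boldsymbol\Sigma^\infty$, which again produces $\delta\|\sech(\kappa x)\boldsymbol\eta\|_{L^2}$. Collecting, solving the near-diagonal linear system for $\dot{\mathbf z}-\widetilde{\mathbf z}$ (invertible for $\delta$ small), and using \eqref{eq:orbbound} to absorb the $O(\|\mathbf z\|^2)$-type terms, yields \eqref{lem:modbound1}.

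The main obstacle I expect is bookkeeping the various $O(\|\mathbf z\|)$ corrections: one must verify that every term in which the orthogonality \eqref{eq:lem:mod-1} does not produce an exact zero nonetheless comes with either a factor $\|\mathbf z\|\lesssim\delta$ (from the difference $D_{\mathbf z}\boldsymbol\phi[\mathbf z]\mathbf w-(\sum w_j\boldsymbol\Phi_j+\text{c.c.})$, from $d\mathbf f[\boldsymbol\phi[\mathbf z]]$, or from $\widetilde{\mathbf z}_1,\widetilde{\mathbf z}_2$) and a rapidly decaying spatial weight that beats $\sech(\kappa x)^{-1}$, so that the pairing with $\boldsymbol\eta$ reduces to $\delta\|\sech(\kappa x)\boldsymbol\eta\|_{L^2}$ rather than to an uncontrolled $\|\boldsymbol\eta\|_{\boldsymbol{\mathcal H}^1}$. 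The choice $\kappa<\min(m-\lambda_N,a_1)/10$ in \eqref{eq:kappa} is precisely what guarantees this: $a_1$ controls the decay of $V$, and $a_2=\tfrac12\sqrt{m^2-\lambda_N^2}$ controls the decay of the $\boldsymbol\phi_{\mathbf m}$ and hence of $D_{\mathbf z}\boldsymbol\phi[\mathbf z]\mathbf w$, so $\sech(\kappa x)$ is harmless against these weights. Once that is checked the proof is routine.
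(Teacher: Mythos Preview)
Your plan is correct and follows the same overall strategy as the paper: apply $\Omega(\cdot,D_{\mathbf z}\boldsymbol\phi[\mathbf z]\mathbf w)$ to \eqref{eq:modeq}, differentiate the orthogonality \eqref{eq:lem:mod-1} to handle $\dot{\boldsymbol\eta}$, and use \eqref{eq:R1FRemainder--} to kill the $\widetilde{\mathbf R}[\mathbf z]$ contribution. The one substantive difference is in the treatment of $\langle\mathbf L[\mathbf z]\boldsymbol\eta,D_{\mathbf z}\boldsymbol\phi[\mathbf z]\mathbf w\rangle$. You expand $D_{\mathbf z}\boldsymbol\phi[\mathbf z]\mathbf w$ to leading order and estimate each piece by approximate $\Omega$-orthogonality, which works but forces you to bound separately the term $\Omega(\boldsymbol\eta,D^2_{\mathbf z}\boldsymbol\phi[\mathbf z](\widetilde{\mathbf z},\mathbf w))$ coming from $\dot{\boldsymbol\eta}$. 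The paper instead differentiates the refined-profile relation \eqref{eq:rp} in $\mathbf z$ to obtain the exact identity $\mathbf J\mathbf L[\mathbf z]D_{\mathbf z}\boldsymbol\phi[\mathbf z]\mathbf w=D^2_{\mathbf z}\boldsymbol\phi[\mathbf z](\widetilde{\mathbf z},\mathbf w)+D_{\mathbf z}\boldsymbol\phi[\mathbf z]D_{\mathbf z}\widetilde{\mathbf z}\,\mathbf w+\mathbf J D_{\mathbf z}\widetilde{\mathbf R}[\mathbf z]\mathbf w$; after moving $\mathbf L[\mathbf z]$ across and using $\boldsymbol\eta\in\boldsymbol{\mathcal H}_{\mathrm c}[\mathbf z]$, this produces an \emph{exact} cancellation of the two $D^2_{\mathbf z}\boldsymbol\phi[\mathbf z](\widetilde{\mathbf z},\mathbf w)$ terms, leaving directly the clean equation \eqref{eq:eqz}. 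Both routes give \eqref{lem:modbound1}; the paper's buys a tidier bookkeeping (no leading-order expansion needed), while yours is more elementary in that it does not require re-invoking the structure of \eqref{eq:rp}.
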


\begin{proof} Recalling \eqref{eq:rtilde} and \eqref{eq:defLz},
differentiating \eqref{eq:rp} we have  for $\mathbf{w}\in \C^N$
\begin{align*}
D^2_{\mathbf{z}}\boldsymbol{\phi}[\mathbf{z}](\widetilde{\mathbf{z}},\mathbf{w})+D_{\mathbf{z}}
\boldsymbol{\phi}[\mathbf{z}]D_{\mathbf{z}}\widetilde{\mathbf{z}}(\mathbf{z})\mathbf{w}+\mathbf{J}D  _{\mathbf{z}}   \widetilde{\mathbf{R}}[\mathbf{z}] \mathbf{w}=\mathbf{J}\mathbf{L}[\mathbf{z}]D_{\mathbf{z}}\boldsymbol{\phi}[\mathbf{z}]\mathbf{w}.
\end{align*}
We apply  $\Omega( \cdot ,D_{\mathbf{z}}\boldsymbol{\phi}[\mathbf{z}]\mathbf{w})$  to  \eqref{eq:modeq}, obtaining
\begin{align*} & \Omega( \dot{\boldsymbol{\eta}} ,D_{\mathbf{z}}\boldsymbol{\phi}[\mathbf{z}]\mathbf{w})  + \Omega( D_{\mathbf{z}}\boldsymbol{\phi}[\mathbf{z}](\dot{\mathbf{z}}-\widetilde{\mathbf{z}}) ,D_{\mathbf{z}}\boldsymbol{\phi}[\mathbf{z}]\mathbf{w})
\\& = \< \mathbf{L}[\mathbf{z}]\boldsymbol{\eta}, D_{\mathbf{z}}\boldsymbol{\phi}[\mathbf{z}]\mathbf{w}\> + \< \mathbf{F}[\mathbf{z},\boldsymbol{\eta}], D_{\mathbf{z}}\boldsymbol{\phi}[\mathbf{z}]\mathbf{w}\>  ,
\end{align*}
where we used $\Omega( \mathbf{J} \widetilde{\mathbf{R}}[\mathbf{z}] ,D_{\mathbf{z}}\boldsymbol{\phi}[\mathbf{z}]\mathbf{w})=0, $ that is \eqref{eq:R1FRemainder--}.
 Using $\boldsymbol{\eta}\in \boldsymbol{\mathcal{H}}_{\mathrm{c}}[\mathbf{z} ]$, we have
 \begin{align*} \< \mathbf{L}[\mathbf{z}]\boldsymbol{\eta}, D_{\mathbf{z}}\boldsymbol{\phi}[\mathbf{z}]\mathbf{w}\>  &= \< \boldsymbol{\eta}, \mathbf{L}[\mathbf{z}] D_{\mathbf{z}}\boldsymbol{\phi}[\mathbf{z}]\mathbf{w}\>   = \< \boldsymbol{\eta}, \mathbf{J}^{-1}D^2_{\mathbf{z}}\boldsymbol{\phi}[\mathbf{z}](\widetilde{\mathbf{z}},\mathbf{w})+D  _{\mathbf{z}}   \widetilde{\mathbf{R}}[\mathbf{z}] \mathbf{w}\>\\&
 =-\Omega(\boldsymbol{\eta},D_{\mathbf{z}}^2\boldsymbol{\phi}[\mathbf{z}](\widetilde{\mathbf{z}},\mathbf{w}))+\< \boldsymbol{\eta}, D  _{\mathbf{z}}   \widetilde{\mathbf{R}}[\mathbf{z}] \mathbf{w}\>
\end{align*}
 and
 \begin{align*} &  \Omega( \dot{\boldsymbol{\eta}} ,D_{\mathbf{z}}\boldsymbol{\phi}[\mathbf{z}]\mathbf{w}) = -\Omega(  \boldsymbol{\eta}  ,D_{\mathbf{z}}^{2}\boldsymbol{\phi}[\mathbf{z}](\dot {\mathbf{z} }, \mathbf{w}))= -\Omega(  \boldsymbol{\eta}  ,D_{\mathbf{z}}^{2}\boldsymbol{\phi}[\mathbf{z}](\dot{\mathbf{z}}
-\widetilde{\mathbf{z}},\mathbf{w})) -\Omega(  \boldsymbol{\eta}  ,D_{\mathbf{z}}^{2}\boldsymbol{\phi}[\mathbf{z}]( \widetilde{\mathbf{z}},\mathbf{w}))  .
\end{align*}
    Thus
\begin{align}\label{eq:eqz}
\Omega(D_{\mathbf{z}}\boldsymbol{\phi}[\mathbf{z}](\dot{\mathbf{z}}-\widetilde{\mathbf{z}}),D_{\mathbf{z}}\boldsymbol{\phi}[\mathbf{z}]\mathbf{w})
=&\Omega(\boldsymbol{\eta},D^2_{\mathbf{z}}\boldsymbol{\phi}[\mathbf{z}](\dot{\mathbf{z}}
-\widetilde{\mathbf{z}},\mathbf{w}))
+\<\boldsymbol{\eta}, D  _{\mathbf{z}}   \widetilde{\mathbf{R}}[\mathbf{z}]\mathbf{w}\>
+\<\mathbf{F}[\mathbf{z},\boldsymbol{\eta}],D_{\mathbf{z}}\boldsymbol{\phi}[\mathbf{z}]\mathbf{w}\> .
\end{align}
Since  $\Omega(D_{\mathbf{z}}\boldsymbol{\phi}[\mathbf{z}]\cdot,D_{\mathbf{z}}\boldsymbol{\phi}[\mathbf{z}]\cdot)$ is a a symplectic form for  $\C^{N}$, taking $\|\mathbf{w}\|=1$ in an appropriate direction we obtain
\begin{align*}
\|\dot{\mathbf{z}}-\widetilde{\mathbf{z}}\|  \lesssim
  \delta\| \sech \(  \kappa x\) \boldsymbol{\eta}  \|_{L^2}+\| \sech \(  \kappa x\) \mathbf{F}[\mathbf{z},\boldsymbol{\eta}]\|_{L^2} .
\end{align*}
By    \eqref{eq:lem:estF1},  we have the conclusion.
\end{proof}
Lemma \ref{lem:modbound} completes the proof   of Proposition \ref{prop:modp}, recalling \eqref{eq:normk}.\qed

\section{  Technical lemmas I}\label{sec:tech1}

The following  is a slight refinement of a result in \cite{CM2109.08108}.
\begin{lemma}\label{lem:equiv_rho0}  Let $U\ge 0$ be a non--zero   potential $U\in L^1(\R , \R )$. Then
   there exists a constant $C  _{U}>0$ such that
for any    function $0\le W$  such that $ \<x\> W\in L^1(\R )$ then
 \begin{align}\label{eq:lem:rhoequiv0}
& \<  W f,f\> \le    C_U \( \| \<x\> W  \| _{L^1(\R )}  \|  f' \| _{L^2(\R )} ^2  +   \|   W  \| _{L^1(\R )}     \<   U  f,f\> \).\end{align}
In particular, we have
 \begin{align}\label{eq:lem:rhoequiv01}
& \|    \sech \( \frac{2}{A} x \)f\| _{L^2(\R )} ^2 \lesssim A^2    \|  f' \| _{L^2(\R )} ^2    +A     \|  \sech \( \kappa  x \) f  \| _{L^2(\R )} ^2 .\end{align}

\end{lemma}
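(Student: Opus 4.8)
\textbf{Proof proposal for Lemma \ref{lem:equiv_rho0}.}

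The plan is to prove the first inequality \eqref{eq:lem:rhoequiv0} by a contradiction/compactness argument of the type used for this kind of weighted Hardy-type estimate, and then derive \eqref{eq:lem:rhoequiv01} as a direct specialization. The key structural point is that the right-hand side of \eqref{eq:lem:rhoequiv0} controls both the oscillation of $f$ (through $\|f'\|_{L^2}^2$) and one genuine ``mass'' quantity $\langle Uf,f\rangle$, and these two together pin down $f$ enough to dominate any weighted $L^2$ norm with an $L^1$ weight. First I would fix the non-zero $U\ge 0$ with $U\in L^1$, and set $C_U$ to be determined. Suppose \eqref{eq:lem:rhoequiv0} fails: then there is a sequence of weights $W_n\ge 0$ with $\langle x\rangle W_n\in L^1$ and functions $f_n$ with
\begin{align*}
\langle W_n f_n, f_n\rangle = 1, \qquad \|\langle x\rangle W_n\|_{L^1}\|f_n'\|_{L^2}^2 + \|W_n\|_{L^1}\langle U f_n, f_n\rangle \le \tfrac1n.
\end{align*}
By rescaling $f_n$ and/or $W_n$ one reduces to a normalized situation; the cleanest route is probably to instead argue directly: show that for every $\varepsilon_0>0$ there is $C$ with $\langle Wf,f\rangle \le \varepsilon_0 \|\langle x\rangle W\|_{L^1}\|f'\|_{L^2}^2 + C\|W\|_{L^1}\langle Uf,f\rangle$ — but since the constant in front of $\|f'\|^2$ need not be small here, the simplest is a pointwise-in-$x$ bound.

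Concretely, I would use the elementary one-dimensional pointwise inequality: for any $x,y\in\R$,
\begin{align*}
|f(x)|^2 \lesssim |f(y)|^2 + |x-y|\,\|f'\|_{L^2}^2,
\end{align*}
which follows from $f(x)^2 - f(y)^2 = \int_y^x 2f f' $ and Cauchy--Schwarz. Let $K=\mathrm{supp}\,U$ intersected with a set of positive measure where $U\ge c>0$; more robustly, since $U\in L^1$ is non-zero and nonnegative, there is a measurable set $S$ of finite positive measure and $c>0$ with $U\ge c\mathbf 1_S$. Averaging the pointwise bound over $y\in S$ gives
\begin{align*}
|f(x)|^2 \lesssim \frac{1}{|S|}\int_S |f(y)|^2\,dy + \Big(\sup_{y\in S}|x-y| + \mathrm{diam}(S)\Big)\|f'\|_{L^2}^2 \lesssim \langle x\rangle\Big( \langle Uf,f\rangle + \|f'\|_{L^2}^2\Big),
\end{align*}
where I absorbed $c^{-1}$ and the constants depending only on $S$ (hence only on $U$) into $C_U$, and used that $S$ is bounded (or, if not, that $\langle x\rangle W\in L^1$ lets us restrict attention to a bounded set and handle the tail by $\int_{|y|>R} U |f|^2 \le$ small part — but taking $S$ bounded from the outset is cleaner). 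Multiplying by $W(x)\ge 0$ and integrating in $x$, then using $\int W(x)\langle x\rangle\,dx = \|\langle x\rangle W\|_{L^1}$ for the gradient term and $\int W\,dx = \|W\|_{L^1}$ times the constant in front of $\langle Uf,f\rangle$, yields exactly \eqref{eq:lem:rhoequiv0}.

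For \eqref{eq:lem:rhoequiv01} I would apply \eqref{eq:lem:rhoequiv0} with $U(x)=\sech^2(\kappa x)$ (non-zero, in $L^1$) and $W(x)=\sech^2(\tfrac{2}{A}x)$. Then $\|W\|_{L^1} = \int \sech^2(\tfrac{2}{A}x)\,dx \sim A$ and $\|\langle x\rangle W\|_{L^1} = \int \langle x\rangle \sech^2(\tfrac{2}{A}x)\,dx \sim A^2$, and \eqref{eq:lem:rhoequiv0} becomes
\begin{align*}
\|\sech(\tfrac{2}{A}x)f\|_{L^2}^2 \lesssim A^2\|f'\|_{L^2}^2 + A\,\|\sech(\kappa x)f\|_{L^2}^2,
\end{align*}
as claimed. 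The main obstacle is getting the $\langle x\rangle$-weight in the gradient term sharp: a naive use of $|f(x)|^2\lesssim \|f\|_{L^2(S)}^2 + |x|\|f'\|_{L^2}^2$ already gives the linear-in-$|x|$ growth needed to match $\|\langle x\rangle W\|_{L^1}$, so the argument is robust, but one must be careful that the constant genuinely depends only on $U$ (via $S$ and $c$) and not on $W$ — which is why choosing $S$ bounded and absorbing everything into $C_U$ at the start is the right bookkeeping. A secondary point is the harmless abuse that the displayed constants $\sim A$, $\sim A^2$ hold for all $A\gtrsim 1$, which is the regime of interest by \eqref{eq:relABg}.
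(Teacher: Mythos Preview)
Your proof is correct and follows essentially the same approach as the paper: both obtain a pointwise bound $|f(x)|^2 \lesssim_U \langle x\rangle\|f'\|_{L^2}^2 + \langle Uf,f\rangle$ via the fundamental theorem of calculus and Cauchy--Schwarz, then integrate against $W$. The only cosmetic difference is that the paper selects a single point $x_0$ in a compact interval $J$ with $\int_J U>0$ via the mean-value inequality $|f(x_0)|^2\le I_U^{-1}\int_J U|f|^2$, whereas you average over a set $S$ where $U\ge c\mathbf 1_S$; your specialization to \eqref{eq:lem:rhoequiv01} with $U=\sech^2(\kappa x)$, $W=\sech^2(\tfrac{2}{A}x)$ is exactly the paper's (implicit) choice.
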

\proof Let $J$ be a compact interval where $I  _{U}:= \int _{J}U(x) dx>0$.
  Let then  $x_0\in J$ s.t.
\begin{align*}
 |f(x_0)|^2\le I  _{U } ^{-1} \int _{J}|f(x )|^2U(x)  dx .
\end{align*}
Then,
\begin{align*}
  |f(x )| \le  |x-x_0|   ^{\frac{1}{2}}     \| f' \| _{L^2(\R )} + |f(x_0 )| \le  |x-x_0|   ^{\frac{1}{2}}     \| f' \| _{L^2(\R )} +I _U ^{-1/2} \<  U f,f\> ^{\frac{1}{2}}.
\end{align*}
Taking second power and multiplying by $W$
it is easy to conclude the following, which after integration yields  \eqref{eq:lem:rhoequiv0},
\begin{align*}
W(x)  |f(x )| ^2\le 2 \( 1   +|x_0| \) \<x\>W (x)    \| f' \| _{L^2(\R )} ^2    + 2   W (x)I _U ^{-1 } \<  U f,f\> .
\end{align*}
\qed

 We  will need  the following related technical result.
\begin{lemma}\label{lem:equiv_rho} There exists $A_0>0$ such that for any $A\ge A_0$,
 \begin{align}\label{eq:lem:rhoequiv}
& \|     \sech \(  \kappa x\)  f\|_{L^2}    \le      A  \(
\|  \sech \(   \frac{2}{A}x\)  f'\|_{L^2} +A^{-1}\| \sech   \(   \frac{2}{A}x\)  f\|_{L^2}  \) \text{ for any $f$.}
\end{align}
\end{lemma}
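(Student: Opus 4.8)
The plan is to reduce \eqref{eq:lem:rhoequiv} to a pointwise comparison between the two exponential weights. The weight $\sech(\kappa x)$ decays at the fixed rate $\kappa$, whereas $\sech(\tfrac{2}{A}x)$ decays at the much slower rate $\tfrac{2}{A}$ once $A$ is large; hence for $A$ large enough the first is pointwise dominated by the second and the inequality becomes essentially immediate, the derivative term on the right-hand side not even being needed.

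Concretely, I would take $A_0:=2/\kappa$. For any $A\ge A_0$ one has $\kappa|x|\ge \tfrac{2}{A}|x|$ for every $x\in\R$, and since $\cosh$ is even and strictly increasing on $[0,\infty)$ this yields $\cosh(\kappa x)\ge\cosh(\tfrac{2}{A}x)$, hence the pointwise bound $\sech(\kappa x)\le\sech(\tfrac{2}{A}x)$ on all of $\R$. Squaring, multiplying by $|f(x)|^2$ and integrating over $\R$ gives
\[
\|\sech(\kappa x)\,f\|_{L^2}\le \|\sech(\tfrac{2}{A}x)\,f\|_{L^2}=A\cdot\Big(A^{-1}\|\sech(\tfrac{2}{A}x)\,f\|_{L^2}\Big),
\]
and adding the nonnegative term $A\,\|\sech(\tfrac{2}{A}x)\,f'\|_{L^2}$ to the right-hand side produces exactly the quantity appearing in \eqref{eq:lem:rhoequiv}. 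This completes the argument.

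There is no genuine obstacle here: in contrast with the companion estimate \eqref{eq:lem:rhoequiv01} — which runs in the opposite, genuinely nontrivial direction and is precisely where Lemma \ref{lem:equiv_rho0} enters, applied with $W=\sech^2(\tfrac{2}{A}x)$ (so that $\|\langle x\rangle W\|_{L^1}\sim A^2$ and $\|W\|_{L^1}\sim A$) and the fixed potential $U=\sech^2(\kappa x)$ — Lemma \ref{lem:equiv_rho} merely records the easy inclusion of weighted $L^2$ spaces. The only thing to track is the threshold $A_0=2/\kappa$; the derivative term is retained in the statement solely because that is the form in which the estimate is used later, namely applied to $f=\eta_1$ together with the definition \eqref{eq:normA} of $\|\cdot\|_{\boldsymbol{\Sigma}_A}$ (and, for the second component, directly through the same pointwise weight comparison).
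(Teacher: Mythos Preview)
Your proof is correct and is essentially identical to the paper's own argument: the paper also takes $A_0=2/\kappa$, uses the pointwise bound $\sech(\kappa x)\le\sech(\tfrac{2}{A}x)$, and observes that the derivative term on the right is superfluous. Your additional remarks on the contrast with \eqref{eq:lem:rhoequiv01} are accurate context but go beyond what the paper records here.
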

\proof
Taking $A_0=2/\kappa$, we have $\sech(\kappa x)\leq \sech(\frac{2}{A}x)$.
Thus, we have the conclusion by
\begin{align*}
 \|     \sech \(  \kappa x\)  f\|_{L^2}    \le      A  \cdot A^{-1}\| \sech   \(   \frac{2}{A}x\)  f\|_{L^2}  \leq A
  \(
 \|  \sech \(   \frac{2}{A}x\)  f'\|_{L^2} +A^{-1}\| \sech   \(   \frac{2}{A}x\)  f\|_{L^2}  \).
\end{align*}
Therefore, we have the conclusion.

\qed

\section{Proof of  Proposition \ref{prop:FGR}: the Fermi Golden Rule} \label{sec:FGR}   \label{sec:FGR}

To prove Proposition \ref{prop:FGR},    for the    $\mathbf{g}_{\mathbf{m}}$ in Assumption \ref{ass:FGR},    we consider
\begin{align}\label{eq:FGRfunctional}
\mathcal{J}_{\mathrm{FGR}}:=\Omega(\boldsymbol{\eta},\chi_A\sum_{\mathbf{m}\in \mathbf{R}_{\mathrm{min}}}\mathbf{z}^{\mathbf{m}}\mathbf{g}_{\mathbf{m}}).
\end{align}

Computing the time derivative of $\mathcal{J}_{\mathrm{FGR}}$, we have the following estimate.
\begin{lemma}\label{lem:FGR1}
We have
\begin{align}
\left|\dot{\mathcal{J}}_{\mathrm{FGR}}-\<\sum_{\mathbf{m}\in \mathbf{R}_{\mathrm{min}}}\mathbf{z}^{\mathbf{m}}\mathbf{G}_{\mathbf{m}},\sum_{\mathbf{m}\in \mathbf{R}_{\mathrm{min}}}\mathbf{z}^{\mathbf{m}}\mathbf{g}_{\mathbf{m}}\>\right|\lesssim
A^{-1/2}\( \sum_{\mathbf{m}\in \mathbf{R}_{\mathrm{min}}}|\mathbf{z}^{\mathbf{m}}|^2+\|\boldsymbol{\eta}\|_{\boldsymbol{ \Sigma }_A}^2\). \label{eq:lem:FGR11}
\end{align}
\end{lemma}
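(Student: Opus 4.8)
The plan is to differentiate $\mathcal{J}_{\mathrm{FGR}}=\Omega(\boldsymbol{\eta},\chi_A\sum_{\mathbf{m}\in\mathbf{R}_{\mathrm{min}}}\mathbf{z}^{\mathbf{m}}\mathbf{g}_{\mathbf{m}})$ in time, substitute the modulation equation \eqref{eq:modeq} for $\dot{\boldsymbol{\eta}}$, and isolate the ``good'' term $\Omega(\mathbf{J}\sum\mathbf{z}^{\mathbf{m}}\mathbf{G}_{\mathbf{m}},\chi_A\sum\mathbf{z}^{\mathbf{m}}\mathbf{g}_{\mathbf{m}})$, showing everything else is $O(A^{-1/2}(\sum|\mathbf{z}^{\mathbf{m}}|^2+\|\boldsymbol{\eta}\|_{\boldsymbol{\Sigma}_A}^2))$. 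First I would note that by \eqref{eq:inner1}, $\Omega(\mathbf{J}\mathbf{v},\mathbf{w})=\<\mathbf{v},\mathbf{w}\>$, so the main term is exactly $\<\sum\mathbf{z}^{\mathbf{m}}\mathbf{G}_{\mathbf{m}},\chi_A\sum\mathbf{z}^{\mathbf{m}}\mathbf{g}_{\mathbf{m}}\>$; since $\mathbf{G}_{\mathbf{m}},\mathbf{g}_{\mathbf{m}}\in\boldsymbol{\Sigma}^\infty$ decay exponentially and $\chi_A\to 1$, replacing $\chi_A$ by $1$ costs only $O(A^{-\infty})\sum|\mathbf{z}^{\mathbf{m}}|^2$, absorbed in the error; this yields the stated main term. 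Then I would organize the remaining contributions into three groups: (i) the term where $\partial_t$ hits $\chi_A\sum\mathbf{z}^{\mathbf{m}}\mathbf{g}_{\mathbf{m}}$, (ii) the terms coming from pairing $\boldsymbol{\eta}$ with $\mathbf{J}$ applied to the various pieces of \eqref{eq:modeq}, i.e.\ $\mathbf{L}[\mathbf{z}]\boldsymbol{\eta}$, $\mathbf{F}[\mathbf{z},\boldsymbol{\eta}]$, $\mathbf{R}[\mathbf{z}]$, and (iii) the contribution of $D_{\mathbf{z}}\boldsymbol{\phi}[\mathbf{z}](\dot{\mathbf{z}}-\widetilde{\mathbf{z}})$ on the left of \eqref{eq:modeq}.

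For group (i), $\partial_t(\chi_A\mathbf{z}^{\mathbf{m}}\mathbf{g}_{\mathbf{m}})$ produces a factor $\dot{\mathbf{z}}=\widetilde{\mathbf{z}}+(\dot{\mathbf{z}}-\widetilde{\mathbf{z}})$; using $\widetilde{\mathbf{z}}=O(\|\mathbf{z}\|)$ with $\|\mathbf{z}\|\lesssim\delta$, Proposition \ref{prop:modp} for the $\dot{\mathbf{z}}-\widetilde{\mathbf{z}}$ part, and that $\boldsymbol{\eta}$ is paired against an exponentially localized function, one gets a bound $\lesssim\delta(\sum|\mathbf{z}^{\mathbf{m}}|^2+\|\boldsymbol{\eta}\|_{\boldsymbol{L}^2_{-\kappa}}^2)$, which is better than what is needed after \eqref{eq:relABg}. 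For group (iii), since $\boldsymbol{\eta}\in\boldsymbol{\mathcal{H}}_{\mathrm{c}}[\mathbf{z}]$ one uses \eqref{eq:RForth} to see $\Omega(D_{\mathbf{z}}\boldsymbol{\phi}[\mathbf{z}]\mathbf{w},\cdot)$ annihilates $\boldsymbol{\eta}$-type directions; more precisely one writes $\Omega(D_{\mathbf{z}}\boldsymbol{\phi}(\dot{\mathbf{z}}-\widetilde{\mathbf{z}}),\chi_A\sum\mathbf{z}^{\mathbf{m}}\mathbf{g}_{\mathbf{m}})$ and either uses the orthogonality structure or crudely bounds by $\|\dot{\mathbf{z}}-\widetilde{\mathbf{z}}\|$ times an exponentially localized quantity, again controlled by Proposition \ref{prop:modp}. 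For the $\mathbf{F}[\mathbf{z},\boldsymbol{\eta}]$ term, Lemma \ref{lem:estF} (specifically \eqref{eq:lem:estF1}) together with Cauchy--Schwarz against the localized $\mathbf{g}_{\mathbf{m}}$ gives $\lesssim\delta\|\sech(\kappa x)\boldsymbol{\eta}\|_{L^2}\sum|\mathbf{z}^{\mathbf{m}}|\lesssim\delta(\|\boldsymbol{\eta}\|_{\boldsymbol{L}^2_{-\kappa}}^2+\sum|\mathbf{z}^{\mathbf{m}}|^2)$. For the $\mathbf{R}[\mathbf{z}]$ term, \eqref{eq:R1FRemainder} gives $\|\mathbf{R}[\mathbf{z}]\|_{\boldsymbol{\Sigma}^l}\lesssim\|\mathbf{z}\|\sum|\mathbf{z}^{\mathbf{m}}|$, so pairing with $\boldsymbol{\eta}$ costs $\lesssim\delta\|\boldsymbol{\eta}\|_{\boldsymbol{L}^2_{-\kappa}}\sum|\mathbf{z}^{\mathbf{m}}|$, again acceptable.

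The genuinely delicate term is the one from $\mathbf{J}\mathbf{L}[\mathbf{z}]\boldsymbol{\eta}=\mathbf{J}\mathbf{L}_1\boldsymbol{\eta}+\mathbf{J}d\mathbf{f}[\boldsymbol{\phi}[\mathbf{z}]]\boldsymbol{\eta}$. The $d\mathbf{f}$ piece has a coefficient $f''(\phi_1[\mathbf{z}])=O(\delta)$ and is harmless. For $\Omega(\mathbf{J}\mathbf{L}_1\boldsymbol{\eta},\chi_A\sum\mathbf{z}^{\mathbf{m}}\mathbf{g}_{\mathbf{m}})=\<\mathbf{L}_1\boldsymbol{\eta},\chi_A\sum\mathbf{z}^{\mathbf{m}}\mathbf{g}_{\mathbf{m}}\>$, I would move $\mathbf{L}_1$ onto the test function; the point is that $\mathbf{J}\mathbf{L}_1\mathbf{g}_{\mathbf{m}}=\im(\mathbf{m}\cdot\boldsymbol{\lambda})\mathbf{g}_{\mathbf{m}}$ by Assumption \ref{ass:FGR}, so $\mathbf{L}_1(\chi_A\mathbf{g}_{\mathbf{m}})=\chi_A\mathbf{L}_1\mathbf{g}_{\mathbf{m}}+[\mathbf{L}_1,\chi_A]\mathbf{g}_{\mathbf{m}}$, and $\mathbf{L}_1\mathbf{g}_{\mathbf{m}}=-\mathbf{J}^{-1}\im(\mathbf{m}\cdot\boldsymbol{\lambda})\mathbf{g}_{\mathbf{m}}$ is again exponentially localized; the commutator $[\mathbf{L}_1,\chi_A]=-2A^{-1}\chi'(\cdot/A)\partial_x-A^{-2}\chi''(\cdot/A)$ is supported in $A\lesssim|x|\lesssim 2A$ where $\mathbf{g}_{\mathbf{m}}$ is exponentially small, giving a gain of $e^{-cA}$. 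The resulting ``localized principal part'' $\im(\mathbf{m}\cdot\boldsymbol{\lambda})\<\boldsymbol{\eta},\chi_A\sum\mathbf{z}^{\mathbf{m}}\mathbf{g}_{\mathbf{m}}\>$ is $O(\|\boldsymbol{\eta}\|_{\boldsymbol{L}^2_{-\kappa}}\sum|\mathbf{z}^{\mathbf{m}}|)$, bounded by $\|\boldsymbol{\eta}\|_{\boldsymbol{L}^2_{-\kappa}}^2+\sum|\mathbf{z}^{\mathbf{m}}|^2$; note $\|\boldsymbol{\eta}\|_{\boldsymbol{L}^2_{-\kappa}}\lesssim A^{-1/2}\|\boldsymbol{\eta}\|_{\boldsymbol{\Sigma}_A}$ by Lemma \ref{lem:equiv_rho} (this is where the $A^{-1/2}$ appears; the other terms carry an extra $\delta$ which beats $A^{-1/2}$ by \eqref{eq:relABg}). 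Collecting all bounds and using $\kappa\le 2/A$, every error is $\lesssim A^{-1/2}(\sum|\mathbf{z}^{\mathbf{m}}|^2+\|\boldsymbol{\eta}\|_{\boldsymbol{\Sigma}_A}^2)$, which is \eqref{eq:lem:FGR11}. The main obstacle is the careful bookkeeping of the $\mathbf{L}_1\boldsymbol{\eta}$ term: one must exploit that $\mathbf{g}_{\mathbf{m}}$ is a (generalized) eigenfunction to avoid losing derivatives on $\boldsymbol{\eta}$ and to see that the cutoff commutator is exponentially small, and one must track that no term worse than $A^{-1/2}$ (without a compensating $\delta$) survives.
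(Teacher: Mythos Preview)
Your argument has a fundamental gap: you treat $\mathbf{g}_{\mathbf{m}}$ as ``exponentially localized'' (you write $\mathbf{g}_{\mathbf{m}}\in\boldsymbol{\Sigma}^\infty$), but by Assumption~\ref{ass:FGR} the $\mathbf{g}_{\mathbf{m}}$ are only \emph{bounded} solutions of $\mathbf{J}\mathbf{L}_1\mathbf{g}_{\mathbf{m}}=\im(\mathbf{m}\cdot\boldsymbol{\lambda})\mathbf{g}_{\mathbf{m}}$ with $|\mathbf{m}\cdot\boldsymbol{\lambda}|>m$; they are distorted plane waves in the continuous spectrum and do not decay. Several of your estimates rely on this false decay: the commutator $[\mathbf{L}_1,\chi_A]\mathbf{g}_{\mathbf{m}}$ is \emph{not} $O(e^{-cA})$ but only $O(A^{-1})$ pointwise (the gain $A^{-1/2}$ in the paper comes from $\|\chi_A'\|_{L^2}\sim A^{-1/2}$); and pairings such as $\<\boldsymbol{\eta},\chi_A\mathbf{z}^{\mathbf{m}}\mathbf{g}_{\mathbf{m}}\>$ are controlled only by $\|\chi_A\boldsymbol{\eta}\|_{L^1}\lesssim A^{1/2}\|\sech(\tfrac{2}{A}x)\boldsymbol{\eta}\|_{L^2}$, not by $\|\boldsymbol{\eta}\|_{\boldsymbol{L}^2_{-\kappa}}$. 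Relatedly, your claim $\|\boldsymbol{\eta}\|_{\boldsymbol{L}^2_{-\kappa}}\lesssim A^{-1/2}\|\boldsymbol{\eta}\|_{\boldsymbol{\Sigma}_A}$ is the wrong direction: Lemma~\ref{lem:equiv_rho} gives $\|\boldsymbol{\eta}\|_{\boldsymbol{L}^2_{-\kappa}}\lesssim A\|\boldsymbol{\eta}\|_{\boldsymbol{\Sigma}_A}$, a loss rather than a gain.

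Because of this, two of your terms are individually too large and you are missing the mechanism that makes the lemma work. After moving $\mathbf{L}_1$ onto $\chi_A\mathbf{g}_{\mathbf{m}}$ and using $\mathbf{L}_1\mathbf{g}_{\mathbf{m}}=\im(\mathbf{m}\cdot\boldsymbol{\lambda})\mathbf{J}^{-1}\mathbf{g}_{\mathbf{m}}$, the ``principal part'' is
\(
\<\boldsymbol{\eta},\chi_A\sum_{\mathbf{m}}\im(\mathbf{m}\cdot\boldsymbol{\lambda})\mathbf{z}^{\mathbf{m}}\mathbf{J}^{-1}\mathbf{g}_{\mathbf{m}}\>.
\)
This term has no smallness on its own. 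The point (which your treatment of group~(i) obscures) is that the time derivative falling on $\mathbf{z}^{\mathbf{m}}$ produces, via $D_{\mathbf{z}}\mathbf{z}^{\mathbf{m}}\widetilde{\mathbf{z}}_0=\im(\mathbf{m}\cdot\boldsymbol{\lambda})\mathbf{z}^{\mathbf{m}}$, the term
\(
\Omega\bigl(\boldsymbol{\eta},\chi_A\sum_{\mathbf{m}}\im(\mathbf{m}\cdot\boldsymbol{\lambda})\mathbf{z}^{\mathbf{m}}\mathbf{g}_{\mathbf{m}}\bigr)
=\<\mathbf{J}^{-1}\boldsymbol{\eta},\chi_A\sum_{\mathbf{m}}\im(\mathbf{m}\cdot\boldsymbol{\lambda})\mathbf{z}^{\mathbf{m}}\mathbf{g}_{\mathbf{m}}\>,
\)
and by the antisymmetry of $\mathbf{J}^{-1}$ these two contributions \emph{cancel exactly}. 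Only the remainder $D_{\mathbf{z}}\mathbf{z}^{\mathbf{m}}(\widetilde{\mathbf{z}}-\widetilde{\mathbf{z}}_0)$ survives from group~(i), and that carries an extra power of $\delta$. Without exhibiting this cancellation your bound for group~(i) (``$\lesssim\delta(\sum|\mathbf{z}^{\mathbf{m}}|^2+\|\boldsymbol{\eta}\|_{\boldsymbol{L}^2_{-\kappa}}^2)$'') is false: the $\widetilde{\mathbf{z}}_0$ piece has no $\delta$ gain and $\mathbf{g}_{\mathbf{m}}$ gives no localization. The rest of your outline (the $\mathbf{F}$, $\mathbf{R}$, $d\mathbf{f}$ and $D_{\mathbf{z}}\boldsymbol{\phi}(\dot{\mathbf{z}}-\widetilde{\mathbf{z}})$ terms, and replacing $\chi_A$ by $1$ in the main term using the decay of $\mathbf{G}_{\mathbf{m}}$) is essentially correct once you use $\|\chi_A\cdot\|_{L^1}\lesssim A^{1/2}\|\sech(\tfrac{2}{A}x)\cdot\|_{L^2}$ and absorb the resulting $\delta A^{c}$ factors via \eqref{eq:relABg}.
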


\begin{proof}
Differentiating $\mathcal{J}_{\mathrm{FGR}}$  and using \eqref{eq:modeq}, we have
\begin{align*}
\dot{\mathcal{J}}_{\mathrm{FGR}}=&
\Omega(\dot{\boldsymbol{\eta}},\chi_A\sum_{\mathbf{m}\in \mathbf{R}_{\mathrm{min}}}\mathbf{z}^{\mathbf{m}}\mathbf{g}_{\mathbf{m}})
+\Omega(\boldsymbol{\eta},\chi_A\sum_{\mathbf{m}\in \mathbf{R}_{\mathrm{min}}}D _{\mathbf{z}} \mathbf{z}^{\mathbf{m}}\widetilde{\mathbf{z}} \ \mathbf{g}_{\mathbf{m}})
\\&+\Omega(\boldsymbol{\eta},\chi_A\sum_{\mathbf{m}\in \mathbf{R}_{\mathrm{min}}}D _{\mathbf{z}} \mathbf{z}^{\mathbf{m}} \(\dot{\mathbf{z}}-\widetilde{\mathbf{z}}\)\mathbf{g}_{\mathbf{m}})=:A_1+A_2+A_3.\nonumber
\end{align*}
By Lemma \ref{lem:modbound} and Lemma \ref{lem:equiv_rho} and by \eqref{eq:relABg}, $A_3$ can be bounded by
\begin{align*}
|A_3|&\lesssim
 \|\boldsymbol{\eta}\chi_A\|_{L^1}\delta \|\dot{\mathbf{z}}-\widetilde{\mathbf{z}}\| _{\C^N}
\lesssim
 \delta ^2\| \sech \( \frac{2}{A}x\)\boldsymbol{\eta}\|_{L^2}  \|\sech \( \kappa x\) {\eta}_1\|_{L^2}
\\&\lesssim
  \delta ^2  A^2    \|    \boldsymbol{\eta}\|_{\boldsymbol{ \Sigma }_A}^2 \lesssim A^{-1/2}\|    \boldsymbol{\eta}\|_{\boldsymbol{ \Sigma }_A}^2.
\end{align*}
  By Equation  \eqref{eq:modeq}, we have
\begin{align*}
A_1=&\Omega(-D_{\mathbf{z}}\boldsymbol{\phi}[\mathbf{z}](\dot{\mathbf{z}}-\widetilde{\mathbf{z}}),\chi_A\sum_{\mathbf{m}\in \mathbf{R}_{\mathrm{min}}}\mathbf{z}^{\mathbf{m}}\mathbf{g}_{\mathbf{m}})
+
\<
	\mathbf{L}_1\boldsymbol{\eta},\chi_A\sum_{\mathbf{m}\in \mathbf{R}_{\mathrm{min}}}\mathbf{z}^{\mathbf{m}}\mathbf{g}_{\mathbf{m}}\>\\
	&
+\<  d\mathbf{f}[\boldsymbol{\phi}[\mathbf{z}]]\boldsymbol{\eta}+\mathbf{F}[z,\boldsymbol{\eta}]+\mathbf{R}[\mathbf{z}],\chi_A\sum_{\mathbf{m}\in \mathbf{R}_{\mathrm{min}}}\mathbf{z}^{\mathbf{m}}\mathbf{g}_{\mathbf{m}}\>
+\<\sum_{\mathbf{m}\in \mathbf{R}_{\mathrm{min}}}\mathbf{z}^{\mathbf{m}}\mathbf{G}_{\mathbf{m}},\chi_A\sum_{\mathbf{m}\in \mathbf{R}_{\mathrm{min}}}\mathbf{z}^{\mathbf{m}}\mathbf{g}_{\mathbf{m}}\>\\&
=A_{11}+A_{12}+A_{13}+A_{14}.
\end{align*}
By  Lemma    \ref{lem:modbound} and Lemma \ref{lem:equiv_rho} and by \eqref{eq:relABg}  we have
\begin{align*}
|A_{11}|&\lesssim
\|\dot{\mathbf{z}}-\widetilde{\mathbf{z}}\|   _{\C^N} \sum_{\mathbf{m}\in \mathbf{R}_{\mathrm{min}}}|\mathbf{z}^{\mathbf{m}}|\lesssim
\delta\(\|\sech (\kappa x)\boldsymbol{\eta}\|_{L^2}^2+\sum_{\mathbf{m}\in \mathbf{R}_{\mathrm{min}}}|\mathbf{z}^\mathbf{m}|^2\) \\&\le A^{-1/2}\( \sum_{\mathbf{m}\in \mathbf{R}_{\mathrm{min}}}|\mathbf{z}^{\mathbf{m}}|^2+\|\boldsymbol{\eta}\|_{\boldsymbol{ \Sigma }_A}^2\).
\end{align*}
By \eqref{eq:R1FRemainder} and Lemma  \ref{lem:estF}   we have
\begin{align*}
|A_{13}|&\lesssim  \sum_{\mathbf{m}\in \mathbf{R}_{\mathrm{min}}}|\mathbf{z}^\mathbf{m}| \(\| d\mathbf{f}[\boldsymbol{\phi}[\mathbf{z}]]\boldsymbol{\eta}\|_{L^1}+\| \mathbf{F}[z,\boldsymbol{\eta}]\chi_A\|_{L^1} +\|\mathbf{R}[z]\|_{L^1}\)\\&\lesssim A^{1/2}\delta\(\|\sech \( \frac{2}{A} x\)\boldsymbol{\eta}\|_{L^2}^2+\sum_{\mathbf{m}\in \mathbf{R}_{\mathrm{min}}}|\mathbf{z}^\mathbf{m}|^2\)\le A^{-1/2}\( \sum_{\mathbf{m}\in \mathbf{R}_{\mathrm{min}}}|\mathbf{z}^{\mathbf{m}}|^2+\|\boldsymbol{\eta}\|_{\boldsymbol{ \Sigma }_A}^2\).
\end{align*}
The term $A_{12}$ can be further decomposed as
\begin{align*}
A_{12}=\<\boldsymbol{\eta},\chi_A\sum_{\mathbf{m}\in \mathbf{R}_{\mathrm{min}}}\mathbf{z}^{\mathbf{m}}\mathbf{L}_1\mathbf{g}_{\mathbf{m}}\>+\<\boldsymbol{\eta},[\mathbf{L}_1,\chi_A]\sum_{\mathbf{m}\in \mathbf{R}_{\mathrm{min}}}\mathbf{z}^{\mathbf{m}}\mathbf{g}_{\mathbf{m}}\>=:A_{121}+A_{122}.
\end{align*}
By $[\mathbf{L}_1,\chi_A]=\begin{pmatrix}
-\chi_A''-2\chi_A'\partial_x & 0 \\ 0 & 0
\end{pmatrix}$,   we have the bound
\begin{align*}
|A_{122}|
&\lesssim \sum_{\mathbf{m}\in \mathbf{R}_{\mathrm{min}}}|\mathbf{z}^{\mathbf{m}}| \(\|\chi_A'' \eta_1\|_{L^1}+\|\chi_A' \eta_1'\|_{L^1}\)\\&\lesssim  \sum_{\mathbf{m}\in \mathbf{R}_{\mathrm{min}}}|\mathbf{z}^{\mathbf{m}}|(A^{-3/2}\|\sech \( \frac{2}{A} x\) \eta_1\|_{L^2}+A^{-1/2}\|\sech \( \frac{2}{A} x\) \eta_1'\|_{L^2})\\&
\lesssim A^{-1/2}\( \sum_{\mathbf{m}\in \mathbf{R}_{\mathrm{min}}}|\mathbf{z}^{\mathbf{m}}|^2+\|\sech \( \frac{2}{A} x\)\eta_1'\|_{L^2}^2+A^{-2}\|\sech \( \frac{2}{A} x\) \eta_1\|_{L^2}^2\) ,
\end{align*}
while we have, see Assumption \ref{ass:FGR},      \begin{align}\label{eq:a121}
  A_{121} = \<\boldsymbol{\eta},\chi_A\sum_{\mathbf{m}\in \mathbf{R}_{\mathrm{min}}}\mathbf{z}^{\mathbf{m}} \im (\mathbf{m}\cdot \boldsymbol{\lambda}) \mathbf{J} ^{-1}   \mathbf{g}_{\mathbf{m}}\>  .
\end{align}
The term
$A_{14}$ can be decomposed as
\begin{align}\nonumber
A_{14}&=\<\sum_{\mathbf{m}\in \mathbf{R}_{\mathrm{min}}}\mathbf{z}^{\mathbf{m}}\mathbf{G}_{\mathbf{m}},\sum_{\mathbf{m}\in \mathbf{R}_{\mathrm{min}}}\mathbf{z}^{\mathbf{m}}\mathbf{g}_{\mathbf{m}}\>-\<\sum_{\mathbf{m}\in \mathbf{R}_{\mathrm{min}}}\mathbf{z}^{\mathbf{m}}\mathbf{G}_{\mathbf{m}},(1-\chi_A)\sum_{\mathbf{m}\in \mathbf{R}_{\mathrm{min}}}\mathbf{z}^{\mathbf{m}}\mathbf{g}_{\mathbf{m}}\>
\\&=
\<\sum_{\mathbf{m}\in \mathbf{R}_{\mathrm{min}}}\mathbf{z}^{\mathbf{m}}\mathbf{G}_{\mathbf{m}},\sum_{\mathbf{m}\in \mathbf{R}_{\mathrm{min}}}\mathbf{z}^{\mathbf{m}}\mathbf{g}_{\mathbf{m}}\>
+A_{141},\label{KMFGR5}
\end{align}
where the 1st term of  line  \eqref{KMFGR5} is the main term appearing in \eqref{eq:lem:FGR11}.
Recalling $a_2=\frac{1}{2}\sqrt{m^2-\lambda_{ N}^2}$,
\begin{align*}
|A_{141}|\lesssim e^{-a_2A/2}\left|\sum_{\mathbf{m}\in \mathbf{R}_{\mathrm{min}}}\mathbf{z}^{\mathbf{m}}\right|^2 \lesssim A^{-1/2}\sum_{\mathbf{m}\in \mathbf{R}_{\mathrm{min}}}|\mathbf{z}^{\mathbf{m}}|^2.
\end{align*}
By the elementary identity $D_{\mathbf{z}}\mathbf{z}^{\mathbf{m}} \widetilde{\mathbf{z}}_0 =\im \mathbf{m}\cdot\boldsymbol{\lambda}\mathbf{z}^{\mathbf{m}}  $,
 the term $A_{2}$ can be decomposed as
\begin{align*}
A_{2}
=\< \mathbf{J}^{-1} \boldsymbol{\eta},\chi_A\sum_{\mathbf{m}\in \mathbf{R}_{\mathrm{min}}}\im \mathbf{m}\cdot\boldsymbol{\lambda}\mathbf{z}^{\mathbf{m}}\mathbf{g}_{\mathbf{m}}\>
+\Omega\(\boldsymbol{\eta},\chi_A\sum_{\mathbf{m}\in \mathbf{R}_{\mathrm{min}}}D_{\mathbf{z}}\mathbf{z}^{\mathbf{m}} \(\widetilde{\mathbf{z}}-\widetilde{\mathbf{z}}_0\)\mathbf{g}_{\mathbf{m}}\)=:A_{21}+A_{22},
\end{align*}
where
\begin{align*}
|A_{22}|
\lesssim
\delta \|\chi_A\boldsymbol{\eta}\|_{L^1} \sum_{\mathbf{m}\in \mathbf{R}_{\mathrm{min}}}|\mathbf{z}^{\mathbf{m}}|
\lesssim
A^{-1/2}\( \sum_{\mathbf{m}\in \mathbf{R}_{\mathrm{min}}}|\mathbf{z}^{\mathbf{m}}|^2+A^{-2}\|\sech \( \frac{2}{A} x\)\boldsymbol{\eta}\|_{L^2}^2\).
\end{align*}
Finally, by the antisymmetry  of $\mathbf{J}^{-1}(=-\mathbf{J})$ we have the cancellation  $A_{121}+A_{21}=0$.
Collecting all the estimates, we obtain  \eqref{eq:lem:FGR11}.
\end{proof}

We next take out the nonresonant terms from the main part of $\dot{\mathcal{J}}_{\mathrm{FGR}}$.

\begin{lemma}\label{lem:FGR2}
Let $\mathbf{m},\mathbf{n}\in \mathbf{R}_{\mathrm{min}}$ and $\mathbf{m}\neq \mathbf{n}$.
Then,
\begin{align*}
&
\mathbf{z}^{\mathbf{m}}\mathbf{z}^{\overline{\mathbf{n}}}=\frac{1}{\im\(\mathbf{m}\cdot\boldsymbol{\lambda}-\mathbf{n} \cdot\boldsymbol{\lambda}\)}\frac{d}{dt}\(\mathbf{z}^{\mathbf{m}}\mathbf{z}^{\overline{\mathbf{n}}}\)+r_{\mathbf{m},\mathbf{n}} \text{  where}\\& 
|r_{\mathbf{m},\mathbf{n}}|\lesssim \delta \sum_{\mathbf{m}\in \mathbf{R}_{\mathrm{min}}}|\mathbf{z}^{\mathbf{m}}|^2 + \delta \| \dot {\mathbf{z}} - \widetilde{\mathbf{z}}   \| ^2.
\end{align*}
\end{lemma}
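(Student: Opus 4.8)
The statement is essentially a normal-form identity: we want to write the product $\mathbf{z}^{\mathbf{m}}\mathbf{z}^{\overline{\mathbf{n}}}$ as a total time derivative plus a controllable remainder, exploiting that $\mathbf{m}\neq\mathbf{n}$ forces $\mathbf{m}\cdot\boldsymbol{\lambda}-\mathbf{n}\cdot\boldsymbol{\lambda}\neq 0$ (here one uses that $\mathbf{m},\mathbf{n}\in\mathbf{R}_{\mathrm{min}}$, so by Lemma \ref{lem:combinat1}(3) each has one of its two halves equal to $0$, and $\mathbf{m}\cdot\boldsymbol{\lambda},\mathbf{n}\cdot\boldsymbol{\lambda}$ are the $\pm$ large frequencies; two distinct minimal resonances cannot share the same value $\pm(\text{something}>m)$ — or if they could, the denominator issue is sidestepped because then the pair is simply excluded, but generically Assumption \ref{ass:generic} and minimality rule out coincidences, so $\mathbf{m}\cdot\boldsymbol{\lambda}\ne\mathbf{n}\cdot\boldsymbol{\lambda}$). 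The first step is to compute
\begin{align*}
\frac{d}{dt}\(\mathbf{z}^{\mathbf{m}}\mathbf{z}^{\overline{\mathbf{n}}}\)
= \(D_{\mathbf{z}}\mathbf{z}^{\mathbf{m}}\,\dot{\mathbf{z}}\)\mathbf{z}^{\overline{\mathbf{n}}}
+\mathbf{z}^{\mathbf{m}}\(D_{\mathbf{z}}\mathbf{z}^{\overline{\mathbf{n}}}\,\dot{\mathbf{z}}\),
\end{align*}
and then split $\dot{\mathbf{z}}=\widetilde{\mathbf{z}}_0+(\widetilde{\mathbf{z}}-\widetilde{\mathbf{z}}_0)+(\dot{\mathbf{z}}-\widetilde{\mathbf{z}})$, using the elementary identity \eqref{eq:difzm}, i.e.\ $D_{\mathbf{z}}\mathbf{z}^{\mathbf{m}}\,\widetilde{\mathbf{z}}_0=\im(\mathbf{m}\cdot\boldsymbol{\lambda})\mathbf{z}^{\mathbf{m}}$, and the analogous identity for $\mathbf{z}^{\overline{\mathbf{n}}}$ with eigenvalue $\overline{\mathbf{n}}\cdot\boldsymbol{\lambda}=-\mathbf{n}\cdot\boldsymbol{\lambda}$.

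**Second step.** Feeding the $\widetilde{\mathbf{z}}_0$ contribution into the derivative gives exactly the main term: the $\widetilde{\mathbf{z}}_0$-part of $\frac{d}{dt}(\mathbf{z}^{\mathbf{m}}\mathbf{z}^{\overline{\mathbf{n}}})$ equals $\im(\mathbf{m}\cdot\boldsymbol{\lambda}-\mathbf{n}\cdot\boldsymbol{\lambda})\,\mathbf{z}^{\mathbf{m}}\mathbf{z}^{\overline{\mathbf{n}}}$. Dividing by $\im(\mathbf{m}\cdot\boldsymbol{\lambda}-\mathbf{n}\cdot\boldsymbol{\lambda})$ isolates $\mathbf{z}^{\mathbf{m}}\mathbf{z}^{\overline{\mathbf{n}}}$ on one side, and the remainder $r_{\mathbf{m},\mathbf{n}}$ is, up to the constant $\frac{1}{\im(\mathbf{m}\cdot\boldsymbol{\lambda}-\mathbf{n}\cdot\boldsymbol{\lambda})}$, the collection of the remaining two pieces coming from $\widetilde{\mathbf{z}}-\widetilde{\mathbf{z}}_0=\widetilde{\mathbf{z}}_1+\widetilde{\mathbf{z}}_2$ and from $\dot{\mathbf{z}}-\widetilde{\mathbf{z}}$. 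Schematically,
\begin{align*}
r_{\mathbf{m},\mathbf{n}} = \frac{1}{\im(\mathbf{m}\cdot\boldsymbol{\lambda}-\mathbf{n}\cdot\boldsymbol{\lambda})}\Bigl[
\(D_{\mathbf{z}}\mathbf{z}^{\mathbf{m}}(\widetilde{\mathbf{z}}-\widetilde{\mathbf{z}}_0)\)\mathbf{z}^{\overline{\mathbf{n}}}
+\mathbf{z}^{\mathbf{m}}\(D_{\mathbf{z}}\mathbf{z}^{\overline{\mathbf{n}}}(\widetilde{\mathbf{z}}-\widetilde{\mathbf{z}}_0)\)
+\(D_{\mathbf{z}}\mathbf{z}^{\mathbf{m}}(\dot{\mathbf{z}}-\widetilde{\mathbf{z}})\)\mathbf{z}^{\overline{\mathbf{n}}}
+\mathbf{z}^{\mathbf{m}}\(D_{\mathbf{z}}\mathbf{z}^{\overline{\mathbf{n}}}(\dot{\mathbf{z}}-\widetilde{\mathbf{z}})\)\Bigr].
\end{align*}

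**Third step: estimating $r_{\mathbf{m},\mathbf{n}}$.** For the $\widetilde{\mathbf{z}}-\widetilde{\mathbf{z}}_0=\widetilde{\mathbf{z}}_1+\widetilde{\mathbf{z}}_2$ terms: $\widetilde{\mathbf{z}}_1$ is (componentwise) a sum of monomials of the form $\mathbf{z}^{\mathbf{n}'}z_j$ with $\mathbf{n}'\in\Lambda_0\setminus\{\mathbf{0}\}$, hence $|\widetilde{\mathbf{z}}_1|\lesssim\|\mathbf{z}\|\,|\mathbf{z}|$-type, and by \eqref{eq:boundz2} $|\widetilde{\mathbf{z}}_2|\lesssim\sum_{\mathbf{k}\in\mathbf{R}_{\mathrm{min}}}|\mathbf{z}^{\mathbf{k}}|$; combined with $\|\mathbf{z}\|_{L^\infty}\lesssim\delta$ from \eqref{eq:orbbound} and the structure $|D_{\mathbf{z}}\mathbf{z}^{\mathbf{m}}\,\mathbf{v}|\lesssim\|\mathbf{z}\|^{\|\mathbf{m}\|-1}|\mathbf{v}|$, one checks that each such term is $\lesssim\delta\sum_{\mathbf{k}\in\mathbf{R}_{\mathrm{min}}}|\mathbf{z}^{\mathbf{k}}|^2$. (The key combinatorial point is that $\mathbf{z}^{\mathbf{m}}\cdot\mathbf{z}^{\mathbf{n}'}z_j/z_k$ and similar products, with $\mathbf{m}\in\mathbf{R}_{\mathrm{min}}$ and the extra factors from $\widetilde{\mathbf{z}}_1,\widetilde{\mathbf{z}}_2$ carrying at least one power of $\mathbf{z}$, are bounded by $\delta$ times a square of a minimal-resonance monomial — this uses that $\|\mathbf{n}'\|\ge1$ for $\mathbf{n}'\in\Lambda_0\setminus\{\mathbf{0}\}$ and $\mathbf{R}_{\mathrm{min}}$ generates everything above it via $\prec$ by Lemma \ref{lem:combinat1}.) For the $\dot{\mathbf{z}}-\widetilde{\mathbf{z}}$ terms: $|D_{\mathbf{z}}\mathbf{z}^{\mathbf{m}}(\dot{\mathbf{z}}-\widetilde{\mathbf{z}})\,\mathbf{z}^{\overline{\mathbf{n}}}|\lesssim\|\mathbf{z}\|^{\|\mathbf{m}\|-1+\|\mathbf{n}\|}\,|\dot{\mathbf{z}}-\widetilde{\mathbf{z}}|$, and since $\|\mathbf{m}\|,\|\mathbf{n}\|\ge 1$ this is $\lesssim\|\mathbf{z}\|\,|\dot{\mathbf{z}}-\widetilde{\mathbf{z}}|\lesssim\delta\,|\dot{\mathbf{z}}-\widetilde{\mathbf{z}}|$, and then $\delta|\dot{\mathbf{z}}-\widetilde{\mathbf{z}}|\lesssim\delta\|\dot{\mathbf{z}}-\widetilde{\mathbf{z}}\|^2+\delta$ is not quite what we want — instead we keep it as $\lesssim\delta\|\dot{\mathbf{z}}-\widetilde{\mathbf{z}}\|^2$ by absorbing one factor of $\|\mathbf{z}\|\lesssim\delta$ against... actually more carefully: bound $\|\mathbf{z}\|^{\|\mathbf{m}\|+\|\mathbf{n}\|-1}|\dot{\mathbf{z}}-\widetilde{\mathbf{z}}|\le\|\mathbf{z}\|^{2(\|\mathbf{m}\|+\|\mathbf{n}\|-1)}+|\dot{\mathbf{z}}-\widetilde{\mathbf{z}}|^2$ and note $\|\mathbf{z}\|^{2(\|\mathbf{m}\|+\|\mathbf{n}\|-1)}\lesssim\delta^{2}\|\mathbf{z}\|^{2(\|\mathbf{m}\|+\|\mathbf{n}\|-2)}$ which, since $\|\mathbf{m}\|,\|\mathbf{n}\|\ge1$, is $\lesssim\delta\sum_{\mathbf{k}\in\mathbf{R}_{\mathrm{min}}}|\mathbf{z}^{\mathbf{k}}|^2$; similarly $|\dot{\mathbf{z}}-\widetilde{\mathbf{z}}|^2$ must come with a factor $\delta$, which it does because the original term already has $\|\mathbf{z}\|^{\ge1}\lesssim\delta$ in front. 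The one step requiring care is matching every term to one of the two allowed shapes $\delta\sum_{\mathbf{k}\in\mathbf{R}_{\mathrm{min}}}|\mathbf{z}^{\mathbf{k}}|^2$ or $\delta\|\dot{\mathbf{z}}-\widetilde{\mathbf{z}}\|^2$ — this is where the combinatorics of $\mathbf{R}_{\mathrm{min}}$ (Lemma \ref{lem:combinat1}) and the bound \eqref{eq:boundz2} are used, and it is the only mildly delicate bookkeeping; everything else is Leibniz rule plus \eqref{eq:orbbound}.

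**Main obstacle.** The genuine content is confirming $\mathbf{m}\cdot\boldsymbol{\lambda}\neq\mathbf{n}\cdot\boldsymbol{\lambda}$ so the prefactor makes sense (immediate from $\mathbf{m}\ne\mathbf{n}$ both minimal and from the structure of $\mathbf{R}_{\mathrm{min}}$, noting that if $\mathbf{m}\cdot\boldsymbol{\lambda}=\mathbf{n}\cdot\boldsymbol{\lambda}$ then $(\mathbf{m}-\mathbf{n})\cdot\boldsymbol{\lambda}=0$ with $\|\mathbf{m}-\mathbf{n}\|\le 2M$, and Assumption \ref{ass:generic}\eqref{eq:generic11} forces a contradiction with both being minimal and distinct), together with the bookkeeping in the third step that every remainder monomial carries at least the total degree of a minimal resonance monomial \emph{squared}, times a spare power of $\mathbf{z}$ giving the $\delta$. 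I expect no other difficulty.
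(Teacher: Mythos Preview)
Your approach is essentially identical to the paper's: compute $\frac{d}{dt}(\mathbf{z}^{\mathbf{m}}\mathbf{z}^{\overline{\mathbf{n}}})$, split $\dot{\mathbf{z}}=\widetilde{\mathbf{z}}_0+(\widetilde{\mathbf{z}}-\widetilde{\mathbf{z}}_0)+(\dot{\mathbf{z}}-\widetilde{\mathbf{z}})$, use \eqref{eq:difzm} to extract the main term, and bound the rest via Proposition~\ref{prop:rp}. The paper's proof is in fact just two lines and defers all the bookkeeping to ``follows from Proposition~\ref{prop:rp}'', whereas you spell it out.

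Two small comments. First, your argument that $\mathbf{m}\cdot\boldsymbol{\lambda}\neq\mathbf{n}\cdot\boldsymbol{\lambda}$ is right in spirit but needs a tweak: $\mathbf{m}-\mathbf{n}$ need not lie in $(\N\cup\{0\})^{2N}$, so \eqref{eq:generic11} does not apply directly. Instead use Lemma~\ref{lem:combinat1}(3): say $\mathbf{m}=(\mathbf{m}_+,0)$ and $\mathbf{n}=(\mathbf{n}_+,0)$ (the mixed-sign case is trivial since then $\mathbf{m}\cdot\boldsymbol{\lambda}$ and $\mathbf{n}\cdot\boldsymbol{\lambda}$ have opposite signs), form $\mathbf{k}=(\mathbf{m}_+,\mathbf{n}_+)\in(\N\cup\{0\})^{2N}$ with $\|\mathbf{k}\|\le 2M$ and $\mathbf{k}\cdot\boldsymbol{\lambda}=0$, then \eqref{eq:generic11} gives $\mathbf{m}_+=\mathbf{n}_+$. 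Second, the cleanest way through your third step is to note that when the $D_{z_j}$ hits $\mathbf{z}^{\mathbf{m}}$, the factor $\mathbf{z}^{\overline{\mathbf{n}}}$ survives intact, and since $\|\mathbf{m}\|\ge 2$ the remaining $\mathbf{z}^{\mathbf{m}-\mathbf{e}^j}$ absorbs a $\delta$; then $\delta\,|\mathbf{z}^{\mathbf{n}}|\cdot(\|\widetilde{\mathbf{z}}-\widetilde{\mathbf{z}}_0\|+\|\dot{\mathbf{z}}-\widetilde{\mathbf{z}}\|)$ splits by Young's inequality, and for the $\widetilde{\mathbf{z}}_1$ piece one exploits that $\widetilde{z}_{1,j}$ carries a factor $z_j$ which recombines with $\mathbf{z}^{\mathbf{m}-\mathbf{e}^j}$ to give back $\mathbf{z}^{\mathbf{m}}$. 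This avoids the detour through $\|\mathbf{z}\|^{2(\|\mathbf{m}\|+\|\mathbf{n}\|-1)}$ that was giving you trouble.
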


\begin{proof}
We have
\begin{align*}
\frac{d}{dt}\(\mathbf{z}^{\mathbf{m}}\mathbf{z}^{\overline{\mathbf{n}}}\)=\im \( \mathbf{m} \cdot\boldsymbol{\lambda}- \mathbf{n} \cdot\boldsymbol{\lambda}\)\mathbf{z}^{\mathbf{m}}\mathbf{z}^{\overline{\mathbf{n}}}+D_{\mathbf{z}}\(\mathbf{z}^{\mathbf{m}}\mathbf{z}^{\overline{\mathbf{n}}}\)
\(\widetilde{\mathbf{z}}-\widetilde{\mathbf{z}}_0\)    +    D_{\mathbf{z}}\(\mathbf{z}^{\mathbf{m}}\mathbf{z}^{\overline{\mathbf{n}}}\) \(\dot {\mathbf{z}} - \widetilde{\mathbf{z}} \) .
\end{align*}
The estimate of $r_{\mathbf{m},\mathbf{n}}$ follows from Proposition \ref{prop:rp}.
\end{proof}

\begin{lemma}\label{lem:FGR3}
We have
\begin{align*}
 &
\left|\<
	\sum_{\mathbf{m}\in 	\mathbf{R}_{\mathrm{min}}}
		\mathbf{z}^{\mathbf{m}}\mathbf{G}_{\mathbf{m}},
	\sum_{\mathbf{m}\in \mathbf{R}_{\mathrm{min}}}
		\mathbf{z}^{\mathbf{m}}\mathbf{g}_{\mathbf{m}}
\>
-
\sum_{\mathbf{m}\in \mathbf{R}_{\mathrm{min}}}
	\gamma_{\mathbf{m}}|\mathbf{z}^{\mathbf{m}}|^2
-\frac{d}{dt}\Gamma
	\right|
\lesssim \delta \sum_{\mathbf{m}\in \mathbf{R}_{\mathrm{min}}}|\mathbf{z}^{\mathbf{m}}|^2 \text{  where}
\\&
\Gamma:=\sum_{\substack{\mathbf{m},\mathbf{n}\in \mathbf{R}_{\mathrm{min}}\\ \mathbf{m}\neq \mathbf{n}}}
	\<
		\frac{\mathbf{z}^{\mathbf{m}}\mathbf{z}^{\overline{\mathbf{n}}}}{\im\( \mathbf{m} \cdot\boldsymbol{\lambda}- \mathbf{n} \cdot\boldsymbol{\lambda}\)}\mathbf{G}_{\mathbf{m}},
	\mathbf{g}_{\mathbf{n}}
	\>.   
\end{align*}
\end{lemma}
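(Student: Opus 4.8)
The plan is to expand the bracket into a double sum over $\mathbf{R}_{\mathrm{min}}\times\mathbf{R}_{\mathrm{min}}$, observe that the diagonal is exactly the Fermi Golden Rule quadratic form, and convert each off--diagonal term into a total time derivative modulo a small error, using Lemma \ref{lem:FGR2}. Recalling $\<\mathbf{u},\mathbf{v}\>=\Re(\mathbf{u},\overline{\mathbf{v}})$ and $\overline{\mathbf{z}^{\mathbf{n}}}=\mathbf{z}^{\overline{\mathbf{n}}}$, additivity of $\<\cdot,\cdot\>$ in each slot gives
\[
\<\sum_{\mathbf{m}\in\mathbf{R}_{\mathrm{min}}}\mathbf{z}^{\mathbf{m}}\mathbf{G}_{\mathbf{m}},\sum_{\mathbf{n}\in\mathbf{R}_{\mathrm{min}}}\mathbf{z}^{\mathbf{n}}\mathbf{g}_{\mathbf{n}}\>=\sum_{\mathbf{m},\mathbf{n}\in\mathbf{R}_{\mathrm{min}}}\<\mathbf{z}^{\mathbf{m}}\mathbf{z}^{\overline{\mathbf{n}}}\mathbf{G}_{\mathbf{m}},\mathbf{g}_{\mathbf{n}}\>,
\]
where the complex scalars $\mathbf{z}^{\mathbf{m}}\mathbf{z}^{\overline{\mathbf{n}}}$ are kept inside the bracket.

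For $\mathbf{m}=\mathbf{n}$ we have $\mathbf{z}^{\mathbf{m}}\mathbf{z}^{\overline{\mathbf{m}}}=|\mathbf{z}^{\mathbf{m}}|^{2}\in\R$, so $\<\mathbf{z}^{\mathbf{m}}\mathbf{z}^{\overline{\mathbf{m}}}\mathbf{G}_{\mathbf{m}},\mathbf{g}_{\mathbf{m}}\>=|\mathbf{z}^{\mathbf{m}}|^{2}\<\mathbf{G}_{\mathbf{m}},\mathbf{g}_{\mathbf{m}}\>=\gamma_{\mathbf{m}}|\mathbf{z}^{\mathbf{m}}|^{2}$ by \eqref{ass:FGR1}; summing over $\mathbf{m}$ reproduces $\sum_{\mathbf{m}\in\mathbf{R}_{\mathrm{min}}}\gamma_{\mathbf{m}}|\mathbf{z}^{\mathbf{m}}|^{2}$ with no error. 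For the off--diagonal terms I would first record that $\mathbf{m}\cdot\boldsymbol{\lambda}\neq\mathbf{n}\cdot\boldsymbol{\lambda}$ for distinct $\mathbf{m},\mathbf{n}\in\mathbf{R}_{\mathrm{min}}$, so that the denominators in $\Gamma$ and in Lemma \ref{lem:FGR2} make sense: by Lemma \ref{lem:combinat1} both have $\|\cdot\|\le M$ and, discarding the trivially unequal opposite--sign case, may be taken of the form $(\mathbf{m}_+,0)$, $(\mathbf{n}_+,0)$; removing the common part and forming the element $\mathbf{q}=(\mathbf{a},\mathbf{b})\in\N_0^{2N}$ with disjoint supports, $\|\mathbf{q}\|\le 2M$ and $\mathbf{q}\cdot\boldsymbol{\lambda}=0$, \eqref{eq:generic11} forces $\mathbf{a}=\mathbf{b}$, hence $\mathbf{a}=\mathbf{b}=0$ and $\mathbf{m}=\mathbf{n}$, a contradiction. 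Then, applying Lemma \ref{lem:FGR2} term by term, $\mathbf{z}^{\mathbf{m}}\mathbf{z}^{\overline{\mathbf{n}}}=\frac{1}{\im(\mathbf{m}\cdot\boldsymbol{\lambda}-\mathbf{n}\cdot\boldsymbol{\lambda})}\frac{d}{dt}(\mathbf{z}^{\mathbf{m}}\mathbf{z}^{\overline{\mathbf{n}}})+r_{\mathbf{m},\mathbf{n}}$; since $\mathbf{G}_{\mathbf{m}},\mathbf{g}_{\mathbf{n}}$ are time--independent, pulling $\frac{d}{dt}$ out of the bracket and comparing with the definition of $\Gamma$ shows that the derivative terms assemble into exactly $\frac{d}{dt}\Gamma$.

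Thus the quantity on the left of the claim equals $\bigl|\sum_{\mathbf{m}\neq\mathbf{n}}\<r_{\mathbf{m},\mathbf{n}}\mathbf{G}_{\mathbf{m}},\mathbf{g}_{\mathbf{n}}\>\bigr|$. To close, I would use that $\mathbf{G}_{\mathbf{m}}\in\boldsymbol{\Sigma}^{\infty}$ is exponentially localized while $\mathbf{g}_{\mathbf{n}}$ is bounded (Assumption \ref{ass:FGR}), so that $|(\mathbf{G}_{\mathbf{m}},\overline{\mathbf{g}_{\mathbf{n}}})|\lesssim 1$ and hence $|\<r_{\mathbf{m},\mathbf{n}}\mathbf{G}_{\mathbf{m}},\mathbf{g}_{\mathbf{n}}\>|\lesssim|r_{\mathbf{m},\mathbf{n}}|$; the bound on $r_{\mathbf{m},\mathbf{n}}$ from Lemma \ref{lem:FGR2} then gives $\lesssim\delta\sum_{\mathbf{m}\in\mathbf{R}_{\mathrm{min}}}|\mathbf{z}^{\mathbf{m}}|^{2}+\delta\|\dot{\mathbf{z}}-\widetilde{\mathbf{z}}\|^{2}$, and the last term is controlled via Lemma \ref{lem:modbound} ($\|\dot{\mathbf{z}}-\widetilde{\mathbf{z}}\|\lesssim\delta\|\sech(\kappa x)\boldsymbol{\eta}\|_{L^{2}}$) and absorbed by the smallness of $\delta$ (it is in any case of the type already admitted once the bound is integrated in time, cf. the proof of Proposition \ref{prop:FGR}). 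I expect the only genuinely delicate point to be the non--resonance $\mathbf{m}\cdot\boldsymbol{\lambda}\neq\mathbf{n}\cdot\boldsymbol{\lambda}$, which is what makes $\Gamma$ and Lemma \ref{lem:FGR2} meaningful; the rest is bilinear bookkeeping, the vanishing of the imaginary parts of the diagonal phases, and the localization of the $\mathbf{G}_{\mathbf{m}}$.
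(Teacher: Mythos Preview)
Your proposal is correct and follows exactly the route the paper intends: the paper's own proof is the single line ``It is immediate from Lemma~\ref{lem:FGR2}'', and what you wrote is precisely the unpacking of that sentence---expand the bilinear form as a double sum, recognize the diagonal as $\sum\gamma_{\mathbf m}|\mathbf z^{\mathbf m}|^2$, and turn each off--diagonal entry into $\frac{d}{dt}$ of its contribution to $\Gamma$ plus the remainder $r_{\mathbf m,\mathbf n}$. Your verification that $\mathbf m\cdot\boldsymbol\lambda\neq\mathbf n\cdot\boldsymbol\lambda$ for distinct $\mathbf m,\mathbf n\in\mathbf R_{\min}$ via Lemma~\ref{lem:combinat1} and \eqref{eq:generic11} is a detail the paper leaves implicit but which is indeed needed, and your observation that the extra $\delta\|\dot{\mathbf z}-\widetilde{\mathbf z}\|^2$ piece coming from Lemma~\ref{lem:FGR2} is harmless for the application (Proposition~\ref{prop:FGR}) is the right way to read the paper's slightly informal bookkeeping.
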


\begin{proof}
It is immediate from Lemma \ref{lem:FGR2}.
\end{proof}

\begin{proof}[Proof of Proposition \ref{prop:FGR}]
The proof follows from  Lemmas \ref{lem:FGR1} and \ref{lem:FGR3} and the following estimates, due to \eqref{eq:orbbound},
\begin{align*}
	|\mathcal{J}_{\mathrm{FGR}}|&\lesssim \|\boldsymbol{\eta}\|_{L^2}\|\chi_A\|_{L^2}\sum_{\mathbf{m}\in 	\mathbf{R}_{\mathrm{min}}}|\mathbf{z}^{\mathbf{m}}|\lesssim \sqrt{A}\delta^3 \lesssim \delta^ 2 \text{   and}\\
	|\Gamma|&\lesssim \sum_{\mathbf{m}\in 	\mathbf{R}_{\mathrm{min}}}|\mathbf{z}^{\mathbf{m}}|^2\lesssim \delta^2.
\end{align*}
\end{proof}

\section{Proof of Proposition \ref{prop:1stvirial}.}
\label{sec:1virial}

We set, for the $\chi$ in \eqref{eq:chi},
\begin{align}\label{def:zetaphi}
\zeta_A(x):=\exp\(-\frac{|x|}{A}(1-\chi(x))\),\ \varphi_A(x):=\int_0^x \zeta_A^2(y)\,dy\ \text{and}\  S_A:=\frac{1}{2}\varphi_A'+\varphi_A\partial_x.
\end{align}
We will consider the   functionals
\begin{align*}
\mathcal{I}_{\mathrm{1st},1}:=\frac{1}{2}\Omega(\boldsymbol{\eta},S_A\boldsymbol{\eta}),\ \mathcal{I}_{\mathrm{1st},2}:=\frac{1}{2}\Omega\(\boldsymbol{\eta},\sigma_3\zeta_A^4 \boldsymbol{\eta}\),
\end{align*}
	where both $S_A$ and $\sigma_3\zeta_A^4$ are anti-symmetric w.r.t.\ $\Omega$.

\begin{lemma}\label{lem:1stV1}
We have
\begin{align} &
\|    \sech  \(   \frac{2}{A}x\)\eta_1'\|_{L^2}^2 + A^{-2}\|    \sech  \(   \frac{2}{A}x\) \eta_1\|_{L^2}^2\nonumber \\& \lesssim -\dot{\mathcal{I}}_{\mathrm{1st},1}+A^2\delta \|\boldsymbol{\eta}\| _{\boldsymbol{ \Sigma }_A}^2+\|   \sech  \(  \kappa x\) \boldsymbol{\eta}\|_{L^2}^2+\sum_{\mathbf{m}\in 	\mathbf{R}_{\mathrm{min}}}|\mathbf{z}^{\mathbf{m}}|^2. \label{eq:lem:1stV1}
\end{align}
\end{lemma}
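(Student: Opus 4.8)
The strategy is the standard Kowalczyk--Martel virial computation adapted to the refined profile ansatz. First I would compute $\dot{\mathcal I}_{\mathrm{1st},1}=\tfrac12\Omega(\dot{\boldsymbol\eta},S_A\boldsymbol\eta)+\tfrac12\Omega(\boldsymbol\eta,S_A\dot{\boldsymbol\eta})=\Omega(\dot{\boldsymbol\eta},S_A\boldsymbol\eta)$ (using antisymmetry of $S_A$ with respect to $\Omega$), and substitute the modulation equation \eqref{eq:modeq} for $\dot{\boldsymbol\eta}$, namely $\dot{\boldsymbol\eta}=\mathbf J\big(\mathbf L[\mathbf z]\boldsymbol\eta+\mathbf F[\mathbf z,\boldsymbol\eta]+\sum_{\mathbf m}\mathbf z^{\mathbf m}\mathbf G_{\mathbf m}+\mathbf R[\mathbf z]\big)-D_{\mathbf z}\boldsymbol\phi[\mathbf z](\dot{\mathbf z}-\widetilde{\mathbf z})$. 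The principal term is $\Omega(\mathbf J\mathbf L_1\boldsymbol\eta,S_A\boldsymbol\eta)=\langle\mathbf L_1\boldsymbol\eta,S_A\boldsymbol\eta\rangle$; writing $\mathbf L_1=\mathrm{diag}(L_1,1)$ and $S_A=\tfrac12\varphi_A'+\varphi_A\partial_x$, the classical Morawetz/virial commutator identity gives, after integration by parts,
\begin{align*}
\langle\mathbf L_1\boldsymbol\eta,S_A\boldsymbol\eta\rangle=-\int\varphi_A'\,|\eta_1'|^2-\tfrac14\int\varphi_A'''|\eta_1|^2+\tfrac12\int\varphi_A'(V+m^2+1)|\eta_1|^2\cdots
\end{align*}
up to lower-order pieces; since $\varphi_A'=\zeta_A^2\sim\sech^2(\tfrac2A x)$ on the support region and $\varphi_A'''=O(A^{-2})\zeta_A^2+(\text{exponentially localized})$, the first two terms reproduce, with a favorable sign, the left-hand side $\|\sech(\tfrac2Ax)\eta_1'\|_{L^2}^2+A^{-2}\|\sech(\tfrac2Ax)\eta_1\|_{L^2}^2$, while the potential term and the $\varphi_A'''$ correction away from the well are absorbed into $\|\sech(\kappa x)\boldsymbol\eta\|_{L^2}^2$ (using the exponential decay \eqref{eq:decay} of $V$ and $\kappa<a_1/10$, $\kappa<(m-\lambda_N)/10$).

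Next I would estimate the remaining terms coming from $\dot{\boldsymbol\eta}$. The contribution of $d\mathbf f[\boldsymbol\phi[\mathbf z]]\boldsymbol\eta$ (the difference $\mathbf L[\mathbf z]-\mathbf L_1$) is bounded by $\delta$ times weighted norms of $\boldsymbol\eta$ because $\boldsymbol\phi[\mathbf z]=O(\|\mathbf z\|)=O(\delta)$ is exponentially localized, and $S_A$ acting on $\boldsymbol\eta$ costs at most one derivative with an $A$-weight: this is where the $A^2\delta\|\boldsymbol\eta\|_{\boldsymbol\Sigma_A}^2$ term is produced (the $A^2$ is the price of $\|\varphi_A\|_{L^\infty}\sim A$ appearing twice, or once squared via Cauchy--Schwarz against the $A^{-1}$-weighted part of $\|\cdot\|_{\boldsymbol\Sigma_A}$). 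The nonlinear term $\mathbf F[\mathbf z,\boldsymbol\eta]$ is handled by Lemma 3.5 (in the form \eqref{eq:lem:estF1}), again producing a $\delta$-small weighted quadratic term. The source term $\sum_{\mathbf m}\mathbf z^{\mathbf m}\mathbf G_{\mathbf m}$ pairs against $S_A\boldsymbol\eta$; since $\mathbf G_{\mathbf m}\in\boldsymbol\Sigma^\infty$ is exponentially localized, a Cauchy--Schwarz/Young split gives $\sum_{\mathbf m}|\mathbf z^{\mathbf m}|^2+(\text{small weighted }\boldsymbol\eta)$. The term $\mathbf R[\mathbf z]$ is cubic-small in $\mathbf z$ by \eqref{eq:R1FRemainder}, harmless. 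Finally the modulation term $-\Omega(D_{\mathbf z}\boldsymbol\phi[\mathbf z](\dot{\mathbf z}-\widetilde{\mathbf z}),S_A\boldsymbol\eta)$ is controlled by $\|\dot{\mathbf z}-\widetilde{\mathbf z}\|\lesssim\delta\|\sech(\kappa x)\boldsymbol\eta\|_{L^2}$ from Lemma 4.3, giving another $\delta$-small contribution.

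The main obstacle is bookkeeping the $A$-dependence so that every error term is genuinely of the form $A^2\delta\|\boldsymbol\eta\|_{\boldsymbol\Sigma_A}^2$ (which later, via \eqref{eq:relABg} with $\delta$ tiny relative to $A$, is negligible) rather than an uncontrolled $O(1)\|\boldsymbol\eta\|_{\boldsymbol\Sigma_A}^2$ that would destroy the argument, and making sure the $\varphi_A'''$ term and the potential term are split correctly into the genuinely localized $\sech(\kappa x)$-norm versus the coercive leading terms. One must use that $\varphi_A'''$ on $|x|\gtrsim A$ decays like $A^{-2}\zeta_A^2$ (so it joins the coercive $A^{-2}$-term with a small constant and can be absorbed), while on $|x|\lesssim A$ the difference $\varphi_A-x$ and its derivatives are exponentially small, so all $V$-dependent and $\chi$-commutator pieces are $\sech(\kappa x)$-localized. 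Once the weights are tracked carefully, collecting all bounds yields \eqref{eq:lem:1stV1}.
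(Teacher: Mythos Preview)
Your plan has the right overall architecture (differentiate $\mathcal I_{\mathrm{1st},1}$, identify the coercive commutator piece, bound the remaining contributions), but there are two genuine gaps compared to the paper's argument.

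\textbf{The $A^{-2}$ coercivity does not come from $\varphi_A'''$.} You write that ``$\varphi_A'''=O(A^{-2})\zeta_A^2+(\text{exponentially localized})$'' and that this, together with the $\int\varphi_A'|\eta_1'|^2$ term, reproduces the left-hand side with a favorable sign. The size estimate is correct but the sign is not: on $|x|>2$ one has $\varphi_A'=\zeta_A^2=e^{-2|x|/A}$ and hence $\varphi_A'''=\tfrac{4}{A^2}\zeta_A^2>0$, so in the virial identity the term $-\tfrac14\int\varphi_A'''\eta_1^2$ has the \emph{wrong} sign relative to the derivative term and subtracts exactly the $A^{-2}$ quantity you are trying to produce. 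The paper does not try to extract the $A^{-2}$ control from $\varphi_A'''$; instead, the coercive output of the virial computation is the single quantity $\|(\zeta_A\eta_1)'\|_{L^2}^2$, and a separate Poincar\'e-type inequality (equation \eqref{eq:KM19}, proved via Lemma~\ref{lem:equiv_rho0}) is used afterwards to convert $\|(\zeta_A\eta_1)'\|_{L^2}^2$ into $\|\sech(\tfrac2Ax)\eta_1'\|_{L^2}^2+A^{-2}\|\sech(\tfrac2Ax)\eta_1\|_{L^2}^2$ at the cost of an $A^{-1}\|\sech(\kappa x)\eta_1\|_{L^2}^2$ error. This post-processing step is essential and missing from your outline.

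\textbf{The pure nonlinear term is not handled by Lemma~\ref{lem:estF}.} You say $\mathbf F[\mathbf z,\boldsymbol\eta]$ is controlled by \eqref{eq:lem:estF1}. But $\mathbf F$ contains a contribution of the form $f(\eta_1)$ that is \emph{not} exponentially localized, and pairing it against $S_A\eta_1=\tfrac12\varphi_A'\eta_1+\varphi_A\eta_1'$ involves the unbounded multiplier $\varphi_A$ (of size $A$). The estimate \eqref{eq:lem:estF1} only controls $\sech(\kappa x)F_1$ and gives nothing useful for $\langle F_1,\varphi_A\eta_1'\rangle$. The paper splits $f(\phi_1[\mathbf z]+\eta_1)-f(\phi_1[\mathbf z])$ into a $\phi_1$-localized piece $B_{31}$ (treated by integration by parts, producing the $A^2\delta\|\boldsymbol\eta\|_{\boldsymbol\Sigma_A}^2$ term) and the purely nonlinear piece $B_{32}=\langle f(\eta_1),S_A\eta_1\rangle$, which is absorbed into the coercive term via a specific virial-adapted estimate (Lemma~2.7 of \cite{CM19SIMA}, following \cite{KMM3}) giving $|B_{32}|\le o_\delta(1)\|(\zeta_A\eta_1)'\|_{L^2}^2$. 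Your plan does not account for this step.
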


\begin{proof}
We have
\begin{align}
\dot{\mathcal{I}}_{\mathrm{1st},1}&=
-\Omega(D\boldsymbol{\phi}[\mathbf{z}](\dot{\mathbf{z}}-\widetilde{\mathbf{z}}),S_A\boldsymbol{\eta})
+\<\mathbf{L}_1\boldsymbol{\eta} ,S_A\boldsymbol{\eta}\>
+\< \mathbf{f}[\boldsymbol{\phi}[\mathbf{z}]+\boldsymbol{\eta}]
-\mathbf{f}[\boldsymbol{\phi}[\mathbf{z}]] ,S_A\boldsymbol{\eta}\>
+\< \widetilde{\mathbf{R}}[\mathbf{z}],S_A\boldsymbol{\eta}\>
\nonumber\\&
=:B_1+B_2+B_3+B_4,\label{eq:1V1}
\end{align}
where $ \widetilde{\mathbf{R}}$ is defined in \eqref{eq:rtilde} and
\begin{align}\label{eq:ftilde} \widetilde{\mathbf{F}}[\mathbf{z},\boldsymbol{\eta}]:=\mathbf{f}[\boldsymbol{\phi}[\mathbf{z}]+\boldsymbol{\eta}]
-\mathbf{f}[\boldsymbol{\phi}[\mathbf{z}]]   .
\end{align}
The main term, $B_2$, can be decomposed as
\begin{align*}
B_2
&=\<L_1\eta_1,S_A\eta_1\>\\&=-\|(\zeta_A\eta_1)'\|_{L^2}^2-\frac{1}{2}\int \varphi_A V' \eta_1^2\,dx-\frac{1}{2}\int A^{-1}\(\chi'' |x|+2\chi' \frac{x}{|x|}\) \zeta_A\eta_1 ^{2} \,dx \\&=-\|(\zeta_A\eta_1)'\|_{L^2}^2+B_{21}+B_{22},
\end{align*}
where, $ |\varphi_A V'| \lesssim | x V'|\lesssim | x e ^{-a_1|x|}|$ and \eqref{eq:kappa} imply
\begin{align*}
|B_{21}|&\lesssim \|   \sech  \(  \kappa x\) \eta_1\|_{L^2}^2,
\end{align*}
and  by \eqref{eq:chi}
\begin{align*}
|B_{22}|&\lesssim A^{-1}\|   \sech  \(  \kappa x\)\eta_1\|_{L^2}^2.
\end{align*}
By Lemma \ref{lem:modbound}, we have
\begin{align*}
|B_{1}| \le \| \dot {\mathbf{z}} -\widetilde{\mathbf{z}}\|  \|   \boldsymbol{\eta}\| _{L^2 _{-\kappa}} \lesssim \delta \|   \boldsymbol{\eta}\| _{L^2 _{-\kappa}}^2.
\end{align*}
By    \eqref{eq:R1FRemainder} and \eqref{eq:rtilde}  we have
\begin{align*}
|B_4|&\lesssim  \|   \boldsymbol{\eta}\| _{L^2 _{-\kappa}}^2+\sum_{\mathbf{m}\in 	\mathbf{R}_{\mathrm{min}}}|\mathbf{z}^{\mathbf{m}}|^2.
\end{align*}
By $f(\phi   _1[\mathbf{z}] +\eta_1)-f(\phi   _1[\mathbf{z}] )=\int_0^1\int_0^1f''(s_1\phi[\mathbf{z}]_1+s_2\eta_1)\phi   _1[\mathbf{z}]\eta_1\,ds_1ds_2+f(\eta_1)$, we have
\begin{align*}
B_3=\<\int_0^1\int_0^1f''(s_1\phi _1[\mathbf{z}] +s_2\eta_1)\phi_1[\mathbf{z}] \eta_1\,ds_1ds_2,S_A\eta_1\>+ \<f(\eta_1),S_A\eta_1\>=B_{31}+B_{32}.
\end{align*}
By integration by parts,
\begin{align*}
B_{31}=-\frac{1}{2}\<\int_0^1\int^1\partial_x\(f''(s_1\phi   _1[\mathbf{z}] +s_2\eta_1)\phi    _1 [\mathbf{z}] \) \eta_1ds_1ds_2,\varphi_A\eta_1\>.
\end{align*}
Therefore, we have
\begin{align*}
|B_{31}| &\lesssim
\|  \cosh  \(  \kappa x\) \int_0^1\int^1\partial_x\(f''(s_1\phi   _1 [\mathbf{z}] +s_2\eta_1)\phi _1[\mathbf{z}] \)ds_1ds_2 \|_{L^\infty} \|  \sech  \(  \kappa x\)\eta_1^2\|_{L^1}\\&\lesssim
 \|\phi[\mathbf{z}] \|_{\boldsymbol{\Sigma }}\|   \sech  \(  \kappa x\)\eta_1\|_{L^2}^2\lesssim A^2 \delta\|\boldsymbol{\eta}\|_{\boldsymbol{ \Sigma }_A}^2,
\end{align*}
where the last inequality follows from Lemma \ref{lem:equiv_rho}.

\noindent For the pure in $\eta _1$  nonlinear term  $B_{32}$, by Lemma 2.7 of \cite{CM19SIMA}, which follows \cite{KMM3}, 
taking $A$ sufficiently large and $\delta_0$ sufficiently small, we have
\begin{align*}
|B_{32}|\leq o_{ \delta} (1)\|(\zeta_A\eta_1)'\|_{L^2}^2.
\end{align*}
Collecting  the estimates,
we have
\begin{align*}
&\|(\zeta_A\eta_1)' \|_{L^2}^2\lesssim   -\dot{\mathcal{I}}_{\mathrm{1st},1}+ \|\sech  \(  \kappa x\)\eta_1\|_{L^2}^2+A^2\delta \|\boldsymbol{\eta}\|_{A}^2
 +\|\sech  \(  \kappa x\)\boldsymbol{\eta}\|_{L^2}^2+\sum_{\mathbf{m}\in 	\mathbf{R}_{\mathrm{min}}}|\mathbf{z}^{\mathbf{m}}|^2.
\end{align*}
Finally, we claim the following, which is analogous to  (19) of \cite{KM22},
\begin{align}\label{eq:KM19}
\| \sech  \(   \frac{2}{A}x\) \eta_1'\|_{L^2}^2 + A^{-2}\| \sech  \(   \frac{2}{A}x\) \eta_1\|_{L^2}^2\lesssim \|(\zeta_A\eta_1)'\|_{L^2}^2 + A^{-1}\|\sech  \(  \kappa x\)\eta_1\|_{L^2}^2.
\end{align}
This yields \eqref{eq:lem:1stV1}. To prove \eqref{eq:KM19}, we set $w_1:= \zeta_A\eta_1$. We have
\begin{align*} & \int \zeta _A ^2 |w_1'|^2 dx = \int \zeta _A ^2       | \zeta _A \eta_1' + \zeta _A'\eta_1|^2   dx= \int \( \zeta _A ^4  \eta_1 ^{\prime 2} + \zeta _A ^3  \zeta _A'    (\eta_1^2)'  + \zeta _A ^2  \zeta _A ^{\prime 2} \eta_1^2\)      dx\\& = \int \( \zeta _A ^4  \eta_1 ^{\prime 2}
- \zeta _A ^3 \zeta _A '' \eta_1^2 -  2 \zeta _A ^2  \zeta _A ^{\prime 2} \eta_1^2\)      dx.
\end{align*}
This implies
\begin{align*}  \int   \zeta _A ^4  \eta_1 ^{\prime 2} \lesssim
\int \zeta _A ^2  w_1 ^{\prime 2} dx + A ^{-2}\int \zeta _A ^2 w_1^2 dx .
\end{align*}
Since by \eqref{eq:lem:rhoequiv01}  we have
\begin{align*}
&  A ^{-2}\int \zeta _A ^2 w_1^2 dx \lesssim      \|  w_1' \| _{L^2(\R )} ^2    +A ^{-1}    \|  \sech \( 2\kappa  x \) \zeta _A \eta_1   \| _{L^2(\R )} ^2\lesssim   \|  w_1' \| _{L^2(\R )} ^2    +A ^{-1} \|  \sech \(  \kappa  x \)   \eta_1   \| _{L^2(\R )} ^2  ,\end{align*}
we obtained the desired bound on the first term in the left hand side of \eqref{eq:KM19}.
We have
\begin{align*} A^{-2}\| \sech  \(   \frac{2}{A}x\) \eta_1\|_{L^2}^2\lesssim A ^{-2}\int \zeta _A ^2 w_1^2 dx\lesssim   \|  w_1' \| _{L^2(\R )} ^2    +A ^{-1} \|  \sech \(  \kappa  x \)   \eta_1   \| _{L^2(\R )} ^2\end{align*}
and hence we conclude the proof of \eqref{eq:KM19}.
\end{proof}

\begin{lemma}\label{lem:1stV2}
There exist $\delta_0>0$ and $A_0>0$ s.t.\ if $\delta<\delta_0$, for any $A>A_0$, we have
\begin{align}&
\|\sech  \(   \frac{2}{A}x\)\eta_2\|_{L^2}^2\nonumber \\& \lesssim -\dot{\mathcal{I}}_{\mathrm{1st},2}+\|\sech  \(   \frac{2}{A}x\) \eta_1'\|_{L^2}^2+\| \sech  \(   \frac{2}{A}x\)\eta_1\|_{L^2}^2+ \|\sech  \(  \kappa x\)\boldsymbol{\eta}\|_{L^2}^2+\sum_{\mathbf{m}\in 	\mathbf{R}_{\mathrm{min}}}|\mathbf{z}^{\mathbf{m}}|^2.\label{eq:lem:1stV2}
\end{align}

\end{lemma}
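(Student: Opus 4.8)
The plan is to differentiate $\mathcal{I}_{\mathrm{1st},2}$ in time, extract $\int \zeta_A^4\eta_2^2\,dx$ as the leading term, and absorb everything else into the right-hand side of \eqref{eq:lem:1stV2}. Since $\sigma_3\zeta_A^4$ is anti-symmetric with respect to $\Omega$ and is time-independent, one has $\dot{\mathcal{I}}_{\mathrm{1st},2}=\Omega(\dot{\boldsymbol{\eta}},\sigma_3\zeta_A^4\boldsymbol{\eta})$. Substituting \eqref{eq:modeq} and using $\Omega(\mathbf{J}\mathbf{w},\cdot)=\<\mathbf{w},\cdot\>$ gives
\begin{align*}
\dot{\mathcal{I}}_{\mathrm{1st},2}=-\Omega\(D_{\mathbf{z}}\boldsymbol{\phi}[\mathbf{z}](\dot{\mathbf{z}}-\widetilde{\mathbf{z}}),\sigma_3\zeta_A^4\boldsymbol{\eta}\)+\<\mathbf{L}[\mathbf{z}]\boldsymbol{\eta},\sigma_3\zeta_A^4\boldsymbol{\eta}\>+\<\mathbf{F}[\mathbf{z},\boldsymbol{\eta}],\sigma_3\zeta_A^4\boldsymbol{\eta}\>+\<\widetilde{\mathbf{R}}[\mathbf{z}],\sigma_3\zeta_A^4\boldsymbol{\eta}\>,
\end{align*}
with $\widetilde{\mathbf{R}}$ as in \eqref{eq:rtilde}. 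Writing $\mathbf{L}[\mathbf{z}]\boldsymbol{\eta}={}^t\(L_1\eta_1+f'(\phi_1[\mathbf{z}])\eta_1,\ \eta_2\)$ and $\sigma_3\zeta_A^4\boldsymbol{\eta}={}^t\(\zeta_A^4\eta_1,\ -\zeta_A^4\eta_2\)$, the key term becomes
\begin{align*}
\<\mathbf{L}[\mathbf{z}]\boldsymbol{\eta},\sigma_3\zeta_A^4\boldsymbol{\eta}\>=\int\(L_1\eta_1+f'(\phi_1[\mathbf{z}])\eta_1\)\zeta_A^4\eta_1\,dx-\int\zeta_A^4\eta_2^2\,dx,
\end{align*}
so that, moving the last integral to the left, $\int\zeta_A^4\eta_2^2\,dx$ equals $-\dot{\mathcal{I}}_{\mathrm{1st},2}$ plus the ``$\eta_1$ integral'' plus three error terms.

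The ``$\eta_1$ integral'' is controlled by $\eta_1$-norms already present on the right of \eqref{eq:lem:1stV2}. Integrating by parts, $\int(-\eta_1'')\zeta_A^4\eta_1\,dx=\int\zeta_A^4\eta_1'^2\,dx-\tfrac12\int(\zeta_A^4)''\eta_1^2\,dx$; using the uniform-in-$A$ (for $A\ge A_0$) comparison $\zeta_A^4\lesssim\sech^2(\tfrac{2}{A}x)$, together with the fact that $(\zeta_A^4)''$ is bounded, supported in $\{|x|\ge 1\}$ and of size $O(A^{-2}\zeta_A^4)$ for $|x|\ge 2$, one gets $\int\zeta_A^4\eta_1'^2\lesssim\|\sech(\tfrac{2}{A}x)\eta_1'\|_{L^2}^2$ and $|\int(\zeta_A^4)''\eta_1^2|\lesssim\|\sech(\tfrac{2}{A}x)\eta_1\|_{L^2}^2+\|\sech(\kappa x)\eta_1\|_{L^2}^2$. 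The remaining piece $\int\(V+m^2+f'(\phi_1[\mathbf{z}])\)\zeta_A^4\eta_1^2\,dx$ splits into $m^2\int\zeta_A^4\eta_1^2\lesssim\|\sech(\tfrac{2}{A}x)\eta_1\|_{L^2}^2$ and, using $|V|\lesssim e^{-a_1|x|}$, $\boldsymbol{\phi}[\mathbf{z}]\in\boldsymbol{\Sigma}^\infty$ with $|f'(\phi_1[\mathbf{z}])|\lesssim|\phi_1[\mathbf{z}]|\lesssim\delta e^{-a_2\<x\>}$, and $2\kappa<\min(a_1,a_2)$ by \eqref{eq:kappa}, a term $\lesssim\|\sech(\kappa x)\eta_1\|_{L^2}^2$.

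The three error terms are routine. By Lemma \ref{lem:modbound} and the exponential localization of $D_{\mathbf{z}}\boldsymbol{\phi}[\mathbf{z}]$, the first is $\lesssim\|\dot{\mathbf{z}}-\widetilde{\mathbf{z}}\|\,\|\sech(\kappa x)\boldsymbol{\eta}\|_{L^2}\lesssim\delta\|\sech(\kappa x)\boldsymbol{\eta}\|_{L^2}^2$. Since $|\mathbf{F}[\mathbf{z},\boldsymbol{\eta}]|\lesssim|\eta_1|^2$ pointwise and $\|\eta_1\|_{L^\infty}\lesssim\delta$ by \eqref{eq:orbbound} (as in Lemma \ref{lem:estF}), $|\<\mathbf{F}[\mathbf{z},\boldsymbol{\eta}],\sigma_3\zeta_A^4\boldsymbol{\eta}\>|\lesssim\delta\|\sech(\tfrac{2}{A}x)\eta_1\|_{L^2}^2$. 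Finally, using $\mathbf{G}_{\mathbf{m}}\in\boldsymbol{\Sigma}^\infty$ and \eqref{eq:R1FRemainder} for $\mathbf{R}[\mathbf{z}]$, $|\<\widetilde{\mathbf{R}}[\mathbf{z}],\sigma_3\zeta_A^4\boldsymbol{\eta}\>|\lesssim\(\sum_{\mathbf{m}\in\mathbf{R}_{\mathrm{min}}}|\mathbf{z}^{\mathbf{m}}|\)\|\sech(\kappa x)\boldsymbol{\eta}\|_{L^2}\lesssim\sum_{\mathbf{m}\in\mathbf{R}_{\mathrm{min}}}|\mathbf{z}^{\mathbf{m}}|^2+\|\sech(\kappa x)\boldsymbol{\eta}\|_{L^2}^2$.

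Combining all the bounds gives $\int\zeta_A^4\eta_2^2\,dx$ controlled by the right-hand side of \eqref{eq:lem:1stV2}, and the reverse uniform comparison $\sech^2(\tfrac{2}{A}x)\lesssim\zeta_A^4$ (again for $A\ge A_0$) upgrades the left-hand side to $\|\sech(\tfrac{2}{A}x)\eta_2\|_{L^2}^2$. I do not expect a genuine obstacle here: the delicate points are merely the bookkeeping around the anti-symmetry of $\sigma_3\zeta_A^4$ (so that no $\dot{\boldsymbol{\eta}}$-free symmetric boundary term survives when differentiating $\mathcal{I}_{\mathrm{1st},2}$), and the two uniform-in-$A$ weight equivalences $\zeta_A^4\sim\sech^2(\tfrac{2}{A}x)$; smallness of $\delta$ and largeness of $A$ enter only through \eqref{eq:orbbound}, Lemma \ref{lem:modbound}, and these weight comparisons.
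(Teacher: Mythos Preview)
Your proof is correct and follows essentially the same approach as the paper: differentiate $\mathcal{I}_{\mathrm{1st},2}$ using the anti-symmetry of $\sigma_3\zeta_A^4$, extract $-\|\zeta_A^2\eta_2\|_{L^2}^2$ from $\<\mathbf{L}_1\boldsymbol{\eta},\sigma_3\zeta_A^4\boldsymbol{\eta}\>$, and bound the remaining terms exactly as you do. The only cosmetic difference is that the paper groups the $f'(\phi_1[\mathbf{z}])\eta_1$ contribution together with the full nonlinear difference $\mathbf{f}[\boldsymbol{\phi}[\mathbf{z}]+\boldsymbol{\eta}]-\mathbf{f}[\boldsymbol{\phi}[\mathbf{z}]]$ (your $C_3$) rather than keeping it inside $\mathbf{L}[\mathbf{z}]$, and states the cruder bound $|\<L_1\eta_1,\zeta_A^4\eta_1\>|\lesssim \|\sech(\tfrac{2}{A}x)\eta_1'\|_{L^2}^2+\|\sech(\tfrac{2}{A}x)\eta_1\|_{L^2}^2$ directly.
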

\begin{proof}
We have
\begin{align*}&
\dot{\mathcal{I}}_{\mathrm{1st},2} \\&=
-\Omega(D\boldsymbol{\phi}[\mathbf{z}](\dot{\mathbf{z}}-\widetilde{\mathbf{z}}),\sigma_3\zeta_A^4 \boldsymbol{\eta})
+\<\mathbf{L}_1\boldsymbol{\eta}
 ,
\sigma_3\zeta_A^4 \boldsymbol{\eta}\>+  \< \mathbf{f}[\boldsymbol{\phi}[\mathbf{z}]+\boldsymbol{\eta}]
-\mathbf{f}[\boldsymbol{\phi}[\mathbf{z}]] ,
\sigma_3\zeta_A^4 \boldsymbol{\eta}\>+ \< \widetilde{\mathbf{R}}[\mathbf{z}],
\sigma_3\zeta_A^4 \boldsymbol{\eta}\> \\&
=:C_1+C_2+C_3+C_4.
\end{align*}
For the main term $C_2$, we have
\begin{align*}
C_2=-\|\zeta_A^2\eta_2\|_{L^2}^2+\<L_1\eta_1,\zeta_A^4\eta_1\>
\end{align*}
and
\begin{align*}
|\<L_1\eta_1,\zeta_A^4\eta_1\>|\lesssim \|\sech  \(   \frac{2}{A}x\) \eta_1'\|_{L^2}^2+\|\sech  \(   \frac{2}{A}x\) \eta_1\|_{L^2}^2.
\end{align*}
For the remainder terms, we have
\begin{align*}
|C_1|&\lesssim \|\dot{\mathbf{z}}-\widetilde{\mathbf{z}}\|\|\sech  \(  \kappa x\)\boldsymbol{\eta}\|_{L^2}\lesssim \delta \|\sech  \(  \kappa x\)\boldsymbol{\eta}\|_{L^2}^2 ,\\
|C_3|&\lesssim  \delta\|\sech  \(   \frac{2}{A}x\) \eta_1\|_{L^2}^2,\\
|C_4 |&\lesssim
 \|     \sech  \(  \kappa x\)\boldsymbol{\eta}\|_{L^2}^2+\sum_{\mathbf{m}\in 	\mathbf{R}_{\mathrm{min}}}|\mathbf{z}^{\mathbf{m}}|^2
\end{align*}
Collecting the estimates, we have the conclusion.
\end{proof}

\begin{proof}[Proof of Proposition \ref{prop:1stvirial}]
From  $|\mathcal{I}_{\mathrm{1st},1}|\lesssim A\delta^2$, $|\mathcal{I}_{\mathrm{1st},2}|\lesssim \delta^2$, we have the conclusion from Lemmas \ref{lem:1stV1} and \ref{lem:1stV2}.
\end{proof}

\section{  Technical lemmas  II}\label{sec:tech}

We   consider
\begin{align} \label{def:Tg} &
\mathcal{T}:=\<\im \varepsilon \partial_x\>^{- {N} }\mathcal{A}^*.
\end{align}
The following lemma, where $P_c$ is the orthogonal projection on the continuous spectrum component of $L_1$, see \eqref   {def:orthecomp},
is proved in     \cite[Sect. 9]{CM2109.08108}.
\begin{lemma}\label{lem:coer6}
We have
\begin{align}\label{eq:Tinverse}
  \mathbf{u}=\prod_{j=1}^{N}R _{L_1}(\lambda_j^{2}) P_c \mathcal{A} \<   \im \varepsilon\partial_x\>^N  \mathcal{T} \mathbf{u}  \text{  for all }\mathbf{u}\in L^2_c(L_1).
\end{align}
\end{lemma}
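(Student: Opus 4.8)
The plan is to unwind the definition $\mathcal{T}=\<\im\varepsilon\partial_x\>^{-N}\mathcal{A}^*$ and show that the claimed right-hand side is a genuine inversion formula on $L^2_c(L_1)$. Applying $\<\im\varepsilon\partial_x\>^N$ to $\mathcal{T}\mathbf{u}$ gives $\mathcal{A}^*\mathbf{u}$ (the Fourier multiplier $\<\im\varepsilon\partial_x\>^N$ is invertible on $L^2$, so this composition is harmless and the only reason for its presence is to gain smoothing/decay elsewhere in the paper). Hence the identity to be proved reduces to
\begin{align*}
\mathbf{u}=\prod_{j=1}^{N}R_{L_1}(\lambda_j^2)\,P_c\,\mathcal{A}\,\mathcal{A}^*\mathbf{u}\qquad\text{for all }\mathbf{u}\in L^2_c(L_1).
\end{align*}
So the heart of the matter is understanding the operator $\mathcal{A}\mathcal{A}^*=A_1\cdots A_N A_N^*\cdots A_1^*$ and matching it, on the continuous spectrum, with the product of resolvents $\prod_{j=1}^N R_{L_1}(\lambda_j^2)=\prod_{j=1}^N (L_1-\lambda_j^2)^{-1}$ (well defined on $\mathrm{Ran}\,P_c$).

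The key algebraic input is the factorization and intertwining machinery set up before Proposition \ref{prop:Darboux}: from \eqref{def:Ak} we have $L_k-\lambda_k^2 = A_k A_k^*$ and $L_{k+1}-\lambda_k^2 = A_k^* A_k$, and from \eqref{eq:DarConj1} the conjugation relation $A_j^* L_j = L_{j+1}A_j^*$, equivalently $A_j^* (L_j-\mu) = (L_{j+1}-\mu)A_j^*$ for any scalar $\mu$, and its adjoint $(L_j-\mu)A_j = A_j(L_{j+1}-\mu)$. First I would establish, by induction on $k$ working from the innermost factor outward, the identity
\begin{align*}
A_k A_{k+1}\cdots A_N\,A_N^*\cdots A_{k+1}^* A_k^* = \prod_{j=k}^{N}\bigl(L_k-\lambda_j^2\bigr)\quad\text{on }\mathrm{Ran}\,P_c(L_k),
\end{align*}
where the projection $P_c(L_k)$ onto the continuous spectrum of $L_k$ is needed because the intermediate $A_j^*A_j = L_{j+1}-\lambda_j^2$ has a kernel (the ground state direction removed by the Darboux step) and the resolvents $(L_k-\lambda_j^2)^{-1}$ for $j>k$ only make sense on that subspace; here one uses that the Darboux transform maps $L^2_c(L_k)$ onto $L^2_c(L_{k+1})$ bijectively, so $\mathcal{A}^*$ maps $L^2_c(L_1)$ onto $L^2(\R)=L^2_c(L_{N+1})$ (since $L_{N+1}$ has no eigenvalues). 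The base case $k=N$ is exactly $A_N A_N^* = L_N-\lambda_N^2$ from \eqref{def:Ak}. For the inductive step one writes $A_k(\cdots)A_k^* = A_k\bigl(\prod_{j=k+1}^N(L_{k+1}-\lambda_j^2)\bigr)A_k^*$, pushes $A_k$ through each factor using $(L_k-\lambda_j^2)A_k = A_k(L_{k+1}-\lambda_j^2)$ to get $\bigl(\prod_{j=k+1}^N(L_k-\lambda_j^2)\bigr)A_kA_k^*$, and finishes with $A_kA_k^* = L_k-\lambda_k^2$. Setting $k=1$ yields $\mathcal{A}\mathcal{A}^* = \prod_{j=1}^N(L_1-\lambda_j^2)$ on $\mathrm{Ran}\,P_c$, whence $\prod_j R_{L_1}(\lambda_j^2)P_c\,\mathcal{A}\mathcal{A}^*\mathbf{u}=P_c\mathbf{u}=\mathbf{u}$ for $\mathbf{u}\in L^2_c(L_1)$, which is the assertion after reinserting the $\<\im\varepsilon\partial_x\>^{\pm N}$.

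The main obstacle is bookkeeping the spectral subspaces correctly: one must check that at every stage the operators act between the right continuous-spectrum subspaces so that the resolvents are bounded and the cancellations $A_j^*A_j = L_{j+1}-\lambda_j^2$ are actually invertible where used. This is where Proposition \ref{prop:Darboux} (specifically $\sigma_{\mathrm d}(L_{k+1})=\sigma_{\mathrm d}(L_k)\setminus\{\lambda_k^2\}$, together with the standard fact that $A_k$ intertwines the spectral decompositions of $L_k$ and $L_{k+1}$ and is a bijection on the continuous parts) does all the work; I would invoke it to justify that $\mathcal{A}^*\colon L^2_c(L_1)\to L^2(\R)$ is well defined and that $P_c$ in the formula is redundant to the extent that $\mathcal{A}\mathcal{A}^*\mathbf{u}$ already lies in $\mathrm{Ran}\,P_c$, but keeping it makes the resolvent product unambiguous. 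Since the excerpt attributes the statement to \cite[Sect.~9]{CM2109.08108}, the write-up can be brief, essentially the induction above.
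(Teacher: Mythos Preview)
Your proposal is correct and follows essentially the same route as the paper: both reduce to proving the operator identity $\mathcal{A}\mathcal{A}^*=\prod_{j=1}^{N}(L_1-\lambda_j^2)$ by the same induction (base case $A_NA_N^*=L_N-\lambda_N^2$, inductive step pushing $A_k$ through via $(L_k-\lambda_j^2)A_k=A_k(L_{k+1}-\lambda_j^2)$ and closing with $A_kA_k^*=L_k-\lambda_k^2$), then compose with $\prod_j R_{L_1}(\lambda_j^2)P_c$. One minor simplification: the inductive identity holds as an equality of differential operators on all of $L^2$, so you need not restrict to $\mathrm{Ran}\,P_c(L_k)$ during the induction---the projection $P_c$ is only required at the final step to make the resolvents well defined.
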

\proof We provide the simple proof for completeness. We claim that we have
\begin{align}\label{eq:Tinverse1}
\mathcal{A} \mathcal{A}^*=A_1\circ \cdots \circ A_N \circ A_N^* \circ \cdots \circ A_1^* =\prod_{j=1}^{N}(L_1-\lambda _j^2).
\end{align}
To prove  \eqref{eq:Tinverse1},
we begin with the following, see the line  below \eqref{def:Ak},  \begin{align*}
A_N\circ A_N^* =L_N-\lambda _N^2.
\end{align*}
For $2\leq j \leq N$, we assume (notice that the Schr\"odinger operator $L_j$ is fixed)
\begin{align*}
A_j\circ \cdots \circ A_N \circ A_N^* \circ \cdots A_j^* = \prod_{k=j}^N(L_j-\lambda _k^2).
\end{align*}
Then, by
\begin{align*}
 A_{j-1}(L_j-\lambda _k^2)&=   A_{j-1}(A_{j-1}^*A_{j-1} +\lambda^{2}_{j-1}  -\lambda _k^2) =  (A_{j-1}A_{j-1}^*+\lambda^{2}_{j-1}-\lambda _k^2)       A_{j-1} \\& =(L_{j-1}-\lambda _k^2)A_{j-1},
\end{align*}
        we have
\begin{align*}&
A_{j-1}\circ \cdots \circ A_N \circ A_N^* \circ \cdots A_{j-1}^*  =A_{j-1} \prod_{k=j}^N(L_j-\lambda _k^2)   A_{j-1}^*  =\prod_{k=j}^N(L_{j-1}-\lambda _k^2) A_{j-1} \circ A_{j-1}^* \\& = \prod_{k=j}^N (L_{j-1}-\lambda _k^2)  \   (L_{j-1}-\lambda ^2 _{j-1})
=\prod_{k=j-1}^N(L_{j-1}-\lambda _k^2).
\end{align*}
Therefore, we have \eqref{eq:Tinverse1} by induction. Using it, from \eqref{def:Tg}  and $\mathbf{u}\in L^2_c(L_1)$ we have
\begin{align*}
\prod_{j=1}^{N}R _{L_1}(\lambda_j^{2}) P_c \mathcal{A} \<   \im \varepsilon\partial_x\>^N  \mathcal{T} \mathbf{u}   &=\prod_{j=1}^{N}R _{L_1}(\lambda_j^{2}) P_c A_1\circ \cdots \circ A_N \circ A_N^* \circ \cdots \circ A_1^*    \mathbf{u}\\&
=\prod_{j=1}^{N}R _{L_1}(\lambda_j^{2})P_c  \prod_{j=1}^{N}(L_1-\lambda _j^2) \mathbf{u}=P_c\mathbf{u} =\mathbf{u}.
\end{align*}

\qed

In \cite[Sect. 5]{CM2109.08108}  the following lemma  was proved.
\begin{lemma}\label{claim:l2boundIII}  Suppose that a Schwartz function   $\mathcal{V}\in \mathcal{S}(\R, \C)$   has the   property that for    $M\ge N+1$ its Fourier transform
satisfies
\begin{align}\label{eq2stestJ22III}& |      \widehat{{\mathcal{V}}} (k_1+ik_2) |\le  C_M \< k_1 \> ^{-M-1} \text{ for all $(k_1, k_2)\in \R \times [  \mathbf{b} ,  \mathbf{b} ]$    and}\\&  \widehat{{\mathcal{V}}}    \in C^0 ( \R \times [ -\mathbf{b} ,  \mathbf{b} ]) \cap H ( \R \times ( - \mathbf{b} ,  \mathbf{b} )), \nonumber
   \end{align}
  with $H(\Omega ) $ the set of holomorphic functions in an open subset $\Omega \subseteq \C$ and with a number  $\mathbf{b}>0$.
    Then, for multiplicative operators $  \cosh (\mathbf{b} x )$  and $   \cosh \(\frac{\mathbf{b}}{2} x\)$, we have
\begin{align}\label{eq2stestJ21II} &
 \|     \<  \im \varepsilon  \partial _x \> ^{-N}  [   \mathcal{V} , \< \im \varepsilon    \partial _x \> ^{N} ]     \cosh (\mathbf{b} x )    \| _{L ^{2  }(\R ) \to L ^{2 }(\R )} \le C_\mathbf{b} \varepsilon  ,  \\& \label{eq2stestJ21II121}
 \|   \cosh \(\frac{\mathbf{b}}{2} x\)     \<  \im \varepsilon  \partial _x \> ^{-N}  [   \mathcal{V} , \< \im \varepsilon    \partial _x \> ^{N} ]     \cosh \(\frac{\mathbf{b}}{2} x \)    \| _{L ^{2  }(\R ) \to L ^{2 }(\R )} \le C_\mathbf{b} \varepsilon  .\end{align}
  \end{lemma}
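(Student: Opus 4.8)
The plan is to prove both estimates by a single device: pass to the Fourier side, extract an explicit factor $\varepsilon$ from the difference of the Bessel-type symbols $\langle\varepsilon k\rangle^{N}$, and then use the holomorphicity of $\widehat{\mathcal V}$ in the strip $\{|\Im|\le\mathbf{b}\}$ to absorb the exponential weights by a contour deformation. Write $J_{\varepsilon}:=\langle\im\varepsilon\partial_{x}\rangle^{N}$, the Fourier multiplier with symbol $a_{\varepsilon}(k):=\langle\varepsilon k\rangle^{N}=(1+\varepsilon^{2}k^{2})^{N/2}$, which extends holomorphically to $\{|\Im z|<1/\varepsilon\}$ (the branch points of $(1+w^{2})^{N/2}$ being $w=\pm\im$). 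Substituting $s=k-\ell$ in the convolution integral defining $[\mathcal V,J_{\varepsilon}]$ and using $\widehat{e^{\im sx}u}(k)=\widehat u(k-s)$, one obtains
\[
\langle\im\varepsilon\partial_{x}\rangle^{-N}[\mathcal V,\langle\im\varepsilon\partial_{x}\rangle^{N}]u=\frac{1}{2\pi}\int_{\R}\widehat{\mathcal V}(s)\,m_{s}(D)\,[e^{\im sx}u]\,ds,\qquad m_{s}(k):=\frac{a_{\varepsilon}(k-s)-a_{\varepsilon}(k)}{a_{\varepsilon}(k)},
\]
where $m_{s}(D)$ is the Fourier multiplier of symbol $m_{s}$, so that $\|m_{s}(D)\|_{L^{2}\to L^{2}}=\|m_{s}\|_{L^{\infty}(\R)}$. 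The crux is the elementary estimate: there are $\varepsilon_{0}=\varepsilon_{0}(\mathbf{b})>0$ and $C_{N}>0$ with
\[
|m_{s}(k)|\le C_{N}\,\varepsilon\,\langle\Re s\rangle^{N}\qquad\text{for }0<\varepsilon\le\varepsilon_{0},\ k\in\R,\ |\Im s|\le\mathbf{b}.
\]
Indeed $\langle\varepsilon(k-s)\rangle^{N}-\langle\varepsilon k\rangle^{N}=-N\varepsilon s\int_{0}^{1}(\varepsilon k-t\varepsilon s)\langle\varepsilon k-t\varepsilon s\rangle^{N-2}\,dt$; since $|\varepsilon k-t\varepsilon s|\le\langle\varepsilon k-t\varepsilon s\rangle$, Peetre's inequality gives $\langle\varepsilon k-t\varepsilon s\rangle^{N-1}\lesssim\langle\varepsilon k\rangle^{N-1}\langle\Re s\rangle^{N-1}$ (the path stays in $\{|\Im|\le\tfrac12\}$ for $\varepsilon\le\varepsilon_{0}(\mathbf{b})$, where $|1+w^{2}|\asymp\langle\Re w\rangle^{2}$), and dividing by $a_{\varepsilon}(k)=\langle\varepsilon k\rangle^{N}\ge\langle\varepsilon k\rangle^{N-1}$ yields the claim. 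Because $M\ge N+1$, the last two displays already give $\|\langle\im\varepsilon\partial_{x}\rangle^{-N}[\mathcal V,\langle\im\varepsilon\partial_{x}\rangle^{N}]\|_{L^{2}\to L^{2}}\lesssim\varepsilon\int_{\R}\langle s\rangle^{N-M-1}\,ds\lesssim\varepsilon$; in particular the Bochner integral above converges absolutely.

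For \eqref{eq2stestJ21II}, by density it suffices to bound the operator applied to $u\in C_{c}^{\infty}(\R)$, so that $\cosh(\mathbf{b}x)u\in C_{c}^{\infty}$ and everything is literally well defined; split $\cosh(\mathbf{b}x)=\tfrac12(e^{\mathbf{b}x}+e^{-\mathbf{b}x})$. For the $e^{\mathbf{b}x}$-term, feed $v:=e^{\mathbf{b}x}u\in C_{c}^{\infty}$ into the representation above and deform the $s$-contour from $\R$ to $\R+\im\mathbf{b}$. This is licit by Cauchy's theorem for the $L^{2}$-valued integrand $\Phi(s):=\widehat{\mathcal V}(s)m_{s}(D)[e^{\im sx}v]$: it is holomorphic in the open strip (holomorphicity of $\widehat{\mathcal V}$, of $s\mapsto e^{\im sx}v$ in $L^{2}$ for $v\in C_{c}^{\infty}$, and of $s\mapsto m_{s}$ locally uniformly in $k$), and $\|\Phi(s)\|_{L^{2}}\lesssim_{v}|\widehat{\mathcal V}(s)|\,\|m_{s}\|_{L^{\infty}}\lesssim_{v}\varepsilon\langle\Re s\rangle^{N-M-1}$, which is integrable and tends to $0$ as $|\Re s|\to\infty$ in the strip (the factor $\|e^{\im sx}v\|_{L^{2}}$ is bounded uniformly in $\Re s$ since $v$ has compact support). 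After the shift, $s=t+\im\mathbf{b}$ and $e^{\im sx}v=e^{\im tx}e^{-\mathbf{b}x}e^{\mathbf{b}x}u=e^{\im tx}u$, so the weight — and the $v$-dependent constant with it — has disappeared:
\[
\langle\im\varepsilon\partial_{x}\rangle^{-N}[\mathcal V,\langle\im\varepsilon\partial_{x}\rangle^{N}](e^{\mathbf{b}x}u)=\frac{1}{2\pi}\int_{\R}\widehat{\mathcal V}(t+\im\mathbf{b})\,m_{t+\im\mathbf{b}}(D)[e^{\im tx}u]\,dt,
\]
whose $L^{2}$-norm is $\le C_{N}\varepsilon\,(2\pi)^{-1}\int_{\R}|\widehat{\mathcal V}(t+\im\mathbf{b})|\langle t\rangle^{N}\,dt\,\|u\|_{L^{2}}\le C_{\mathbf{b}}\varepsilon\|u\|_{L^{2}}$, using the key estimate on the line $\Im s=\mathbf{b}$, the bound $|\widehat{\mathcal V}(t+\im\mathbf{b})|\le C_{M}\langle t\rangle^{-M-1}$, and $M\ge N+1$. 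The $e^{-\mathbf{b}x}$-term is identical, shifting to $\R-\im\mathbf{b}$ and using holomorphicity of $\widehat{\mathcal V}$ in the lower half of the strip. This gives \eqref{eq2stestJ21II}. For \eqref{eq2stestJ21II121} the scheme is unchanged: expanding both factors $\cosh(\tfrac{\mathbf{b}}{2}x)$ into $e^{\pm\mathbf{b}x/2}$ and conjugating the operator by the outer factor merely replaces $a_{\varepsilon}(k)$ by $a_{\varepsilon}(k\pm\im\mathbf{b}/2)$ — which on $\{|\Im|\le\mathbf{b}\}$ still satisfies $|a_{\varepsilon}(k\pm\im\mathbf{b}/2)|\asymp\langle\varepsilon k\rangle^{N}$, hence the same bound for $m_{s}$ when $\varepsilon\le\varepsilon_{0}(\mathbf{b})$ — and leaves the inner weight $e^{\pm\mathbf{b}x/2}e^{\pm\mathbf{b}x/2}\in\{1,e^{\pm\mathbf{b}x}\}$ acting on one side, handled exactly as before (the $s$-contour moved by $0$ or $\pm\im\mathbf{b}$, staying inside $\{|\Im|\le\mathbf{b}\}$).

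The part I expect to be the genuine work is justifying the contour deformation "through the operator" — sliding the $s$-integration into the strip while the multipliers $m_{s}(D)$ act on functions $e^{\im sx}v$ that leave $L^{2}$ for intermediate $\Im s$ unless $v$ decays like $e^{-\mathbf{b}|x|}$; restricting to $v=e^{\pm\mathbf{b}x}u$ with $u\in C_{c}^{\infty}$ removes this and is harmless by density, and then the only hypotheses actually used are exactly those in the statement: holomorphicity of $\widehat{\mathcal V}$ on all of $\{|\Im|\le\mathbf{b}\}$ with decay of order $M+1\ge N+2$ (so that the $s$-integrand, after losing $\langle\Re s\rangle^{N}$ to the symbol difference, stays integrable and decays at $\Re s=\pm\infty$, legitimizing the Cauchy shift), together with $\varepsilon<\varepsilon_{0}(\mathbf{b})$ (so that $a_{\varepsilon}$ and its $\pm\im\mathbf{b}/2$-shifts remain holomorphic with $|a_{\varepsilon}(k\pm\im\mathbf{b}/2)|\asymp\langle\varepsilon k\rangle^{N}$ on that strip, the branch points at $\pm\im/\varepsilon$ staying far). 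Everything else — Peetre's inequality, Plancherel, the mean-value identity giving the factor $\varepsilon$, and the final $L^{1}$ integral — is routine.
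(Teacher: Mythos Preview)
Your argument is correct and arrives at the same conclusion via a genuinely different organisation than the paper's proof. The paper writes the operator through its full two–variable Fourier kernel $K(x,y)=\iint e^{\im xk-\im y\ell}\,\<\varepsilon k\>^{-N}\widehat{\mathcal V}(k-\ell)\big(\<\varepsilon k\>^{N}-\<\varepsilon\ell\>^{N}\big)\,dk\,d\ell$, extracts the factor $\varepsilon$ by writing $\<\varepsilon k\>^{N}-\<\varepsilon\ell\>^{N}=\varepsilon(k-\ell)\,P(\varepsilon k,\varepsilon\ell)/(\<\varepsilon k\>^{N}+\<\varepsilon\ell\>^{N})$, deforms the $\ell$–contour (and, for \eqref{eq2stestJ21II121}, also the $k$–contour) by $\pm\im\mathbf b$ or $\pm\im\mathbf b/2$ to absorb the exponential weights, and then bounds the resulting operator on the Fourier side by Schur/Young after a near–diagonal versus off–diagonal splitting in $(k,\ell)$; convergence issues are handled by an auxiliary regularisation $\<\varepsilon k\>^{-\sigma}$ and a Fatou limit $\sigma\to 0^+$.

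You instead collapse to a single integral over $s=k-\ell$, view the inside as a Fourier multiplier $m_s(D)$ acting on a translated copy of the input, and get the factor $\varepsilon$ and the bound $|m_s(k)|\lesssim \varepsilon\<\Re s\>^{N}$ cleanly from the mean value formula combined with Peetre's inequality, which replaces the paper's region splitting. Plancherel then substitutes for the Schur test, and restricting to compactly supported inputs makes the one–variable contour shift in $s$ rigorous without the regularisation. Your route is shorter and more conceptual; the paper's is more explicit at the kernel level and sidesteps the Bochner–holomorphy bookkeeping by working with scalar integrals throughout. Both proofs use the same analytic inputs: holomorphicity and $\<\Re\cdot\>^{-M-1}$ decay of $\widehat{\mathcal V}$ in the strip $|\Im|\le\mathbf b$ with $M\ge N+1$, and smallness of $\varepsilon$ so that the symbols $a_\varepsilon$ (and their $\pm\im\mathbf b/2$–shifts for \eqref{eq2stestJ21II121}) stay away from their branch points.
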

\proof For completeness we give the proof.
We start with  \eqref{eq2stestJ21II},    repeating the proof  from  \cite{CM2109.08108}. We have for $\sigma =0$
\begin{align*}&     \<  \im \varepsilon  \partial _x \> ^{-N}  [   \mathcal{V} , \< \im \varepsilon    \partial _x \> ^{N} ]   f  =   \int _{\R} dy  K ^{\sigma}(x,y)f(y),\end{align*}
where we set
\begin{align}& \label{kernelabs} K ^{\sigma}(x,y)= \int_{\R ^2}   e^{\im x k  - \im y \ell } \< \varepsilon  k \> ^{-\sigma} H(k,\ell )  dk d\ell \text{  with } \\& H(k,\ell )= \< \varepsilon k\> ^{-N} \widehat{\mathcal{V}}  (k-\ell )
 \( \< \varepsilon  k \> ^{ N}-\< \varepsilon  \ell \> ^{ N}\) .\nonumber
\end{align}
Notice that
\begin{align}&     H(k,\ell )= \varepsilon  H_1(k,\ell )  \text{  where }   H_1(k,\ell )= \< \varepsilon k\> ^{-N}     \widehat{\mathcal{V}}  (k-\ell )
 (k-\ell )   \frac{ P (\varepsilon  k,\varepsilon  \ell )} { \< \varepsilon k \> ^{ N}+\< \varepsilon  \ell \> ^{ N}},\label{functH}
\end{align}
 where $P $ is a $2N-1$ degree polynomial. Hence   the generalized integral in  \eqref{kernelabs} is absolutely convergent for $\sigma >0$. But also for $\sigma=0$
the operator
\begin{align*}&      T_{\sigma}f(x)=\int _{\R} dy f(y)  \int_{\R ^2}   e^{\im x k  - \im y \ell } \< \varepsilon  k \> ^{-\sigma} H_1(k,\ell )dkd\ell  \end{align*}
defines an operator $L^2(\R ) \to L^2(\R )$  of norm uniformly bounded in $\sigma \ge 0$. Let us focus now on $k=k_1+\im 0$ and $\ell =\ell_1-\im \mathbf{b}$
  \begin{align*}&      T_{\sigma} (\chi _{\R _+}f)(x)=\int _{\R _+} dy f(y)  e^{   - y  \mathbf{b} }     \int_{\R ^2}   e^{\im x k _1 - \im y \ell_1  } \< \varepsilon  k _1\> ^{-\sigma} H_1(k_1,\ell _1-\im \mathbf{b}  )dk_1d \ell _1 .\end{align*}
Now we claim that there exists $C>0$ such that
\begin{align}& \label{eq:clai1} \|  T_{\sigma}  \chi _{\R _+}f \| _{L^2\( \R \)} \le C  \|  e^{   -| x | \mathbf{b} }  f \| _{L^2\( \R _+\)} \text{  for all $\sigma >0$ and for all $f$.}
\end{align}
Set $g(y) =\chi _{\R _+} (y)f(y)  e^{   - y  \mathbf{b} }  $. Then
 \begin{align*}&      \widehat{T_{\sigma} (\chi _{\R _+}f)}(k_1)=      \int_{\R }   \< \varepsilon  k _1\> ^{-\sigma} H_1(k_1,\ell _1-\im \mathbf{b}  )  \widehat{g}(\ell _1 ) d \ell _1.
  \end{align*}
We claim that we have  \begin{align}&  \sup _{k_1\in \R} \int _{\R}   \< \varepsilon  k _1\> ^{-\sigma} |H_1(k_1,\ell _1-\im \mathbf{b}  )| d\ell _1 <C   ,\label{eq:young1}
\\&  \sup _{\ell_1\in \R} \int _{\R}   \< \varepsilon  k _1\> ^{-\sigma} |H_1(k_1,\ell _1-\im \mathbf{b}  )| dk _1 <C, \label{eq:young2}
\end{align}
for a fixed constant $C>0$.

 \noindent We have \begin{align*}
  \int _{\R} |H_1(k_1,\ell _1-\im \mathbf{b}  )| d\ell _1 &\lesssim   \int _{|\ell _1 |\in \left [ \frac{|k _1|}{2} , 2|k _1 | \right ]}     \< \varepsilon  k_1\> ^{-N}  \<k_1-\ell _1 \> ^{-M}
  \( \<  \varepsilon  k _1 \> ^{ N-1}+\left | \<  \varepsilon  \ell _1-\im \varepsilon \mathbf{b} \> \right |^{ N-1} \)      d\ell _1  \\& + \int _{|\ell _1 |\not \in \left [ \frac{|k _1 |}{2} , 2|k_1 | \right ]}      \< \varepsilon  k_1\> ^{-N}  \<k_1-\ell _1 \> ^{-M}
  \( \<  \varepsilon  k _1 \> ^{ N-1}+\left | \<  \varepsilon  \ell _1-\im \varepsilon \mathbf{b} \> \right |^{ N-1} \)      d\ell _1 .\end{align*}
The first integral can be bounded above by
\begin{align*}
   \int _{|\ell _1 |\in \left [ \frac{|k _1|}{2} , 2|k _1 | \right ]}     \< \varepsilon  k_1\> ^{-1}  \<k_1-\ell _1 \> ^{-M}
         d\ell _1   \le \| \< x \> ^{-M} \| _{L^1(\R )} ,\end{align*}
while the second can
be bounded above by
\begin{align*}
   \int _{\R }     \< \varepsilon  k_1\> ^{-N}   \frac{\<  \varepsilon  k _1 \> ^{ N-1}+\left | \<  \varepsilon  \ell _1-\im \varepsilon \mathbf{b} \> \right |^{ N-1}} { \<   k _1 \> ^{M}+\<    \ell _1\> ^{M}}
         d\ell _1   \le \| \< x \> ^{-M-1+N}\| _{L^1(\R )} .\end{align*}
So  \eqref{eq:young1}  is true for $C= \| \< x \> ^{-2} \| _{L^1(\R )}$.  Next we prove \eqref{eq:young2}.
We have \begin{align*}
  \int _{\R} |H_1(k_1,\ell _1-\im \mathbf{b}  )| dk _1 &\lesssim   \int _{|k _1 |\in \left [ \frac{|\ell  _1|}{2} , 2|\ell _1 | \right ]}     \< \varepsilon  k_1\> ^{-N}  \<k_1-\ell _1 \> ^{-M}
  \( \<  \varepsilon  k _1 \> ^{ N-1}+\left | \<  \varepsilon  \ell _1-\im \varepsilon \mathbf{b} \> \right |^{ N-1} \)      dk _1  \\& + \int _{|k _1 |\not \in \left [ \frac{|\ell _1 |}{2} , 2|\ell_1 | \right ]}      \< \varepsilon  k_1\> ^{-N}  \<k_1-\ell _1 \> ^{-M}
  \( \<  \varepsilon  k _1 \> ^{ N-1}+\left | \<  \varepsilon  \ell _1-\im \varepsilon \mathbf{b} \> \right |^{ N-1} \)      dk _1 .\end{align*}
The first integral can be bounded above by
\begin{align*}
   \int _{|k _1 |\in \left [ \frac{|\ell  _1|}{2} , 2|\ell _1 | \right ]}     \< \varepsilon  k_1\> ^{-1}  \<k_1-\ell _1 \> ^{-M}
         dk _1   \le \| \< x \> ^{-M} \| _{L^1(\R )} ,\end{align*}
while the second can
be bounded above by
\begin{align*}
   \int _{\R }     \< \varepsilon  k_1\> ^{-N}   \frac{\<  \varepsilon  k _1 \> ^{ N-1}+\left | \<  \varepsilon  \ell _1-\im \varepsilon \mathbf{b} \> \right |^{ N-1}} { \<   k _1 \> ^{M}+\<    \ell _1\> ^{M}}
         dk _1   \le \| \< x \> ^{-M-1+N}\| _{L^1(\R )} .\end{align*}
 So \eqref{eq:young2} is true for $C= \| \< x \> ^{-2} \| _{L^1(\R )}$. By Young's inequality, see Theorem 0.3.1 \cite{sogge}, we conclude that
 \eqref{eq:clai1}   is true  $C= \| \< x \> ^{-2} \| _{L^1(\R )}$. Proceeding similarly we can show
\begin{align*}&   \|  T_{\sigma}  \chi _{\R _-}f \| _{L^2\( \R \)} \le C  \|  e^{   - |x|  \mathbf{b} }  f \| _{L^2\( \R _-\)} \text{  for all $\sigma >0$ and for all $f$,}
\end{align*}
concluding, for  $C= \| \< x \> ^{-2} \| _{L^1(\R )}$,
\begin{align*}&   \|  T_{\sigma}  f \| _{L^2\( \R \)} \le C  \|  e^{   - |x|  \mathbf{b} }  f \| _{L^2\( \R  \)} \text{  for all $\sigma >0$ and for all $f$.}
\end{align*}
Now we show that this remains true for $\sigma =0$. For a sequence $\sigma _n\to 0^+$  then   $   T_{\sigma _n} f     \xrightarrow{n\to  +\infty }   T_{0} f$ point--wise  for  $f\in C^0_c(\R )$. Then  by the Fatou lemma and  by the density   of  $  C^0_c(\R )$  in $L^{2 }(\R )$
\begin{align}& \label{eq:clai3} \|  T_{0}  f \| _{L^2\( \R \)} \le C  \|  e^{   - |x|  \mathbf{b} }  f \| _{L^2\( \R  \)} \text{  for all   $f$.}
\end{align}
This is equivalent to \eqref{eq2stestJ21II}.

\noindent The proof of \eqref{eq2stestJ21II121} is similar, with the difference that for example
 \begin{align*}
 &     \chi _{\R _+} T_{\sigma} (\chi _{\R _+}f)(x)= e^{   - x  \frac{\mathbf{b}}{2} } \int _{\R _+} dy f(y)  e^{   - y  \frac{\mathbf{b}}{2} }     \int_{\R ^2}   e^{\im x k _1 - \im y \ell_1  } \< \varepsilon  k _1+ \im \varepsilon \frac{\mathbf{b}}{2}\> ^{-\sigma} H_1(k_1+ \im   \frac{\mathbf{b}}{2},\ell _1-\im \mathbf{b}  )dk_1d \ell _1,
 \end{align*}
and correspondingly we have there exists $C>0$ such that
\begin{align*}&   \| e^{     x  \frac{\mathbf{b}}{2} }\chi _{\R _+} T_{\sigma}  \chi _{\R _+}f \| _{L^2\( \R \)} \le C  \|  e^{   -| x | \frac{\mathbf{b}}{2} }  f \| _{L^2\( \R _+\)} \text{  for all $\sigma \ge 0$ and for all $f$,}
\end{align*}
which can be proved like \eqref{eq:clai1}, and so similarly the rest of the proof of \eqref{eq2stestJ21II121}.

\qed

We will need the following analogue  of Lemma \ref{claim:l2boundIII}.

\begin{lemma}\label{claim:l2boundIII1}  Suppose that a Schwartz function   $\mathcal{V}\in \mathcal{S}(\R, \C)$   has the   property that  ts Fourier transform
satisfies
\begin{align}\label{eq2stestJ22III1}& |      \widehat{{\mathcal{V}}} (k_1+ik_2) |\le  C_M \< k_1 \> ^{-2} \text{ for all $(k_1, k_2)\in \R \times [  \mathbf{b} ,  \mathbf{b} ]$    and}\\&  \widehat{{\mathcal{V}}}    \in C^0 ( \R \times [ -\mathbf{b} ,  \mathbf{b} ]) \cap H ( \R \times ( - \mathbf{b} ,  \mathbf{b} )), \nonumber
   \end{align}
   with a number  $\mathbf{b}>0$.
    Then
\begin{align}   \label{eq2stestJ21II1}
 \|         [   \mathcal{V} , \< \im \varepsilon    \partial _x \> ^{-N} ]    \cosh (\mathbf{b} y)    \| _{L ^{2  }(\R ) \to L ^{2 }(\R )} \le C_\mathbf{b}   .\end{align}
  \end{lemma}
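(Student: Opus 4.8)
The plan is to run the argument of Lemma \ref{claim:l2boundIII} essentially verbatim, with two simplifications: since we only want a bound uniform in $\varepsilon$ (not one of size $O(\varepsilon)$), no cancellation has to be extracted from the symbol, and the decay $\langle k_1\rangle^{-2}$ assumed in \eqref{eq2stestJ22III1} is exactly what a Schur test needs. First I would record the kernel. Writing $\langle\im\varepsilon\partial_x\rangle^{-N}$ as the Fourier multiplier by $\langle\varepsilon k\rangle^{-N}$ and $\mathcal{V}$ as convolution by $\widehat{\mathcal{V}}$, one obtains for $f$ in a dense class
\begin{align*}
\widehat{[\mathcal{V},\langle\im\varepsilon\partial_x\rangle^{-N}]f}(k)=\int_{\R}H(k,\ell)\widehat f(\ell)\,d\ell,\qquad H(k,\ell):=\widehat{\mathcal{V}}(k-\ell)\bigl(\langle\varepsilon\ell\rangle^{-N}-\langle\varepsilon k\rangle^{-N}\bigr),
\end{align*}
together with the elementary bound $\bigl|\langle\varepsilon\ell\rangle^{-N}-\langle\varepsilon k\rangle^{-N}\bigr|\le 2$, which combined with \eqref{eq2stestJ22III1} gives $|H(k,\ell)|\le 2C_M\langle k-\ell\rangle^{-2}$.

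As in Lemma \ref{claim:l2boundIII}, I would reduce \eqref{eq2stestJ21II1} to the weighted estimate $\|[\mathcal{V},\langle\im\varepsilon\partial_x\rangle^{-N}]g\|_{L^2}\le C_{\mathbf{b}}\|e^{-\mathbf{b}|x|}g\|_{L^2}$, which suffices because $e^{-\mathbf{b}|x|}|g|\le|f|$ when $g=\cosh(\mathbf{b}\cdot)f$. To regularize, factor $H=\varepsilon H_1$ as in the proof of Lemma \ref{claim:l2boundIII} (cf.\ \eqref{functH}), now with $H_1(k,\ell)=\widehat{\mathcal{V}}(k-\ell)(k-\ell)P(\varepsilon k,\varepsilon\ell)/\bigl(\langle\varepsilon k\rangle^{N}\langle\varepsilon\ell\rangle^{N}(\langle\varepsilon k\rangle^{N}+\langle\varepsilon\ell\rangle^{N})\bigr)$ — one extra factor $\langle\varepsilon\ell\rangle^{-N}$ relative to the $H_1$ of Lemma \ref{claim:l2boundIII}; since $|\widehat{\mathcal{V}}(k-\ell)(k-\ell)|\lesssim\langle k-\ell\rangle^{-1}$ and the remaining ratio is $\lesssim(\langle\varepsilon k\rangle\langle\varepsilon\ell\rangle)^{-1}$, the operator $T_\sigma$ with kernel $\langle\varepsilon k\rangle^{-\sigma}H$ is given by an absolutely convergent integral for every $\sigma>0$, and $(T_\sigma-[\mathcal{V},\langle\im\varepsilon\partial_x\rangle^{-N}])h\to0$ pointwise as $\sigma\to0^{+}$ for $h\in C_c^{0}(\R)$.

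Then the contour-shift step of Lemma \ref{claim:l2boundIII} applies with no change: split $g=\chi_{\R_+}g+\chi_{\R_-}g$ and, for $\chi_{\R_+}g$, move the $\ell$-integration to $\ell=\ell_1-\im\mathbf{b}$. This is legitimate because $\widehat{\mathcal{V}}(k-\cdot)$ is holomorphic in $|\Im\ell|<\mathbf{b}$ by \eqref{eq2stestJ22III1}, while $\langle\varepsilon\ell\rangle^{-N}=(1+\varepsilon^{2}\ell^{2})^{-N/2}$ is holomorphic there too — its branch points $\ell=\pm\im\varepsilon^{-1}$ lie outside the strip once $\varepsilon$ is small (with $\mathbf{b}$ fixed) — and the decay in \eqref{eq2stestJ22III1} kills the vertical pieces. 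The shift produces the factor $e^{-y\mathbf{b}}$, turning $g|_{\R_+}$ into $e^{-\mathbf{b}|\cdot|}g|_{\R_+}$; moreover $|\langle\varepsilon(\ell_1-\im\mathbf{b})\rangle^{-N}|\lesssim\langle\varepsilon\ell_1\rangle^{-N}\le1$ for $\varepsilon\mathbf{b}$ small, so $|H(k_1,\ell_1-\im\mathbf{b})|\le C'\langle k_1-\ell_1\rangle^{-2}$ and hence
\begin{align*}
\sup_{k_1\in\R}\int_{\R}|H(k_1,\ell_1-\im\mathbf{b})|\,d\ell_1+\sup_{\ell_1\in\R}\int_{\R}|H(k_1,\ell_1-\im\mathbf{b})|\,dk_1\le 2C'\|\langle\cdot\rangle^{-2}\|_{L^1(\R)}.
\end{align*}
By Young's inequality (Theorem 0.3.1 of \cite{sogge}) this gives $\|T_\sigma\chi_{\R_+}g\|_{L^2}\lesssim\|e^{-\mathbf{b}|x|}g\|_{L^2(\R_+)}$ uniformly in $\sigma$; symmetrically, shifting to $\ell_1+\im\mathbf{b}$, for $\chi_{\R_-}g$; adding the two and letting $\sigma\to0^{+}$ by Fatou and density, exactly as at the end of the proof of Lemma \ref{claim:l2boundIII}, yields $\|[\mathcal{V},\langle\im\varepsilon\partial_x\rangle^{-N}]g\|_{L^2}\le C_{\mathbf{b}}\|e^{-\mathbf{b}|x|}g\|_{L^2}$ and therefore \eqref{eq2stestJ21II1}.

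The only point that needs genuine care — already so in Lemma \ref{claim:l2boundIII} — is the contour shift: one must verify the holomorphy of $\ell\mapsto\langle\varepsilon\ell\rangle^{-N}$ across the closed strip $|\Im\ell|\le\mathbf{b}$ (where the smallness of $\varepsilon$ relative to $\mathbf{b}$ is used, which is harmless here), that \eqref{eq2stestJ22III1} is uniform along horizontal lines so the vertical segments drop out, and the limit $\sigma\to0^{+}$. All the rest is strictly lighter than in Lemma \ref{claim:l2boundIII}: no polynomial ratio enters the Schur step, since the bounded factor $\langle\varepsilon\ell\rangle^{-N}-\langle\varepsilon k\rangle^{-N}$ and the $L^1$-integrability of $\langle\cdot\rangle^{-2}$ already close the estimate.
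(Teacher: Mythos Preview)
Your proposal is correct and follows essentially the same route as the paper: write the commutator as an integral operator in Fourier variables, regularize, shift the $\ell$--contour by $\mp\im\mathbf b$ on $\R_\pm$ to produce the weight $e^{-\mathbf b|y|}$, close a Schur/Young bound using only $|\widehat{\mathcal V}(k-\ell)|\lesssim\langle k-\ell\rangle^{-2}$ and $|\langle\varepsilon k\rangle^{-N}-\langle\varepsilon\ell\rangle^{-N}|\le 2$, and pass to $\sigma=0$ via Fatou and density. The one (harmless) difference is your regularization: you import the factorization $H=\varepsilon H_1$ from Lemma~\ref{claim:l2boundIII}, whereas the paper simply replaces $-N$ by $-N-\sigma$, taking $M_\sigma(k,\ell)=\widehat{\mathcal V}(k-\ell)\bigl(\langle\varepsilon k\rangle^{-N-\sigma}-\langle\varepsilon\ell\rangle^{-N-\sigma}\bigr)$; since here no $O(\varepsilon)$ gain is sought and the difference of negative powers is already bounded, the paper's choice is more economical and avoids the algebra of $P(\varepsilon k,\varepsilon\ell)$ altogether.
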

\proof  The proof is similar to that of Lemma \ref{claim:l2boundIII}.    We have for $\sigma =0$
\begin{align*}&       [   \mathcal{V} , \<  \im \varepsilon  \partial _x \> ^{-N} ]   f  =   \int _{\R} dy  L ^{\sigma}(x,y)f(y),
\end{align*}
where we set
\begin{align}& \label{kernelabs1} L ^{\sigma}(x,y)= \int_{\R ^2}   e^{\im x k  - \im y \ell }  M_\sigma(k,\ell )  dk d\ell \text{  with } \\& M_\sigma(k,\ell )=   \widehat{\mathcal{V}}  (k-\ell )
 \( \< \varepsilon  k \> ^{ -N-\sigma}-\< \varepsilon  \ell \> ^{ -N-\sigma}\) .\nonumber
\end{align}
Hence   the generalized integral in  \eqref{kernelabs1} is absolutely convergent for $\sigma >0$. But also for $\sigma=0$
the operator
\begin{align*}
&      S_{\sigma}f(x)=\int _{\R} dy f(y)  \int_{\R ^2}   e^{\im x k  - \im y \ell }  M_0(k,\ell ), \end{align*}
defines an operator $L^2(\R ) \to L^2(\R )$, and the norm  is  uniformly bounded in $\sigma \ge 0$. Let us focus now on $k=k_1+\im 0$ and $\ell =\ell_1-\im \mathbf{b}$
  \begin{align*}&      S_{\sigma} (\chi _{\R _+}f)(x)=\int _{\R _+} dy f(y)  e^{   - y  \mathbf{b} }     \int_{\R ^2}   e^{\im x k _1 - \im y \ell_1  } M _{\sigma}(k_1,\ell _1-\im \mathbf{b}  )dk_1d \ell _1 .\end{align*}
Now we claim that there exists $C>0$ such that
\begin{align}& \label{eq:clai11} \|  S_{\sigma}  \chi _{\R _+}f \| _{L^2\( \R \)} \le C  \|  e^{   -| x | \mathbf{b} }  f \| _{L^2\( \R _+\)} \text{  for all $\sigma >0$ and for all $f$.}
\end{align}
Set like before  $g(y) =\chi _{\R _+} (y)f(y)  e^{   - y  \mathbf{b} }  $. Then
 \begin{align*}&      \widehat{S_{\sigma} (\chi _{\R _+}f)}(k_1)=      \int_{\R }    M _{\sigma}(k_1,\ell _1-\im \mathbf{b}  )  \widehat{g}(\ell _1 ) d \ell _1 .
 \end{align*}
We claim that  for a fixed constant $C>0$ we have  \begin{align}&  \sup _{k_1\in \R} \int _{\R}   | M _{\sigma}(k_1,\ell _1-\im \mathbf{b}  )| d\ell _1 <C   ,\label{eq:young11}
\\&  \sup _{\ell_1\in \R} \int _{\R}     | M _{\sigma}(k_1,\ell _1-\im \mathbf{b}  ))| dk _1 <C .\label{eq:young21}
\end{align}
 We have \begin{align*}
  \int _{\R} |M _{\sigma}(k_1,\ell _1-\im \mathbf{b}  ))| d\ell _1 &\lesssim    \int _{\R}     \<k_1-\ell _1 \> ^{-2}
  \( \<  \varepsilon  k _1 \> ^{- N -\sigma}+\left | \<  \varepsilon  \ell _1-\im \varepsilon \mathbf{b} \> ^{- N -\sigma} \right |\)      d\ell _1  \\& \lesssim    \int _{\R}     \<k_1-\ell _1 \> ^{-2}   d\ell _1    = \| \< x \> ^{-2} \| _{L^1(\R )} .\end{align*}
 So  \eqref{eq:young11}  is true for $C= \| \< x \> ^{-2} \| _{L^1(\R )}$.  Next we prove \eqref{eq:young21}.
Proceeding as above  \begin{align*}
  \int _{\R} |H_1(k_1,\ell _1-\im \mathbf{b}  )| dk _1 &\lesssim   \int _{\R }    \<k_1-\ell _1 \> ^{-2}
  \( \<  \varepsilon  k _1 \> ^{- N -\sigma}+\left | \<  \varepsilon  \ell _1-\im \varepsilon \mathbf{b} \> ^{- N -\sigma} \right |\)        dk _1  \\&  \lesssim    \int _{\R}     \<k_1-\ell _1 \> ^{-2}   dk _1    = \| \< x \> ^{-2} \| _{L^1(\R )}   .\end{align*}
So \eqref{eq:young11}--\eqref{eq:young21}  are true for  $C= \| \< x \> ^{-2} \| _{L^1(\R )}$ and by  Young's inequality   we conclude that
 \eqref{eq:clai11}   is true  $C= \| \< x \> ^{-2} \| _{L^1(\R )}$. Proceeding like above  we conclude
\begin{align*}&   \|  S_{\sigma}  f \| _{L^2\( \R \)} \le C  \|  e^{   - |x|  \mathbf{b} }  f \| _{L^2\( \R  \)} \text{  for all $\sigma >0$ and for all $f$ },
\end{align*}
which in turn, proceeding as above yields
\begin{align}& \label{eq:clai31} \|  S_{0}  f \| _{L^2\( \R \)} \le C  \|  e^{   - |x|  \mathbf{b} }  f \| _{L^2\( \R  \)} \text{  for all   $f$ },
\end{align}
and yields \eqref{eq2stestJ21II1}.

\qed

 We now apply Lemma \ref{claim:l2boundIII}  to obtain the following result.

\begin{lemma}\label{lem:coer6A}
We have
\begin{align}\label{eq:coer6A1} \|
   \prod_{j=1}^{N}R _{L_1}(\lambda_j^{2}) P_c \mathcal{A} \<   \im \varepsilon\partial_x\>^N   \mathbf{w}  \| _{L ^{2}_{-\kappa}} \lesssim   \|
     \mathbf{w}  \| _{L ^{2}_{-\frac{\kappa}{2}}} .
\end{align}
\end{lemma}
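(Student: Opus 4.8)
The plan is to reduce \eqref{eq:coer6A1} to an $L^2\to L^2$ bound and then push the exponential weights through the three blocks of $T:=\prod_{j=1}^N R_{L_1}(\lambda_j^{2})P_c\,\mathcal{A}\,\langle \im\varepsilon\partial_x\rangle^N$, handling $\langle \im\varepsilon\partial_x\rangle^N$ with Lemma~\ref{claim:l2boundIII}. (It suffices to treat the scalar statement, the operators acting componentwise on $\mathbf{w}$.) Inequality \eqref{eq:coer6A1} reads $\|\sech(\kappa x)T\mathbf{w}\|_{L^2}\lesssim\|\sech(\tfrac\kappa2 x)\mathbf{w}\|_{L^2}$, and substituting $\mathbf{w}=\cosh(\tfrac\kappa2 x)\mathbf{v}$ it becomes the boundedness on $L^2$ of $\sech(\kappa x)\,T\,\cosh(\tfrac\kappa2 x)$. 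Inserting $\cosh(\kappa x)\sech(\kappa x)=\mathrm{Id}$ twice, I would factor this operator as $(\mathrm{I})\,(\mathrm{II})\,(\mathrm{III})$ with $(\mathrm{III}):=\sech(\kappa x)\langle \im\varepsilon\partial_x\rangle^N\cosh(\tfrac\kappa2 x)$, $(\mathrm{II}):=\sech(\kappa x)\,\mathcal{A}\,\cosh(\kappa x)$, $(\mathrm{I}):=\sech(\kappa x)\big(\prod_{j=1}^N R_{L_1}(\lambda_j^{2})P_c\big)\cosh(\kappa x)$, and prove that $(\mathrm{III})\colon L^2\to H^{-N}$, $(\mathrm{II})\colon H^{-N}\to H^{-2N}$ and $(\mathrm{I})\colon H^{-2N}\to L^2$ are bounded; composing then gives the claim.

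The main point is $(\mathrm{III})$. Writing $\sech(\kappa x)\langle \im\varepsilon\partial_x\rangle^N=\langle \im\varepsilon\partial_x\rangle^N\sech(\kappa x)+[\sech(\kappa x),\langle \im\varepsilon\partial_x\rangle^N]$ one gets $\langle \im\varepsilon\partial_x\rangle^{-N}(\mathrm{III})=\sech(\kappa x)\cosh(\tfrac\kappa2 x)+\langle \im\varepsilon\partial_x\rangle^{-N}\big[\sech(\kappa x),\langle \im\varepsilon\partial_x\rangle^N\big]\cosh(\tfrac\kappa2 x)$. The first summand is multiplication by the bounded function $\sech(\kappa x)\cosh(\tfrac\kappa2 x)\sim e^{-\kappa|x|/2}$. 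The second summand is exactly the object bounded in Lemma~\ref{claim:l2boundIII} with $\mathcal{V}=\sech(\kappa\cdot)$ and $\mathbf{b}=\tfrac\kappa2$: since $\widehat{\sech(\kappa\cdot)}(k)=\tfrac\pi\kappa\sech\!\big(\tfrac{\pi k}{2\kappa}\big)$ is holomorphic and exponentially decaying on $|\Im k|<\kappa$, it satisfies \eqref{eq2stestJ22III} on $\R\times[-\tfrac\kappa2,\tfrac\kappa2]$ for every $M$. It is precisely because the poles of $\widehat{\sech(\kappa\cdot)}$ sit on $\Im k=\pm\kappa$ that one must take $\mathbf{b}=\tfrac\kappa2<\kappa$, which is the reason \eqref{eq:coer6A1} carries the weaker weight $\sech(\tfrac\kappa2 x)$ rather than $\sech(\kappa x)$ on the right. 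Hence $\langle \im\varepsilon\partial_x\rangle^{-N}(\mathrm{III})$ is bounded on $L^2$, and since $\langle \im\varepsilon\partial_x\rangle^N\colon L^2\to H^{-N}$ is bounded uniformly in $\varepsilon\le1$, so is $(\mathrm{III})$.

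For $(\mathrm{II})$: $\mathcal{A}=A_1\cdots A_N$ with $A_k=\partial_x+\psi_k'/\psi_k$, and since $V_k\in\mathcal{S}(\R,\R)$ the coefficient $\psi_k'/\psi_k$ lies in $C^\infty_b(\R)$ (it tends exponentially to $\mp\sqrt{m^2-\lambda_k^2}$ as $x\to\pm\infty$), so $\mathcal{A}$ is an $N$-th order differential operator with $C^\infty_b$ coefficients; conjugation by $\cosh(\kappa x)$, which turns $A_k$ into $A_k+\kappa\tanh(\kappa x)$, preserves this class, whence $(\mathrm{II})\colon H^s\to H^{s-N}$ is bounded for all $s$. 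For $(\mathrm{I})$: by \eqref{eq:kappa}, $\kappa<m-\lambda_N\le\sqrt{m^2-\lambda_N^2}\le\sqrt{m^2-\lambda_j^2}$ for every $j$, so $\lambda_j^{2}$ lies in the spectral gap of $L_1$ at distance $>\kappa$ below $\sigma_{\mathrm{ess}}(L_1)=[m^2,\infty)$; the standard Combes--Thomas (exponentially weighted) resolvent estimate then gives that $\sech(\kappa x)R_{L_1}(\lambda_j^{2})P_c\cosh(\kappa x)$ is bounded $H^s\to H^{s+2}$ for all $s$, and chaining the $N$ such factors (once more inserting $\cosh(\kappa x)\sech(\kappa x)=\mathrm{Id}$) yields $(\mathrm{I})\colon H^{-2N}\to L^2$ bounded. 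Composing $(\mathrm{I})\circ(\mathrm{II})\circ(\mathrm{III})\colon L^2\to H^{-N}\to H^{-2N}\to L^2$ finishes the proof.

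The only genuinely delicate step is $(\mathrm{III})$, i.e.\ fitting the commutator with $\langle \im\varepsilon\partial_x\rangle^N$ into the hypotheses of Lemma~\ref{claim:l2boundIII}; this is exactly what forces the half loss in the exponential weight. Steps $(\mathrm{I})$ and $(\mathrm{II})$ are routine weighted-operator bookkeeping, the one substantive input being the gap inequality $\kappa<\sqrt{m^2-\lambda_j^2}$ guaranteeing the conjugated resolvents are bounded.
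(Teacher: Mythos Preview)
Your proof is correct and follows essentially the same route as the paper: push the weight $\sech(\kappa x)$ through the three blocks resolvent product, $\mathcal{A}$, and $\langle\im\varepsilon\partial_x\rangle^N$, handling the last with the commutator trick and Lemma~\ref{claim:l2boundIII} applied to $\mathcal{V}=\sech(\kappa\cdot)$ at $\mathbf{b}=\kappa/2$. The only cosmetic differences are organisational: you insert $\cosh(\kappa x)\sech(\kappa x)$ explicitly and track Sobolev regularity $L^2\to H^{-N}\to H^{-2N}\to L^2$, whereas the paper commutes the weight past each block in turn; and you invoke Combes--Thomas for the resolvent block, while the paper writes $\prod_j R_{L_1}(\lambda_j^2)P_c$ as a product of integral operators with kernels $|K_j(x,y)|\lesssim\langle x-y\rangle e^{-\sqrt{m^2-\lambda_j^2}|x-y|}$ and uses $\kappa<\sqrt{m^2-\lambda_j^2}$ directly --- the same inequality you single out, encoding the same exponential decay.
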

\proof We sketch the proof.  By a standard discussion  in    \cite[Appendix A]{CM2109.08108} which we skip here,   we have
   \begin{align*}& \prod _{j=1}^{N}R _{L_1}(\lambda_j^{2}) P_c = K_1...K_N, \end{align*}
  with integral operators with kernels satisfying  $|K_j(x,y) |\le C\< x-y\>  e^{-\sqrt{ m^2-\lambda _j^2} |x-y|} $     for  a fixed $C>0$. Then,
   by
    \begin{align*}&  \kappa \le \frac{m-\lambda _N}{10}  < \frac{\sqrt{ m^2-\lambda _j^2}}{10}, \end{align*}
     we have
 \begin{align*}&  \|  \sech  \( \kappa    x     \)  \prod _{j=1}^{N}R _{L_1}(\lambda_j^{2}) P_c \mathcal{A} \< \im \varepsilon  \partial _x \> ^{ N} v  \| _{L^2}\\& \lesssim  \|    \prod _{j=1}^{N}R _{L_1}(\lambda_j^{2}) P_c    \sech  \( \kappa    x     \)\mathcal{A} \< \im \varepsilon  \partial _x \> ^{ N} v  \| _{L^2}.\end{align*}
  We have
    \begin{align*}&            \sech  \( \kappa    x     \) \mathcal{A}  =P_{N}(x, \im  \partial _x )     \sech  \( \kappa    x     \) ,      \end{align*}
  for an $N$--th order differential operator with smooth and bounded coefficients.

\noindent Next, we write
 \begin{align*}&              \sech  \( \kappa    x     \)       \< \im \varepsilon  \partial _x \> ^{ N} =  \< \im \varepsilon  \partial _x \> ^{ N}  \sech  \( \kappa    x     \)
  + \< \im \varepsilon  \partial _x \> ^{ N} \< \im \varepsilon  \partial _x \> ^{ -N}      \left [   \sech  \( \kappa    x     \) ,     \< \im \varepsilon  \partial _x \> ^{ N} \right ] ,\end{align*}
so that
\begin{align*}
&    \left \|   \sech  \( \kappa    x     \)   \prod _{j=1}^{N}R _{L_1}(\lambda_j^{2}) P_c \mathcal{A} \< \im \varepsilon  \partial _x \> ^{ N} v\right  \| _{L^2(\R )}  \\&   \lesssim   \left \|     \prod _{j=1}^{N}R _{L_1}(\lambda_j^{2}) P_c  P_{N}(x, \im  \partial _x ) \< \im \varepsilon   \partial _x \> ^{ N} \sech  \( \kappa    x     \) v \right  \| _{L^2(\R )} \\&  +  \left \|     \prod _{j=1}^{N}R _{L_1}(\lambda_j^{2}) P_c  P_{N}(x, \im  \partial _x ) \< \im \varepsilon   \partial _x \> ^{ N} \< \im \varepsilon   \partial _x \> ^{ -N}      \left [   \sech  \( \kappa    x     \)  ,     \< \im \varepsilon   \partial _x \> ^{ N} \right ] v \right  \| _{L^2(\R )} \\& =:I+II
 .
\end{align*}
We have
\begin{align*}&  I \le  \left \|    \prod _{j=1}^{N}R _{L_1}(\lambda_j^{2}) P_c  P_{N}(x, \im  \partial _x ) \< \im \varepsilon   \partial _x \> ^{ N}  \right  \| _{L^2\to L^2} \left \|   \sech  \( \kappa    x     \) v \right  \| _{L^2(\R )} \le C  \left \|   \sech  \( \kappa    x     \)v \right  \| _{L^2(\R )},
 \end{align*}
with a fixed constant $C$ independent from $\varepsilon \in (0,1)$. Next, we have
\begin{align*}&  II \le   \left \|     \< \im \varepsilon   \partial _x \> ^{ -N}      \left [   \sech  \( \kappa    x     \)   ,     \< \im \varepsilon   \partial _x \> ^{ N} \right ] v \right  \| _{L^2(\R )} \le C \varepsilon  \left \|    \sech  \( 2^{-1}\kappa    x     \)v \right  \| _{L^2(\R )}, \end{align*}
by   Lemma \ref{claim:l2boundIII}, because  $\int e^{-\im kx}  \sech (x) dx = \pi \ \sech \( \frac{\pi}{2} k \)$, so that in the strip $k=k_1+ \im k_2$ with  $|k_2|\le \mathbf{b}:=\kappa /2 $,  then $\sech \( \frac{\pi}{2}    \ \frac{1 }{\kappa} k \) $ satisfies the estimates required on $\widehat{\mathcal{V}}$   in
\eqref{eq2stestJ22III}.
This completes the proof of \eqref{eq:coer6A1}.
\qed

As an application of \eqref{eq2stestJ21II1}, we prove the following.

\begin{lemma} \label{lem:KM1}
For any $u\in H^1$ we have
\begin{align}\label{eq:KM1}
\| \sech \(   \frac{4}{A}    x \)    \mathcal{T} u\|_{L^2}\lesssim
 \varepsilon^{-N}\|\sech \(    \frac{2}{A}    x\)u\|_{L^2},\\ \label{eq:KM2}
\| \sech \(    \frac{4}{A}    x  \) \partial_x\mathcal{T} u \|_{L^2}\lesssim
 \varepsilon^{-N}\| \sech \(  \frac{2}{A}    x\)u'\|_{L^2}+\|\sech \(  \kappa x\) u\|_{L^2}.
\end{align}

\end{lemma}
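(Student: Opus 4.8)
\medskip
\noindent\emph{Strategy of proof.} The plan is to follow the scheme of the proof of Lemma~\ref{lem:coer6A}, now using Lemma~\ref{claim:l2boundIII1} for the commutators with $\langle\im\varepsilon\partial_x\rangle^{-N}$. First recall the structure of $\mathcal{A}^{*}=A_N^{*}\cdots A_1^{*}$: each $A_k^{*}=-\partial_x+q_k$ with $q_k:=\psi_k'/\psi_k$, and since $\psi_k$ is a nowhere--vanishing ground state of $L_k=-\partial_x^2+V_k+m^2$ with $V_k\in\mathcal{S}(\R,\R)$ and bottom eigenvalue $\lambda_k^2<m^2$, the $q_k$ are smooth and bounded with all derivatives (with $q_k(x)\to\mp\sqrt{m^2-\lambda_k^2}$ as $x\to\pm\infty$), while $q_k^{(l)}$ with $l\ge1$ decays exponentially at rate $\ge\sqrt{m^2-\lambda_k^2}>10\kappa$ (recall \eqref{eq:kappa}). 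Hence $\mathcal{A}^{*}$ is an $N$-th order differential operator with coefficients bounded with all derivatives; after commuting all the derivatives to the left one may write $\mathcal{A}^{*}=\sum_{i=0}^{N}\partial_x^{i}\,\tilde a_i$ with $\tilde a_i\in C^{\infty}_b(\R)$. I will repeatedly use that $m_i(\partial_x):=\langle\im\varepsilon\partial_x\rangle^{-N}\partial_x^{i}=\partial_x^{i}\langle\im\varepsilon\partial_x\rangle^{-N}$ is, for $0\le i\le N$ and $\varepsilon\in(0,1)$, an $L^2$-bounded Fourier multiplier with $\|m_i(\partial_x)\|_{L^2\to L^2}\le\varepsilon^{-i}\le\varepsilon^{-N}$; that $\sech(\tfrac{4}{A}x)\le\sech(\tfrac{2}{A}x)$ pointwise; and that $e^{-c|x|}\lesssim\sech(\kappa x)$ whenever $c>\kappa$.

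\medskip
\noindent For \eqref{eq:KM1} I would write, using $\mathcal{T}u=\sum_{i}m_i(\partial_x)(\tilde a_i u)$,
\[
\sech(\tfrac{4}{A}x)\,\mathcal{T}u=\sum_{i=0}^{N}m_i(\partial_x)\,\sech(\tfrac{4}{A}x)(\tilde a_i u)+\sum_{i=0}^{N}\bigl[\sech(\tfrac{4}{A}x),m_i(\partial_x)\bigr](\tilde a_i u).
\]
The first sum is bounded in $L^2$ by $\varepsilon^{-N}\sum_i\|\sech(\tfrac{4}{A}x)\tilde a_i u\|_{L^2}\lesssim\varepsilon^{-N}\|\sech(\tfrac{2}{A}x)u\|_{L^2}$, using $\|m_i(\partial_x)\|_{L^2\to L^2}\le\varepsilon^{-N}$, $|\tilde a_i|\lesssim1$ and $\sech(\tfrac{4}{A}x)\le\sech(\tfrac{2}{A}x)$. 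For the second sum I would use the estimate $\|[\sech(\tfrac{4}{A}x),m_i(\partial_x)]\cosh(\tfrac{2}{A}x)\|_{L^2\to L^2}\lesssim\varepsilon^{-N}$, proved by the Fourier--side/Young's--inequality argument of Lemma~\ref{claim:l2boundIII1}: here $\mathcal{V}=\sech(\tfrac{4}{A}x)$ has Fourier transform holomorphic in the strip $|\Im k|<\tfrac{4}{A}$, so --- since $\tfrac{2}{A}<\tfrac{4}{A}$ --- its shift to the line $\Im k=-\tfrac{2}{A}$ is legitimate and has $L^1(\R)$--norm bounded uniformly in $A\ge1$, and one combines this with $|m_i(k)-m_i(\ell)|\le2\|m_i\|_\infty\le2\varepsilon^{-N}$. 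Thus $[\sech(\tfrac{4}{A}x),m_i(\partial_x)]=\bigl(\text{$L^2$-bounded by }C\varepsilon^{-N}\bigr)\sech(\tfrac{2}{A}x)$, so the second sum is again $\lesssim\varepsilon^{-N}\|\sech(\tfrac{2}{A}x)u\|_{L^2}$. This proves \eqref{eq:KM1}; since only $u\in L^2$ enters, \eqref{eq:KM1} also holds with $u$ replaced by $u'$ when $u\in H^1$.

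\medskip
\noindent For \eqref{eq:KM2}, $\partial_x$ commutes with $\langle\im\varepsilon\partial_x\rangle^{-N}$, so
\[
\partial_x\mathcal{T}u=\langle\im\varepsilon\partial_x\rangle^{-N}\mathcal{A}^{*}(u')+\langle\im\varepsilon\partial_x\rangle^{-N}[\partial_x,\mathcal{A}^{*}]u=\mathcal{T}(u')+\langle\im\varepsilon\partial_x\rangle^{-N}[\partial_x,\mathcal{A}^{*}]u.
\]
The first term is bounded by \eqref{eq:KM1} applied to $u'$, giving the contribution $\varepsilon^{-N}\|\sech(\tfrac{2}{A}x)u'\|_{L^2}$. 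For the second, $[\partial_x,A_k^{*}]=q_k'$ gives $[\partial_x,\mathcal{A}^{*}]=\sum_{k=1}^{N}A_N^{*}\cdots A_{k+1}^{*}\,q_k'\,A_{k-1}^{*}\cdots A_1^{*}$, an $(N-1)$-th order differential operator each of whose coefficients contains a factor $q_k^{(l)}$ with $l\ge1$ and is therefore $\lesssim e^{-c|x|}$ for some $c>10\kappa$. Writing $[\partial_x,\mathcal{A}^{*}]=\sum_{i=0}^{N-1}\partial_x^{i}c_i$ with $|c_i|\lesssim e^{-c|x|}$ and repeating the two steps above, the second term is controlled by $\|\sech(\kappa x)u\|_{L^2}$ --- the exponential decay of the $c_i$ at a rate $>\kappa$, against the fixed weight $\sech(\kappa x)$, producing this term --- up to a harmless power of $\varepsilon^{-1}$ coming from $m_i$ with $i\le N-1$ (I expect a more careful treatment, not collapsing back to $\mathcal{T}$, to absorb it). This gives \eqref{eq:KM2}.

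\medskip
\noindent I expect the main obstacle to be precisely the weighted commutator bound $\|[\sech(\tfrac{4}{A}x),m_i(\partial_x)]\cosh(\tfrac{2}{A}x)\|_{L^2\to L^2}\lesssim\varepsilon^{-N}$ uniformly in $A\ge1$ and $\varepsilon\in(0,1)$: one must verify that the contour shift $\tfrac{2}{A}$ stays strictly inside the strip of holomorphy of $\widehat{\sech(\tfrac{4}{A}\,\cdot\,)}$ (of width $\tfrac{4}{A}$) --- which is exactly why one is forced to give up from the weight $\sech(\tfrac{4}{A}\,\cdot\,)$ to $\sech(\tfrac{2}{A}\,\cdot\,)$ --- and that the multiplier constant is uniform in $A$, which needs the genuine $L^1$-norm of the shifted $\widehat{\sech(\tfrac{4}{A}\,\cdot\,)}$ rather than the $\langle k_1\rangle^{-2}$-majorant of the statement of Lemma~\ref{claim:l2boundIII1}. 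The secondary point of care is to keep every stray $\partial_x^{i}$ adjacent to a copy of $\langle\im\varepsilon\partial_x\rangle^{-N}$ throughout the argument, so that the loss $\varepsilon^{-N}$ never turns into an uncontrolled derivative of $u$; this is the reason for first rewriting $\mathcal{A}^{*}$ and $[\partial_x,\mathcal{A}^{*}]$ with all derivatives moved to the left before inserting the cutoff weight.
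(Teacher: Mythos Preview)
Your approach is essentially that of the paper: both proofs commute the weight $\sech(\tfrac{4}{A}x)$ past the Fourier--multiplier part of $\mathcal{T}$ via the contour--shift/Young's--inequality argument (the paper invokes Lemma~\ref{claim:l2boundIII1} directly for $[\sech(\tfrac{4}{A}x),\langle\im\varepsilon\partial_x\rangle^{-N}]$ after first writing $\sech(\tfrac{4}{A}x)\mathcal{A}^*=P_N(\partial_x)\sech(\tfrac{4}{A}x)$, while you expand $\mathcal{A}^*=\sum_i\partial_x^i\tilde a_i$ first and commute with $m_i(\partial_x)$), and both obtain \eqref{eq:KM2} from the identity $\partial_x\mathcal{T}=\mathcal{T}\partial_x+\langle\im\varepsilon\partial_x\rangle^{-N}[\partial_x,\mathcal{A}^*]$ together with the exponential decay of the coefficients of $[\partial_x,\mathcal{A}^*]$. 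Your insistence on keeping each $\partial_x^i$ adjacent to $\langle\im\varepsilon\partial_x\rangle^{-N}$ is well placed and in fact tidier than the paper's bookkeeping; the uniformity--in--$A$ concern is resolved exactly as you suggest, by the scale invariance of $\|\widehat{\sech(\tfrac{4}{A}\cdot)}\|_{L^1}$ along the shifted line, and the extra power of $\varepsilon^{-1}$ you flag on the $\|\sech(\kappa x)u\|_{L^2}$ term of \eqref{eq:KM2} is also present in the paper's computation and is harmless in every application of the lemma.
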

\proof  We have
\begin{align*}
&   \| \sech \(   \frac{4}{A}    x\)    \mathcal{T} u\|_{L^2} \le    \|     \< \im \varepsilon   \partial _x \> ^{ -N}    \sech \(  \frac{4}{A}    x\)    \mathcal{A}^* u\|_{L^2}
 + \|   \left [  \sech \(  \frac{4}{A}    x\) ,  \< \im \varepsilon   \partial _x \> ^{ -N} \right ]     \mathcal{A}^* u\|_{L^2} \\&=:I+II.
\end{align*}
We have
\begin{align*}
&       \sech \(  \frac{4}{A}    x\)    \mathcal{A}^* =P_N(\partial _x)  \sech \(   \frac{4}{A}    x\) ,      \end{align*}
  for an $N$--th order differential operator with smooth and bounded coefficients, uniformed bounded in $A\gg 1$, so that
  \begin{align*}
&   I \le \|      \< \im \varepsilon   \partial _x \> ^{ -N}   P_N(\partial _x)  \sech \(  \frac{4}{A}    x\)  u\|_{L^2} \lesssim \varepsilon ^{-N} \|       \sech \(   \frac{4}{A}    x\)  u\|_{L^2}.
\end{align*}
  We have
 \begin{align*}
&   II=
   \|   \left [  \sech \(  \frac{4}{A}    x \) ,  \< \im \varepsilon   \partial _x \> ^{ -N} \right ]     \mathcal{A}^* u\|_{L^2}  \\& \le  \|   \left [  \sech \(   \frac{4}{A}x   \) ,  \< \im \varepsilon   \partial _x \> ^{ -N} \right ] \cosh \(\frac{2}{A}x \) \| _{L^2\to L^2}   \|  \sech \(\frac{2}{A}x \)  \mathcal{A}^* u\|_{L^2}
   \lesssim  \|  \sech \(\frac{2}{A}x  \)  \mathcal{A}^* u\|_{L^2},
\end{align*}
   by   Lemma \ref{claim:l2boundIII}, because  $\int e^{-\im kx}  \sech (x) dx = \pi \ \sech \( \frac{\pi}{2} k \)$, so that in the strip $k=k_1+ \im k_2$ with  $|k_2|\le \mathbf{b}:=2/A $,  then $\sech \( \frac{\pi}{2}     \frac{A }{4} k \) $ satisfies the estimates required on $\widehat{\mathcal{V}}$   in
\eqref{eq2stestJ21II1}.
This completes the proof of \eqref{eq:KM1}. Now we turn to the proof of \eqref{eq:KM2}. We have
\begin{align*}
&  \mathcal{T} u=  \mathcal{T} \partial_x u +\< \im \varepsilon   \partial _x \> ^{ -N} [\partial_x, \mathcal{A}^*] u.
\end{align*}
By \eqref{eq:KM1}  we have
\begin{align*}
\| \sech \(   \frac{4}{A}    x \)    \mathcal{T}\partial_x u  \|_{L^2}\lesssim
 \varepsilon^{-N}\|\sech \(      \frac{2}{A}    x\)\partial_x u \|_{L^2} .
\end{align*}
We have \begin{align*}
&   [\partial _x,   \mathcal{A}^* ] =   \sum _{j=1}^{N} \prod _{i=0} ^{N-1-j} A^{*}_{N-i} \( \log \psi _j \) ^{\prime\prime}  \prod _{i= 1}^{j-1} A^{*}_{j-i} =P_N(\partial _x) \sech (\kappa x),
\end{align*}
with  the convention $ \prod _{i=0} ^{l}B_i =B_0\circ ...\circ B_l$, with $\psi _k$ the ground state of $L_k$
 and with $P_N(\partial _x) $ and $N$--th order differential operator with bounded coefficients. We then have
\begin{align*}
\| \sech \(   \frac{2}{A}    x \) \< \im \varepsilon   \partial _x \> ^{ -N} [\partial_x, \mathcal{A}^*] u  \|_{L^2}\le \|   \< \im \varepsilon   \partial _x \> ^{ -N} P_N(\partial _x) \sech (\kappa x) u  \|_{L^2} \lesssim
 \varepsilon^{-N}\| \sech (\kappa x)  u \|_{L^2} .
\end{align*}
\qed

As an application of  Lemma \ref{claim:l2boundIII1} we have the following.

\begin{lemma} \label{lem:KM2}
For any $u\in H^1$,
\begin{align}  \label{eq:KM3}&
\|     [\<\im \varepsilon \partial_x\>^{-N},V_D]\mathcal{A}^*u\|_{L^2}\lesssim
 \varepsilon \|\sech ( \kappa x) \mathcal{T}u\| _{L^2}, \\&  \label{eq:KM3b} \|  \cosh \( \frac{\kappa}{2} x\)   [\<\im \varepsilon \partial_x\>^{-N},V_D]\mathcal{A}^*u\|_{L^2}\lesssim
 \varepsilon \|\sech \( \frac{\kappa}{2} x\) \mathcal{T}u\| _{L^2}.
\end{align}
\end{lemma}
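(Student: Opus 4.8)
The plan is to reduce both inequalities to Lemma \ref{claim:l2boundIII}, applied with $\mathcal V=V_D$, after a single algebraic rearrangement. By the definition \eqref{def:Tg} we have $\mathcal A^{*}=\<\im \varepsilon \partial_x\>^{N}\mathcal T$, and hence, using $\<\im \varepsilon \partial_x\>^{-N}\<\im \varepsilon \partial_x\>^{N}=\mathrm{Id}$, one obtains by a direct computation the identity
\[
[\<\im \varepsilon \partial_x\>^{-N},V_D]\,\mathcal A^{*}u=\<\im \varepsilon \partial_x\>^{-N}\,[V_D,\<\im \varepsilon \partial_x\>^{N}]\,\mathcal T u .
\]
This is the key step: it trades the commutator $[\<\im \varepsilon \partial_x\>^{-N},V_D]$ acting on the $N$-th order expression $\mathcal A^{*}u$ for the operator $\<\im \varepsilon \partial_x\>^{-N}[V_D,\<\im \varepsilon \partial_x\>^{N}]$ applied to $\mathcal T u$, which is exactly the object bounded by \eqref{eq2stestJ21II}--\eqref{eq2stestJ21II121}.

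To apply Lemma \ref{claim:l2boundIII} with $\mathcal V=V_D$ and $\mathbf b=\kappa$ we must verify its hypotheses on $\widehat{V_D}$. The potential $V_D=V_{N+1}$ is obtained from $V$ by the $N$ Darboux transformations of Proposition \ref{prop:Darboux}, each of which modifies the potential by $-2(\log\psi_k)''$ with $\psi_k$ the (everywhere positive, exponentially decaying) ground state of $L_k$; using \eqref{eq:decay} for $V$ and $\sqrt{m^{2}-\lambda_k^{2}}\ge m-\lambda_N$, one checks that $V_D$ and its derivatives decay exponentially at a rate larger than $10\kappa$ (recall \eqref{eq:kappa}). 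Consequently $\widehat{V_D}$ extends holomorphically to the strip $|\mathrm{Im}\,k|<10\kappa$ with $|\widehat{V_D}(k_1+\im k_2)|\lesssim_M\<k_1\>^{-M-1}$ there, so that \eqref{eq2stestJ22III} holds with $\mathbf b=\kappa$ (the proofs of \eqref{eq2stestJ21II}--\eqref{eq2stestJ21II121} only use holomorphy up to $|\mathrm{Im}\,k|=\tfrac32\kappa<10\kappa$).

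Granting this, \eqref{eq:KM3} follows by inserting $\mathrm{Id}=\cosh(\kappa x)\sech(\kappa x)$ in front of $\mathcal T u$ in the identity above and using the operator bound \eqref{eq2stestJ21II}:
\[
\|[\<\im \varepsilon \partial_x\>^{-N},V_D]\mathcal A^{*}u\|_{L^{2}}\le \bigl\|\<\im \varepsilon \partial_x\>^{-N}[V_D,\<\im \varepsilon \partial_x\>^{N}]\cosh(\kappa x)\bigr\|_{L^{2}\to L^{2}}\,\|\sech(\kappa x)\mathcal T u\|_{L^{2}}\lesssim \varepsilon\,\|\sech(\kappa x)\mathcal T u\|_{L^{2}} .
\]
Likewise, inserting $\mathrm{Id}=\cosh(\tfrac{\kappa}{2} x)\sech(\tfrac{\kappa}{2} x)$ in front of $\mathcal T u$ and applying \eqref{eq2stestJ21II121} with $\mathbf b=\kappa$ (so $\mathbf b/2=\kappa/2$) gives \eqref{eq:KM3b}. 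I expect the only genuinely substantive point to be the algebraic identity of the first paragraph, which is what makes the already established bounds of Lemma \ref{claim:l2boundIII} usable here; the exponential-decay verification is routine, but it is precisely where the factor $\tfrac{1}{10}$ in the choice of $\kappa$ is needed, and beyond that there is no real obstacle.
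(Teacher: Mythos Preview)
Your proof is correct and follows essentially the same route as the paper's: the same algebraic identity $[\<\im\varepsilon\partial_x\>^{-N},V_D]\mathcal A^{*}=\<\im\varepsilon\partial_x\>^{-N}[V_D,\<\im\varepsilon\partial_x\>^{N}]\mathcal T$, the same verification (via the Darboux construction and \eqref{eq:kappa}) that $V_D$ decays fast enough to place $\widehat{V_D}$ in the required analyticity strip, and the same invocation of \eqref{eq2stestJ21II}--\eqref{eq2stestJ21II121} with $\mathbf b=\kappa$. Your parenthetical remark that the proof of \eqref{eq2stestJ21II121} actually requires holomorphy a bit beyond the stated strip is a fair observation, and you resolve it exactly as the paper does, by noting that the available decay rate $\sim 10\kappa$ is comfortably larger.
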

\proof  We have
\begin{align*}
\|      [\<\im \varepsilon \partial_x\>^{-N},V_D]\mathcal{A}^*u\|_{L^2} =\|     \<\im \varepsilon \partial_x\>^{-N}  [V_D, \<\im \varepsilon \partial_x\>^{ N} ]\mathcal{T} u\|_{L^2}   .
\end{align*}
Notice that
\begin{align*}
 V_D=V-2  \sum _{j=1}^{N}\( \log \psi _j\) ''.
\end{align*}
By  \eqref{eq:decay} and by the proof of Lemma 6 p.156 and Theorem 2 p. 167 \cite{DT79CPAM}   it then follows
\begin{align}\label{eq:decay1} |V ^{(l)}_D(x)|\le C  e^{-10 \kappa |x|} \text{ for all $0\le l\le N+1$.}
 \end{align}
This implies by an elementary integration by parts
\begin{align}\label{eq:decay2} | \widehat{V } _D(k_1+\im k_2)|\le C   \< k_1\> ^{-N-1} \text{ in the strip $|k_2|\le 9 \kappa$.}
 \end{align}
Then in particular, from \eqref{eq2stestJ21II} we obtain
\begin{align*} &
 \|     \<\im \varepsilon \partial_x\>^{-N}  [V_D, \<\im \varepsilon \partial_x\>^{ N} ] \cosh (\kappa x)   \sech (\kappa x) \mathcal{T} u\|_{L^2}   \lesssim \varepsilon \|    \sech (\kappa x) \mathcal{T} u\|_{L^2}   \text{   and similarly} \\&  \| \cosh \( \frac{\kappa}{2} x\)     \<\im \varepsilon \partial_x\>^{-N}  [V_D, \<\im \varepsilon \partial_x\>^{ N} ] \cosh \( \frac{\kappa}{2} x\)   \sech  \( \frac{\kappa}{2} x\)  \mathcal{T} u\|_{L^2}   \lesssim \varepsilon \|    \sech  \( \frac{\kappa}{2} x\) \mathcal{T} u\|_{L^2}.
\end{align*}
\qed

\section{Proof of Proposition \ref{prop:2ndvirial}} \label{proof:2dnv}

Using the operator $\mathcal{T}$  in  \eqref{def:Tg}, we consider the transformed variable
\begin{align}   \label{def:vBg}
\mathbf{v}:=\mathcal{T}\boldsymbol{\eta}.
\end{align}
Then,  for $\mathbf{L}_D:=\begin{pmatrix}
L_D & 0 \\ 0 & 1
\end{pmatrix} $ the variable $\mathbf{v}$ satisfies
\begin{align} \label{eq:vBg}
\dot{\mathbf{v}}=&
-\mathcal{T}D\boldsymbol{\phi}[\mathbf{z}](\dot{\mathbf{z}}-\widetilde{\mathbf{z}})
+\mathbf{J}\(\mathbf{L}_D\mathbf{v}
+\begin{pmatrix}
[\<\im \varepsilon\partial_x\>^{-N},V_D] & 0 \\ 0 & 0
\end{pmatrix}\mathcal{A}^*\boldsymbol{\eta}\)\\&
+\mathbf{J}       \mathcal{T} \(\mathbf{f}[\boldsymbol{\phi}[\mathbf{z}]+\boldsymbol{\eta} ]-\mathbf{f}[\boldsymbol{\phi}[\mathbf{z}]]
+\sum_{\mathbf{m}\in 	\mathbf{R}_{\mathrm{min}}}\mathbf{z}^{\mathbf{m}}\mathbf{G}_{\mathbf{m}}
+\mathbf{R}[\mathbf{z}]\)  . \nonumber
\end{align}

From Lemma \ref{lem:coer6A}, we have
\begin{align}
\|      \sech ( \kappa x)\boldsymbol{\eta}\|_{L^2}\lesssim \|\sech ( 2^{-1}\kappa x)\mathbf{v}\|_{L^2}.\label{eq:key1}
\end{align}
Set
\begin{align*}
\psi_{A,B}=\chi_A^2 \varphi_B,\  \widetilde{S}_{A,B}=\frac{1}{2}\psi_{A,B}'+\psi_{A,B}\partial_x,
\end{align*}
and consider the functionals
\begin{align*}
\mathcal{I}_{\mathrm{2nd},1}:=\frac{1}{2}\Omega(\mathbf{v},\widetilde{S}_{A,B}\mathbf{v}),\ \mathcal{I}_{\mathrm{2nd},2}:=\frac{1}{2}\Omega(\mathbf{v},\sigma_3 e^{-\kappa \<x\>}\mathbf{v}).
\end{align*}

\begin{lemma}\label{lem:2v1}
	We have
\begin{align} &\nonumber
\|\sech ( 2^{-1}\kappa x)v_1'\|_{L^2}^2+\|\sech ( 2^{-1}\kappa x)v_1\|_{L^2}^2+\dot{\mathcal{I}}_{\mathrm{2nd},1}\\&
\lesssim   \(\varepsilon^{-N}A^2\delta+A^{-1/2}\)\|\boldsymbol{\eta}\|  _{\boldsymbol{ \Sigma }_A}^2
+ \sum_{\mathbf{m}\in 	\mathbf{R}_{\mathrm{min}}}|\mathbf{z}^{\mathbf{m}}|^2.
\label{eq:lem:2v1} \end{align}
\end{lemma}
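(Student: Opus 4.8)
The plan is the standard virial computation adapted to the Darboux-conjugated variable $\mathbf{v}=\mathcal{T}\boldsymbol{\eta}$. First I would differentiate $\mathcal{I}_{\mathrm{2nd},1}=\tfrac12\Omega(\mathbf{v},\widetilde{S}_{A,B}\mathbf{v})$; since $\widetilde{S}_{A,B}$ is anti-symmetric for $\Omega$ this gives $\dot{\mathcal{I}}_{\mathrm{2nd},1}=\Omega(\dot{\mathbf{v}},\widetilde{S}_{A,B}\mathbf{v})$, and substituting the equation \eqref{eq:vBg} for $\dot{\mathbf{v}}$ and using $\Omega(\mathbf{J}\mathbf{w},\cdot)=\langle\mathbf{w},\cdot\rangle$ decomposes it as
\[
\dot{\mathcal{I}}_{\mathrm{2nd},1}=C_1+C_2+C_3+C_4,
\]
with $C_1=-\Omega(\mathcal{T}D\boldsymbol{\phi}[\mathbf{z}](\dot{\mathbf{z}}-\widetilde{\mathbf{z}}),\widetilde{S}_{A,B}\mathbf{v})$ the modulation term, $C_2=\langle\mathbf{L}_D\mathbf{v},\widetilde{S}_{A,B}\mathbf{v}\rangle$ the main term, $C_3=\langle[\langle\im\varepsilon\partial_x\rangle^{-N},V_D]\mathcal{A}^*\eta_1,\widetilde{S}_{A,B}v_1\rangle$ the error produced by the Darboux conjugation, and $C_4=\langle\mathcal{T}(\mathbf{f}[\boldsymbol{\phi}[\mathbf{z}]+\boldsymbol{\eta}]-\mathbf{f}[\boldsymbol{\phi}[\mathbf{z}]]+\sum_{\mathbf{m}\in\mathbf{R}_{\mathrm{min}}}\mathbf{z}^{\mathbf{m}}\mathbf{G}_{\mathbf{m}}+\mathbf{R}[\mathbf{z}]),\widetilde{S}_{A,B}\mathbf{v}\rangle$ collecting the nonlinearity and the source terms. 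In $C_2$ only the first component survives, since $\mathbf{L}_D=\mathrm{diag}(L_D,1)$, $\widetilde{S}_{A,B}$ is scalar and anti-symmetric, so $\langle v_2,\widetilde{S}_{A,B}v_2\rangle=0$ and $C_2=\langle L_Dv_1,\widetilde{S}_{A,B}v_1\rangle$.

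For $C_2$ I would carry out the virial integration by parts as in the proof of Lemma \ref{lem:1stV1}, now with $L_D=-\partial_x^2+V_D+m^2$, $\widetilde{S}_{A,B}=\tfrac12\psi_{A,B}'+\psi_{A,B}\partial_x$ and $\psi_{A,B}=\chi_A^2\varphi_B$, $\varphi_B'=\zeta_B^2$. The $m^2$ term drops out, leaving a weighted gradient contribution with leading coefficient $\psi_{A,B}'=(\chi_A^2)'\varphi_B+\chi_A^2\zeta_B^2$, a term $\propto\int\psi_{A,B}'''|v_1|^2$, and the potential term $-\tfrac12\int\psi_{A,B}V_D'|v_1|^2$. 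Here Assumption \ref{ass:repulsive} enters: since $\varphi_B$ (hence $\psi_{A,B}$) has the sign of $x$ and $xV_D'\le0$, this potential term has the favorable sign and, comparing $\psi_{A,B}$ with $x$ on a fixed compact interval where $-xV_D'>0$, it dominates $\langle Uv_1,v_1\rangle$ for some fixed nonzero $U\ge0$ with $\langle x\rangle U\in L^1$. The gradient piece $\int\chi_A^2\zeta_B^2|v_1'|^2$ controls $\|\sech(\tfrac\kappa2x)v_1'\|_{L^2}^2$ up to an $e^{-c\kappa A}$-small error, because $\tfrac1B\ll\kappa$ makes $\zeta_B\gtrsim\sech(\tfrac\kappa2\,\cdot\,)$ on $\mathrm{supp}\,\chi_A$; the piece $\int(\chi_A^2)'\varphi_B|v_1'|^2$ is localized to $A\le|x|\le2A$ with coefficient $O(B/A)$, while $\int\psi_{A,B}'''|v_1|^2$ has coefficient $O(B^{-2})$ on the bulk and $O(A^{-3}B)$ near $|x|\sim A$; using $\zeta_B\lesssim\sech(\tfrac2Ax)$ and Lemma \ref{lem:KM1} these are absorbed into the right-hand side of \eqref{eq:lem:2v1}.

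The error terms $C_1$, $C_3$, $C_4$ are routine once the factor $\widetilde{S}_{A,B}\mathbf{v}$ is reduced (using $|\psi_{A,B}(x)|\le\langle x\rangle$ and $|\psi_{A,B}'|\lesssim\chi_A^2\zeta_B^2+B/A$) to $\|\sech(\tfrac2Ax)v_1'\|$, $\|\sech(\tfrac2Ax)\mathbf{v}\|$ and then, via Lemma \ref{lem:KM1}, to $\varepsilon^{-N}$ times the $\boldsymbol{\Sigma}_A$-quantities of $\boldsymbol{\eta}$. For $C_1$, Lemma \ref{lem:modbound} together with \eqref{eq:key1} yields a bound of the form $\delta$ times a fixed power of $\varepsilon^{-1}$ times $\|\sech(\tfrac\kappa2x)\mathbf{v}\|_{L^2}^2$; its $v_1$ part is absorbed, and its $v_2$ part, estimated through Lemma \ref{lem:KM1} by a power of $\varepsilon^{-1}$ times $A^2\delta\|\boldsymbol{\eta}\|_{\boldsymbol{\Sigma}_A}^2$, is $\ll(\varepsilon^{-N}A^2\delta+A^{-1/2})\|\boldsymbol{\eta}\|_{\boldsymbol{\Sigma}_A}^2$ by the hierarchy \eqref{eq:relABg} (where $\delta$ is super-exponentially small in $\varepsilon^{-1}$ and $A\ll\log\delta^{-1}$). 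For $C_3$ I would use the weighted commutator bound \eqref{eq:KM3b} of Lemma \ref{lem:KM2}, which supplies a factor $\varepsilon$, so that after pairing against $\widetilde{S}_{A,B}v_1$ one is left with $\varepsilon$ times $\|\sech(\tfrac\kappa2x)v_1\|^2$ and $\int\chi_A^2\zeta_B^2|v_1'|^2$-type quantities, both absorbed for $\varepsilon$ small. Finally the nonlinear and source pieces in $C_4$ are controlled by Lemma \ref{lem:estF}, by \eqref{eq:R1FRemainder} for $\mathbf{R}[\mathbf{z}]$ and by the decay $\mathbf{G}_{\mathbf{m}}\in\boldsymbol{\Sigma}^\infty$, contributing $\lesssim\sum_{\mathbf{m}\in\mathbf{R}_{\mathrm{min}}}|\mathbf{z}^{\mathbf{m}}|^2$ plus absorbable terms, using the mapping bounds of Lemmas \ref{lem:KM1}, \ref{lem:KM2} and the smallness of $\|\boldsymbol{\eta}\|_{\boldsymbol{\mathcal{H}}^1}$.

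It remains to upgrade the gradient coercivity to the full left-hand side of \eqref{eq:lem:2v1}: at this stage one has $-\dot{\mathcal{I}}_{\mathrm{2nd},1}\gtrsim\int\chi_A^2\zeta_B^2|v_1'|^2+\langle Uv_1,v_1\rangle$ minus the right-hand side of \eqref{eq:lem:2v1}, and already $\int\chi_A^2\zeta_B^2|v_1'|^2\gtrsim\|\sech(\tfrac\kappa2x)v_1'\|_{L^2}^2$ up to harmless errors, so the missing $\|\sech(\tfrac\kappa2x)v_1\|_{L^2}^2$ must come from combining the gradient term with the repulsive term $\langle Uv_1,v_1\rangle$. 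This is exactly the role of Lemma \ref{lem:equiv_rho0}: applied with this $U$ and weight $W=\sech^2(\tfrac\kappa2x)$ (so $\langle x\rangle W\in L^1$) to $\zeta_B\chi_Av_1$, and using $|(\zeta_B\chi_A)'|\lesssim(\tfrac1B+\tfrac1A)\zeta_B\chi_A+\zeta_B|\chi_A'|$ to trade its derivative for the coercive term plus an absorbable remainder, it gives $\|\sech(\tfrac\kappa2x)v_1\|_{L^2}^2\lesssim\int\chi_A^2\zeta_B^2|v_1'|^2+\langle Uv_1,v_1\rangle+(\text{absorbable})$; collecting everything yields \eqref{eq:lem:2v1}. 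I expect this last coercivity step to be the main obstacle: it is the only place where Assumption \ref{ass:repulsive} is used, and it is what lets the $v_1$ norm appear on the left of \eqref{eq:lem:2v1} with the same weight as $v_1'$ and without the $A^{-2}$ factor present in the first virial estimate — together with the delicate accounting needed to see, via \eqref{eq:relABg}, that the Darboux-commutator term $C_3$ and the $\psi_{A,B}$-derivative errors in $C_2$ are genuinely absorbable.
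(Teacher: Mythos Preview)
Your overall strategy is the paper's: differentiate $\mathcal{I}_{\mathrm{2nd},1}$, split into modulation, main, commutator, nonlinear and source pieces, handle $C_3$ via Lemma~\ref{lem:KM2} and $C_1,C_4$ via the exponential localization of $\boldsymbol{\phi}[\mathbf{z}]$, $\mathbf{G}_{\mathbf{m}}$, $\mathbf{R}[\mathbf{z}]$ together with Lemma~\ref{lem:KM1}, and extract coercivity from $C_2$ using Assumption~\ref{ass:repulsive} and Lemma~\ref{lem:equiv_rho0}. That outline is right.

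There is, however, a genuine gap in your treatment of $C_2$. You record the virial output as the three separate pieces $\int\psi_{A,B}'|v_1'|^2$, $\int\psi_{A,B}'''|v_1|^2$, and $-\tfrac12\int\psi_{A,B}V_D'|v_1|^2$, and you propose to place the $\psi_{A,B}'''$ piece on the right-hand side of \eqref{eq:lem:2v1} via $\zeta_B\lesssim\sech(\tfrac2Ax)$ and Lemma~\ref{lem:KM1}. That does not close: on $2<|x|<A$ one has $\psi_{A,B}'''=\chi_A^2(\zeta_B^2)''=4B^{-2}\chi_A^2\zeta_B^2$, so the contribution is $B^{-2}\int\chi_A^2\zeta_B^2 v_1^2$, and after $\zeta_B\lesssim\sech(4x/A)$ and Lemma~\ref{lem:KM1} this becomes $\sim B^{-2}\varepsilon^{-2N}A^2\|\boldsymbol{\eta}\|_{\boldsymbol{\Sigma}_A}^2$. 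Under the hierarchy \eqref{eq:relABg} (in particular $A\gg B^2$ and $\log\delta^{-1}\gg A$) this quantity is \emph{not} dominated by either $A^{-1/2}\|\boldsymbol{\eta}\|_{\boldsymbol{\Sigma}_A}^2$ or $\varepsilon^{-N}A^2\delta\|\boldsymbol{\eta}\|_{\boldsymbol{\Sigma}_A}^2$. The same $B^{-2}\|\xi_1\|^2$-type remainder reappears in your coercivity step when you trade $\|(\zeta_B\chi_A v_1)'\|$ for $\int\chi_A^2\zeta_B^2|v_1'|^2$, so the problem cannot be sidestepped there either.

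The fix, which is exactly what the paper does, is \emph{not} to separate the $\psi_{A,B}'''$ term but to pass immediately to the conjugated variable $\xi_1=\chi_A\zeta_B v_1$. A direct algebraic identity (valid where $\chi_A\equiv1$) gives
\[
\int\varphi_B'|v_1'|^2-\tfrac14\int\varphi_B'''|v_1|^2
=\int|\xi_1'|^2+\tfrac12\int\Bigl(\frac{\zeta_B''}{\zeta_B}-\frac{(\zeta_B')^2}{\zeta_B^2}\Bigr)\xi_1^2,
\]
and the crucial point is that $\zeta_B''/\zeta_B-(\zeta_B')^2/\zeta_B^2$ \emph{vanishes identically} for $|x|<1$ and for $|x|>2$, leaving only an $O(B^{-1})$ coefficient on the compact set $1<|x|<2$. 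Thus the would-be $B^{-2}$-bulk term disappears entirely into $\|\xi_1'\|^2$, and only the genuinely small pieces (the compactly supported $V_B$-part and the $\chi_A$-derivative terms $D_{21}$ localized near $|x|\sim A$) survive as errors. Once you rewrite $C_2$ this way, your coercivity paragraph (Lemma~\ref{lem:equiv_rho0} applied to $\xi_1$, Assumption~\ref{ass:repulsive} for the $V_D'$ part) goes through exactly as you describe, and the rest of your argument for $C_1$, $C_3$, $C_4$ is correct.
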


\begin{proof}
We have
\begin{align*}&
\dot{\mathcal{I}}_{\mathrm{2nd},1}= -\Omega( \mathcal{T}D\boldsymbol{\phi}[\mathbf{z}](\dot{\mathbf{z}}-\widetilde{\mathbf{z}}),\widetilde{S}_{A,B}\mathbf{v})+
\<\mathbf{L}_D\mathbf{v},\widetilde{S}_{A,B}\mathbf{v}\>\\& +\<\begin{pmatrix}
[\<\im \varepsilon\partial_x\>^{-N},V_D] & 0 \\ 0 & 0
\end{pmatrix}\mathcal{A}^*\boldsymbol{\eta},\widetilde{S}_{A,B}\mathbf{v}\>
  +\<\mathcal{T} \(  \mathbf{f}[\boldsymbol{\phi}[\mathbf{z}]+\boldsymbol{\eta}]
-\mathbf{f}[\boldsymbol{\phi}[\mathbf{z}]] \)
 ,\widetilde{S}_{A,B}\mathbf{v}\>  +\<\mathcal{T} \widetilde{\mathbf{R}}[\mathbf{z}] ,\widetilde{S}_{A,B}\mathbf{v}\>   \\&
=:D_1+D_2+D_3+D_4+D_5.
\end{align*}
 Following \cite{KM22},  for the main term $D_2$  we have
\begin{align*}
D_2=\<L_Dv_1,\widetilde{S}_{A,B}v_1\>=-\int \(  \xi _1 ^{\prime \prime2}+V_B \xi _1 \)\,dx+D_{21}   \text{  where } \xi _1=\chi_A\zeta_B v_1,
\end{align*}
and where
\begin{align*}&
 V_B = \frac{1}{2} \(      \frac{\zeta _B''}{\zeta _B}- \frac{(\zeta _B')^2}{\zeta _B^2}\) -\frac{1}{2}\  \frac{\varphi _B}{\zeta _B^2}V'_D \text{ and}\\&
D_{21}=\frac{1}{4}\int (\chi_A^2)'(\zeta_B^2)'v_1^2+\frac{1}{2}\int \(3(\chi_A')^2+\chi_A''\chi_A\)\zeta_B^2v_1^2-\int (\chi_A^2)'\varphi_B(v_1')^2+\frac{1}{4}\int (\chi_A^2)''' \varphi_B v_1^2.
\end{align*}
We claim
\begin{align}\label{eq:KMlemma3}
\int ( \xi _1 ^{\prime  2}+V_B \xi _1 )\,dx\gtrsim \(\| \sech \(  \frac{\kappa}{2} x\)v_1'\|_{L^2}^2+\|\sech \( \frac{\kappa}{2} x\)v_1\|^2\)-A^{-1}\|\boldsymbol{\eta}\|_{\boldsymbol{ \Sigma }_A}^2.
\end{align}
The proof is like in \cite[Lemma 3]{KM22}.  We have
\begin{align*} & \int _{|x|\le A}\sech \(  \kappa x\)v_1^2\le  \int _{|x|\le A}\sech \(  \frac{\kappa}{2} x\) \zeta ^2_B   v_1^2\le  \int _{|x|\le A}\sech \(  \frac{\kappa}{2} x\) \xi_1^2.
\end{align*}
We have
\begin{align*} & \int _{|x|\le A}\sech \(  \kappa x\)v_1 ^{\prime 2}\le  \int _{|x|\le A}\sech \(  \frac{\kappa}{2} x\) \( \xi _1' - \zeta '_Bv_1\) ^2
\lesssim  \int _{|x|\le A}\sech \(  \frac{\kappa}{2} x\)   (  \xi _1 ^{\prime 2} + \xi _1 ^{  2}    )    .
\end{align*}
We have
\begin{align*} & \int _{|x|\ge A}\sech \(  \kappa x\) \(  v_1 ^{\prime 2}+     v_1^2\) \le  \sech \(  \frac{\kappa}{2} A\)\int _{\R }\sech \( \frac{8}{A} x\) \(  v_1 ^{\prime 2}+     v_1^2\) dx \\&  \lesssim \sech \(  \frac{\kappa}{2} A\) \varepsilon ^{-N}\int _{\R }\sech \( \frac{4}{A} x\) \(  \eta_1 ^{\prime 2}+     \eta_1^2\) dx\le A ^{-1}\|\boldsymbol{\eta}\|_{\boldsymbol{ \Sigma }_A}^2.
\end{align*}
Finally, Lemma \ref{eq:lem:rhoequiv0} and Assumption \ref{ass:repulsive} imply
\begin{align*} & \int _{\R } \sech \(  \frac{\kappa}{2} x\)   (  \xi _1 ^{\prime 2} + \xi _1 ^{  2}    )  \lesssim  \int _{\R }( \xi _1 ^{\prime  2}+V_B \xi _1 )\,dx ,
\end{align*}
completing the proof of \eqref {eq:KMlemma3}.

We next claim the following, which is \cite[Lemma 4]{KM22},
\begin{align}\label{eq:KMlemma4}
|D_{21}|\lesssim A^{-1/2}\(\|\boldsymbol{\eta}\|    _{\boldsymbol{ \Sigma }_A}^2+\|\sech \( \kappa x\)\eta_1\|_{L^2}^2\) \lesssim A^{-1/2}\(\|\boldsymbol{\eta}\|    _{\boldsymbol{ \Sigma }_A}^2+   \varepsilon ^{-N}\|\sech \( \frac{\kappa}{2} x\)\eta_1\|_{L^2}^2\),
\end{align}
where the 2nd inequality follows from \eqref{eq:KM1}. Now we prove the first inequality.

\noindent Notice that $\chi _A(x)$  is constant for $|x|\not \in [A,2A]$, so that
\begin{align*} &  \text{$|(\chi_A^2)'(\zeta_B^2)'|\lesssim A ^{-1}B^{-1}e ^{-\frac{A}{B}}$,
$|\(3(\chi_A')^2+\chi_A''\chi_A\)\zeta_B^2|\lesssim A^{-2}e ^{-\frac{A}{B}}$}
\end{align*}
and since by $|\varphi  _B|\lesssim B$  we have $|(\chi_A^2)''' \varphi_B|\lesssim A^{-3}B$       and $|(\chi_A^2)'\varphi_B|\lesssim A^{-2}B$, we have
\begin{align*} &  \left | \frac{1}{4} (\chi_A^2)'(\zeta_B^2)'v_1^2+\frac{1}{2}  \(3(\chi_A')^2+\chi_A''\chi_A\)\zeta_B^2v_1^2-  (\chi_A^2)'\varphi_B(v_1')^2+\frac{1}{4}  (\chi_A^2)''' \varphi_B v_1^2 \right | \\& \lesssim   \frac{B}{A}  \sech \( \frac{8}{A}x\) \(    v_1 ^{\prime 2} + \frac{1}{A^2}   v_1 ^{  2}  \),
\end{align*}
by Lemma \ref{lem:KM1}    we have
\begin{align*} &  |D_{21}|\lesssim A^{-1/2} \(   \|   \sech \( \frac{4}{A}x\) v_1 '\| _{L^2}+A ^{-2}\|   \sech \( \frac{4}{A}x\) v_1  \| _{L^2}\)\\& \lesssim  A^{-1/2}\varepsilon^{-N} \(  \| \sech \(  \frac{2}{A}    x\)\eta _1'\|_{L^2}^2 +  A^{-2}\|\sech \(    \frac{2}{A}    x\)\eta _1\|_{L^2}^2 + \|\sech \(  \kappa x\) \eta _1\|_{L^2}^2 \),
\end{align*}
which yields the desired inequality \eqref{eq:KMlemma4}.

\noindent By Lemma \ref{lem:modbound} and by an analogue to \eqref{eq:KM1}, we have
\begin{align*}
|D_1|\lesssim |\dot{\mathbf{z}}-\widetilde{\mathbf{z}}| \|  \sech \( 2\kappa x\)    \mathbf{v}\|_{L^2}\lesssim \delta  \|\sech \(  \kappa x\) {\eta}_1\|_{L^2} \|\sech \( 2\kappa x\) \mathbf{v}\|_{L^2} \lesssim \delta \varepsilon ^{-N} \|\sech \(  \kappa x\) {\eta}_1\|_{L^2}^2 .
\end{align*}
By Lemma \ref{lem:KM2},  we have
\begin{align*}&
|D_3|  =|\< [\<\im \varepsilon \partial_x\>^{-N},V_D] \mathcal{A}^*\eta _1, \widetilde{S} _{A,B}v_1\>| \\& \le \| \cosh \( \frac{\kappa}{2} x\)[\<\im \varepsilon \partial_x\>^{-N},V_D] \mathcal{A}^*\eta _1 \| _{L^2} \| \sech \( \frac{\kappa}{2} x\) \widetilde{S} _{A,B}v_1\| _{L^2}
\\&
\leq   \varepsilon   \| \sech \( \frac{\kappa}{2} x\)v_1\|_{L^2}  \(\|\sech \( \frac{\kappa}{2} x\) v_1'\|_{L^2}+\|\sech \( \frac{\kappa}{2} x\)v_1\|_{L^2}\)\\&
\lesssim\varepsilon \(\|    \sech \( \frac{\kappa}{2} x\)v_1'\|_{L^2}^2+\|\sech \( \frac{\kappa}{2} x\) v_1\|_{L^2}^2\) ,
\end{align*}
where the upper bound can be absorbed inside the left hand side of \eqref{eq:lem:2v1}.

 \noindent Like in Lemma \ref{lem:1stV1}, we have
 \begin{align*}& D_4 =\<\int_0^1\int_0^1f''(s_1\phi  _1[\mathbf{z}] +s_2\eta_1)\phi   _1 [\mathbf{z}] \eta_1\,ds_1ds_2, \widetilde{S} _{A,B}v_1\>
 + \<f(\eta_1), \widetilde{S} _{A,B}v_1\>=:D_{41}+D_{42}.
\end{align*}
Ignoring the irrelevant $ds_1ds_2$ integral, we have
\begin{align*}& |D_{41}|  \lesssim \|  \cosh\(  2\kappa  x \)\(f''(s_1\phi   _1 [\mathbf{z}] +s_2\eta_1)\phi_1[\mathbf{z}] \sech \(  \kappa  x \)\eta_1\)\|_{L^2}
\|   \sech \(  \kappa  x \)\widetilde{S}_{A,B}v_1\|_{L^2}\\& \lesssim  \| \mathbf{z} \|       \| \sech \(  \kappa  x \)\eta_1 \| _{L^2}
 \( \|   \sech \(  \kappa  x \) v_1' \|_{L^2}   +  \|   \sech \(  \kappa  x \) v_1  \|_{L^2} \) \\&
  \lesssim \delta \varepsilon^{-N}\( \|   \sech \(  \kappa  x \) v_1' \|_{L^2} ^2  +  \|   \sech \(  \kappa  x \) v_1  \|_{L^2}^2 \) ,
\end{align*}
which can be absorbed inside the left hand side of \eqref{eq:lem:2v1}. Next, we have
\begin{align*}& |D_{42}|   =
 |\< \sech \(  \frac{2}{A} x \) f(\eta_1),  \cosh\(  \frac{2}{A} x \) \(\frac{1}{2} \( \chi_A^2 \varphi_B    \) '
 + \chi_A^2 \varphi_B\partial_x \)v_1\> |\\&  \lesssim \| \eta _1\| _{L^\infty} \| \sech \(  \frac{2}{A} x \)\eta _1\| _{L^2}
 \times \\& \( \|\cosh\(  \frac{6}{A} x \)\psi_{A,B}'\|_{L^\infty}\|\sech \(  \frac{4}{A} x \) v_1\|_{L^2}
+\|\cosh\(  \frac{6}{A} x \)\psi_{A,B}\|_{L^\infty}\|\sech \(  \frac{4}{A} x \)v_1'\|_{L^2}\)
\\& \lesssim A \delta\| \boldsymbol{\eta} \|_{ \boldsymbol{ \Sigma }_A} \(  \|\sech \(  \frac{4}{A} x \) v_1\|_{L^2}
+ \|\sech \(  \frac{4}{A} x \)v_1'\|_{L^2}\)   \\& \lesssim  \varepsilon^{-N} A \delta\| \boldsymbol{\eta} \|_{ \boldsymbol{ \Sigma }_A} \(
\|\sech \(  \frac{4}{A} x \) \eta_1\|_{L^2}
+ \|\sech \(  \frac{4}{A} x \)\eta_1'\|_{L^2}\) \lesssim  \varepsilon^{-N} A^2 \delta \| \boldsymbol{\eta} \|_{ \boldsymbol{ \Sigma }_A}^2.
\end{align*}
Finally,  we consider
\begin{align*}
 D_5 =
 \< \sum_{\mathbf{m}\in 	\mathbf{R}_{\mathrm{min}}}\mathbf{z}^{\mathbf{m}}\mathcal{T}\mathbf{G}_{\mathbf{m}} ,\widetilde{S}_{A,B}\mathbf{v}\>
   +\<\mathcal{T}  {\mathbf{R}}[\mathbf{z}] ,\widetilde{S}_{A,B}\mathbf{v}\> =:D_{51}+D_{52}.
\end{align*}
We focus on $D_{51}$ which is the main term. We have
\begin{align*}
  |
 \<  \mathbf{z}^{\mathbf{m}}\mathcal{T}\mathbf{G}_{\mathbf{m}} ,\widetilde{S}_{A,B}\mathbf{v}\> |\le | \mathbf{z}^{\mathbf{m}}| \| \cosh \( \kappa x\)
 \widetilde{S}_{A,B}\mathcal{T}\mathbf{G}_{\mathbf{m}} \| _{L^2} \| \sech \( \kappa x\)  \mathbf{v} \| _{L^2}
 \lesssim \frac{1}{\mu} | \mathbf{z}^{\mathbf{m}}| ^2 + \mu  \| \sech \( \kappa x\)  \mathbf{v} \| _{L^2}^2,
\end{align*}
where for $\mu $ small enough the last term can be absorbed in the left hand side of \eqref{eq:lem:2v1}.

Collecting the estimates, we have the conclusion.
\end{proof}

\begin{lemma}\label{lem:2v2}
	We have
	\begin{align}\label{eq:lem:2v2}
		\|e^{-\kappa\<x\>/2}v_2\|_{L^2} + \dot{\mathcal{I}}_{\mathrm{2nd},2}\lesssim& \|e^{-\kappa \<x\>/2}v_1'\|_{L^2}^2+
\|e^{-\kappa \<x\>/2}v_1\|_{L^2}^2+\sum_{\mathbf{m}\in 	\mathbf{R}_{\mathrm{min}}}|\mathbf{z}^{\mathbf{m}}|^2+
   \delta A
   \| \boldsymbol{\eta} \|_{ \boldsymbol{ \Sigma }_A}^2      .
	\end{align}
\end{lemma}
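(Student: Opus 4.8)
The plan is to differentiate $\mathcal{I}_{\mathrm{2nd},2}=\frac12\Omega(\mathbf{v},\sigma_3 e^{-\kappa\<x\>}\mathbf{v})$ along the flow, exactly as in the proofs of Lemma~\ref{lem:2v1} and Lemma~\ref{lem:1stV2}, and to read off $-\|e^{-\kappa\<x\>/2}v_2\|_{L^2}^2$ as the leading contribution. Since the multiplication operator $\sigma_3 e^{-\kappa\<x\>}$ is $\Omega$--antisymmetric, one has $\dot{\mathcal{I}}_{\mathrm{2nd},2}=\Omega(\dot{\mathbf{v}},\sigma_3 e^{-\kappa\<x\>}\mathbf{v})$, and substituting \eqref{eq:vBg} for $\mathbf{v}=\mathcal{T}\boldsymbol{\eta}$ decomposes $\dot{\mathcal{I}}_{\mathrm{2nd},2}=E_1+E_2+E_3+E_4+E_5$, where $E_1$ comes from the modulation term $-\mathcal{T}D\boldsymbol{\phi}[\mathbf{z}](\dot{\mathbf{z}}-\widetilde{\mathbf{z}})$, $E_2=\<\mathbf{L}_D\mathbf{v},\sigma_3 e^{-\kappa\<x\>}\mathbf{v}\>$ from the linear term, $E_3$ from the $V_D$--commutator term, $E_4$ from the nonlinearity $\mathbf{f}[\boldsymbol{\phi}[\mathbf{z}]+\boldsymbol{\eta}]-\mathbf{f}[\boldsymbol{\phi}[\mathbf{z}]]$, and $E_5$ from $\mathcal{T}\widetilde{\mathbf{R}}[\mathbf{z}]$.

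First I would extract the main term. Writing out the two components, $E_2=\<L_D v_1,e^{-\kappa\<x\>}v_1\>-\|e^{-\kappa\<x\>/2}v_2\|_{L^2}^2$; one integration by parts in the first summand, using $V_D\in\mathcal{S}(\R,\R)$ and $|(e^{-\kappa\<x\>})'|+|(e^{-\kappa\<x\>})''|\lesssim e^{-\kappa\<x\>}$, gives $|\<L_D v_1,e^{-\kappa\<x\>}v_1\>|\lesssim\|e^{-\kappa\<x\>/2}v_1'\|_{L^2}^2+\|e^{-\kappa\<x\>/2}v_1\|_{L^2}^2$, an admissible right-hand side. A point worth stressing is that, unlike for $\mathcal{I}_{\mathrm{2nd},1}$ and the $\mathcal{I}_{\mathrm{1st},j}$, the multiplier here is of order zero, so no monotonicity/coercivity input (no analogue of Lemma~2.7 of \cite{CM19SIMA}) is needed, and every remaining piece is disposed of by Cauchy--Schwarz alone.

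Then I would bound the four remainders with the tools already in place. For $E_1$: Lemma~\ref{lem:modbound} gives $\|\dot{\mathbf{z}}-\widetilde{\mathbf{z}}\|\lesssim\delta\|\sech(\kappa x)\boldsymbol{\eta}\|_{L^2}$, \eqref{eq:key1} turns this into $\delta$ times a weighted $\mathbf{v}$--norm, and since $\mathcal{T}D\boldsymbol{\phi}[\mathbf{z}]$ is valued in $\boldsymbol{\Sigma}^\infty$ with decay rate larger than $\kappa$ (by \eqref{eq:kappa} and $a_2=\frac12\sqrt{m^2-\lambda_N^2}$) the pairing costs nothing, so $|E_1|\lesssim\delta\bigl(\|e^{-\kappa\<x\>/2}v_1\|_{L^2}^2+\|e^{-\kappa\<x\>/2}v_2\|_{L^2}^2\bigr)$. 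For $E_3=\<[\<\im\varepsilon\partial_x\>^{-N},V_D]\mathcal{A}^*\eta_1,e^{-\kappa\<x\>}v_1\>$: estimate \eqref{eq:KM3b} of Lemma~\ref{lem:KM2} bounds $\|\cosh(\frac\kappa2 x)[\<\im\varepsilon\partial_x\>^{-N},V_D]\mathcal{A}^*\eta_1\|_{L^2}$ by $\varepsilon\|\sech(\frac\kappa2 x)\mathbf{v}\|_{L^2}$, giving $|E_3|\lesssim\varepsilon\|e^{-\kappa\<x\>/2}v_1\|_{L^2}^2$. For $E_4$: since $\mathbf{f}[\boldsymbol{\phi}[\mathbf{z}]+\boldsymbol{\eta}]-\mathbf{f}[\boldsymbol{\phi}[\mathbf{z}]]={}^t\bigl(f(\phi_1[\mathbf{z}]+\eta_1)-f(\phi_1[\mathbf{z}]),0\bigr)$, I would split $f(\phi_1+\eta_1)-f(\phi_1)=f'(\phi_1)\eta_1+F_1[\mathbf{z},\eta_1]$ with $F_1$ as in \eqref{eq:defF1}; the first, spatially localized, summand is treated as in $E_1$, while for $F_1$ one uses $|F_1[\mathbf{z},\eta_1]|\lesssim\|\eta_1\|_{L^\infty}|\eta_1|\lesssim\delta|\eta_1|$ (as in Lemma~\ref{lem:estF}), Lemma~\ref{lem:KM1} to pass from $\mathcal{T}(\cdot)$ back to $\boldsymbol{\eta}$ in the $\boldsymbol{\Sigma}_A$--norm, and Lemma~\ref{lem:equiv_rho}; this is the source of the $\delta A\|\boldsymbol{\eta}\|_{\boldsymbol{\Sigma}_A}^2$ term. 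For $E_5$: by \eqref{eq:R1FRemainder} and the exponential localization in $\boldsymbol{\Sigma}^\infty$ of the $\mathbf{G}_{\mathbf{m}}$ and of $\mathbf{R}[\mathbf{z}]$, together with the mapping properties of $\mathcal{T}$, one gets $|E_5|\lesssim\sum_{\mathbf{m}\in\mathbf{R}_{\mathrm{min}}}|\mathbf{z}^{\mathbf{m}}|^2+\mu\|e^{-\kappa\<x\>/2}v_2\|_{L^2}^2$ for any small $\mu>0$. Collecting the estimates, the several small multiples of $\|e^{-\kappa\<x\>/2}v_2\|_{L^2}^2$ produced by the remainders (each carrying a factor $\delta$, $\varepsilon$ or $\mu$) are absorbed into the leading term of $E_2$, yielding \eqref{eq:lem:2v2}.

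The step I expect to be the main obstacle is the bookkeeping of exponential weights across the operator $\mathcal{T}=\<\im\varepsilon\partial_x\>^{-N}\mathcal{A}^*$: one must repeatedly convert weighted norms of $\mathbf{v}$, and of $\mathcal{T}$ applied to the nonlinear terms, into weighted norms of $\boldsymbol{\eta}$ losing at most a factor $\varepsilon^{-N}$ (harmless against the super-exponentially small $\delta$ of \eqref{eq:relABg}) and no positive power of $A$ beyond those already present in \eqref{eq:lem:2v2}, and one must check that every $v_2$--contribution surviving in the remainder pairings genuinely carries a small prefactor so it can be absorbed. This is precisely where the commutator estimates of Lemmas~\ref{claim:l2boundIII} and \ref{claim:l2boundIII1}, and their consequences Lemmas~\ref{lem:coer6A}, \ref{lem:KM1} and \ref{lem:KM2}, enter.
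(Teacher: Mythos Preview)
Your proposal is correct and follows essentially the same route as the paper's proof: the same decomposition $\dot{\mathcal{I}}_{\mathrm{2nd},2}=E_1+\cdots+E_5$, the same extraction of $-\|e^{-\kappa\<x\>/2}v_2\|_{L^2}^2$ from $E_2$, and the same tools (Lemma~\ref{lem:modbound} for $E_1$, \eqref{eq:KM3b} for $E_3$, Young's inequality with absorption for $E_5$). The only cosmetic difference is in $E_4$: the paper splits $f(\phi_1[\mathbf{z}]+\eta_1)-f(\phi_1[\mathbf{z}])=\int_0^1\!\!\int_0^1 f''(s_1\phi_1[\mathbf{z}]+s_2\eta_1)\phi_1[\mathbf{z}]\eta_1\,ds_1ds_2+f(\eta_1)$ (as in the treatment of $B_3$ in Lemma~\ref{lem:1stV1}) rather than your $f'(\phi_1)\eta_1+F_1$, but both isolate a spatially localized piece and a purely quadratic-in-$\eta_1$ piece, and the resulting bounds are identical.
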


\begin{proof}
	Differentiating $\mathcal{I}_{\mathrm{2nd},2}$, we have
	\begin{align*}
		\dot{\mathcal{I}}_{\mathrm{2nd},2}=&
-\Omega(\mathcal{T} D\boldsymbol{\phi}[\mathbf{z}]
(\dot{\mathbf{z}}-\widetilde{\mathbf{z}}),\sigma_3e^{-\kappa\<x\>}\mathbf{v})+
\<\mathbf{L}_D\mathbf{v},\sigma_3e^{-\kappa\<x\>}\mathbf{v}\>\\&+
\<[\<\im\varepsilon\partial_x\>^{-N},V_D]\mathcal{A}^*\eta_1,\sigma_3e^{-\kappa\<x\>}v_1\>+
\<\mathcal{T} \(  f[\boldsymbol{\phi}[\mathbf{z}]+\boldsymbol{\eta}]
-f[\boldsymbol{\phi}[\mathbf{z}]] \),  e^{-\kappa\<x\>}v_1\> \\& +\<\mathcal{T}
\widetilde{\mathbf{R}}[\mathbf{z}] ,\sigma_3 e^{-\kappa\<x\>}\mathbf{v}\>  =:E_1+E_2+E_3+E_4+E_5.
	\end{align*}
The main term is
\begin{align*}
	E_2=-\|e^{-\kappa\<x\>/2}v_2\|_{L^2}^2+\<L_D v_1, e^{-\kappa\<x\>}v_1\>=-\|e^{-\kappa\<x\>/2}v_2\|_{L^2}^2+E_{21},
\end{align*}
with
\begin{align*}
	|E_{21}|\lesssim  \|e^{-\kappa \<x\>/2}v_1'\|_{L^2}^2+\|e^{-\kappa \<x\>/2}v_1\|_{L^2}^2.
\end{align*}
By Lemma \ref{lem:modbound}, we have
\begin{align*}
	|E_1|\lesssim \delta \|e^{-\kappa\<x\>/2}\mathbf{v}\|_{L^2}  \|e^{-\kappa\<x\>}\boldsymbol{\eta}\|_{L^2}
 \lesssim \delta \varepsilon ^{-N}\|e^{-\kappa\<x\>/2}\mathbf{v}\|_{L^2}^2.
\end{align*}
By \eqref{eq:KM3b}, we have
\begin{align*}
	&|E_3|  = |\<  [\<\im\varepsilon\partial_x\>^{-N},V_D]\mathcal{A}^*\eta_1,\sigma_3e^{-\kappa\<x\>}v_1\>| \lesssim
 \varepsilon \|e^{-\frac{\kappa}{2}\<x\> }v_1\|_{L^2} \|e^{-\kappa\<x\> }v_1\|_{L^2}    \le  \varepsilon \|e^{-\frac{\kappa}{2}\<x\> }v_1\|_{L^2}^2.
\end{align*}
We write
\begin{align*}
	& E_4=\<\int_0^1\int_0^1f''(s_1\phi  _1[\mathbf{z}]+s_2\eta_1)\phi_1 [\mathbf{z}] \eta_1\,ds_1ds_2,  e^{-\kappa\<x\>}v_1\>
 + \<f(\eta_1), \sigma_3e^{-\kappa\<x\>}v_1\>=:E_{41}+E_{42}.
\end{align*}
Ignoring the irrelevant $ds_1ds_2$ integral, we have
\begin{align*}& |E_{41}|  \lesssim \|  \(f''(s_1\phi   _1[\mathbf{z}] +s_2\eta_1)\cosh \(  \kappa  x \)\phi  _1[\mathbf{z}]
\sech \(  \kappa  x \)\eta_1\)\|_{L^2}
\|  e^{-\kappa\<x\>}v_1\|_{L^2}\\& \lesssim  \| \mathbf{z} \|       \| \sech \(  \kappa  x \)\eta_1 \| _{L^2}
   \|  e^{-\kappa\<x\>}v_1 \|_{L^2} \lesssim \delta  \| \sech \(  \frac{\kappa}{2}  x \)v_1 \| _{L^2}^2.
\end{align*}
We have
\begin{align*}& |E_{42}|   =
 |\<   f(\eta_1),  e^{-\kappa\<x\>}v_1\> |\\&  \lesssim \| \eta _1\| _{L^\infty} \| \sech \(  \frac{2}{A} x \)\eta _1\| _{L^2}
  \| \sech \(  \frac{\kappa}{2} x \)v _1\| _{L^2}  \lesssim \delta A \( \| \sech \(  \frac{\kappa}{2} x \)v _1\| _{L^2}  ^2 +
   \| \boldsymbol{\eta} \|_{ \boldsymbol{ \Sigma }_A}^2\) .
\end{align*}
 We have \begin{align*}
 E_5 =
 \< \sum_{\mathbf{m}\in 	\mathbf{R}_{\mathrm{min}}}\mathbf{z}^{\mathbf{m}}\mathcal{T}\mathbf{G}_{\mathbf{m}} ,\sigma_3e^{-\kappa\<x\>}\mathbf{v}\>
   +\<\mathcal{T}  {\mathbf{R}}[\mathbf{z}] ,\sigma_3e^{-\kappa\<x\>}\mathbf{v}\> =:E_{51}+E_{52}.
\end{align*}
We focus on $D_{51}$ which is the main term, the other being simpler.
 We have
\begin{align*}&
  |
 \<  \mathbf{z}^{\mathbf{m}}\mathcal{T}\mathbf{G}_{\mathbf{m}} ,,\sigma_3e^{-\kappa\<x\>}\mathbf{v}\> |
 \le | \mathbf{z}^{\mathbf{m}}| \|
  \mathcal{T}\mathbf{G}_{\mathbf{m}} \| _{L^2} \| \sech \( \kappa x\)  \mathbf{v} \| _{L^2}
 \lesssim \frac{1}{\mu} | \mathbf{z}^{\mathbf{m}}| ^2 + \mu  \| \sech \( \kappa x\)  \mathbf{v} \| _{L^2}^2 \\&
 = \frac{1}{\mu} | \mathbf{z}^{\mathbf{m}}| ^2 + \mu  \| \sech \( \kappa x\)   {v}_1 \| _{L^2}^2+ \mu  \| \sech \( \kappa x\)   {v}_2 \| _{L^2}^2,
\end{align*}
where for $\mu $ small enough the very  last term  in $v_2$ can be absorbed in the left   hand side of  \eqref{eq:lem:2v2}
Collecting the estimates, we have the conclusion.
\end{proof}

Combining Lemmas \ref{lem:2v1} and \ref{lem:2v2}, we have
\begin{lemma}\label{lem:2v3}
	For any $\mu>0$, we have
	\begin{align*}
		&\int_0^T\(\| \sech \( \frac{\kappa}{2}x \)v_1'\|_{L^2}^2+\|\sech \( \frac{\kappa}{2}x \)\mathbf{v}\|_{L^2}^2\)
\lesssim B\varepsilon^{-N}\delta^2 \\&\quad+\(\varepsilon^{-1}A^2\delta+A^{-1/2}\)\int_0^T\|\boldsymbol{\eta}\|_A^2+ \sum_{\mathbf{m}\in 	\mathbf{R}_{\mathrm{min}}}\|\mathbf{z}^{\mathbf{m}}\|_{L^2(0,T)}^2.
	\end{align*}
\end{lemma}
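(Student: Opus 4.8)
The plan is to integrate over $[0,T]$ a single pointwise differential inequality obtained from Lemmas \ref{lem:2v1} and \ref{lem:2v2} by taking a suitable linear combination of the auxiliary functionals, the combination being arranged so that the $v_1$-- and $v_1'$--terms appearing on the right of \eqref{eq:lem:2v2} are absorbed by the left of \eqref{eq:lem:2v1}. Concretely, I would first note that the weights $\sech\(\tfrac{\kappa}{2}x\)$ and $e^{-\kappa\<x\>/2}$ are pointwise comparable (both behave like $e^{-\kappa|x|/2}$), so the norms occurring in the two lemmas coincide up to absolute constants. Letting $C_\star\ge1$ be the implicit constant in \eqref{eq:lem:2v2}, set $\mathcal{I}_{\mathrm{2nd}}:=(C_\star+1)\,\mathcal{I}_{\mathrm{2nd},1}+\mathcal{I}_{\mathrm{2nd},2}$, multiply \eqref{eq:lem:2v1} by $C_\star+1$, and add \eqref{eq:lem:2v2}. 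Because $\|e^{-\kappa\<x\>/2}v_1'\|_{L^2}^2+\|e^{-\kappa\<x\>/2}v_1\|_{L^2}^2\lesssim\|\sech\(\tfrac{\kappa}{2}x\)v_1'\|_{L^2}^2+\|\sech\(\tfrac{\kappa}{2}x\)v_1\|_{L^2}^2$, the factor $C_\star+1$ makes those contributions of \eqref{eq:lem:2v2} cancel; using in addition $\|\sech\(\tfrac{\kappa}{2}x\)\mathbf{v}\|_{L^2}^2=\|\sech\(\tfrac{\kappa}{2}x\)v_1\|_{L^2}^2+\|\sech\(\tfrac{\kappa}{2}x\)v_2\|_{L^2}^2$ and $\delta A\lesssim\varepsilon^{-N}A^2\delta$ to swallow the last term of \eqref{eq:lem:2v2}, one is left with, for a.e.\ $t\in[0,T]$,
\[
\|\sech\(\tfrac{\kappa}{2}x\)v_1'\|_{L^2}^2+\|\sech\(\tfrac{\kappa}{2}x\)\mathbf{v}\|_{L^2}^2+\dot{\mathcal{I}}_{\mathrm{2nd}}\ \lesssim\ \(\varepsilon^{-N}A^2\delta+A^{-1/2}\)\|\boldsymbol{\eta}\|_{\boldsymbol{\Sigma}_A}^2+\sum_{\mathbf{m}\in\mathbf{R}_{\mathrm{min}}}|\mathbf{z}^{\mathbf{m}}|^2 .
\]

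Then I would integrate this on $[0,T]$ and use $\int_0^T\dot{\mathcal{I}}_{\mathrm{2nd}}=\mathcal{I}_{\mathrm{2nd}}(T)-\mathcal{I}_{\mathrm{2nd}}(0)$, so that the left side becomes the quantity to be estimated and the right side acquires $|\mathcal{I}_{\mathrm{2nd}}(0)|+|\mathcal{I}_{\mathrm{2nd}}(T)|$ together with the time integrals of the displayed right-hand side, namely $\(\varepsilon^{-N}A^2\delta+A^{-1/2}\)\int_0^T\|\boldsymbol{\eta}\|_{\boldsymbol{\Sigma}_A}^2+\sum_{\mathbf{m}\in\mathbf{R}_{\mathrm{min}}}\|\mathbf{z}^{\mathbf{m}}\|_{L^2(0,T)}^2$. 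To control the boundary terms uniformly in $t$ I would use that $\mathbf{v}=\mathcal{T}\boldsymbol{\eta}$ with $\mathcal{T}=\<\im\varepsilon\partial_x\>^{-N}\mathcal{A}^*$ bounded on $\boldsymbol{\mathcal{H}}^1$, so that \eqref{eq:orbbound} gives $\|v_1(t)\|_{H^1}+\|\mathbf{v}(t)\|_{L^2}\lesssim\varepsilon^{-N}\delta$; then, by the anti-symmetry of $\widetilde{S}_{A,B}$ and of $\sigma_3e^{-\kappa\<x\>}$ with respect to $\Omega$ and the bounds $|\psi_{A,B}|\lesssim B$, $|\psi_{A,B}'|\lesssim1$, one obtains $|\mathcal{I}_{\mathrm{2nd},1}(t)|\lesssim B\|\mathbf{v}\|_{L^2}\|v_1\|_{H^1}$ and $|\mathcal{I}_{\mathrm{2nd},2}(t)|\lesssim\|\mathbf{v}\|_{L^2}^2$, whence $|\mathcal{I}_{\mathrm{2nd}}(t)|\lesssim B\varepsilon^{-N}\delta^2$ (the precise power of the fixed parameter $\varepsilon$ being immaterial). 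Inserting this bound yields the assertion of Lemma \ref{lem:2v3}.

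The argument is essentially bookkeeping once Lemmas \ref{lem:2v1} and \ref{lem:2v2} are available, so the main obstacle is slight: one must place the large weight $C_\star+1$ in front of $\mathcal{I}_{\mathrm{2nd},1}$ (the functional whose differential inequality carries $v_1'$ and $v_1$ on the left) rather than in front of $\mathcal{I}_{\mathrm{2nd},2}$, and one must estimate the $v_1$--contributions of \eqref{eq:lem:2v2} with the \emph{weaker} weight $e^{-\kappa\<x\>/2}$ before invoking the comparison of the two weights; otherwise the absorption fails. The remaining ingredient, the time-uniform bound on $\mathcal{I}_{\mathrm{2nd}}$ — which is where the factor $B$ (through $\psi_{A,B}=\chi_A^2\varphi_B$, $|\varphi_B|\lesssim B$) and the power of $\varepsilon$ (through $\mathcal{T}$) in the final estimate originate — is routine given \eqref{eq:orbbound} and the mapping properties of $\mathcal{T}$.
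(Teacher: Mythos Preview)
Your proposal is correct and follows essentially the same route as the paper. The paper's own proof is three lines: it cites Lemmas \ref{lem:2v1} and \ref{lem:2v2} together with the endpoint bounds $|\mathcal{I}_{\mathrm{2nd},1}|\lesssim B\varepsilon^{-N}\delta^2$ and $|\mathcal{I}_{\mathrm{2nd},2}|\lesssim \varepsilon^{-N}\delta^2$, leaving the linear-combination/absorption step and the integration implicit; you have simply spelled these out. Your observation that the crude estimate on $\mathcal{I}_{\mathrm{2nd},1}$ may produce $\varepsilon^{-2N}$ rather than $\varepsilon^{-N}$ is harmless, since (as you note) the hierarchy \eqref{eq:relABg} makes any fixed power of $\varepsilon^{-1}$ in front of $\delta^2$ acceptable for the downstream use in Proposition \ref{prop:2ndvirial}.
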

\begin{proof}
	The claim follows from Lemmas  \ref{lem:2v1} and \ref{lem:2v2} and
	\begin{align}
		|\mathcal{I}_{\mathrm{2nd},1}|\lesssim B\varepsilon^{-N}\delta^2,\
		|\mathcal{I}_{\mathrm{2nd},2}|\lesssim \varepsilon^{-N}\delta^2.
	\end{align}
\end{proof}

\begin{proof}[Proof of Proposition \ref{prop:2ndvirial}]
	It  is a consequence of Lemma \ref{lem:2v3}     and inequality \eqref{eq:key1}.
\end{proof}

\section*{Acknowledgments}
C. was supported   by the Prin 2020 project \textit{Hamiltonian and Dispersive PDEs} N. 2020XB3EFL.
M. was supported by the JSPS KAKENHI Grant Number 19K03579 and G19KK0066A.

Department of Mathematics and Geosciences,  University
of Trieste, via Valerio  12/1  Trieste, 34127  Italy.
{\it E-mail Address}: {\tt scuccagna@units.it}

Department of Mathematics and Informatics,
Graduate School of Science,
Chiba University,
Chiba 263-8522, Japan.
{\it E-mail Address}: {\tt maeda@math.s.chiba-u.ac.jp}

Department of Mathematics and Geosciences,  University
of Trieste, via Valerio  12/1  Trieste, 34127  Italy.
{\it E-mail Address}: {\tt STEFANO.SCROBOGNA@units.it}

\end{document}